\newcommand{\Div}{{\rm div}\hspace{.5mm}}
\newcommand{\x}{\mathbf{x}}
\newcommand{\y}{\mathbf{y}}
\newcommand{\Y}{\mathbf{Y}}
\newcommand{\z}{\mathbf{z}}
\newcommand{\uu}{\mathbf{u}}
\newcommand{\vv}{\mathbf{v}}
\newcommand{\J}{\mathbf{J}}
\newcommand{\E}{\mathbf{E}}
\newcommand{\HH}{\mathbf{H}}
\newcommand{\F}{\mathbf{F}}
\renewcommand{\l}{\lambda}
\renewcommand{\b}{\beta}
\theoremstyle{plain}
\newtheorem{theorem}{Theorem}[section]
\newtheorem{corollary}{Corollary}[section]
\newtheorem{lemma}{Lemma}[section]
\newtheorem{proposition}{Proposition}[section]
\theoremstyle{definition}
\theoremstyle{remark}
\newtheorem{remark}{Remark}[section]
\numberwithin{equation}{section}
\author[H. Frid]{Hermano Frid}
 \address{Instituto de Matem\'atica Pura e Aplicada - IMPA\\ Estrada Dona Castorina, 110\\
Rio de Janeiro, RJ, 22460-320, Brazil}
\email{hermano@impa.br}
\thanks{H.~Frid gratefully acknowledges the support from CNPq, through grant proc.~303950/2009-9, and FAPERJ, through grant E-26/103.019/2011.}
\author[D.~Marroquin]{Daniel R. Marroquin}
 \address{Instituto de Matem\'atica Pura e Aplicada - IMPA\\ Estrada Dona Castorina, 110\\
Rio de Janeiro, RJ, 22460-320, Brazil}
\thanks{D.~Marroquin thankfully acknowledges the support from CNPq, through grant proc. 150118/2018-0.}
\email{danielrm@impa.br}
\author[J.F.C.~Nariyoshi]{Jo\~ao F.C.~Nariyoshi}
 \address{Instituto de Matem\'atica Pura e Aplicada - IMPA\\ Estrada Dona Castorina, 110\\
Rio de Janeiro, RJ, 22460-320, Brazil}
\thanks{J. F. C.~Nariyoshi appreciatively acknowledges the support from CNPq, through grant proc. 140600/2017-5.}
\email{jfcn@impa.br}
\title[Smooth large solutions for aurora type phenomena in the 2-torus]{Global smooth solutions with large data \\ for a system modeling \\ aurora type phenomena  in the 2-torus}
\subjclass[2010]{35Q35, 76A02, 76N10}
\keywords{Compressible MHD system, nonlinear Schr\"{o}dinger equations, short wave-long wave interactions}
\begin{document}

\maketitle

\begin{abstract}
We prove existence and uniqueness of smooth solutions with large initial data for a system of equations modeling the interaction of short waves, governed by a nonlinear Schr\"{o}dinger equation, and long waves, described by the equations of magnetohydrodynamics. In the model, the short waves propagate along the streamlines of the fluid flow. This is translated in the system by setting up the nonlinear Schr\"{o}dinger equation in the Lagrangian coordinates of the fluid. Besides,  the equations are coupled by nonlinear terms accounting for the strong interaction of the dynamics.   The system provides a simplified mathematical model for studying aurora type phenomena.  We focus on the 2-dimensional case with periodic boundary conditions.
This is the first result on existence of smooth solutions with large data for the multidimensional case of the model under consideration.
\end{abstract}

\section{Introduction}

We consider the following system of equations modeling the interaction of short waves, governed by a nonlinear Schr\"{o}dinger equation (NLS), and long waves, described by the equations of magnetohydrodynamics (MHD):
\begin{align}
& \rho_t+\text{div}(\rho\uu)= 0,\label{E2rho}\\
&(\rho \uu)_t + \text{div}(\rho\uu\otimes\uu) + \nabla P = \HH\cdot\nabla \HH - \tfrac{1}{2}\nabla|\HH|^2\nonumber\\
&\qquad\qquad\qquad\qquad\qquad\qquad+ \alpha \nabla(g'(1/\rho)h(|\psi\circ \Y|^2) \J_\y/\rho) + \Div \mathbb{S}, \label{E2u}\\
& \HH_t + \uu \cdot \nabla \HH - \HH\cdot \nabla\uu + \HH\Div \uu = \nu \Delta \HH,\label{E2H}\\
&\Div \HH = 0,\label{E2H'}\\
& i\psi_t+\Delta_\y \psi = |\psi|^2\psi + \alpha g(v)h'(|\psi|^2)\psi. \label{E2psi}
\end{align}
Our main goal is to prove existence and uniqueness of solutions with prescribed large initial data in the 2-dimensional torus.

Let us recall that the MHD equations model the dynamics of a conductive fluid in the presence of a magnetic field. Accordingly, in the system above, $\rho=\rho(\x,t) \geq 0$ and $\uu=\uu(\x,t)\in \mathbb{R}^2$ are the fluid's density and velocity, respectively, and $\HH=\HH(\x,t)\in\mathbb{R}^2$ is the magnetic field; $P=P(\rho)$ denotes the pressure and $\mathbb{S}$ is the viscous stress tensor given by
\begin{equation}
\mathbb{S}=\lambda (\Div \uu)\text{Id}+ \mu (\nabla \uu + (\nabla \uu)^\top),\label{defS}
\end{equation}
where $\lambda$ and $\mu$ are the viscosity coefficients which are, in general, functions of the density and must satisfy the relations $\mu >0$ and $3\lambda + 2\mu \geq 0$; $\nu$ is the magnetic diffusivity.

On the other hand, $\psi=\psi(\y,t)\in\mathbb{C}$ is the wave function, $\alpha>0$ is the interaction coefficient and $g$ and $h$ are smooth functions taking nonnegative values.

The MHD equations above are stated in the Eulerean coordinates $(\x,t)$, that describe the dynamics from an outsider's point of view, while the NLS is stated in the Lagrangian coordinates $(\y,t)$ associated to the velocity field of the fluid, which follow the particle paths. 

The Lagrangian transformation $\Y(\x,t)=(\y(\x,t),t)$ can thus be defined through the relation
\begin{equation}
\y(\Phi(\x,t),t)=\y_0(\x),\label{deflagr}
\end{equation}
where $\y_0$ is any diffeomorphism, which can be chosen conveniently, and $\Phi:\Omega\to \Omega$ is the flux associated to the fluid's velocity field, given by
\begin{equation}
\begin{cases}
\frac{d\Phi}{dt}(\x,t)=u(\Phi(\x,t),t),\\
\Phi(\x,0)=\x.
\end{cases}\label{flowofu}
\end{equation}

Finally, $\mathbf{J}_\y$ is the Jacobian determinant $\det (\frac{\partial\y}{\partial\x}(\x,t))$ and $v(\y,t)$ is the specific volume determined by the identity
\[
v(\y(\x,t),t)=\frac{1}{\rho(\x,t)}.
\]

The interaction terms in equations \eqref{E2u} and \eqref{E2psi} are an external force term and a potential due to external forces, respectively, that account for the strong interaction between the dynamics.

This kind of model of short wave-long wave interactions in compressible fluid dynamics was first introduced by Dias and Frid in \cite{DFr}, where, motivated by Benney's general theory on short wave-long wave interactions \cite{Be}, they proposed a similar coupling where the long waves were given by the Navier-Stokes equations for a compressible isentropic fluid and proved existence and uniqueness of solutions in the 1-dimensional setting. They also studied the vanishing viscosity problem. An extension of Benney's theory for systems of conservation laws was given in \cite{DFF}. 

Later, Frid, Pan and Zhang \cite{FrPZ} studied the full 3-dimensional equations proving global existence and uniqueness of smooth solutions with small data. After this, Frid, Jia and Pan, extended these results to the model involving the MHD equations, proving existence, uniqueness and decay rates of solutions with small data, also in the 3-dimensional case.

More recently, Frid, Marroquin and Pan \cite{FrDRMP} proposed an approximation scheme consisting of a regularization of the system above, proving existence of weak solutions and showing the convergence of the sequence to a solution of the limit decoupled system as the regularizing parameters tend to zero together with the interaction coefficient, all  in a bounded domain of $\mathbb{R}^2$.

Finally, in \cite{DRM} Marroquin extended the results in \cite{DFr} to the case where the long waves are given by a compressible, heat conductive magnetohydrodynamic fluid, also proving existence and uniqueness of solutions and studying the vanishing viscosity problem, in the planar (1-dimensional) case.

The system above models an aurora type phenomenon where a small wave, obeying a nonlinear Schr\"{o}dinger equation, propagates along the streamlines of a magnetohydrodynamic fluid. To this end, the NLS equation is stated in the Lagrangian coordinates of the fluid, which by definition, follow the trajectories of the particles. However, this poses several limitations, as the Lagrangian transformation becomes singular in the presence of vacuum. Indeed, it can be shown that the Jacobian determinant of the change of variables $\mathbf{J}_\y$ vanishes if and only if the density vanishes. As in the multidimensional setting solutions of the fluid equations may not be regular enough and vacuum may appear in finite time, the study of the model under consideration is challenging.

In this paper, building on the pioneering work by Va\u \i gant and  Kazhikhov  \cite{VK} and its posterior improvements, e.g.,  \cite{P,JWX,HL,Mei, Mei'}, as well as on the small gem classical paper by Brezis and Gallouet \cite{BrGa}, we prove the existence and uniqueness of strong solutions to the full system \eqref{E2rho}-\eqref{E2psi} without restrictions on the size of the initial data, in the 2-dimensional case with periodic conditions. Under certain hypotheses on the viscosity coefficients, we prove, in particular, that there is no vacuum, nor concentration, if this is the case initially.

The most important feature of this model is that it is endowed with an energy identity, which can be stated in differential form as
\begin{flalign}
 &\Big\{ \big( \rho( \frac{1}{2}|\mathbf{u}|^2 + e )+\frac{1}{2}|\mathbf{H}|^2 \big)_t + \big(\mu |\nabla_\mathbf{x} \mathbf{u}|^2+(\lambda(\rho)+\mu)(\text{div}_\mathbf{x}\mathbf{u})^2 + \nu |\nabla \mathbf{H}|^2\big) \label{difE}\\
 &\hspace{15mm} + \text{div}_{\mathbf{x}}\big( \mathbf{u}(\rho(\frac{1}{2}|\mathbf{u}|^2 + e) + p+\alpha g'(1/\rho)h(|\psi\circ \mathbf{Y}|^2)\frac{\J_\y}{\rho} \big)\nonumber \\
 &\hspace{30mm}- \text{div}_{\mathbf{x}}\big(\mathbb{S}\cdot \mathbf{u} +  (\HH\otimes \HH)\cdot \uu -\uu|\HH|^2 + \nu \nabla\HH\cdot\HH )\big)\Big\} d\mathbf{x}\nonumber \\
 &\hspace{6mm}= -\Big\{ \big( \frac{1}{2}|\nabla_{\mathbf{y}}\psi(t,\mathbf{y})|^2 + \frac{1}{4}|\psi(t,\mathbf{y})|^4 + \alpha g(v(t,\mathbf{y})) h(|\psi(t,\mathbf{y})|^2 \big)_t \nonumber\\
 &\hspace{65mm}- \text{div}_{\mathbf{y}}(\overline{\psi}_t\nabla_{\mathbf{y}}\psi + \psi_t\nabla_{\mathbf{y}}\overline{\psi})\Big\}d\mathbf{y},&& \nonumber
\end{flalign}
where, $e=e(\rho)$ is the internal energy given by
\[
e(\rho):=\int^\rho \frac{p(s)}{s^2}ds.
\]

Indeed this identity is obtained by multiplying the momentum equation \eqref{E2u} by $\uu$, the NLS \eqref{E2psi} by $\overline{\psi}_t$ (the complex conjugate of $\psi_t$) taking real part and adding the resulting equations. We omit the details as they follow the same lines as in any of the references on the model cited above (\cite{DFr,FrPZ,FrJP,FrDRMP,DRM}).

In order to state precisely our results, let us fix some notation. We consider our spatial domain to be the unitary square in $\mathbb{R}^2$ endowed with periodic boundary conditions
\begin{align*}
\Omega&=\{\x = (x_1,x_2) \in\mathbb{R}^2 ; 0\le x_1\le 1, 0\le x_2\le 1\},
\end{align*}
with opposite sides identified, each point with its antipode, so that one could understand $\Omega$ as being the two-dimensional torus $\Omega = \mathbb{T}^2$.

Choosing $\y_0(\x)=\x$, $\x\in\Omega$, in the definition of the Lagrangian coordinate \eqref{deflagr}, we see that $\y(\cdot,t)$ is a change of variables from $\Omega$ to itself (provided that $\rho$ is strictly positive and finite). However, as we deal with Sobolev norms in the different coordinates, we have to distinguish between one from the other and thus we denote the domain of the Lagrangian coordinate as $\Omega_\y$. 

As aforementioned, the viscosity coefficients are, in general, functions of the density, and in this work we assume that 
\begin{align}
\mu(\rho) &= \text{ const.} = \mu > 0, \nonumber \\
\lambda(\rho) &= b \rho^\beta \text{, where } \beta > 4/3 \text{ and } b > 0, \label{assumptionvisc}
\end{align}
Regarding the pressure, we assume the following constitutive relation
\begin{equation}
p(\rho)=a\rho^\gamma,\label{assumptionp}
\end{equation}
where $a>0$ and $\gamma>1$.

These assumptions agree with those in  \cite{VK} and in   \cite{P} on the 2-dimensional periodic Navier-Stokes equations; and also with  those in \cite{Mei,Mei'} on the 2-dimensional periodic MHD equations.

From a mathematical standpoint, the condition on $\mu$ ensures that the associated Lamé operator $L\uu = \Div \mathbb{S}$ is uniformly parabolic, whereas the hypothesis on $\lambda$ hinders singularities in the density $\rho$. The latter consequence is crucial to our model, as the non-degeneracy of the Lagrangian coordinates is intimately linked to the regularity of the density.

At last, we assume that the interaction coefficient $\alpha$ is a positive constant and that the coupling functions $g$ and $h$ are smooth and satisfy
\begin{equation}
\begin{cases}
g,h:[0,\infty)\to[0,\infty), &g(0)=h(0)=0, \\
\text{supp}\,g' \text{ compact in }(0,\infty), &\\
\text{supp}\,h' \text{ compact in }[0,\infty)
\end{cases}\label{assumptiongh}
\end{equation}

Under these assumptions we consider the initial value problem for system \eqref{E2rho}-\eqref{E2psi} on the square $\Omega$ with periodic boundary conditions, and subject to initial data
\begin{equation}
(\rho,\uu,\HH)|_{t=0} = (\rho_0,\uu_0,\HH_0)(\x),\qquad \psi_0|_{t=0}=\psi_0(\y). \label{initialdata}
\end{equation}

Our first main result reads as follows.
\begin{theorem} \label{principalthm}
	Let $m \geq 3$ be an integer and assume, in addition to \eqref{assumptionvisc},  \eqref{assumptionp} and \eqref{assumptiongh}, that the periodic data $\rho_0$, $\uu_0$, $\HH_0$ and $\psi_0$ on \eqref{initialdata} satisfy
	\begin{equation}
	\begin{cases}
	\rho_0 &\in H^{m}(\Omega), \\
	\uu_0 &\in H^m(\Omega) , \\
	\HH_0 &\in H^m(\Omega) , \\
	\psi_0 &\in H^m(\Omega_\y). 
	\end{cases}\label{secondinitialdata}
	\end{equation}
	and the initial data has no vacuum: there exists constants $0< m_0 < M_0$ such that
	\begin{equation}
	m_0 \leq \rho_0(\x) \leq M_0
	\end{equation}
	for any $\x \in \Omega$.
	
	Then there exists a unique strong solution to the system \eqref{E2rho}-\eqref{E2psi} $(\rho, \uu,\HH, \psi)$ lying on the space
	\begin{equation}
	\begin{cases}
	\rho &\in C([0,\infty); H^m(\Omega))\cap C^1([0,\infty);H^{m-1}(\Omega)),  \\
	\uu  &\in L_{\text{loc}}^2(0,\infty;H^{m+1}(\Omega))\cap H_{\text{loc}}^1(0,\infty;H^{m-1}(\Omega)),  \\
	\HH &\in L_{\text{loc}}^2(0,\infty;H^{m+1}(\Omega))\cap H_{\text{loc}}^1(0,\infty;H^{m-1}(\Omega)),  \\
	\psi &\in C([0,\infty);H^m(\Omega))\cap C^1( [0,\infty);H^{m-2}(\Omega)).
	\end{cases}\label{regularityofu}
	\end{equation}
	
	And, for any $T>0$ we have the estimates
	\begin{equation}
	M^{-1} \leq \rho(\x,t) \leq M \label{novacuum}
    \end{equation}
	for all $\x \in \Omega$ and $0\leq t\leq T$, and  
		\begin{equation}
	\Vert \rho(t) \Vert_{H^m(\Omega)}, \Vert \uu(t) \Vert_{H^m(\Omega)}, \Vert \HH(t) \Vert_{H^m(\Omega)}, \Vert \psi(t) \Vert_{H^m(\Omega_\y)} \leq C, \label{stability}
	\end{equation}
	for all $0\leq t\leq T$, where $M$ is a positive constant depending only on $T$, $\| \uu_0 \|_{H^1(\Omega)}$, $\| \HH_0\|_{H^1(\Omega)}$, $\| \psi_0 \Vert_{H^2(\Omega_\y)}$, $m_0$ and $M_0$; and $C$ is a positive constant depending on $T$, $\Vert \rho_0 \Vert_{H^m(\Omega)}$, $\Vert \uu_0 \Vert_{H^m(\Omega)}$, $\|\HH\|_{H^m(\Omega)}$, $\Vert \psi \Vert_{H^m(\Omega_\y)}$, $m_0$ and $M_0$.
	
	Furthermore, there is a constant $N$ depending on $T$, $\|\rho_0\|_{H^2(\Omega)}$, $\| \uu_0 \|_{H^2(\Omega)}$, $\| \HH_0\|_{H^2(\Omega)}$, $\| \psi_0 \Vert_{H^2(\Omega_\y)}$, $m_0$ and $M_0$ such that
	\begin{equation}
	\Vert \rho(t) \Vert_{H^2(\Omega)}, \Vert \uu(t) \Vert_{H^2(\Omega)}, \Vert \HH(t) \Vert_{H^2(\Omega)}, \Vert \psi(t) \Vert_{H^2(\Omega_\y)} \leq N. \label{stability2}
	\end{equation}
\end{theorem}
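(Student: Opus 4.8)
The plan is to establish the theorem along the classical four-step route for such quasilinear systems: a short-time existence and uniqueness result, a family of global-in-time a priori estimates that rule out breakdown, a continuation argument, and finally a stability/uniqueness estimate. The entire difficulty lies in the a priori estimates, where the two-dimensional Va\u \i gant--Kazhikhov theory for density-dependent viscosity must be combined with the logarithmic Sobolev inequality of Brezis and Gallouet \cite{BrGa} in order to absorb the borderline terms produced by working in dimension two.

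\emph{Local existence.} First I would construct, on some interval $[0,T_*]$, a unique solution with the regularity \eqref{regularityofu}, by a linearization and fixed-point scheme in the spirit of \cite{FrPZ,FrJP}. Given an iterate $(\rho^n,\uu^n,\HH^n,\psi^n)$, one solves the transport equation for $\rho^{n+1}$ along $\uu^n$ (which keeps $\rho^{n+1}$ pinched between two positive constants on a short interval since $\rho_0$ is so), then \eqref{flowofu} and \eqref{deflagr} with $\y_0=\mathrm{id}$ for the flow, the Lagrangian coordinate, and its Jacobian $\J_\y^{n+1}$, then the linear parabolic systems for $\uu^{n+1}$ --- uniformly parabolic because $\mu>0$ --- and for $\HH^{n+1}$, and finally the linear Schr\"{o}dinger equation for $\psi^{n+1}$ with the bounded potential $\alpha g(v^n)h'(|\psi^n|^2)$. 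Closing the iteration in the spaces of \eqref{regularityofu} is routine; the only genuinely new point is that one must propagate the Lagrangian change of variables --- controlled by $\uu$ through \eqref{deflagr}--\eqref{flowofu} --- and transfer Sobolev norms back and forth between the $\x$ and $\y$ coordinates, which is legitimate as long as $\rho$ stays pinched so that $\y(\cdot,t)\colon\Omega\to\Omega_\y$ is a diffeomorphism.

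\emph{Global a priori estimates.} This is the heart of the matter. Integrating \eqref{difE} over $\Omega$ and $\Omega_\y$ and using periodicity to annihilate every divergence term gives conservation of the total energy, which --- together with conservation of $\int_\Omega\rho\,d\x$ and of $\|\psi\|_{L^2(\Omega_\y)}$, and the fact that the cubic Schr\"{o}dinger nonlinearity is defocusing --- yields uniform bounds on $\|\sqrt{\rho}\,\uu\|_{L^2}$, $\|\HH\|_{L^2}$, $\|\nabla_\y\psi\|_{L^2}$, $\|\psi\|_{L^4(\Omega_\y)}$ and on $\int_0^T(\|\nabla\uu\|_{L^2}^2+\|\nabla\HH\|_{L^2}^2)\,dt$. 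The key structural observation is that the short-wave force in \eqref{E2u} is harmless: writing $q=g'(1/\rho)\,h(|\psi\circ\Y|^2)\,\J_\y/\rho$, the identity $\J_\y(\z,t)=\rho(\z,t)/\rho_0(\y(\z,t))$ --- which follows from \eqref{deflagr}--\eqref{flowofu} and conservation of mass along the flow --- gives $\J_\y/\rho\le 1/m_0$, while the compact-support hypotheses in \eqref{assumptiongh} make $g'(1/\rho)$ and $h(\cdot)$ bounded; hence $\alpha\nabla q$ is merely a bounded perturbation of the pressure gradient, and the effective viscous flux $F=(\lambda(\rho)+2\mu)\Div\uu-p(\rho)+\alpha q$ obeys the same elliptic estimates as in the uncoupled MHD problem. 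I would then run the two-dimensional Va\u \i gant--Kazhikhov argument, in the form refined for the periodic MHD system (compare \cite{VK,P,Mei,Mei'}) and using \eqref{assumptionvisc}, to obtain the no-vacuum, no-concentration bound \eqref{novacuum} with $M$ depending only on energy-level quantities --- the upper bound on $\rho$ coming from a logarithmic Gronwall estimate for a function of the density via the flux $F$, and the lower bound from $\tfrac{D}{Dt}\log\rho=-\Div\uu=-(F+p(\rho)-\alpha q)/(\lambda(\rho)+2\mu)$. Once $\rho$ is pinched, $v=1/\rho$ and the Schr\"{o}dinger potential $\alpha g(v)h'(|\psi|^2)$, together with their spatial derivatives, are controlled by $\rho$ and $\psi$, and one closes the first- and second-order estimates for the whole quadruple simultaneously: standard energy identities for $\nabla\uu$, $\nabla\HH$, $\sqrt{\rho}\,\dot\uu$ (with $\dot\uu=\uu_t+\uu\cdot\nabla\uu$), $\nabla_\y\psi$ and $\Delta_\y\psi$, together with the elliptic estimates for the Lam\'e operator and for $F$, in which every borderline factor --- $\|\nabla\uu\|_{L^\infty}$ and $\|\Div\uu\|_{L^\infty}$ (needed also to estimate $\nabla\rho$, which is transported by $\uu$), $\|\nabla v\|_{L^\infty}$, $\|\psi\|_{L^\infty}$ --- is dominated, by the Brezis--Gallouet inequality, by $1+\log^{1/2}$ of the second-order energy itself (the $H^1$ factor in that inequality being already under control). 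The outcome is a differential inequality of log--log-Gronwall type that closes on any finite interval and yields \eqref{stability2}; since every ingredient up to this point depends on the data only through the $H^2$-norms (and $m_0,M_0,T$), so does $N$. Finally \eqref{stability} for arbitrary $m\ge 3$ follows by induction on the order of differentiation: differentiate the equations, use the $H^2$ bounds already established to absorb all lower-order contributions, and apply Gronwall once more.

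\emph{Continuation, uniqueness, and the main obstacle.} Global existence then follows because \eqref{novacuum}--\eqref{stability} forbid blow-up of the $H^m$-norm and the appearance of vacuum or concentration, so the local solution extends to every $[0,T]$ with the stated regularity. Uniqueness is obtained by a Gronwall argument on the difference of two solutions with the same data, estimating $\rho_1-\rho_2$ in $L^2(\Omega)$, $\uu_1-\uu_2$ and $\HH_1-\HH_2$ in $L^2(\Omega)$ with gradients in $L^2(0,t;L^2(\Omega))$, the difference of the Lagrangian maps in the natural norm, and $\psi_1-\psi_2$ in $L^2(\Omega_\y)$, all of which close because the nonlinearities are Lipschitz on the class of solutions already bounded by \eqref{stability}. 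I expect the main obstacle to be not the coupling itself --- which, thanks to \eqref{assumptiongh} and the Jacobian identity, enters only as a bounded perturbation of the pressure and of the Schr\"{o}dinger potential --- but faithfully reproducing the Va\u \i gant--Kazhikhov upper and lower density bounds in the presence of the magnetic field, and then threading Brezis--Gallouet through the coupled second-order estimates so that the constant in \eqref{stability2} depends on the data only through its $H^2$-norms; it is exactly this self-containedness at the $H^2$ level that makes the induction to $H^m$ and the global continuation work.
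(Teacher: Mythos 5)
Your proposal is correct and follows essentially the same route as the paper: a Solonnikov-type linearization and fixed-point argument for local existence (with the Lagrangian change of variables controlled through the flow of $\uu$), the Va\u \i gant--Kazhikhov/Mei density bounds obtained by absorbing the coupling term $\alpha g'(1/\rho)h(|\psi\circ\Y|^2)\J_\y/\rho$ into the effective viscous flux via the Jacobian identity and \eqref{assumptiongh}, logarithmic (Brezis--Gallouet/Brezis--Wainger) inequalities to close the second-order estimates, and induction on $m$ plus continuation and a Gronwall uniqueness argument. The only cosmetic discrepancy is that your effective viscous flux omits the magnetic pressure $-\tfrac12|\HH|^2$, which the paper's definition \eqref{EVF} includes and which is needed for the elliptic and commutator estimates in the MHD setting.
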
 

Observe that \eqref{regularityofu} implies that
$$	\uu,\HH  \in C([0,\infty);H^{m}(\Omega))\cap C^1 ([0,\infty);H^{m-2}(\Omega)),$$
so that the well-posedness statement makes sense. 

It may be noted that it implies by an interpolation argument (see Theorem~\ref{dependcont}), that the solution map 
\[(t, \rho_0, \uu_0,\HH_0, \psi_0) \mapsto (\rho(t), \uu(t),\HH(t), \psi(t))\]
is continuous in the strong topology of $[0,\infty)\times H^m(\Omega)^3 \times H^{m}(\Omega_\y)$ into the strong topolgy $H^{m-1}(\Omega)^3 \times H^{m-1}(\Omega_\y)$, for all integers $m \geq 3$.

In particular, if one identifies $C^\infty(\Omega)$ with the Fréchet space $\cap_{m=0}^\infty H^m(\Omega)$, not only the solution map of \eqref{E2rho}--\eqref{E2psi} is \textit{well-defined} on $[0,\infty)\times C^\infty(\Omega)^3 \times C^\infty(\Omega_\y)$, but also turns out to be  \textit{continuous in its topology}.

The proof of Theorem \ref{principalthm} consists of a local result and the extraction of a priori estimates, which serve the purpose of extending the local solutions globally in time.

For the local result we adapt the framework by V. A. Solonnikov \cite{Solonnikov} on the Navier-Stokes equations. More precisely, we obtain a local solution of the system \eqref{E2rho}--\eqref{E2psi} as a fixed point of a transformation $K$ defined as follows:
\begin{itemize}
	\item Given a velocity field $\uu$, one solves the continuity equation \eqref{E2rho} with initial data $\rho(\x,0)=\rho_0(\x)$ (observe that the continuity equation is a \textit{linear} first order partial differential equation, and thus can be solved \textit{explicitly} by the method of characteristics);
	\item Once that $\rho$ is available, one can solve the equation for $\psi$ \eqref{E2psi} (with $\Y$ being the Lagrangian transformation associated to the given velocity $\uu$) by means of a variant of the method of H. Brézis--Th. Gallouet \cite{BrGa};
	\item One also solves the magnetic field equations \eqref{E2H}, \eqref{E2H'} in terms of $\uu$. Note that the magnetic field equation \eqref{E2H} reduces to a linear parabolic equation and can be solved by standard methods.
	\item Finally, one ``returns'' $K\uu = \vv$ as the solution of the \textit{linear} parabolic problem
	\begin{equation}
	\begin{cases}
	\rho \vv_t + \rho (\uu \cdot \nabla) \vv + \nabla P_0 = \alpha \nabla (g'(1/\rho)h(|\psi \circ \Y|^2) \J_\y/\rho) + \Div \mathbb{S},\\
	\vv(\x,0) = \uu_0(\x)
	\end{cases}\label{E2ulinear}
	\end{equation}
\end{itemize}

Clearly, a fixed point of such application will be a solution to the equation we are investigating. 

Let us point out that, unlike the case  addressed by  Solonikov, here the viscosity is not constant; in fact it depends on the densisty (recall \eqref{assumptionvisc}). This brings new subtleties to the estimates. In addition to this, the presence of the magnetic field, the wave function and the coordinate change between Eulerean and Lagrangian coordinates bring extra difficulties. 

In particular, the $H^m$ regularity, with $m\geq 3$, in Theorem~\ref{principalthm} is necessary for the fixed point argument in the proof of the existence and uniqueness of local solutions, as, in order to ensure that the application $K$ is a contraction, we  require the wave function and the specific volume to be Lipschitz continuous. This is due to the coupling terms, where we have a composition of the wave function with the Lagrangian transformation.

Concerning the extension of the local solution to a global one, as usual, it follows from a priori estimates which allow reiteration of the construction made for the local solution.  The starting point in the deduction of the a priori estimates is the energy identity \eqref{difE}. After this, we prove the crucial uniform estimates from above and away from vacuum for the density. This was the decisive step first achieved in the pioneering paper of Va\u \i gant and Kazhikhov \cite{VK}, where it was  shown that the prescription 
$\l=\rho^\b$, with $\b>3$,  yielded such bounds; later on the  restriction on the power $\b$ was relaxed to $\b> 4/3$ by Huang and Li in \cite{HL}. Here we build on the estimates by Yu Mei \cite{Mei,Mei'} for the $2$-dimensional periodic MHD equations which, in turn, are inspired by \cite{VK} and its improvements by  Perepelitsa in \cite{P}, Jiu, Wang and Xin \cite{JWX} and \cite{HL} on the 2-dimensional periodic Navier-Stokes equations. 

It is worth while to recall the ingenious way in which the bounds from above and below for the density were obtained in \cite{VK}.  In \cite{H}, Hoff  introduced and emphasized the striking regularity properties of the effective viscous flux ${\bf F}$, which for the isentropic Navier-Stokes is defined by ${\bf F}=(2\mu+\l(\rho))\operatorname{div} u-P$.   In  \cite{VK},  the  transport equation 
$$
( \theta(\rho))_t+u\cdot\nabla \theta(\rho)+P(\rho)+{\bf F}=0
$$
is considered, where $\theta(\rho)=\int_1^\rho\frac1s(2\mu+\l(s))\,ds=2\mu\log\rho+\frac1\b(\rho^\b-1)$, which easily follows from the continuity equation. Therefore, the problem of obtaining the referred bounds for the density is reduced, using the nonnegativity of the pressure,  to that of  obtaining a bound for the integral in time of the $L^\infty$ norm of ${\bf F}$. Indeed,  one gets first  a bound from above for $\theta(\rho)$, which implies a bound from above for the density, which in turn, together with the bound for 
${\bf F}$, can be  used to give also a bound from below for $\theta(\rho)$ and so for $\rho$.  Obtaining the bound for ${\bf F}$  is not at all an easy task, but was  achieved in \cite{VK}. In passing, it also becomes  transparent the importance of the introduction of the dependence of $\l$ on $\rho$ in the form $\rho^\b$.

For the MHD equations, in \cite{Mei, Mei'},  the desired uniform bounds on the density follow from  key estimates on the commutators $[u_i,R_i R_j](\rho u_j)$ and $[H_i,R_i R_j](H_j)$ of Riesz transforms and the operators of multiplication by $u_i$ and $H_i$. Such estimates ultimately depend on a careful analysis of the effective viscous flux $\F$, which, in turn, relies on some $L^p$ bounds on the density. For the system  \eqref{E2rho}--\eqref{E2psi} considered here, the definition of effective viscous flux, whose treatment follows closely that of the MHD case,  takes the form \eqref{EVF} which, comparing with that for the isentropic Navier-Stokes system, includes the term $-\frac12|H|^2$, as in the MHD case, and also the term $-\alpha g'(1/\rho)h(|\psi\circ \Y|^2) \J_\y/\rho$, due to the coupling with the NLS equation, whose nature is  also that of a pressure.

We stress that the strong coupling between the MHD equations and the NLS  poses a delicate interplay between the mechanics. On top of the nonlinear coupling terms from the momentum equation and the NLS, the equations are also coupled through the coordinate change. At this point the regularity of the Lagrangian transformation comes into play and is crucial in order to close the estimates.

Once we have estimates \eqref{novacuum} and \eqref{stability2}, we proceed to prove estimate \eqref{stability} by induction, thus obtaining the necessary estimates in order to extend the local solutions to global ones. It should be pointed out, that the lifespan of the local solutions depends only on the strict positiveness and finiteness of the initial density as well as the size of initial data in the corresponding Sobolev norms. Thus estimates \eqref{novacuum} and \eqref{stability} are sufficient to obtain the global solutions from the local ones.

In this connection,  with the estimates \eqref{novacuum} and \eqref{stability2} at hand, we can also prove the existence of global solutions under weaker hypotheses on the initial data.

\begin{theorem}\label{secondthm}
Assume that \eqref{assumptionvisc},  \eqref{assumptionp} and \eqref{assumptiongh} hold and suppose that the periodic initial data $\rho_0$, $\uu_0$, $\HH_0$ and $\psi_0$ on \eqref{initialdata} satisfy 
	\begin{equation}
	\begin{cases}
	\rho_0 &\in H^2(\Omega),\\
	\uu_0 &\in H^2(\Omega) , \\
	\HH_0 &\in H^2(\Omega) , \\
	\psi_0 &\in H^2(\Omega_\y). 
	\end{cases}
	\end{equation}
	and that there are constants $0< m_0 < M_0$ such that
	\begin{equation}
	m_0 \leq \rho_0(\x) \leq M_0
	\end{equation}
	for any $\x \in \Omega$.
	
	Then, there exists a solution $(\rho, \uu,\HH, \psi)$ to system \eqref{E2rho}-\eqref{E2psi} satisfying \eqref{regularityofu}, \eqref{novacuum} and \eqref{stability} with $m=2$.
\end{theorem}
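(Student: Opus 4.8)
\emph{Strategy.} The only obstruction to invoking Theorem~\ref{principalthm} directly is that its proof, through the local theory, needs $m\geq 3$: the contraction property of the map $K$ relies on $\psi\circ\Y$ and $v$ being Lipschitz, which $H^2$ data do not guarantee. Hence I would not produce an $H^2$ solution directly, but rather as a limit of the smooth solutions supplied by Theorem~\ref{principalthm}. The decisive point is that the constant $M$ in \eqref{novacuum} depends only on $T$, $\|\uu_0\|_{H^1}$, $\|\HH_0\|_{H^1}$, $\|\psi_0\|_{H^2}$, $m_0$, $M_0$, and the constant $N$ in \eqref{stability2} only on $T$ and the $H^2$ norms of the data together with $m_0,M_0$ --- on no norm above $H^2$. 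I would also use that the a priori estimates underlying \eqref{stability2} furnish, with a constant of the same dependence, a uniform bound on $\int_0^T\big(\|\uu\|_{H^3}^2+\|\HH\|_{H^3}^2+\|\uu_t\|_{H^1}^2+\|\HH_t\|_{H^1}^2\big)\,dt$ (or else recover this from \eqref{stability2} together with the uniform parabolicity of the Lam\'e operator $L\uu=\Div\mathbb{S}$).

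\emph{Construction.} Fix $T>0$. I would regularize the data by convolution on the torus, $\rho_0^\delta=\rho_0*\eta_\delta$, $\uu_0^\delta=\uu_0*\eta_\delta$, $\HH_0^\delta=\HH_0*\eta_\delta$, $\psi_0^\delta=\psi_0*\eta_\delta$, so that these lie in every $H^m$, converge in $H^2$ to the respective data, have $H^2$ norms bounded uniformly in $\delta$, and satisfy $\tfrac12 m_0\leq\rho_0^\delta\leq 2M_0$ for $\delta$ small. Theorem~\ref{principalthm} (with, say, $m=3$) then yields global strong solutions $(\rho^\delta,\uu^\delta,\HH^\delta,\psi^\delta)$ which, by \eqref{novacuum}, \eqref{stability2} and the supplementary bound above, satisfy uniformly in $\delta$ and on $[0,T]$: $M^{-1}\leq\rho^\delta\leq M$, with $\rho^\delta$ bounded in $L^\infty(0,T;H^2)$ and, by the continuity equation, $\rho^\delta_t$ bounded in $L^\infty(0,T;H^1)$; $\uu^\delta,\HH^\delta$ bounded in $L^\infty(0,T;H^2)\cap L^2(0,T;H^3)$ with $\uu^\delta_t,\HH^\delta_t$ bounded in $L^2(0,T;H^1)$; and $\psi^\delta$ bounded in $L^\infty(0,T;H^2(\Omega_\y))$ with $\psi^\delta_t$ bounded in $L^\infty(0,T;L^2(\Omega_\y))$ (the last from \eqref{E2psi}, the density bounds, and the boundedness of $g$, $h'$ coming from \eqref{assumptiongh}).

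\emph{Compactness and passage to the limit.} By the Aubin--Lions--Simon lemma I would extract a subsequence along which $\uu^\delta\to\uu$ and $\HH^\delta\to\HH$ strongly in $L^2(0,T;H^s)$ for every $s<3$ and in $C([0,T];H^{s'})$ for every $s'<2$; $\rho^\delta\to\rho$ and $\psi^\delta\to\psi$ in $C([0,T];H^{s'})$ for every $s'<2$ (so $\rho^\delta\to\rho$ uniformly on $\Omega\times[0,T]$); and simultaneously $\uu^\delta\rightharpoonup\uu$, $\HH^\delta\rightharpoonup\HH$ weakly in $L^2(0,T;H^3)$, weakly-$*$ in $L^\infty(0,T;H^2)$, with time derivatives converging weakly in $L^2(0,T;H^1)$, while $\rho,\psi\in L^\infty(0,T;H^2)$ by weak-$*$ lower semicontinuity. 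Since $H^3\hookrightarrow C^{1,\beta}$ in two dimensions, the uniform $L^2(0,T;H^3)$ bound makes $\int_0^T\|\nabla\uu^\delta\|_{L^\infty}\,dt$ uniformly bounded and convergent, and a Gr\"onwall argument then gives $\Phi^\delta\to\Phi$ in $C^1$ on $\Omega\times[0,T]$; hence $\y^\delta\to\y$ and $\J_\y^\delta\to\J_\y$ uniformly and, with $\psi^\delta\to\psi$ uniformly, $\psi^\delta\circ\Y^\delta\to\psi\circ\Y$ uniformly. These convergences suffice to pass to the limit in the distributional form of \eqref{E2rho}--\eqref{E2psi}: linear terms by the weak convergence, polynomial nonlinearities and coupling terms by the strong convergences (the compact supports of $g'$, $h'$ in \eqref{assumptiongh} giving the uniform bounds that legitimize the passage). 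The limit $(\rho,\uu,\HH,\psi)$ then solves \eqref{E2rho}--\eqref{E2psi} and lies in \eqref{regularityofu} with $m=2$ --- $\rho\in C([0,\infty);H^2)\cap C^1([0,\infty);H^1)$ from the transport equation and the established regularity of $\uu$ (in particular $\uu\in L^2_{\mathrm{loc}}(0,\infty;H^3)\cap C([0,\infty);H^2)$), and $\psi\in C([0,\infty);H^2)\cap C^1([0,\infty);L^2)$ from the Schr\"odinger equation with the now-regular coefficients. Estimate \eqref{novacuum} passes to the limit pointwise, and \eqref{stability} with $m=2$ follows from weak lower semicontinuity of the $H^2$ norms applied at each fixed $t$, the weak-$H^2$ limits of $\uu^\delta(t),\HH^\delta(t),\rho^\delta(t),\psi^\delta(t)$ being identified with $\uu(t),\HH(t),\rho(t),\psi(t)$ through the $C([0,T];H^{s'})$ convergence.

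\emph{Main obstacle.} I expect the crux to be verifying that the a priori machinery behind \eqref{stability2} indeed controls $\int_0^T\|\uu\|_{H^3}^2\,dt$ and $\int_0^T\|\uu_t\|_{H^1}^2\,dt$ (and the $\HH$ analogues) purely in terms of $H^2$ data --- this is where the uniform parabolicity of $L\uu=\Div\mathbb{S}$ and the density bounds \eqref{novacuum} enter --- since without the $L^2(0,T;H^3)$ bound the two-dimensional borderline situation ($H^2$ embeds only into spaces of continuous, not continuously differentiable, functions) would be too weak to pass the Lagrangian flows and the Jacobian $\J_\y$ through the limit. Once that bound is in hand, the remaining steps are standard, modulo the routine upgrades from weak to strong time-continuity for $\rho$ and $\psi$.
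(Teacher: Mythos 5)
Your proposal is correct and follows essentially the same route as the paper, which proves Theorem~\ref{secondthm} precisely by mollifying the data, invoking Theorem~\ref{principalthm} for the regularized problems, and passing to the limit via Aubin--Lions using the uniform bounds \eqref{novacuum} and \eqref{stability2} together with the convergence of the Lagrangian transformations. The supplementary $L^2(0,T;H^3)$ and $L^2(0,T;H^1)$-in-time bounds you flag as the crux are indeed available with constants depending only on $H^2$ data, via Lemmas~\ref{nablaudotut} and \ref{lemmanablarho} combined with the elliptic estimates for the effective viscous flux and vorticity.
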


The proof of Theorem \ref{secondthm} follows by a standard compactness argument based on estimates \eqref{novacuum} and \eqref{stability2}. This is achieved by approximating the initial data by smooth functions satisfying the hypotheses of Theorem \ref{principalthm}. Then, from estimates \eqref{novacuum} and \eqref{stability2} we may find a subsequence $(\rho_n,\uu_n,\HH_n,\psi_n)$ of the solutions corresponding to the regularized initial data that converges strongly to a limit $(\rho, \uu,\HH,\psi)$, which turns out to be the desired solution. In particular, by the Sobolev embedding and Aubin--Lions lemma, $\rho_n\to \rho$ in $C([0,T];C(\Omega_\y))$ and $\psi_n\to \psi$ in $C([0,T];C(\Omega))$. Also, as will become clear in Section \ref{linearproblems}, where we prove several regularity estimates on the coordinate change, we also have that the sequence of Lagrangian transformations associated to $\uu_n$ converge strongly to the Lagrangian transformation associated to the limit velocity field $\uu$. With this, the coupling terms in the momentum equation and in the NLS converge in the sense of distributions to the corresponding ones in the limit. In turn, all the other nonlinearities from system \eqref{E2rho}-\eqref{E2psi} are accounted for, due to the strong convergence of the sequence; thus concluding that the limit $(\rho,\uu,\HH,\psi)$ is indeed a solution.

The rest of the paper is organized as follows. In Section \ref{linearproblems} we prove several estimates on the linear and nonlinear problems associated to equations \eqref{E2rho}, \eqref{E2H} and \eqref{E2psi}, where the velocity field $\uu$ is given; and also the linearisation \eqref{E2ulinear} of \eqref{E2u}. We also study the regularity of the Lagrangian coordinate associated to a given velocity field $\uu$ in order to deal with the Sobolev norms in different coordinates. In Section \ref{S3} we prove the local existence of strong solutions. Section \ref{S4} is devoted to the proof of the low order a priori estimates \eqref{novacuum} and \eqref{stability2}. In Section \ref{S5} we complete the necessary a priori estimates that allow us to extend the local solutions to global ones. At last, in Section \ref{S6} we state a weak stability result, that follows as a byproduct of the proof of the contractivity of the solution operator to the linearized system employed in the local result.

\section{Linear problems}\label{linearproblems}

As explained above, we construct a local solution of system \eqref{E2rho}-\eqref{E2psi} as a fixed point of the solution operator of the equation \eqref{E2ulinear}, where $\rho$, $\HH$ and $\psi$ are the solutions of equations \eqref{E2rho}, \eqref{E2H} and \eqref{E2psi} for a given velocity field $\uu$. 

For this reason we dedicate this section to the investigation of the linear and nonlinear problems, corresponding to equations \eqref{E2rho}, \eqref{E2H} and \eqref{E2psi}, where the velocity field $\uu$ is given; and also the linearization \eqref{E2ulinear} of \eqref{E2u}. We also study the regularity of the Lagrangian coordinate associated to a given velocity field $\uu$ in order to deal with the Sobolev norms in different coordinates.

In the following, we denote by $\phi = \phi(\cdot,\cdots, \cdot)$ a \textit{continuous} and \textit{non-decreasing} real function of its arguments that may change from line to line.

\subsection{The linearized parabolic problem}

Let us consider the following initial value problem
\begin{equation}
\begin{cases} \label{problemaparabolico}
\rho(\x,t)  \frac{\partial \uu}{\partial t}(\x,t) - (L_\rho \uu)(\x,t) = f(\x,t) & \text{ for } 0<t<T \text{ and } \x \in \Omega  \\
\uu(\x,0) = \uu_0(\x) & \text{ for } 0=t \text{ and } \x \in \Omega;
\end{cases}
\end{equation}
where $\Omega$ is the bidimensional torus, $T$ is a positive number, the density $\rho(\x,t)$ is a given function, $L_\rho$ is the associated Lamé operator given by
\begin{equation}
(L \uu)(x) = (L_\rho \uu)(x) = - \Div (\lambda(\rho)(\Div \uu)\text{Id} + \mu (\nabla\uu + (\nabla\uu)^\top)), \label{operatorL}
\end{equation}
and $\uu_0(\x)$ and $f(\x,t)$ lie on adequate Sobolev or Lebesgue spaces.

Note that this is a linear parabolic problem and can be solved by standard methods from the theory of linear parabolic equations, provided that the given density $\rho$ is strictly positive and sufficiently smooth. Moreover, we have the following a priori estimates on the solutions.
\begin{lemma}\label{highparabol}
Let $m \geq 1$ and assume that
		\begin{equation}
		\begin{cases} \label{condparabol2}
		\rho &\in C([0,T];H^{m}(\Omega))\cap F_{r,q}, \\
		f &\in L^2(0,T;H^{m-1}(\Omega;\mathbb{R}^2)), \text{ and } \\
		\uu_0 &\in H^{m}(\Omega;\mathbb{R}^2).
		\end{cases}
		\end{equation}
		for some $r\geq 2$ and $q>2$, where $F_{r,q}:=W^{1,\infty}(0,T; L^2(\Omega)) \cap C([0,T]; W^{1,q}(\Omega))$. Assume further that
		\[
           0<m(T),\qquad M(T)<\infty		
		\]
		 where, $m(t) := \min_{0\leq t' \leq t, x \in \Omega} \rho(\x,t')$ and $M(t) := \max_{0\leq t' \leq t, x \in \Omega} \rho(\x,t')$. 
		
		Then, \eqref{problemaparabolico} has a unique solution $\uu \in H^1(0,T;H^{m-1}(\Omega)) \cap L^2(0,T;H^{m+1})$ and we have the following estimate
		\begin{align}
		\int_{0}^{T} &(\Vert \uu_t(t') \Vert_{H^{m-1}}^2 + \Vert \uu(t') \Vert_{H^{m+1}}^2 )dt' \nonumber \\ &\leq \phi(T, m(t)^{-1}, \Vert \rho \Vert_{F_{r,q}\cap C([0,T];H^m)}) (\Vert f \Vert_{L^2(0,T;H^{m-1})}^2 + \Vert \uu_0 \Vert_{H^m}^2).\label{apriorimparabol}
		\end{align}
\end{lemma}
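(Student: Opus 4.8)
The plan is to prove \eqref{apriorimparabol} by energy estimates combined with the elliptic regularity theory for the Lamé operator $L_\rho$, lifting the result from $m=1$ to general $m$ by induction; existence and uniqueness for \eqref{problemaparabolico} are standard. Indeed, since $0<m(T)$, $M(T)<\infty$, $\mu>0$ and $\lambda(\rho)=b\rho^\b$ is nonnegative and bounded, $\rho^{-1}L_\rho$ is, for each $t$, uniformly elliptic with coefficients continuous in $\x$ (because $q>2$ forces $\rho\in C([0,T];C(\Omega))$ and $\lambda(\rho)\in C([0,T];W^{1,q}(\Omega))$), so \eqref{problemaparabolico} is a uniformly parabolic linear system, solvable by the Faedo--Galerkin method, with uniqueness following from linearity together with the bound \eqref{apriorimparabol}. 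Thus the real content is \eqref{apriorimparabol}.

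For $m=1$, testing \eqref{problemaparabolico} against $\uu$ and using the coercivity $\int_\Omega\mathbb S:\nabla\uu=\int_\Omega\lambda(\rho)(\Div\uu)^2+\mu\|\nabla\uu\|_{L^2}^2+\mu\|\Div\uu\|_{L^2}^2\geq\mu\|\nabla\uu\|_{L^2}^2$ (the torus identity $\int_\Omega\partial_ju_i\,\partial_iu_j=\|\Div\uu\|_{L^2}^2$ being used) gives, after Grönwall, a bound for $\sup_{[0,T]}\|\uu\|_{L^2}^2+\int_0^T\|\nabla\uu\|_{L^2}^2\,dt$. Testing against $\uu_t$ yields $\int_\Omega\rho|\uu_t|^2+\tfrac{d}{dt}E(t)=\int_\Omega f\cdot\uu_t+\tfrac12\int_\Omega\lambda'(\rho)\rho_t(\Div\uu)^2$, with $E(t)=\tfrac12\int_\Omega\lambda(\rho)(\Div\uu)^2+\tfrac\mu4\int_\Omega|\nabla\uu+(\nabla\uu)^\top|^2\gtrsim\|\nabla\uu\|_{L^2}^2$. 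The only delicate term is the last: I would bound it by $\|\rho_t\|_{L^2}\|\Div\uu\|_{L^4}^2$, use the two-dimensional Gagliardo--Nirenberg inequality $\|\Div\uu\|_{L^4}^2\lesssim\|\uu\|_{H^1}\|\uu\|_{H^2}$, the elliptic estimate $\|\uu(t)\|_{H^2}\leq\phi(m(T)^{-1},\|\rho\|_{F_{r,q}})(\|\rho\uu_t(t)-f(t)\|_{L^2}+\|\uu(t)\|_{L^2})$ (valid since the coefficients of $L_\rho$ lie in $W^{1,q}$, $q>2$), the bound $\rho_t\in L^\infty(0,T;L^2)$ from $\rho\in F_{r,q}$, and finally Young's and Grönwall's inequalities in $E(t)+\int_0^t\|\uu\|_{L^2}^2$, absorbing $\epsilon\|\uu_t\|_{L^2}^2+\epsilon\|\uu\|_{H^2}^2$ to the left; feeding the outcome back into the elliptic estimate and integrating in $t$ gives \eqref{apriorimparabol} for $m=1$.

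For $m\geq2$ I would induct, assuming \eqref{apriorimparabol} at level $m-1$, so that $\uu\in L^2(0,T;H^m)\cap H^1(0,T;H^{m-2})\hookrightarrow C([0,T];H^{m-1})$ with norms controlled by the $(m-1)$-estimate. Applying $\partial^\alpha$ with $|\alpha|=m-1$ to \eqref{problemaparabolico} produces a problem of the same type for $\partial^\alpha\uu$, with initial datum $\partial^\alpha\uu_0\in H^1$ and right-hand side $\partial^\alpha f-[\partial^\alpha,\rho]\uu_t+[\partial^\alpha,L_\rho]\uu$, where $[\partial^\alpha,L_\rho]\uu=\nabla\big([\partial^\alpha,\lambda(\rho)]\Div\uu\big)$ because $\mu$ is constant. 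Running the $m=1$ energy estimates on this equation, I would treat the two commutators as forcing: each is a sum of products of derivatives of $\rho$ (controlled in $C([0,T];H^m)$, with $\rho_t$ controlled via $F_{r,q}$) with derivatives of $\uu$ and of $\uu_t$; distributing regularity by the two-dimensional Gagliardo--Nirenberg inequalities, using Hölder in $\x$ and Cauchy--Schwarz in $t$ (which supplies a small power of $T$), and invoking the inductive bounds for the lower-order norms of $\uu$ and $\uu_t$, one arranges that the only top-order quantities left, namely $\|\uu_t\|_{L^2(0,T;H^{m-1})}$ and $\|\uu\|_{L^2(0,T;H^{m+1})}$ --- the latter re-expressed through $\|\uu(t)\|_{H^{m+1}}\leq\phi(m(T)^{-1},\|\rho\|_{C([0,T];H^m)\cap F_{r,q}})(\|\uu_t(t)\|_{H^{m-1}}+\|f(t)\|_{H^{m-1}}+\|\uu(t)\|_{L^2})$, legitimate since $\lambda(\rho),\rho^{-1}\in C([0,T];H^m)$ thanks to $0<m(T)\leq\rho\leq M(T)$ and $\rho\in C([0,T];H^m)$ --- appear either to a power strictly below $2$ or multiplied by a factor small in $T$. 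Absorbing these and closing with Grönwall gives \eqref{apriorimparabol} at level $m$; the dependence on $\uu_0$ enters through $\|\partial^\alpha\uu_0\|_{H^1}\leq\|\uu_0\|_{H^m}$, as claimed.

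The step I expect to be the main obstacle is the bookkeeping of these commutators in the induction: since the coefficients of $L_\rho$ depend on $t$ through $\rho$, several commutator terms must be integrated by parts in $t$, which generates $t$-boundary contributions and terms containing $\rho_t$, and one must check that none of them demands more regularity of $\uu$, $\uu_t$ or $\rho$ than is actually available. The margin here comes from working in two space dimensions (so that $H^1\hookrightarrow L^p$ for every finite $p$ and the Gagliardo--Nirenberg inequalities are favorable), from the divergence structure of $[\partial^\alpha,L_\rho]\uu$, and from the hypotheses $q>2$ and $\rho\in F_{r,q}$, which secure the continuity of the coefficients and the $L^\infty_tL^2_x$ bound on $\rho_t$. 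The constants produced by the iterated use of Young's and Grönwall's inequalities blow up as $m(T)\downarrow0$ or as $\|\rho\|_{F_{r,q}\cap C([0,T];H^m)}\uparrow\infty$, which is exactly the dependence recorded in \eqref{apriorimparabol}.
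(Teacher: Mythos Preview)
Your approach is essentially the same as the paper's: induction on $m$, with the base case $m=1$ obtained by testing against $\uu$ and $\uu_t$, using the elliptic estimate $\|\uu\|_{H^2}\le\phi(\|\nabla\lambda(\rho)\|_{L^q})(\|\rho\uu_t\|_{L^2}+\|f\|_{L^2}+\|\uu\|_{L^2})$ and Gr\"onwall; then for $m\ge2$ applying $\nabla^{m-1}$, treating the commutators $[\nabla^{m-1},\rho]\uu_t$ and $\nabla^{m-2}\Div(\nabla\lambda(\rho)\Div\uu)$ as forcing, and closing with the $m=1$ estimate plus the induction hypothesis.

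Two remarks on where your write-up diverges from what is actually needed. First, the difficulty you flag as ``the main obstacle'' does not arise: no integration by parts in $t$ on the commutators is required, and no $t$-boundary terms appear. The commutators are purely spatial, estimated pointwise in $t$ using $\rho\in C([0,T];H^m)$; the only place $\rho_t$ enters is in the $m=1$ identity $\int_\Omega\lambda(\rho)_t(\Div\uu)^2$, which you already handled. Second, you do not need smallness of $T$ or sub-quadratic powers to absorb the top-order commutator contributions. The paper's mechanism is Gagliardo--Nirenberg with $\varepsilon$: for $m=2$ one writes, e.g., $\|\nabla^2\uu\|_{L^p}\le\varepsilon\|\nabla^3\uu\|_{L^2}+C_\varepsilon\|\nabla\uu\|_{L^2}$ and $\|\uu_t\|_{L^q}\le\varepsilon\|\nabla\uu_t\|_{L^2}+C_\varepsilon\|\uu_t\|_{L^2}$ (with $1/p+1/q=1/2$), absorbs the $\varepsilon$-terms on the left, and controls the rest by the induction hypothesis; for $m\ge3$ one uses that $H^{m-1}$ and $H^{m-2}\cap L^\infty$ are algebras. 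With this in hand Gr\"onwall closes the estimate for arbitrary $T$.
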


\begin{proof}	
	We proceed by induction in $m\geq 1$.
	
	Multiplying equation \eqref{problemaparabolico} by $\uu$, integrating by parts and using Young's inequality we get
	\begin{align*}  
	&\frac{1}{2}\frac{d}{dt}\int_\Omega \rho|\uu|^2 d\x + \int_\Omega (\mu |\nabla \mathbf{u}|^2+(\lambda(\rho)+\mu)(\Div\mathbf{u})^2) d\x\\
       &\qquad\leq C\left(\| f(t)\|_{L^2(\Omega)}^2 + (1+\|\rho_t\|_{L^\infty(0,T;L^r(\Omega))}^2 ) \int_\Omega |\uu|^2d\x\right) + \frac{\mu}{2}\int_\Omega |\nabla\uu|^2d\x,
	\end{align*}
	where we have used Gagliardo-Nirenberg inequality in order to estimate
	\[
	\| \uu\|_{L^{2r'}(\Omega)}^2\leq C_\varepsilon\|\uu\|_{L^2(\Omega)}^2 + \varepsilon \|\nabla\uu\|_{L^2(\Omega)}^2,
	\]
	with small enough $\varepsilon>0$.	 Then, using the uniform bounds away from zero for the density, Gronwall's inequality yields
	\begin{align}
	&\int_{\Omega} |\uu(\x,t)|^2d\x + \int_{0}^{t} \int_{\Omega} |\nabla \uu(\x,t') |^2d\x dt'  \nonumber \\ 
	&\qquad\leq \phi(t, m(t)^{-1}, M(t), \Vert \rho \Vert_{F_{r,q}})\Big( \int_{0}^{t}\int_{\Omega} |f(\x,t')|^2d\x dt' + \int_{\Omega} |\uu_0(\x)|^2d\x  \Big). \label{parabolest1}
	\end{align}
	
	Similarly, noting that
	\begin{align*}
	&\int_\Omega L_\rho\uu\cdot \uu_t d\x \\
	&\qquad= -\frac{1}{2}\frac{d}{dt}\int_\Omega (\mu |\nabla \mathbf{u}|^2+(\lambda(\rho)+\mu)(\Div\mathbf{u})^2) d\x +\int_\Omega \lambda(\rho)_t (\Div \uu)^2d\x,\nonumber\\
	&\qquad\leq -\frac{1}{2}\frac{d}{dt}\int_\Omega (\mu |\nabla \mathbf{u}|^2+(\lambda(\rho)+\mu)(\Div\mathbf{u})^2) d\x\\
	&\qquad\qquad\qquad\qquad\qquad\qquad+\|\lambda(\rho)_t\|_{L^\infty(0,T;L^r(\Omega))}\|\Div\uu(t)\|_{L^{2r'}(\Omega)}^2,
	\end{align*}
	multiplying \eqref{problemaparabolico} by $\uu_t$ and integrating by parts we have
	\begin{align*}
	&\int_\Omega \rho|\uu_t|^2 d\x + \frac{1}{2}\frac{d}{dt}\int_\Omega (\mu |\nabla \mathbf{u}|^2+(\lambda(\rho)+\mu)(\Div\mathbf{u})^2) d\x\\
	&\qquad\qquad\qquad\qquad\qquad\qquad\leq \varepsilon\|\uu\|_{H^2(\Omega)}^2 + \phi(\varepsilon^{-1},M(t),\|\rho_t\|_r)\|\nabla\uu\|_{L^2(\Omega)}^2,
	\end{align*}
	where we used again Gagliardo-Nirenberg inequality to estimate $\|\Div\uu\|_{L^{2r'}(\Omega)}^2$.
	
	Since $L_\rho$ is uniformly elliptic, one can apply Gagliardo-Nirenberg inequality to get\footnote{To see this, one may argue as follows. Let $\uu \in H^2(\Omega)$ and $f \in L^2(\Omega)$ be such that $L_\rho \uu + \uu = f$. If one uses the classic argument of difference quotients due to Nirenberg, one can show that $\mu \Vert \nabla^2 \uu \Vert_2^2 + \Vert \nabla u \Vert_{L^2}^2 \leq \Vert f \Vert_{L^2} \Vert \nabla^2 \uu \Vert_{L^2} + \Vert \nabla^2 \uu \Vert_{L^2} \Vert \nabla (\lambda(\rho)) \Vert_q  \Vert \operatorname{div} \uu \Vert_{L^{2q/(q-2)}}$. Hence, Gagliardo--Nirenberg and Young inequalities, and an elementary $H^1-$estimate assert that $\Vert \uu \Vert_{H^2} \leq \phi(\Vert  \nabla (\lambda(\rho)) \Vert_{W^{1,q}}) \cdot ( \Vert f \Vert_{L^2} + \Vert \uu \Vert_{L^2} )$, as desired. }
	\begin{align*}
	\Vert \uu \Vert_{H^2} &\leq \phi(\Vert \nabla \lambda(\rho) \Vert_{L^{q}} ) \cdot (\Vert L_\rho \uu \Vert_2 + \Vert \uu \Vert_2) \nonumber \\
					     &\leq \phi(\Vert \nabla \lambda(\rho) \Vert_{L^{q}} ) \cdot(\Vert  \rho \uu_t \Vert_2 + \Vert f \Vert_2 + \Vert \uu \Vert_2).\\
	\end{align*}
Thus, choosing $\varepsilon>0$ small enough,  Gronwall's inequality implies \eqref{apriorimparabol} for $m=1$ by virtue of the uniform bounds on the density.

	Now let $m\geq 2$ and assume that \eqref{apriorimparabol} holds for $1,2,...,m-1$. Then, we apply the operator $\nabla^{m-1}$ to equation \eqref{problemaparabolico} to obtain
	\begin{equation*}
	\begin{cases} 
	\rho (\nabla^{m-1} \uu_t) - (L_\rho \nabla^{m-1} \uu) = \nabla^{m-1} f+ \nabla^{m-2} (\Div (\nabla \lambda(\rho) (\Div \uu) \text{Id} )) \\
	\>\>\>\>\>\>\>\>\>\>\>\>\>\>\>\>\>\>\>\>\>\>\>\>\>\>\>\>\>\>\>\>\>\>\>\>\>\>\>\>\>\>\>\>\>\>\>\>\>\>\>\>\>\>\>\> - \nabla^{m-2} (\nabla \rho \otimes \uu_t) \\ \>\>\>\>\>\>\>\>\>\>\>\>\>\>\>\>\>\>\>\>\>\>\>\>\>\>\>\>\>\>\>\>\>\>\>\>\>\>\>\>\>\>\>\>\>\>\>\>\>\>\>\>\>\>\>\>\>\>\>\>\>\>\>\>\>\>\>\>\>\>\>\>\>\>\>\>\>\>\>\>\>\>\>\> \text{ for } 0<t<T \text{ and } \x \in \Omega,  \\
	(\nabla^{m-1}\uu)(\x,0) = \nabla^{m-1} \uu_0(\x) \\ \>\>\>\>\>\>\>\>\>\>\>\>\>\>\>\>\>\>\>\>\>\>\>\>\>\>\>\>\>\>\>\>\>\>\>\>\>\>\>\>\>\>\>\>\>\>\>\>\>\>\>\>\>\>\>\>\>\>\>\>\>\>\>\>\>\>\>\>\>\>\>\>\>\>\>\>\>\>\>\>\>\>\>\> \text{ for } 0=t \text{ and } \x \in \Omega.
	\end{cases}
	\end{equation*}
	From the case $m=1$, we get that
	\begin{align}
&\int_{0}^{t} ( \Vert \nabla^{m-1}\uu_t(t') \Vert_{L^2}^2 + \Vert \nabla^{m+1}\uu (t') \Vert_{L^2}^2 )   dt'   \nonumber \\ &\leq\phi(t, m(t)^{-1}, M(t), \Vert \rho \Vert_{F_{r,p}})  \Bigg( \int_{0}^{t}\int_{\Omega} |\nabla^{m-1}f(\x,t')|^2d\x dt' + \Vert \uu_0 \Vert_{H^m}^2  \nonumber \\
							&\qquad\qquad\qquad\qquad\qquad + \int_0^t\Vert \nabla^{m-1} ((\nabla \lambda(\rho(t))(\Div \uu(t)) \Vert_2^2dt' \nonumber\\
							&\qquad\qquad\qquad\qquad\qquad + \int_0^t \Vert  \nabla^{m-2} (\nabla \rho(t') \otimes \uu_t (t'))  \Vert_2^2dt' \Bigg). \label{estlemma}
	\end{align}
	To estimate the last two terms, let us divide into two cases cases. 
	
	i) If $m=2$, then
	\begin{align}
	&\Vert \nabla^{m-1} ((\nabla \lambda(\rho(t))(\Div \uu(t)) \Vert_2 \nonumber\\
	&\qquad\qquad\leq \Vert \nabla^2 \lambda(\rho(t)) \Vert_2 \Vert \Div \uu(t) \Vert_\infty + \Vert \nabla \lambda(\rho(t)) \Vert_q  \Vert \nabla^2 \uu(t) \Vert_p, \text{ and } \nonumber\\
	&\Vert  \nabla^{m-2} (\nabla \rho(t') \otimes \uu_t (t'))  \Vert_2 \leq \Vert \nabla \lambda(\rho(t')) \Vert_p \Vert \uu_t(t') \Vert_q,  \label{estlemma1}
	\end{align}
	where  $1/p + 1/q = 1/2$. Nonetheless,  Gagliardo--Nirenberg inequality implies
	\begin{align}
	\Vert \Div \uu(t) \Vert_\infty &\leq \varepsilon \Vert \nabla^3 \uu(t) \Vert_{2} + C_\varepsilon \Vert \nabla \uu (t) \Vert_2, \label{gagniren} \\
	\Vert \nabla^2 \uu(t) \Vert_p &\leq \varepsilon \Vert \nabla^3 \uu(t) \Vert_{2} + C_\varepsilon \Vert \nabla \uu (t) \Vert_2, \text{ and } \nonumber\\
	\Vert  \uu_t(t) \Vert_q &\leq \varepsilon \Vert \nabla \uu_t(t) \Vert_{2} + C_\varepsilon \Vert \uu_t (t) \Vert_2; \nonumber
	\end{align}
	thus, plugging \eqref{estlemma1} into \eqref{estlemma}, one can deduce the desired conclusion from the induction hypothesis.
	
	ii) Let us consider now the case $m\geq 3$. Once that $H^{m-1}$ and $H^{m-2} \cap L^\infty$ are algebras, we can show that 
	\begin{align*}
	\Vert \nabla^{m-1} ((\nabla \lambda(\rho)(t)(\Div \uu(t)) \Vert_2 &\leq \Vert  (\nabla \lambda(\rho(t)))(\Div \uu(t)) \Vert_{H^{m-1}} \\
	&\leq C_m \Vert \lambda(\rho(t)) \Vert_{H^m} \Vert \uu(t) \Vert_{H^{m} }, \text{ and} \\
	\Vert  \nabla^{m-2} (\nabla \rho(t') \otimes \uu_t (t'))  \Vert_2 &\leq  \Vert  \nabla \rho(t') \otimes \uu_t (t'))  \Vert_{H^{m-2}} \\
	&\leq C_m ( \Vert  \nabla \rho(t') \Vert_\infty \Vert \uu_t(t') \Vert_{H^{m-2}} \\ &\qquad\qquad + \Vert \rho (t') \Vert_{H^{m-2}}  \Vert \uu_t(t')  \Vert_{\infty} ).
	\end{align*}
	Thence, the result follows from the induction hypothesis and a Gagliardo--Nirenberg inequality similar to \eqref{gagniren}.
\end{proof}

\subsection{The continuity equation in terms of $\uu$}

We now consider the continuity equation \eqref{E2rho} in terms of a given velocity field $\uu$. Specifically we consider the problem: 
\begin{equation}
\begin{cases} \label{continuity}
\rho_t + \Div (\rho \uu) = 0 &\text{on } \Omega\times(0,T),\\
\rho(\x,0) = \rho_0(\x) &\text{on } \Omega\times\{ t=0 \},
\end{cases}
\end{equation}
where $\rho_0$ is \textit{positive} and belongs to $W^{1,q}(\Omega)$ for some $q>2$.

It is well known (see, e.g., the article of V.A. Solonnikov \cite{Solonnikov} and the references therein) that this problem has a unique solution given by the method of characteristics. In fact we have that
\begin{equation}
\rho(t,\Phi(t;\x)) = \rho_0(\x) \cdot \exp \Big\{ - \int_{0}^{t} (\Div \uu)\big(t', \Phi(t';\x)\big) dt'  \Big\}, \label{rhoeuler}
\end{equation} 
where $\Phi$ is the flow of $\uu$ given by
\begin{equation}
\begin{cases}
\frac{d\Phi}{dt}(t;\x)=\uu(\Phi(t;\x),t),\\
\Phi(0;\x)=\x.\label{flow(u)}
\end{cases}
\end{equation}

Moreover, $\rho$ is defined in the interval $[0,T]$ and satisfies
	\begin{align}
	&\min_{\x\in\overline{\Omega}}\rho_0(\x) \exp\left\{ -\int_0^t \|\Div\uu(t') \|_{L^\infty(\Omega)}dt'\right\} \nonumber\\
	&\qquad\qquad\qquad\qquad\qquad\leq \rho(t,\x)\leq \max_{\x\in\overline{\Omega}}\rho_0(\x)\exp\left\{ \int_0^t \|\Div\uu (t')\|_{L^\infty(\Omega)}dt'\right\},\label{uniformrho(u)}
	\end{align}
	for any $(t,\x)\in [0,T]\times \Omega$, provided that $\Div\uu\in L^1(0,T;L^\infty(\Omega))$.

Regarding the regularity of $\rho$ we have the following.

\begin{lemma} \label{higheu}
	Let $m\geq 1$ and assume that
	\begin{equation*}
	\begin{cases} 
	\uu \in L^2(0,T;H^{m+1}(\Omega)),&\text{with }\nabla \uu \in L^1(0,T;L^\infty(\Omega)), \\
	\rho_0 \in H^{m}(\Omega),&\text{with }0\leq \rho_0 <M,
	\end{cases}
	\end{equation*} 
	for some positive constant $M$.
	
	Then, regarding the solution of \eqref{continuity}, we have the estimate
	\begin{align}
	&\Vert \rho(t) \Vert_{H^m(\Omega)} \leq \phi(t,\|\uu\|_{L^2(0,T;H^{m}(\Omega))},\|\nabla\uu\|_{L^1(0,T;L^\infty(\Omega))},\|\rho_0\|_{H^m(\Omega)}, \nonumber\\ 
	&\qquad\qquad\qquad\qquad\qquad \|\uu\|_{L^2(0,T;W^{2,4}(\Omega))},M)\cdot(1+\|\uu\|_{L^2(0,T;H^{m+1})}), \label{Hmrho}
	\end{align}
	for all $0< t < T$. Furthermore, when $m=1$, the dependence of $\|\uu\|_{L^2(0,T;W^{2,4}(\Omega))}$ above may be dropped.
\end{lemma}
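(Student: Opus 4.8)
The plan is to establish \eqref{Hmrho} as an \emph{a priori} estimate: for every $0\le k\le m$ we run an $L^2$ energy estimate on the continuity equation differentiated $k$ times, and then close with Grönwall's inequality, the crucial point being to arrange that the top-order norm $\|\uu\|_{H^{m+1}(\Omega)}$ enters only as a linear forcing term. As a preliminary remark, the representation \eqref{rhoeuler} together with \eqref{uniformrho(u)} already give $0\le\rho(t,\x)\le M\exp\!\big(\|\nabla\uu\|_{L^1(0,T;L^\infty(\Omega))}\big)=:\phi$, so $\|\rho(t)\|_{L^\infty(\Omega)}$ is under control by such a $\phi$; this will be used repeatedly. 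Fixing $0\le k\le m$, applying $\nabla^k$ to \eqref{continuity}, multiplying by $\nabla^k\rho$, integrating over the torus and integrating the transport term by parts (so that $\int_\Omega(\uu\cdot\nabla\nabla^k\rho)\cdot\nabla^k\rho\,d\x=-\tfrac12\int_\Omega(\Div\uu)\,|\nabla^k\rho|^2\,d\x$), one obtains
\[
\tfrac12\tfrac{d}{dt}\|\nabla^k\rho\|_{L^2}^2=\tfrac12\!\int_\Omega(\Div\uu)|\nabla^k\rho|^2\,d\x-\!\int_\Omega[\nabla^k,\uu\cdot\nabla]\rho\cdot\nabla^k\rho\,d\x-\!\int_\Omega\nabla^k(\rho\,\Div\uu)\cdot\nabla^k\rho\,d\x .
\]

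The first integral is $\le\tfrac12\|\nabla\uu\|_{L^\infty}\|\nabla^k\rho\|_{L^2}^2$. Expanding $\nabla^k(\rho\,\Div\uu)$ by the Leibniz rule, the only summand carrying $\nabla^{k+1}\uu$ is $\rho\,\nabla^k(\Div\uu)$, whose $L^2$-norm is $\le\phi\,\|\uu\|_{H^{k+1}}$; for $k<m$ this is $\le\phi\,\|\uu\|_{H^m}$, whereas for $k=m$ it is the genuine forcing $\phi\,\|\uu\|_{H^{m+1}}$. Every remaining summand of $\nabla^k(\rho\,\Div\uu)$, as well as every summand of the commutator $[\nabla^k,\uu\cdot\nabla]\rho$, is a product of a factor carrying between $1$ and $k$ derivatives of $\rho$ and a factor carrying between $1$ and $k$ derivatives of $\uu$; placing each such product in $L^2$ by Hölder together with Gagliardo--Nirenberg interpolation of the intermediate derivatives — or, for $k\ge3$, simply by the Kato--Ponce commutator estimates and the embedding $H^{k-1}\hookrightarrow L^\infty$ — one dominates it by $\big(\|\nabla\uu\|_{L^\infty}+\|\uu\|_{H^m}\big)\|\rho\|_{H^m}$, \emph{with the sole exception} of the ``mixed'' product $\nabla\rho\,\nabla^2\uu$ that occurs when $k=m=2$, which must instead be bounded by $\|\nabla\rho\|_{L^4}\|\nabla^2\uu\|_{L^4}\le C\,\|\nabla^2\uu\|_{L^4}\,\|\rho\|_{H^2}$; the factor $\|\nabla^2\uu\|_{L^4}$ is exactly what forces the dependence on $\|\uu\|_{L^2(0,T;W^{2,4})}$. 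When $m=1$ no such mixed product occurs — the only second-order derivative of $\uu$ present being the one in the forcing $\rho\,\nabla^2\uu$ — so the $W^{2,4}$-norm disappears, which is the final assertion of the lemma; and when $m\ge3$ that norm is in any case subsumed by $\|\uu\|_{L^2(0,T;H^m)}$, since $H^{m-2}\hookrightarrow L^4$ in two dimensions.

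Summing over $0\le k\le m$ and writing $g(t):=\|\rho(t)\|_{H^m(\Omega)}$, the estimates above yield a differential inequality of the form $\tfrac{d}{dt}g^2\le\Phi(t)\,g^2+C\phi\,\|\uu(t)\|_{H^{m+1}}\,g$, where $\Phi\ge0$ belongs to $L^1(0,T)$ with $\int_0^T\Phi\le\phi(T,\|\nabla\uu\|_{L^1(0,T;L^\infty)},\|\uu\|_{L^2(0,T;H^m)},\|\uu\|_{L^2(0,T;W^{2,4})},M)$ — here one uses $L^2(0,T)\subset L^1(0,T)$ to integrate the contributions carrying $\|\uu(t)\|_{H^m}$ or $\|\nabla^2\uu(t)\|_{L^4}$. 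Dividing by $g$ after the standard regularization (replace $g$ by $\sqrt{g^2+\varepsilon}$ and let $\varepsilon\to0$) and applying Grönwall's inequality, one gets
\[
g(t)\le e^{\int_0^t\Phi}\Big(\|\rho_0\|_{H^m}+C\phi\!\int_0^t\!\|\uu(s)\|_{H^{m+1}}\,ds\Big)\le\phi\,\big(1+\|\uu\|_{L^2(0,T;H^{m+1})}\big),
\]
where in the last step one uses $\int_0^t\|\uu(s)\|_{H^{m+1}}\,ds\le\sqrt{T}\,\|\uu\|_{L^2(0,T;H^{m+1})}$ and absorbs $\|\rho_0\|_{H^m}$, $\sqrt T$ and $e^{\int_0^t\Phi}$ into $\phi$; this is \eqref{Hmrho}. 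The one delicate point, and the main obstacle, is precisely this bookkeeping: one must ensure that $\|\uu\|_{H^{m+1}}$ enters solely through the term $\rho\,\nabla^{m+1}\uu$ — that is, only as the linear forcing, never inside the Grönwall exponent — which is what dictates the choice of Hölder exponents in the mixed terms (notably the use of the $L^4$-norm, rather than the $H^1$-norm, of $\nabla^2\uu$ when $m=2$). For $m\ge2$ one may alternatively argue by induction on $m$, reducing to the case $m=1$ exactly as in the proof of Lemma~\ref{highparabol}.
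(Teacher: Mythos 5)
Your proposal is correct and follows essentially the same strategy as the paper: an $L^2$ energy estimate on $\nabla^k$ of the continuity equation, careful isolation of the single top-order term $\rho\,\nabla^{m}\Div\uu$ so that $\|\uu\|_{H^{m+1}}$ enters only as a linear forcing outside the Gr\"onwall exponent, the algebra property of $H^{m-1}$ for $m\geq 3$, and Gr\"onwall to close. The one place where you diverge is the treatment of the mixed term $\nabla\rho\cdot\nabla^2\uu$ at $m=2$: the paper first proves a standalone bound $\sup_t\|\nabla\rho(t)\|_{L^4}\leq\phi(\dots)$ by a separate $L^4$ energy estimate on $\nabla\rho$ (this is \eqref{rhoW14}, which is also reused later in the paper), and then inserts that constant into the $H^2$ estimate; you instead invoke the two-dimensional embedding $H^1(\Omega)\hookrightarrow L^4(\Omega)$ to absorb $\|\nabla\rho\|_{L^4}$ into $\|\rho\|_{H^2}$ and place $\|\nabla^2\uu(t)\|_{L^4}^2\in L^1(0,T)$ inside the Gr\"onwall coefficient. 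Both routes yield the stated dependence on $\|\uu\|_{L^2(0,T;W^{2,4}(\Omega))}$ and its disappearance when $m=1$ or $m\geq 3$; your variant is marginally more self-contained here, while the paper's yields the auxiliary $L^4$ bound on $\nabla\rho$ as a byproduct.
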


\begin{proof}
	Let $m\geq 1$. Then, applying the operator $\nabla^m$ to equation \eqref{continuity}, multiplying by $\nabla^m \rho$ and integrating by parts we have
	\begin{align}
	&\frac{1}{2}\frac{d}{dt}\int_\Omega |\nabla^m\rho|^2 dx \nonumber\\
	&\qquad= -\frac{1}{2}\int_\Omega  |\nabla^m\rho|^2\Div\uu d\x - \int_\Omega\nabla^{m-1}(\nabla\uu\cdot\nabla\rho+\rho\nabla\Div\uu)\cdot\nabla^m\rho d\x\nonumber\\
	&\qquad\leq C\|\nabla^{m-1}(\nabla\uu\cdot\nabla\rho+\rho\nabla\Div\uu)\|_{L^2(\Omega)}^2 \nonumber\\
	&\qquad\qquad\qquad+ C(1+\|\nabla\uu(t)\|_{L^\infty(\Omega)})\int_\Omega |\nabla^m\rho|^2 dx.\label{ineqrho(u)m}
	\end{align}
	
With this observation at hand, we proceed with the proof by induction in $m$.

So, let $m=1$. Then, we have that 
\begin{align*}
&\|\nabla\uu\cdot\nabla\rho+\rho\nabla\Div\uu\|_{L^2(\Omega)}\leq \|\rho\|_{L^\infty((0,T)\times\Omega)}\|\nabla^2 \uu(t)\|_{L^2(\Omega)}\\
&\qquad\qquad\qquad\qquad\qquad\qquad +\|\nabla\uu(t)\|_{L^\infty(\Omega)}\|\nabla\rho(t)\|_{L^2(\Omega)}.
\end{align*}
Hence, by virtue of \eqref{uniformrho(u)} and \eqref{ineqrho(u)m}, Gronwall's inequality yields the result for $m=1$.

For $m=2$, we have that
\begin{align*}
&\|\nabla(\nabla\uu\cdot\nabla\rho+\rho\nabla\Div\uu)\|_{L^2(\Omega)} \\
&\qquad\leq \|\nabla\uu(t)\|_{L^\infty(\Omega)}\|\nabla^2\rho\|_{L^2(\Omega)}+\|\rho(t)\|_{L^\infty(\Omega)}\|\nabla^3 \uu(t)\|_{L^2(\Omega)} \\
&\qquad\qquad\qquad+ \|\nabla\rho(t)\|_{L^4(\Omega)}\|\nabla^2 \uu(t)\|_{L^4(\Omega)}.
\end{align*}

On the other hand, applying the operator $\nabla$ to the continuity equation \eqref{E2rho}, multiplying the resulting equation by $p|\nabla\rho|^2\nabla\rho$ and integrating we have
\begin{align*}
\frac{d}{dt}\int_\Omega|\nabla\rho|^4d\x&=-3\int_\Omega |\nabla\rho|^4\Div\uu d\x - 4\int_\Omega |\nabla\rho|^{p-2}\nabla\rho\cdot\nabla\uu\cdot\nabla\rho d\x\\
&\qquad -p\int_\Omega \rho|\nabla\rho|^2\nabla\rho\cdot\nabla\Div\uu d\x,
\end{align*}
so that
\begin{equation}
\frac{d}{dt}\|\nabla\rho\|_{L^4(\Omega)} \leq C(\|\nabla\uu\|_{L^\infty(\Omega)}\|\nabla\rho\|_{L^4(\Omega)}+\|\nabla^2\uu\|_{L^4(\Omega)});
\end{equation}
which implies the estimate
\begin{equation}
  	\Vert \nabla \rho(t) \Vert_4 \leq C e^{ c\int_{0}^{t} \Vert \nabla \uu(s) \Vert_\infty ds} \Big( \Vert \nabla\rho_0 \Vert_4 + \Vert \rho_0 \Vert_\infty \int_{0}^{t} \Vert \nabla^2 \uu (s) \Vert_4 ds \Big), \label{rhoW14}
\end{equation}
for universal constants $C$ and $c$. Consequently, $m=2$ follows from Gronwall's inequality and the case $m=1$.

Finally, we recall that when $m\geq 3$ the space $H^{m-1}$ is an algebra so that
\begin{align*}
&\|\nabla^{m-1}(\nabla\uu\cdot\nabla\rho+\rho\nabla\Div\uu)\|_{L^2(\Omega)}\\
&\qquad\leq \|\uu(t)\|_{H^m(\Omega)}\|\rho(t)\|_{H^m(\Omega)} + \|\rho(t)\|_{H^{m-1}(\Omega)}\|\uu(t)\|_{H^{m+1}(\Omega)}.
\end{align*}
thus, once more, Gronwall's inequality and the case $m-1$ imply the result for any $m\geq 2$.
\end{proof}

\subsection{The magnetic field in terms of $\uu$}

We move on to considering the following problem for the magnetic field in terms of a given velocity field $\uu$.
\begin{equation}
\begin{cases} \label{H(u)}
\HH_t + \uu \cdot \nabla \HH - \HH\cdot \nabla\uu + \HH\Div \uu = \nu \Delta \HH, &\text{on }\Omega\times (0,T),\\
\Div \HH = 0,&\text{on }\Omega\times (0,T),\\
\HH(\x,0) = \HH_0(\x) &\text{on } \Omega\times \{ t=0 \},
\end{cases}
\end{equation}
with periodic boundary conditions (recall that $\Omega$ is the 2-dimensional torus). Note that this is a linear parabolic equation and can be solved by the standard Faedo-Galerkin method. Regarding the regularity of $\HH$ we have the following.
\begin{lemma}\label{lemmaH(u)}
Supose that $\HH_0 \in H^m(\Omega)$ and $\uu\in L^2(0,T;H^{m+1}(\Omega))$, for some $m\geq 1$. Then, \eqref{H(u)} has a unique solution $\HH\in L^2(0,T;H^{m+1}(\Omega))\cap C([0,T];H^m(\Omega))$. Moreover, we have the following a piori estimate
\begin{align}
&\Vert \HH(t) \Vert_{H^m(\Omega)}^2+\int_0^t\Vert \HH \Vert_{H^{m+1}}^2ds\nonumber\\
&\qquad\qquad\leq \phi(T,\Vert\uu\Vert_{L^2(0,T;H^m(\Omega))},\Vert \HH_0\Vert_{H^m}^2)(1+\Vert\uu\Vert_{L^2(0,T;H^{m+1}(\Omega))}),\label{estonH(u)}
\end{align}
for a.e. $t\in [0,T]$.
\end{lemma}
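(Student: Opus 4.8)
The plan is to view \eqref{H(u)} as a \emph{linear} parabolic system for $\HH$ with the prescribed coefficient $\uu$, to solve it by the Faedo--Galerkin method, and then to establish \eqref{estonH(u)} by an energy argument carried out by induction on $m$, exactly along the lines of Lemmas~\ref{highparabol} and \ref{higheu}. Writing the first equation of \eqref{H(u)} in the form $\HH_t-\nu\Delta\HH=-\uu\cdot\nabla\HH+\HH\cdot\nabla\uu-\HH\Div\uu$, the right-hand side is linear in $\HH$ with coefficients controlled, via Sobolev embedding, by $\|\uu(t)\|_{H^{m+1}}$, which belongs to $L^2(0,T)$; hence a standard Galerkin scheme on the eigenspaces of $-\Delta$ on $\Omega=\mathbb{T}^2$, together with the a priori bounds obtained below, produces a unique $\HH\in L^2(0,T;H^{m+1}(\Omega))\cap C([0,T];H^m(\Omega))$. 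I note in passing that taking the divergence of the equation and using the identities $\Div(\uu\cdot\nabla\HH)=(\partial_iu_j)(\partial_jH_i)+\uu\cdot\nabla(\Div\HH)$, $\Div(\HH\cdot\nabla\uu)=(\partial_iH_j)(\partial_ju_i)+\HH\cdot\nabla(\Div\uu)$ and $\Div(\HH\Div\uu)=(\Div\HH)(\Div\uu)+\HH\cdot\nabla(\Div\uu)$, the two quadratic terms cancel and $w:=\Div\HH$ satisfies the homogeneous equation $w_t+\uu\cdot\nabla w+w\,\Div\uu=\nu\Delta w$; so the constraint $\Div\HH=0$ propagates from $t=0$ and is not needed in \eqref{estonH(u)}.

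For the base case $m=1$ I would first multiply the equation by $\HH$ and integrate by parts. The dissipation yields $\nu\|\nabla\HH\|_{L^2}^2$, while the contributions of $-\uu\cdot\nabla\HH$, $-\HH\cdot\nabla\uu$ and $\HH\Div\uu$ combine into a term bounded, via Ladyzhenskaya's inequality in two dimensions, by $C\|\nabla\uu\|_{L^2}\|\HH\|_{L^2}\|\HH\|_{H^1}\le\tfrac{\nu}{2}\|\nabla\HH\|_{L^2}^2+C\|\nabla\uu\|_{L^2}^2\|\HH\|_{L^2}^2$; since $\|\nabla\uu(t)\|_{L^2}^2\in L^1(0,T)$, Gronwall's inequality closes the $L^2$ bound. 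Next I would apply $\nabla$ and pair with $\nabla\HH$ (equivalently, multiply by $-\Delta\HH$): after one integration by parts, the top-order term arising from $\nabla(\HH\cdot\nabla\uu)$ and $\nabla(\HH\Div\uu)$ is absorbed into $\tfrac{\nu}{2}\|\nabla^2\HH\|_{L^2}^2$ plus $C\|\HH\|_{H^1}^2\|\uu\|_{H^2}^2$ by Gagliardo--Nirenberg and Young, and the remaining terms are treated as before; Gronwall together with the $L^2$ bound then gives \eqref{estonH(u)} for $m=1$.

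For $m\ge2$ I would apply $\nabla^m$ to the equation, pair with $\nabla^m\HH$, and again exploit the dissipation $\nu\|\nabla^{m+1}\HH\|_{L^2}^2$. The convective commutator $[\nabla^m,\uu\cdot\nabla]\HH$ and the products $\nabla^m(\HH\cdot\nabla\uu)$ and $\nabla^m(\HH\Div\uu)$ are estimated by Kato--Ponce-type or Banach-algebra inequalities ($H^{m-1}$ is an algebra on $\mathbb{T}^2$ when $m\ge3$; for $m=2$ one uses instead $\uu\in L^2(0,T;H^3)\hookrightarrow L^2(0,T;W^{2,4})$ and $\|\HH\|_{L^\infty}\le C\|\HH\|_{H^2}$). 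The genuinely delicate contribution is the top-order term $\HH\otimes\nabla^{m+1}\uu$ coming from the stretching and compression terms: paired with $\nabla^m\HH$ it cannot be absorbed directly, so I would integrate by parts to shift one derivative off $\uu$, which produces on the one hand $\HH\otimes\nabla^{m+1}\HH\otimes\nabla^m\uu$, absorbed into $\tfrac{\nu}{4}\|\nabla^{m+1}\HH\|_{L^2}^2+C\|\HH\|_{L^\infty}^2\|\uu\|_{H^m}^2$, and on the other hand $\nabla\HH\otimes\nabla^m\HH\otimes\nabla^m\uu$, bounded by $\tfrac12\|\HH\|_{H^m}^2+C\|\nabla\HH\|_{L^4}^2\|\uu\|_{H^{m+1}}^2$ via Young and Gagliardo--Nirenberg; here $\|\HH\|_{L^\infty}$ and $\|\nabla\HH\|_{L^4}$ are dominated by $\|\HH\|_{H^{m-1}}$, which the induction hypothesis bounds in terms of the data and $\|\uu\|_{L^2(0,T;H^m)}$. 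Collecting all contributions yields a differential inequality of the form $\tfrac{d}{dt}\|\HH\|_{H^m}^2+\nu\|\HH\|_{H^{m+1}}^2\le a(t)\|\HH\|_{H^m}^2+b(t)$ with $a\in L^1(0,T)$ and $\int_0^T b\,dt\le C\sup_{t}\|\HH\|_{H^{m-1}}^2\,\|\uu\|_{L^2(0,T;H^{m+1})}^2$, and Gronwall's inequality then produces \eqref{estonH(u)}.

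The main obstacle I foresee is precisely the stretching/compression term $-\HH\cdot\nabla\uu+\HH\Div\uu$, which after $m$ differentiations carries $m+1$ derivatives on $\uu$; the naive pairing with $\nabla^m\HH$ leaves a factor controlled neither by the magnetic dissipation nor by $\|\uu\|_{L^2(0,T;H^{m+1})}$ by itself, and the integration-by-parts redistribution above --- transferring one derivative onto $\HH$ so that the parabolic term does the absorbing, at the cost of only $\|\uu\|_{L^2(0,T;H^m)}$ and $\|\uu\|_{L^2(0,T;H^{m+1})}$ --- is the key technical device, entirely analogous to the handling of $\rho\,\nabla\Div\uu$ in Lemma~\ref{higheu}. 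A secondary, recurring point is that on $\mathbb{T}^2$ the relevant embeddings into $W^{1,\infty}$ are borderline, so for $m=1,2$ one must use Ladyzhenskaya/Gagliardo--Nirenberg interpolation in place of crude Sobolev embeddings.
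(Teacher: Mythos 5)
Your proposal is correct and follows essentially the same route as the paper, which (even more tersely) proves existence by standard linear parabolic theory and obtains \eqref{estonH(u)} by applying $\partial^\alpha$, $|\alpha|\le m$, pairing with $\partial^\alpha\HH$, integrating by parts, and closing with Young's and Gronwall's inequalities by induction on $m$; your integration by parts on the stretching term $\HH\otimes\nabla^{m+1}\uu$ so that the magnetic dissipation absorbs the top-order contribution is exactly the ``standard manipulation'' the paper leaves implicit. The only caveat is bookkeeping at $m=2$, where $\Vert\nabla\HH\Vert_{L^4}$ is not controlled by $\Vert\HH\Vert_{H^{m-1}}=\Vert\HH\Vert_{H^1}$ and one must interpolate against the dissipation instead, but this is routine and does not affect the argument.
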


\begin{proof}
We omit the proofs of existence and uniqueness of solutions as they follow from standard procedures from the theory of linear parabolic equations. On the other hand estimate \eqref{estonH(u)} follows inductively by taking the $\alpha$ derivative of equation \eqref{H(u)}, for any multi-index $\alpha$ with $|\alpha|\leq m$, taking the inner product by $\partial^\alpha \HH$ and integrating by parts. After some standard manipulation, involving Young's inequality with $\varepsilon$, Gronwall's inequality yields the result.
\end{proof}

\subsection{The Lagrangian transformation}

Before we go into the resolvability of the NLS in terms of $\uu$, we have to establish some properties of the associated Lagrangian transformation. Since we deal with Sobolev norms in different coordinates it is very advantageous to have some information about the regularity of the change of variables.

Let us recall that the Lagrangian transformation $\Y(t,\x)=(t,\y(t,\x))$ can be formulated for any velocity field $\uu$ through the identity
\begin{equation}
\y(t,\Phi(t;\x))=\y_0(\x),\qquad \x\in\Omega\label{ydef}
\end{equation}
where $\y_0$ is a given diffeomorphism and $\Phi=\Phi(t;\x)$ is the flux associated to the velocity field of the fluid, satisfying the the ODE
\begin{equation}
\begin{cases}
\frac{d\Phi}{dt}(t;\x)=\uu(t,\Phi(t;\x)),\\
\Phi(0;\x)=\x.
\end{cases}
\end{equation}

In this opportunity, regarding the SW-LW Interactions model, we choose
\[
\y_0(\x)=\x,
\]

Note that if we denote $\tilde{\J}_\y(t,\x):=\J_\y(t,\Phi(t,\x)))=|\det \frac{\partial \y}{\partial \z}(t,\Phi(t,\x))|$ then using \eqref{ydef} along with Liouville's formula for the determinant applied to the gradient of $\Phi$ we have that
\begin{equation}
\frac{d}{dt}\tilde{\J}_\y(t,\x) = -\Div \uu(t,\Phi(t,\x)) \tilde{\J}_\y(t,\x);\label{liouvilleJ_y}
\end{equation}
and from the continuity equation \eqref{continuity}, a straightforward calculation shows that
\begin{equation}
\frac{d}{dt}\left[\frac{\tilde{J}_{\y}(t)}{\rho(t,\Phi(t;\x))}\right]=0.\label{J/rho=const}
\end{equation}
In other words, the function $\J_\y/\rho$ is constant along particle paths.

Taking into account our particular choice for $\y_0$ we conclude that
\begin{equation}
(\max_{\x\in\Omega}\rho_0(\x))^{-1}\leq \frac{\J_\y(t,\x)}{\rho(t,\x)}\leq (\min_{\x\in\Omega}\rho_0(\x))^{-1},\label{J/rho}
\end{equation}
and this holds for all $(\x,t)\in \Omega\times [0,\infty)$. In particular, we see that, $\y(t,\cdot)$ is a diffeomorphism from $\Omega$ onto $\Omega_\y$, for any $t\geq 0$, as long as the density is positive and finite. Let us recall that that, in fact, $\Omega_\y=\Omega$ is the 2-dimensional torus and that $\y(t,\cdot)$ is a change of variables from $\Omega$ onto itself. However we distinguish the domain of the Lagrangian coordinate with the sub-index $\y$ as we will deal with Sobolev norms in the different coordinates.

Moreover, defining the \emph{deformation gradient} $\E$ as $\E(\x,t):=\nabla_{\x}\y(\x,t)$, it can be shown that its entries $E_{ij}$ satisfy the following
\begin{equation}
\begin{cases}
\frac{d}{dt} E_{ij}(t,\x)+\sum_{k=1}^n\frac{\partial E_{ij}}{\partial x_k}(t,\x) u_k(t,\x) + \sum_{k=1}^nE_{ik}(t,\x)\frac{\partial u_k}{\partial x_j}(t,\x) = 0\\
E_{i,j}(t,0)=\delta_{ij}.
\end{cases}\label{defgrad}
\end{equation}
where $\uu=(u_1,...,u_n)$.

Indeed, this follows by taking the gradient in \eqref{ydef} and differentiating the resulting equation with respect to $t$.

Multiplying \eqref{defgrad} by $2m|E_{ij}|^{2m-2}E_{ij}$, for any given integer $m>2$, and integrating by parts we get
\begin{align*}
\frac{d}{dt}\int_\Omega |E_{ij}|^{2m}d\x&=\int_\Omega |E_{ij}|^{2m}\Div \uu d\x - 2m\int_\Omega |E_{ij}|^{2m-2}E_{ij}\sum_{k=1}^n E_{ik}\frac{\partial u_k}{\partial x_j}d\x &\\
          &\leq C ||\nabla \uu (t)||_{L^\infty(\Omega)}\int_\Omega |\E|^{2m}d\x,\label{ELp}
\end{align*}
where $C>0$ depends only on on $m$. Taking the sum over $1\leq i,j\leq 2$ we obtain
\begin{equation}
\frac{d}{dt}\int_\Omega |\E|^{2m}d\x\leq C ||\nabla \uu (t)||_{L^\infty(\Omega)}\int_\Omega |\E|^{2m}d\x.\label{ELpeq}
\end{equation}

In fact, realizing that \eqref{defgrad} is a transport equation we can obtain, through the method of characteristics, the following $L^\infty$ estimate
\begin{equation}
\|\E(t) \|_{L^\infty(\Omega)}\leq \exp\left\{ \int_0^t \|\nabla\uu(s)\|_{L^\infty(\Omega)} ds \right\}.\label{LinftyE}
\end{equation}

Furthermore, through a similar reasoning to that of the proof of lemma \ref{higheu} applied to equation \eqref{defgrad}, we can prove the following.
\begin{lemma} \label{regE}
	Let $m\geq 1$ and assume that $\uu \in L^2(0,T;H^{m+1}(\Omega))$, with $\nabla \uu \in L^1(0,T;L^\infty(\Omega))$.
	
	Then, $\mathbf{E}\in L^\infty(0,T;H^m(\Omega))$ and we have the estimate
	\begin{align}
	\Vert \mathbf{E}(t) \Vert_{H^m(\Omega)} \leq \phi(T,\|\uu\|_{L^1(0,T;H^{m+1}(\Omega))},\|\nabla\uu\|_{L^1(0,T;L^\infty(\Omega))}),
	\end{align}
	for all $0< t < T$.
\end{lemma}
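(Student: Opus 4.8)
The plan is to follow exactly the inductive scheme used in the proof of Lemma~\ref{higheu}, since \eqref{defgrad} is structurally a linear transport equation for $\E$ with the same shape as the continuity equation: the transport operator is $\partial_t + \uu\cdot\nabla$, and the inhomogeneous term $\sum_k E_{ik}\partial u_k/\partial x_j$ is bilinear in $(\E,\nabla\uu)$, just as $\rho\,\nabla\Div\uu$ is bilinear in $(\rho,\nabla^2\uu)$ in \eqref{continuity}. So the base case uses \eqref{LinftyE} together with an $H^1$ estimate, and the inductive step differentiates $m$ times and closes the estimate with Gronwall.

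First I would record the $L^\infty$ bound \eqref{LinftyE} and the $L^{2m}$ bounds coming from \eqref{ELpeq} (via Gronwall these give $\|\E(t)\|_{L^{2m}(\Omega)}\le C$, with $C$ depending on $\|\nabla\uu\|_{L^1(0,T;L^\infty)}$). Then, for the induction, apply $\nabla^m$ to \eqref{defgrad}, take the $L^2$ inner product with $\nabla^m\E$, and integrate by parts. The term from $\nabla^m(\uu\cdot\nabla\E)$ produces, after integration by parts on the top-order piece, a harmless $\|\nabla\uu\|_\infty\|\nabla^m\E\|_{L^2}^2$ plus commutator terms $[\nabla^m,\uu\cdot\nabla]\E$ which are controlled in $L^2$ by $\|\uu\|_{H^{m+1}}\|\E\|_{H^m}$ once $H^{m-1}$ (equivalently $H^m$ for the relevant factors) is known to be an algebra — i.e.\ for $m\ge 2$ directly, and for $m=1$ by the explicit splitting $\|\nabla(\uu\cdot\nabla\E)\|_{L^2}\le \|\nabla\uu\|_\infty\|\nabla\E\|_{L^2}+\|\nabla^2\uu\|_{L^2}\|\E\|_\infty$ together with \eqref{LinftyE}. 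Similarly the source term $\nabla^m(\sum_k E_{ik}\partial_j u_k)$ is estimated in $L^2$ by $C_m(\|\E\|_\infty\|\uu\|_{H^{m+1}}+\|\E\|_{H^m}\|\nabla\uu\|_\infty)$, using the algebra property and \eqref{LinftyE}, plus, in the $m=1$ case, an $L^4$ bound on $\nabla\E$ obtained exactly as in \eqref{rhoW14} (multiply the $\nabla$-differentiated \eqref{defgrad} by $|\nabla\E|^2\nabla\E$ and integrate). Putting these together yields a differential inequality of the form $\tfrac{d}{dt}\|\nabla^m\E\|_{L^2}^2\le C(1+\|\nabla\uu\|_\infty)\|\nabla^m\E\|_{L^2}^2 + (\text{lower-order, already-bounded terms}) + C\|\uu\|_{H^{m+1}}^2$; since $\nabla\uu\in L^1(0,T;L^\infty)$ and $\uu\in L^2(0,T;H^{m+1})\subset L^1(0,T;H^{m+1})$, Gronwall's inequality and the induction hypothesis (the case $m-1$, giving control of $\|\E\|_{H^{m-1}}$) close the estimate and give $\E\in L^\infty(0,T;H^m)$ with the stated bound.

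The main obstacle — though a mild one — is the bookkeeping of the commutator and product terms at top order so that no derivative of $\uu$ beyond order $m+1$ and no derivative of $\E$ beyond order $m$ appears on the right-hand side; this is precisely where integration by parts on the transport term and the algebra property of $H^{m-1}$ (for $m\ge 3$), or the ad hoc $L^4$/$L^\infty$ splittings (for $m=1,2$), are essential, mirroring the two-case structure already used in Lemma~\ref{higheu}. Since the paper explicitly says the argument runs "through a similar reasoning to that of the proof of Lemma~\ref{higheu}", I would simply indicate these parallels rather than rewrite the computation in full, noting only the one genuine difference: the source term here is first order in $\uu$ (it involves $\nabla\uu$, not $\nabla^2\uu$), so the estimate is actually slightly easier, which is why the $\|\uu\|_{L^2(0,T;W^{2,4})}$-type dependence present in Lemma~\ref{higheu} does not reappear, and the bound depends only on $\|\uu\|_{L^1(0,T;H^{m+1})}$ and $\|\nabla\uu\|_{L^1(0,T;L^\infty)}$ as stated.
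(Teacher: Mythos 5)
Your proposal is correct and follows the same route the paper intends: the proof of Lemma~\ref{regE} is omitted in the paper precisely because it mirrors that of Lemma~\ref{higheu}, and your induction (base case via \eqref{LinftyE}, an $L^4$ bound on $\nabla\E$ analogous to \eqref{rhoW14} for $m=2$, the algebra property for $m\geq 3$, integration by parts on the transport term, and Gronwall) is exactly that argument transplanted to \eqref{defgrad}. One small correction to your closing remark: the source $\E\,\nabla\uu$ has the same differential order in $\uu$ as $\rho\,\Div\uu$, so after one differentiation both produce $\nabla^2\uu$ and the $m=2$ step still requires $\int_0^t\Vert\nabla^2\uu\Vert_{L^4}\,ds$; the reason the $W^{2,4}$ norm does not appear in the statement of Lemma~\ref{regE} is not that the estimate is easier, but that here the dependence on $\Vert\uu\Vert_{L^1(0,T;H^{m+1}(\Omega))}$ is allowed to enter arbitrarily through $\phi$ (and $H^{m+1}\hookrightarrow W^{2,4}$ in two dimensions for $m\geq 2$), whereas in \eqref{Hmrho} the top-order norm must enter only affinely.
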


We omit the proof as it goes by the same lines as the proof of lemma \ref{higheu}, once that we realize that the structure of the equations \eqref{continuity} and \eqref{defgrad} is very similar. 

To conclude this subsection, let us also analyse the inverse deformation gradient $\mathbf{B}(t,\y):=\frac{\partial \x}{\partial \y}(t,\y)$, where $\x(t,\y)$ is the inverse of the Lagrangian coordinate.

From the relation $\x(t,\y(t,\z))=\z$ and using the definition of the Lagrangian coordinate it can be shown that $\mathbf{B}$ satisfies the equation
\begin{equation}
\begin{cases}
\frac{d}{dt}\mathbf{B}(t,\y)=\nabla_\x\uu(t,\x(t,\y)) \mathbf{B}(t,\y), &(t,\y)\in .(0,T)\times \Omega_\y\\
\mathbf{B}(0,\y)= Id,
\end{cases}\label{eqB(u)}
\end{equation} 
where $Id$ is the $2\times 2$ identity matrix. Indeed, it suffices to realize that relation \eqref{ydef} implies that $\x(t,\y)=\Phi(t,\y)$.

Using this equation it can be easily shown that
\begin{equation}
\| \mathbf{B}(t)\|_{L^\infty(\Omega_\y)}\leq C\exp\left( \int_0^t \|\nabla_\x \uu(s)\|_{L^\infty(\Omega)}ds \right),\label{LinftyB(u)}
\end{equation}
for all $0\leq t\leq T$, provided that $\nabla\uu\in L^1(0,T;L^\infty(\Omega))$. Moreover we have the following result regarding the regularity of $\mathbf{B}$.
\begin{lemma}\label{regB}
	Let $m\geq 1$ and assume that $\uu \in L^2(0,T;H^{m+1}(\Omega))$, with $\nabla \uu \in L^1(0,T;L^\infty(\Omega))$.
	
	Then, $\mathbf{B}\in L^\infty(0,T;H^m(\Omega_\y))$ and we have the estimate
	\begin{align}
	\Vert \mathbf{B}(t) \Vert_{H^m(\Omega_y)} \leq \phi(T,\|\uu\|_{L^2(0,T;H^{m+1}(\Omega))},\|\nabla\uu\|_{L^1(0,T;L^\infty(\Omega))}),\label{aprioriB(u)}
	\end{align}
	for all $0< t < T$.
\end{lemma}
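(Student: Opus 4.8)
We would mirror the proof of Lemma~\ref{regE} (itself modelled on Lemma~\ref{higheu}), the one new feature being that \eqref{eqB(u)} is posed in the Lagrangian variable $\y$ and that its coefficient matrix $A(t,\y):=(\nabla_\x\uu)(t,\x(t,\y))$ is a \emph{composition} of $\nabla_\x\uu$ with the inverse Lagrangian map $\x(t,\cdot)=\Phi(t,\cdot)$. Two preliminary observations neutralize this composition. First, since $\x(t,\cdot)$ is a bijection of $\Omega$ onto itself, $\|A(t)\|_{L^\infty(\Omega_\y)}=\|\nabla\uu(t)\|_{L^\infty(\Omega)}$. Second, $\det\mathbf{B}$ solves $\tfrac{d}{dt}\det\mathbf{B}=(\Div\uu)(t,\x(t,\y))\,\det\mathbf{B}$ with $\det\mathbf{B}(0,\cdot)\equiv 1$, so $\det\mathbf{B}(t,\y)=\exp\!\big(\int_0^t(\Div\uu)(s,\x(s,\y))\,ds\big)$ is pinched between $\exp(\pm\|\nabla\uu\|_{L^1(0,T;L^\infty)})$ (a rephrasing of \eqref{J/rho}); hence $\x(t,\cdot)$ is a uniformly non-degenerate change of variables and the change-of-variables formula yields $\|(\nabla_\x^k\uu)(t,\cdot)\circ\x(t,\cdot)\|_{L^p(\Omega_\y)}\le\phi\big(\|\nabla\uu\|_{L^1(0,T;L^\infty)}\big)\,\|\nabla_\x^k\uu(t)\|_{L^p(\Omega)}$ for all $k\ge 0$ and $1\le p\le\infty$. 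Combining this with \eqref{LinftyB(u)} and the Fa\`a di Bruno formula --- in which $\nabla_\y^kA$ is a sum of terms $\big((\nabla_\x^{1+j}\uu)\circ\x\big)\,P_j$, with $P_j$ a polynomial in the derivatives $\nabla_\y^i\mathbf{B}$ of orders $i\le k-1$ --- we can control every $\y$-derivative $\nabla_\y^kA$ in terms of $\|\uu(t)\|_{H^{k+1}(\Omega)}$ and the lower-order Sobolev norms of $\mathbf{B}$ only.

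With these tools the proof proceeds by induction on $m$, exactly as in Lemma~\ref{higheu}. For $m=1$: differentiate \eqref{eqB(u)} in $\y$, obtaining $\partial_t\nabla_\y\mathbf{B}=A\,\nabla_\y\mathbf{B}+(\nabla_\y A)\,\mathbf{B}$ with $\nabla_\y A=\big((\nabla_\x^2\uu)\circ\x\big)\,\mathbf{B}$; multiply by $\nabla_\y\mathbf{B}$, integrate over $\Omega_\y$, use $\|A\|_{L^\infty}=\|\nabla\uu\|_{L^\infty}$, the bound \eqref{LinftyB(u)} for $\|\mathbf{B}\|_{L^\infty}$ and $\|(\nabla_\x^2\uu)\circ\x\|_{L^2(\Omega_\y)}\le\phi(\cdots)\|\uu(t)\|_{H^2(\Omega)}$; since $\nabla_\y\mathbf{B}(0,\cdot)=0$, Gronwall's inequality together with $\|\uu\|_{L^1(0,T;H^2)}\le\sqrt{T}\,\|\uu\|_{L^2(0,T;H^2)}$ gives \eqref{aprioriB(u)} for $m=1$. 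For $m=2$, since $H^1(\Omega)$ is not an algebra in two dimensions, we first establish --- exactly as for \eqref{rhoW14} --- an $L^4$ bound $\frac{d}{dt}\|\nabla_\y\mathbf{B}\|_{L^4}\le C\big(\|\nabla\uu\|_{L^\infty}\|\nabla_\y\mathbf{B}\|_{L^4}+\|(\nabla_\x^2\uu)\circ\x\|_{L^4}\,\|\mathbf{B}\|_{L^\infty}^2\big)$ and use the embedding $H^3(\Omega)\hookrightarrow W^{2,4}(\Omega)$; then in the $\nabla_\y^2$-energy estimate the terms $\nabla_\y A\cdot\nabla_\y\mathbf{B}$ and $(\nabla_\y^2 A)\mathbf{B}$ are bounded in $L^2(\Omega_\y)$ by $\|\uu(t)\|_{H^3(\Omega)}$, $\|\mathbf{B}\|_{L^\infty}$ and $\|\nabla_\y\mathbf{B}\|_{L^4}$, so Gronwall again closes the estimate. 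Finally, for $m\ge 3$, $H^{m-1}(\Omega_\y)$ is an algebra embedded in $L^\infty$ and $H^{m+1}(\Omega)\hookrightarrow W^{2,\infty}(\Omega)$; applying $\nabla_\y^m$ to \eqref{eqB(u)}, taking the $L^2$ inner product with $\nabla_\y^m\mathbf{B}$ and writing $\nabla_\y^m(A\mathbf{B})=A\,\nabla_\y^m\mathbf{B}+[\nabla_\y^m,A]\mathbf{B}$, the first term is absorbed into $\|\nabla\uu\|_{L^\infty}\|\nabla_\y^m\mathbf{B}\|_{L^2}^2$, while the Kato--Ponce-type bound $\|[\nabla_\y^m,A]\mathbf{B}\|_{L^2}\lesssim\|\nabla_\y A\|_{L^\infty}\|\nabla_\y^{m-1}\mathbf{B}\|_{L^2}+\|\nabla_\y^m A\|_{L^2}\|\mathbf{B}\|_{L^\infty}$, combined with the control of $\|\nabla_\y A\|_{L^\infty}$ and $\|\nabla_\y^m A\|_{L^2}$ from the first paragraph and the induction hypothesis at level $m-1$, gives $\tfrac{d}{dt}\|\nabla_\y^m\mathbf{B}\|_{L^2}\le a(t)\|\nabla_\y^m\mathbf{B}\|_{L^2}+b(t)$ with $a,b\in L^1(0,T)$ having the required dependence; since $\nabla_\y^m\mathbf{B}(0,\cdot)=0$, Gronwall yields \eqref{aprioriB(u)}.

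The routine parts --- the Gronwall arguments and the Gagliardo--Nirenberg interpolations --- are as in Lemmas~\ref{highparabol}, \ref{higheu} and \ref{regE}. The one point that requires care, and where the structure of the problem genuinely enters, is the handling of the composition $(\nabla_\x\uu)\circ\x$: one must check that the Jacobian of $\x(t,\cdot)$ stays bounded away from $0$ and $\infty$ with a constant depending only on $\|\nabla\uu\|_{L^1(0,T;L^\infty)}$ (so that the constants in \eqref{aprioriB(u)} have the asserted dependence), and one must organize the Fa\`a di Bruno expansion so that $\nabla_\y^kA$ contains derivatives of $\mathbf{B}$ of order at most $k-1\le m-1$ --- already controlled by the induction hypothesis --- and derivatives of $\uu$ of order at most $k+1\le m+1$ --- covered by the hypothesis $\uu\in L^2(0,T;H^{m+1}(\Omega))$. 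This is precisely what keeps the inductive step non-circular.
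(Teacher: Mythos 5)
Your proposal is correct and follows essentially the same route as the paper: the energy estimate obtained by applying $\nabla_\y^m$ to \eqref{eqB(u)}, induction on $m$, the transfer of $L^p$-norms between coordinates via the non-degeneracy of the Jacobian of $\x(t,\cdot)$ (Remark~\ref{Lp_yLp_x}), the chain-rule expansion of $\nabla_\y^k\big[(\nabla_\x\uu)\circ\x\big]$ involving only $\nabla_\y^i\mathbf{B}$ with $i\le k-1$, and a separate $L^4$ bound on $\nabla_\y\mathbf{B}$ to handle $m=2$. The only cosmetic deviations are that for $m\ge 3$ you invoke a Kato--Ponce commutator bound where the paper uses the algebra property of $H^m$ together with \eqref{ineqB2}, and for $m=2$ you derive the $L^4$ estimate on $\nabla_\y\mathbf{B}$ directly (as the paper itself does in the proof of Corollary~\ref{equivnorms}) rather than interpolating it against $\Vert\mathbf{B}\Vert_{H^2(\Omega_\y)}$.
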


In order to prove this result we need the following observation about $L^q$ and Sobolev norms in different coordinates.
\begin{remark}\label{Lp_yLp_x}
(i) From equation \eqref{liouvilleJ_y} we have that
\begin{equation}
\| J_\y(t)\|_{L^\infty(\Omega)}\leq C\exp\left( \int_0^t \|\Div\uu(s)\|_{L^\infty} ds\right).
\end{equation}
Therefore, for any function $f\in L^q(\Omega)$ we have
\[
\| f(\x(t,\cdot))\|_{L^q(\Omega_\y)} = \| |f(\cdot)|^q\J_\y(t)\|_{L^1(\Omega)}^{1/q}\leq \phi(\|\nabla\uu\|_{L^1(0,T;L^\infty(\Omega))}) \|f\|_{L^q(\Omega)}. 
\]

(ii) On the other hand, a straightforward combinatorial argument (which can be made rigorous using induction) shows that there are universal polynomials $Q_m^k$ of degree $k$, $k=1,...,m$, such that, for any smooth enough function $f$
\begin{align}
  |\nabla_\y^m & f(\x(t,\y))|\leq \nonumber \\ &\sum_{k=1}^m |\nabla_\x^k f(\x(t,\y))|Q_m^k\big(|\mathbf{B}(t,\y)|,|\nabla_\y\mathbf{B}(t,\y)|,...,|\nabla^{m-k}\mathbf{B}(t,\y)|\big). \label{chainrule}
\end{align}
In fact, $Q_m^1 (z_0,z_1,...,z_{m-1})=z_{m-1}$. In particular, if $f\in H^m(\Omega)$, Gagliardo-Nirenberg inequality implies that,
\begin{align*}
&\| \nabla_\y^m f(\x(t,\cdot)\|_{L^2(\Omega)}\\
&\leq \phi(\|\nabla\uu\|_{L^1(0,T;L^\infty(\Omega))})\Big(\|\mathbf{B}\|_{L^\infty(\Omega)}^m\|\nabla_\x^m f(t)\|_{L^2(\Omega)} +\\
&\qquad+ \|\nabla f(t)\|_{L^\infty(\Omega)}\|\nabla_\y^{m-1}\mathbf{B}\|_{L^2(\Omega)} + \|f(t)\|_{H^{m}(\Omega)}(1 + \|\mathbf{B}(t)\|_{H^{m-1}(\Omega_\y)}^m)\Big),
\end{align*}
where we have also used part (i) above.

(iii) When $m=2$ we may refine this estimate as
\begin{align*}
&\| \nabla_\y^2 f(\x(t,\cdot)\|_{L^2(\Omega)}\\
&\leq \phi(\|\nabla\uu\|_{L^1(0,T;L^\infty(\Omega))})\Big(\|\mathbf{B}\|_{L^\infty(\Omega)}^m\|\nabla_\x^2 f(t)\|_{L^2(\Omega)} ++ \|\nabla f(t)\|_{L^4(\Omega)}\|\nabla_\y\mathbf{B}\|_{L^4(\Omega)} \Big),
\end{align*}
\end{remark}

\begin{proof}[Proof of lemma \ref{regB}]
Applying the operator $\nabla_\y^m$ to equation \eqref{eqB(u)}, multiplying by $\nabla^m\mathbf{B}$, integrating over $\Omega_\y$ and using Young's inequality we have
\begin{equation}
\frac{d}{dt}\int_{\Omega_\y} |\nabla_\y^m\mathbf{B}(t,\y)|^2 d\y \leq \| \nabla_\y^m [\nabla_x \uu(t,\x(t,\cdot))\cdot \mathbf{B}(t,\cdot)] \|_{L^2(\Omega_\y)}^2 + \| \nabla_\y^m\mathbf{B}(t) \|_{L^2(\Omega_\y)}^2.\label{regBprelim}
\end{equation}

With this observation at hand, we proceed with the proof of \eqref{aprioriB(u)} by induction in $m$. 

Note that, for $m=1$,
\begin{align*}
&\| \nabla_\y [\nabla_x \uu(t,\x(t,\cdot))\cdot \mathbf{B}(t,\cdot)] \|_{L^2(\Omega_\y)}\\
&\leq \|\mathbf{B}(t)\|_{L^\infty(\Omega_\y)}^2\| \nabla_\x^2\uu(t,\x(t,\cdot)) \|_{L^2(\Omega_\y)}+\|\nabla_x\uu(t)\|_{L^\infty(\Omega)}\|\nabla_\y\mathbf{B}(t)\|_{L^2(\Omega_\y)},
\end{align*}
so that, using remark \ref{Lp_yLp_x} and inequality \eqref{regBprelim}, Gronwall's inequality yields the result.

For $m=2$ we see that
\begin{align*}
&\| \nabla_\y^2 [\nabla_x \uu(t,\x(t,\cdot)) \mathbf{B}(t,\cdot)\|_{L^2(\Omega_\y)}\\
&\leq \|\mathbf{B}(t)\|_{L^\infty(\Omega_\y)}^3\| \nabla_\x^3\uu(t,\x(t,\cdot)) \|_{L^2(\Omega_\y)}+\|\nabla_\x\uu(t,\x(t,\cdot))\|_{L^\infty(\Omega_\y)}\|\nabla_\y^2\mathbf{B}\|_{L^2(\Omega_\y)}\\
&\qquad\qquad+2\|\nabla_x^2\uu(t,\x(t,\cdot))\|_{L^4(\Omega_\y)}\|\mathbf{B}\|_{L^\infty(\Omega_\y)}\|\nabla_\y\mathbf{B}(t)\|_{L^4(\Omega_\y)}\\
&\leq \|\mathbf{B}(t)\|_{L^\infty(\Omega_\y)}^3\| \nabla_\x^3\uu(t,\x(t,\cdot)) \|_{L^2(\Omega_\y)}+\|\nabla_\x\uu(t,\x(t,\cdot))\|_{L^\infty(\Omega_\y)}\|\nabla_\y^2\mathbf{B}\|_{L^2(\Omega_\y)}\\
&\qquad+\phi(\|\nabla\uu\|_{L^1(0,T;L^\infty(\Omega))})\|\uu(t)\|_{H^3(\Omega)}\|\mathbf{B}\|_{L^\infty(\Omega_\y)}\|\mathbf{B}(t)\|_{H^2(\Omega_\y)}.
\end{align*}
Thus, plugging this inequality in \eqref{regBprelim} and using \eqref{LinftyB(u)} together with the case $m=1$ and Gronwall's inequality we conclude the result for $m=2$.

When $m\geq 3$ we use the fact that the space $H^m$ is an algebra so that
\begin{equation}
\| \nabla_\y^m [\nabla_x \uu(t,\x(t,\cdot))\cdot \mathbf{B}(t,\cdot)] \|_{L^2(\Omega_\y)}\leq \|\nabla_x \uu(t,\x(t,\cdot))\|_{H^m(\Omega_\y)}\|\mathbf{B}(t)\|_{H^m(\Omega_\y)}.\label{ineqB1}
\end{equation}

Since in this case $\|\nabla_\x^2 \uu\|_{L^\infty(\Omega)}\leq C \|\uu\|_{H^{m+1}(\Omega)}$, then using part (ii) of remark \ref{Lp_yLp_x} we have that
\begin{align}
&\|\nabla_x \uu(t,\x(t,\cdot))\|_{H^m(\Omega_\y)}\nonumber\\
&\leq \phi(\|\nabla\uu\|_{L^1(0,T;L^\infty(\Omega))},\|\mathbf{B}\|_{L^\infty((0,T)\times\Omega_\y)},\|\mathbf{B}\|_{L^\infty(0,T;H^{m-1}(\Omega_\y))})\|\uu(t)\|_{H^{m+1}(\Omega)}.\label{ineqB2}
\end{align}

Gathering \eqref{ineqB1} and \eqref{ineqB2} in \eqref{regBprelim}, the result follows by Gronwall's inequality and the induction hypothesis.
\end{proof}

As a corollary of lemmas \ref{regE} and \ref{regB}, we have the following equivalence between Sobolev norms in Eulearean and Lagrangian coordinates.
\begin{corollary}\label{equivnorms}
Let $m\geq 1$ and assume that $\uu \in L^2(0,T;H^{m}(\Omega))$, with $\nabla \uu \in L^1(0,T;L^\infty(\Omega))$. Then, there are positive constants $C_1$ and $C_2$ depending on $T$, $\|\uu\|_{L^2(0,T;H^{m}(\Omega))}$ and $\|\nabla\uu\|_{L^1(0,T;L^\infty(\Omega))}$ such that
\begin{equation}
\| f(\x(t,\cdot))\|_{H^m(\Omega_\y)}\leq C_1\|f\|_{H^m(\Omega)}\leq C_2\| f(\x(t,\cdot))\|_{H^m(\Omega_\y)},\label{equivnormseq}
\end{equation}
for any $f\in H^m(\Omega)$.

When $m=2$, the conclusion remains valid, if $\nabla^2\uu\in L^2(0,T;L^4(\Omega))$.  
\end{corollary}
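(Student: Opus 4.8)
The plan is to derive both inequalities in \eqref{equivnormseq} from one and the same estimate, applied once with the Eulerian and Lagrangian coordinates in one order and once in the other. Fix $t\in[0,T]$. For the left-hand inequality one controls $\|f\circ\x(t,\cdot)\|_{H^m(\Omega_\y)}$ through the change of variables $\x(t,\cdot)\colon\Omega_\y\to\Omega$, whose relevant gradient is the inverse deformation gradient $\mathbf{B}=\partial\x/\partial\y$ governed by \eqref{eqB(u)}; for the right-hand inequality one writes $f=g\circ\y(t,\cdot)$ with $g:=f\circ\x(t,\cdot)\in H^m(\Omega_\y)$ and controls $\|g\circ\y(t,\cdot)\|_{H^m(\Omega)}$ through $\y(t,\cdot)\colon\Omega\to\Omega_\y$, whose relevant gradient is the deformation gradient $\E$ governed by \eqref{defgrad}. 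Note that the sole hypothesis $\nabla\uu\in L^1(0,T;L^\infty(\Omega))$ already renders the flow bi-Lipschitz with constants $\exp\big(\pm\int_0^t\|\nabla\uu(s)\|_{L^\infty(\Omega)}\,ds\big)$, so both $\x(t,\cdot)$ and $\y(t,\cdot)$ are diffeomorphisms of the torus and, because $\E\mathbf{B}=\mathrm{Id}$, the Jacobians $\det\E$ and $\det\mathbf{B}$ are bounded above and below, uniformly in $(t,\cdot)$, by quantities $\phi(\|\nabla\uu\|_{L^1(0,T;L^\infty(\Omega))})$; in particular no hypothesis on $\rho$ intervenes.

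It therefore suffices to prove that, for every $h\in H^m(\Omega)$,
\[
\|h\circ\x(t,\cdot)\|_{H^m(\Omega_\y)}\le \phi\big(T,\|\uu\|_{L^2(0,T;H^m(\Omega))},\|\nabla\uu\|_{L^1(0,T;L^\infty(\Omega))}\big)\,\|h\|_{H^m(\Omega)},
\]
together with the analogous inequality for the map $\y(t,\cdot)$. I would argue exactly along the lines of Remark~\ref{Lp_yLp_x}: the $L^2$ contribution is part (i) of that remark, which uses only $\|\J_\y(t)\|_{L^\infty}\le\exp\big(\int_0^t\|\Div\uu(s)\|_{L^\infty}\,ds\big)$ from \eqref{liouvilleJ_y} (and, for the companion inequality, $\|\det\mathbf{B}(t)\|_{L^\infty}\le C\|\mathbf{B}(t)\|_{L^\infty(\Omega_\y)}^2$ combined with \eqref{LinftyB(u)}); while for the derivatives of orders $1\le k\le m$ one applies the higher-order chain rule \eqref{chainrule}, takes $L^2(\Omega_\y)$ norms, passes to the $\z$-variables by part (i) of the remark, and interpolates the polynomial coefficients $Q_m^k(|\mathbf{B}|,\dots,|\nabla^{m-k}\mathbf{B}|)$ by Gagliardo--Nirenberg exactly as in part (ii). This yields a bound by $\phi(\|\nabla\uu\|_{L^1(0,T;L^\infty)})$ times powers of $\|\mathbf{B}\|_{L^\infty((0,T)\times\Omega_\y)}$ and $\|\mathbf{B}\|_{L^\infty(0,T;H^{m-1}(\Omega_\y))}$ multiplying $\|h\|_{H^m(\Omega)}$; by \eqref{LinftyB(u)} the former is $\le\phi(\|\nabla\uu\|_{L^1(0,T;L^\infty)})$, and by Lemma~\ref{regB} applied with $m-1$ in place of $m$ the latter is $\le\phi(T,\|\uu\|_{L^2(0,T;H^m)},\|\nabla\uu\|_{L^1(0,T;L^\infty)})$. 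The inequality for $\y(t,\cdot)$ is obtained verbatim, with \eqref{LinftyE} and Lemma~\ref{regE} (applied with $m-1$ in place of $m$ and $\|\uu\|_{L^1(0,T;H^m)}\le T^{1/2}\|\uu\|_{L^2(0,T;H^m)}$) replacing \eqref{LinftyB(u)} and Lemma~\ref{regB}; applying it to $g=f\circ\x(t,\cdot)$ and using $g\circ\y(t,\cdot)=f$ gives the right-hand inequality in \eqref{equivnormseq}.

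For $m\ge 3$ this is routine, since $H^{m-1}$ is then a Banach algebra continuously embedded in $L^\infty(\Omega)$, so every term produced by \eqref{chainrule} is dominated by $\|h\|_{H^m}$ times powers of $\|\mathbf{B}\|_{H^{m-1}}$ (resp.\ $\|\E\|_{H^{m-1}}$); and for $m=1$ only the term $(\nabla_\x h)\,\mathbf{B}$ appears, controlled by $\|\mathbf{B}\|_{L^\infty}$ alone. The single delicate point — and the reason for the additional hypothesis in the case $m=2$ — is precisely the endpoint $m=2$ in two dimensions: there $\nabla^{m-1}\mathbf{B}=\nabla\mathbf{B}$ lies only in $L^2$, and the generic estimate of Remark~\ref{Lp_yLp_x}(ii) carries the unusable term $\|\nabla h\|_{L^\infty}\|\nabla\mathbf{B}\|_{L^2}$, as $H^1$ does not embed into $L^\infty$ in dimension two. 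I would instead invoke the refined estimate of Remark~\ref{Lp_yLp_x}(iii), which replaces this by $\|\nabla h\|_{L^4}\|\nabla\mathbf{B}\|_{L^4}$: now $\|\nabla h\|_{L^4}\le C\|h\|_{H^2}$ by the two-dimensional embedding $H^1\hookrightarrow L^4$, while $\|\nabla\mathbf{B}(t)\|_{L^4(\Omega_\y)}$ is estimated by differentiating \eqref{eqB(u)}, testing against $|\nabla\mathbf{B}|^2\nabla\mathbf{B}$ and repeating the computation that produced \eqref{rhoW14}, which gives $\|\nabla\mathbf{B}(t)\|_{L^4(\Omega_\y)}\le\phi\big(T,\|\nabla\uu\|_{L^1(0,T;L^\infty)}\big)\big(1+\|\nabla^2\uu\|_{L^2(0,T;L^4(\Omega))}\big)$ — this being exactly where the hypothesis $\nabla^2\uu\in L^2(0,T;L^4(\Omega))$ is used. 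The entirely analogous $L^4$-bound for $\nabla\E$, obtained from \eqref{defgrad} in the same way, supplies the companion estimate for the inequality involving $\y(t,\cdot)$, thereby closing the case $m=2$. In short, the main obstacle is this two-dimensional endpoint; all the rest is the higher-order chain rule together with the a priori bounds on $\mathbf{B}$ and $\E$ already furnished by Lemmas~\ref{regB} and~\ref{regE}.
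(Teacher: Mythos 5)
Your proposal is correct and follows essentially the same route as the paper: both rest on Remark~\ref{Lp_yLp_x} together with Lemmas~\ref{regB} and~\ref{regE} (applied at order $m-1$), and both isolate the endpoint $m=2$ by switching to the refined estimate of part (iii) and deriving the $L^4$ bound on $\nabla\mathbf{B}$ (resp.\ $\nabla\E$) by testing the differentiated equation \eqref{eqB(u)} (resp.\ \eqref{defgrad}) against $|\nabla\mathbf{B}|^2\nabla\mathbf{B}$, which is exactly where $\nabla^2\uu\in L^2(0,T;L^4(\Omega))$ enters. Your write-up is simply a more explicit version of the paper's terse argument, including the correct observation that the second inequality is obtained symmetrically via $\E$.
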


\begin{proof}
When $m\geq 3$ we may use Remark \ref{Lp_yLp_x} and the fact that $\|\nabla f\|_{L^\infty(\Omega)}\leq C\|\nabla f\|_{H^m(\Omega)}$. For the case $m=2$ we use part (iii) of Remark \ref{Lp_yLp_x} instead. Here we have to deal with the $L^4$ norm of $\nabla \mathbf{B}$. However, applying the operator $\nabla_\y$ to equation \eqref{eqB(u)} multiplying by $|\nabla_\y \mathbf{B}|^2\nabla_\y\mathbf{B}$ and integrating, we see that
\begin{align*}
&\frac{d}{dt}\|\nabla\mathbf{B}(t)\|_{L^4(\Omega_\y)}^4 \leq C\phi(\|\nabla_\x\uu\|_{L^1(0,T;L^\infty(\Omega))})\Big(\|\mathbf{B}\|_{L^\infty(\Omega_\y))}^2\|\nabla_\x^2\uu\|_{L^4(\Omega)}\|\nabla_\y\mathbf{B}\|_{L^4(\Omega_\y)} \\
&\qquad\qquad\qquad\qquad\qquad\qquad+\|\nabla_\x\uu\|_{L^\infty(\Omega)}\|\nabla_\y\mathbf{B}\|_{L^4(\Omega_\y)}^4 \Big),
\end{align*}
thus, obtaining the following bound, through Gronwall's inequality
\[
\|\nabla_\y\mathbf{B}(t)\|_{L^4(\Omega_\y)}\leq \phi(\|\nabla_\x\uu\|_{L^1(0,T;L^\infty(\Omega))},\|\nabla\uu\|_{L^2(0,T;L^4(\Omega))}).
\]

This takes care of the first inequality in \eqref{equivnormseq}. The second inequality follows by similar considerations involving the deformation gradient $\mathbf{E}$ instead of $\mathbf{B}$.
\end{proof}

\subsection{The NLS in terms of $\uu$}

Finally, let us analyse the following NLS equation in the Lagrangian coordinates $(t,\y)\in [0,T]\times\Omega_\y$ associated to the given velocity field $\uu$
\begin{equation} \label{schr}
\begin{cases} 
i\psi_t + \Delta_\y \psi = |\psi|^2\psi +g(v) h'(|\psi|^2) \psi &\text{on } (0,T)\times\Omega_\y;\\
\psi = \psi_0 &\text{on } \{t=0\}\times\Omega_\y,
\end{cases}
\end{equation}
with periodic boundary conditions, where $g$ and $h$ are as in \eqref{assumptiongh} and $v$ is the specific volume, which is related $\uu$ through the the relation
\[v(\y(\x,t),t) = \frac{1}{\rho(\x,t)};\]
being $\rho$ the solution of \eqref{continuity}.

\begin{lemma}\label{existschr}
Let $m\geq 2$ and assume that
	\begin{equation*}
	\begin{cases}
	\rho_0 \in H^m(\Omega), &\text{with } 0<\rho_0 <\infty \\
	\uu  \in  L^2(0,T;H^{m+1}(\Omega)), &\text{with } \nabla \uu\in L^1(0,T;L^\infty(\Omega))
	\end{cases}
	\end{equation*}
so that, according to lemma \ref{higheu}, $\rho \in L^\infty(0,T;H^m(\Omega))$.

	Then, there is a unique solution $\psi \in C([0,T);H^m(\Omega_\y)) \cap C^1([0,T); H^{m-2}(\Omega_\y))$ of \eqref{schr}.	
	
    Moreover, we have the following a priori estimate
    \begin{align}
	&\Vert \psi(t) \Vert_{H^m(\Omega_\y)} \nonumber\\
	&\hspace{2mm}\leq \phi\big(T, \Vert \psi_0 \Vert_{H^m}, \Vert \rho_0 \Vert_{H^m}, (\min \rho_0)^{-1},\Vert \uu \Vert_{L^2(0,T;H^{m}(\Omega))},\|\nabla\uu\|_{L^1(0,T;L^\infty(\Omega))}  \big) \nonumber\\
	&\qquad\qquad\qquad\qquad\qquad\qquad\cdot(1+\|\uu\|_{L^2(0,T;H^{m+1}(\Omega))}) \label{aprioripsi(u)}
	\end{align}
    for all $0\leq t\leq T$.
\end{lemma}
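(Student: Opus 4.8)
The plan is to establish the a priori estimate \eqref{aprioripsi(u)} first — assuming a smooth solution exists — and then obtain existence and uniqueness by a Brézis--Gallouet type argument, exactly as advertised in the introduction. The starting point is the conservation laws of the NLS. Multiplying \eqref{schr} by $\overline\psi$ and taking imaginary parts gives conservation of the $L^2(\Omega_\y)$ norm; multiplying by $\overline\psi_t$ and taking real parts gives the energy identity (the long-wave part of \eqref{difE}): $\tfrac{d}{dt}\big(\tfrac12\|\nabla_\y\psi\|_{L^2}^2 + \tfrac14\|\psi\|_{L^4}^4 + \alpha\int g(v)h(|\psi|^2)\,d\y\big) = \alpha\int g(v)_t\, h(|\psi|^2)\,d\y$. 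Since $g$ and $h$ are bounded with compactly supported derivatives (hypothesis \eqref{assumptiongh}), and since $v_t$ is controlled via $v(\y(\x,t),t)=1/\rho(\x,t)$ together with the continuity equation \eqref{continuity} and the bounds \eqref{uniformrho(u)} — so that $\|v_t\|_{L^\infty}\leq \phi((\min\rho_0)^{-1},\|\nabla\uu\|_{L^1(0,T;L^\infty)})\|\Div\uu\|_{L^\infty}$ — Gronwall yields a bound on $\|\psi(t)\|_{H^1(\Omega_\y)}$ in terms of $T$, $\|\psi_0\|_{H^1}$, $(\min\rho_0)^{-1}$ and $\|\nabla\uu\|_{L^1(0,T;L^\infty)}$.

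For the higher-order estimate one proceeds by induction on $m\geq 2$, applying $\partial_\y^\alpha$ with $|\alpha|\le m$ to \eqref{schr}, pairing with $\overline{\partial_\y^\alpha\psi}$ and taking real parts. The Schrödinger term $i\partial_t$ contributes $\tfrac12\tfrac{d}{dt}\|\partial^\alpha\psi\|_{L^2}^2$, the Laplacian term integrates by parts to zero against its own derivative, and what remains is to control $\|\partial^\alpha(|\psi|^2\psi)\|_{L^2}$ and $\|\partial^\alpha(g(v)h'(|\psi|^2)\psi)\|_{L^2}$. The cubic term is handled by the Gagliardo--Nirenberg/Moser estimates for $H^m$ (an algebra for $m\ge 2$ in two dimensions after accounting for the lowest derivative, using the $L^\infty$ control of $\psi$ from $H^2\hookrightarrow L^\infty$). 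The coupling term is the delicate one: expanding by the chain and product rules produces factors $\partial_\y^j v$, and one needs $v(\cdot,t)=\rho(\x(t,\cdot),t)^{-1}\in H^m(\Omega_\y)$. This is where the regularity theory of the Lagrangian change of variables enters: by Lemma \ref{higheu} we have $\rho\in L^\infty(0,T;H^m(\Omega))$, hence $1/\rho\in L^\infty(0,T;H^m(\Omega))$ by the no-vacuum bound \eqref{uniformrho(u)} and the fact that $H^m$ is closed under composition with smooth functions bounded away from singularities; then Corollary \ref{equivnorms} (valid for $m\ge 2$ since $\uu\in L^2(0,T;H^{m+1})$ implies $\nabla^2\uu\in L^2(0,T;L^4)$) transfers this to $\|v(t)\|_{H^m(\Omega_\y)}\le C\|1/\rho(t)\|_{H^m(\Omega)}$, with constants depending on the data as in the statement. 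Plugging these bounds in, absorbing the top-order term $\varepsilon\|\nabla_\y\psi\|_{H^m}^2$-type quantities (there are none, since the Laplacian is conservative here — actually the estimate closes without absorption, the right-hand side being at most $\phi(\cdots)(1+\|\psi(t)\|_{H^m}^2)$ plus a term linear in $\|\uu(t)\|_{H^{m+1}}^2$ coming from $\|v(t)\|_{H^m}$), Gronwall's inequality delivers \eqref{aprioripsi(u)}, the factor $(1+\|\uu\|_{L^2(0,T;H^{m+1})})$ reflecting the single appearance of the highest $\uu$-norm through $v$.

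For existence, I would follow Brézis--Gallouet \cite{BrGa}: consider the linear Schrödinger flow $e^{it\Delta_\y}$ on $\Omega_\y$ and set up the Duhamel integral equation $\psi(t)=e^{it\Delta_\y}\psi_0 - i\int_0^t e^{i(t-s)\Delta_\y}\big(|\psi|^2\psi + g(v)h'(|\psi|^2)\psi\big)(s)\,ds$; a contraction mapping argument in $C([0,T'];H^m(\Omega_\y))$ for small $T'$ gives a local solution (here $m\ge 2$ ensures the nonlinearity is locally Lipschitz on $H^m$, the coupling term being smooth in $\psi$ and Lipschitz in $v$, which is fixed and in $L^\infty(0,T;H^m(\Omega_\y))$). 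The a priori bound \eqref{aprioripsi(u)} — in particular the Brézis--Gallouet logarithmic refinement controlling $\|\psi\|_{L^\infty}$ by $\|\psi\|_{H^2}$ up to a logarithm of $\|\psi\|_{H^m}$, which is what prevents finite-time blow-up of the $H^m$ norm in two dimensions — allows the local solution to be continued to all of $[0,T]$. Uniqueness is immediate from the $L^2$ (or $H^1$) energy estimate for the difference of two solutions with the same data, using the Lipschitz bounds on the nonlinearities. The time regularity $\psi\in C^1([0,T);H^{m-2}(\Omega_\y))$ then follows by reading it off the equation $\psi_t = -i\Delta_\y\psi + \text{(lower order)}\in C([0,T);H^{m-2})$. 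The main obstacle is the coupling term: getting $\|v(t)\|_{H^m(\Omega_\y)}$ under control requires the full machinery of Subsection 2.4 — the equivalence of Sobolev norms under the Lagrangian map — and this is precisely why the hypothesis $\uu\in L^2(0,T;H^{m+1})$ (rather than $H^m$) is needed, and why the estimate is only linear, not polynomial, in the top $\uu$-norm.
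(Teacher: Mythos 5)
Your overall architecture --- a priori estimate, then Duhamel/contraction for local existence, then continuation via the blow-up alternative --- is the paper's, and your $H^1$ energy identity with the control of $\int g(v)_t\, h(|\psi|^2)\,d\y$ through the continuity equation and \eqref{J/rho} is exactly Lemma \ref{H1-H2(u)}(i). For $m\ge 3$ you run direct $\partial^\alpha$-energy estimates where the paper instead iterates the Duhamel formula \eqref{schr1}, using that $S(t)$ is an isometry on $H^m$; the two routes are interchangeable, since both reduce to the same Moser/chain-rule bounds on $\|\,|\psi|^2\psi\|_{H^m}$ and $\|g(v)h'(|\psi|^2)\psi\|_{H^m}$, with $\|g(v)\|_{H^m(\Omega_\y)}$ controlled through Lemma \ref{higheu} and Corollary \ref{equivnorms} exactly as you indicate (this is \eqref{gvHm} in the paper), and with the induction hypothesis supplying $\|\psi\|_{L^\infty}\le C\|\psi\|_{H^{m-1}}$ so that Gronwall closes linearly.

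The genuine gap is at the base case $m=2$, where your claim that ``the estimate closes without absorption, the right-hand side being at most $\phi(\cdots)(1+\|\psi(t)\|_{H^m}^2)$'' fails. At $m=2$ there is no induction hypothesis bounding $\|\psi\|_{L^\infty}$: the algebra/Moser estimate gives $\|\,|\psi|^2\psi\|_{H^2}\lesssim\|\psi\|_{L^\infty}^2\|\psi\|_{H^2}\lesssim\|\psi\|_{H^2}^3$, and the resulting differential inequality $\frac{d}{dt}\|\psi\|_{H^2}^2\lesssim\|\psi\|_{H^2}^4$ is of Riccati type, so it only yields a bound on a short time interval, not on all of $[0,T]$. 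You do invoke Br\'ezis--Gallouet, but you assign it the role of ``continuing the local solution,'' whereas it is needed to close the $H^2$ a priori bound itself. The paper's resolution (Lemma \ref{H1-H2(u)}(ii)) is different in mechanism from your $\partial^\alpha$ scheme: differentiate the equation in time, estimate $\|\psi_t\|_{L^2}$ (comparable to $\|\Delta_\y\psi\|_{L^2}$ by the equation and the $H^1$ bound), insert the logarithmic inequality $\|\psi\|_{L^\infty}\le C\bigl(1+\sqrt{\log(1+\|\psi\|_{H^2})}\bigr)$ to arrive at $G'\le CG(1+\log(1+G))$, and integrate to a double-exponential bound. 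Without this logarithmic device placed \emph{inside} the $m=2$ estimate, your induction has no starting point. A smaller slip: pairing $\partial^\alpha$ of the equation with $\overline{\partial^\alpha\psi}$, one must take \emph{imaginary} parts, not real parts, to produce $\tfrac12\tfrac{d}{dt}\|\partial^\alpha\psi\|_{L^2}^2$ and kill the Laplacian contribution (which equals $-\|\nabla\partial^\alpha\psi\|_{L^2}^2$, a real and nonzero quantity, after integration by parts).
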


As it is well known, provided that $\psi$ is sufficiently regular (for instance $\psi \in C^1([0,T];L^2(\Omega_\y)) \cap C([0,T];H^2(\Omega_\y))$), \eqref{schr} is equivalent to the integral equation given by the Duhamel formula
\begin{equation} \label{schr1}
\psi(t) = S(t) \psi_0 + \frac{1}{i} \int_0^t S(t-t') \Big( |\psi(t')|^2\psi(t') + g(v(t'))h'(|\psi(t')|^2)\psi(t')  \Big) dt',
\end{equation}
where we have denoted by $S(t) = \exp \Big( \frac{1}{i}\Delta_\y t \Big)$ the group of isometries associated to the linear Schr\"{o}dinger equation. One can also introduce the momentary notation
\begin{equation}
F(t,\psi) = \frac{1}{i} \Big( |\psi|^2\psi + g(v)h'(|\psi|^2)\psi \Big), \label{defF}
\end{equation}
so to make \eqref{schr1} look more like an ordinary differential equation solution:
$$\psi(t) = S(t)\psi_0 + \int_{0}^{t}S(t-t')F\big(t',\psi(t')\big)dt'.$$

Lemma \ref{existschr} will follow from the following classical result.

\begin{lemma} \label{pseudosegal}
	Let $H$ be a complex Hilbert space and $A : D(A) \subset H \rightarrow H$ a nonnegative self-adjoint operator. For some $s \geq 1$ and $T>0$, let also $F : [0,T] \times D(A^s) \rightarrow D(A^s)$ be a continuous transformation (where $D(A)$ is endowed with its graph norm), which is bounded and Lipschitz continuous on its second variable on bounded sets of $[0,T] \times D(A^s)$.
	
	Then, if $S(t) = \exp \{ - i A  t  \}$ is the group of isometries associated to $-iA$ and $\psi_0 \in D(A^s)$, there exists a $T^*>0$ and a unique and maximal solution $\psi \in C([0,T^*);D(A^s)) \cap C^1([0,T^*);D(A^{s-1}))$ of the integral equation
	$$\psi(t) = S(t)\psi_0 + \int_{0}^{t}S(t-t')F\big(t',\psi(t')\big)dt'.$$
	Moreover, one has the following alternative: either $T^* = T$ and $\psi$ may be extended to $t = T$; or $\Vert \psi(t) \Vert_{D(A^s)} \rightarrow \infty$ as $t \rightarrow T^*$.
\end{lemma}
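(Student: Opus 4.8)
The plan is to prove Lemma~\ref{pseudosegal} by a standard Picard iteration / contraction mapping argument on the Duhamel operator, followed by a continuation argument to produce the maximal solution and the blow-up alternative. I first fix $\psi_0 \in D(A^s)$ and set $R := 2\Vert \psi_0 \Vert_{D(A^s)}$. Since $F$ is bounded on bounded subsets of $[0,T] \times D(A^s)$, there is a constant $\kappa_R$ bounding $\Vert F(t,\phi) \Vert_{D(A^s)}$ for $t \in [0,T]$ and $\Vert \phi \Vert_{D(A^s)} \leq R$, and a Lipschitz constant $\ell_R$ such that $\Vert F(t,\phi_1) - F(t,\phi_2) \Vert_{D(A^s)} \leq \ell_R \Vert \phi_1 - \phi_2 \Vert_{D(A^s)}$ on the same set. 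On the complete metric space
\[
X_{\tau} := \{ \psi \in C([0,\tau]; D(A^s)) : \Vert \psi(t) \Vert_{D(A^s)} \leq R \text{ for all } t \in [0,\tau] \}
\]
(with the sup-in-time $D(A^s)$ metric) I define $(\Phi \psi)(t) := S(t)\psi_0 + \int_0^t S(t-t') F(t',\psi(t'))\, dt'$. The key point is that $S(t)$ is a group of isometries of $H$ that commutes with $A$, hence restricts to an isometry of $D(A^s)$ in its graph norm; consequently $\Vert (\Phi\psi)(t) \Vert_{D(A^s)} \leq \Vert \psi_0 \Vert_{D(A^s)} + \tau \kappa_R$ and $\Vert (\Phi\psi_1)(t) - (\Phi\psi_2)(t) \Vert_{D(A^s)} \leq \tau \ell_R \sup_{[0,\tau]} \Vert \psi_1 - \psi_2 \Vert_{D(A^s)}$. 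Choosing $\tau = \tau(R) > 0$ small enough that $\tau \kappa_R \leq \Vert \psi_0 \Vert_{D(A^s)}$ and $\tau \ell_R \leq 1/2$, the map $\Phi$ sends $X_\tau$ into itself and is a contraction, so it has a unique fixed point $\psi \in X_\tau$; a direct estimate using strong continuity of $t \mapsto S(t)\phi$ in $D(A^{s})$ (which holds because $S$ is strongly continuous on $H$ and commutes with the closed operator $A^s$) and continuity of $F$ shows $\psi \in C([0,\tau];D(A^s))$, and then $\psi(t) - \psi_0 = (S(t)-I)\psi_0 + \int_0^t S(t-t')F(t',\psi(t'))dt'$ differentiates (using that $S(t)-I$ maps $D(A^s)$ into $D(A^{s-1})$ with $\frac{d}{dt}S(t)\phi = -iAS(t)\phi$ and that the integrand is continuous into $D(A^{s-1})$) to give $\psi \in C^1([0,\tau]; D(A^{s-1}))$ solving $i\psi_t = A\psi + iF(t,\psi)$, equivalently the integral equation.

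For uniqueness on any common interval $[0,\sigma]$ and for the maximal solution, I argue as follows. If $\psi^{(1)}, \psi^{(2)}$ both solve the integral equation on $[0,\sigma]$, then on any subinterval on which both stay bounded by some $\rho$, Gronwall applied to $g(t) := \sup_{[0,t]}\Vert \psi^{(1)} - \psi^{(2)}\Vert_{D(A^s)}$ via $g(t) \leq \ell_\rho \int_0^t g(t')dt'$ forces $g \equiv 0$; a standard connectedness argument propagates this to all of $[0,\sigma]$. To build the maximal solution, I let $T^* := \sup\{\sigma \in (0,T] : \text{the integral equation has a (necessarily unique) solution in } C([0,\sigma];D(A^s))\}$ and glue: by uniqueness the solutions on different intervals agree, yielding $\psi \in C([0,T^*);D(A^s)) \cap C^1([0,T^*);D(A^{s-1}))$ (the $C^1$ regularity being local on $[0,T^*)$, hence inherited). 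For the dichotomy: suppose $T^* < T$ and $\liminf_{t \to T^*} \Vert \psi(t)\Vert_{D(A^s)} < \infty$ is false to be excluded; rather, suppose $\Vert \psi(t)\Vert_{D(A^s)}$ does \emph{not} tend to $\infty$, so there is a sequence $t_n \uparrow T^*$ with $\Vert \psi(t_n)\Vert_{D(A^s)} \leq \rho$ for some finite $\rho$. Applying the local existence result with initial datum $\psi(t_n)$ gives a uniform existence time $\tau(2\rho) > 0$ independent of $n$; taking $n$ with $t_n + \tau(2\rho) > T^*$ and invoking uniqueness to glue, one extends the solution past $T^*$, contradicting maximality. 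Hence either $T^* = T$ and $\psi$ extends to $t=T$ (same extension argument using a bound at a point near $T$, together with boundedness of $F$ forcing $\psi(t)$ to have a limit in $D(A^s)$ as $t \to T^-$ via the Cauchy criterion on the integral formula), or $\Vert \psi(t)\Vert_{D(A^s)} \to \infty$ as $t \to T^*$.

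I expect the only genuinely delicate point to be the bookkeeping of regularity in the graph norms: one must check carefully that $S(t)$ commutes with $A^s$ and is strongly continuous on $D(A^s)$, that $(S(t)-I)$ gains one power (maps $D(A^s) \to D(A^{s-1})$ continuously in $t$), and that the Duhamel integral $\int_0^t S(t-t')F(t',\psi(t'))dt'$ is $C^1$ into $D(A^{s-1})$ with the expected derivative — all of which follow from the spectral theorem for the nonnegative self-adjoint operator $A$ but should be stated cleanly. The hypothesis $s \geq 1$ is exactly what makes $D(A^{s-1})$ meaningful and the loss-of-one-derivative in the time derivative acceptable. Everything else — the contraction estimate, Gronwall uniqueness, gluing, and the blow-up alternative — is the textbook semilinear-evolution scheme (as in Segal's theorem, whence the lemma's name), so I would present those steps briskly and concentrate the exposition on the commutation/continuity facts for $S(t)$ on $D(A^s)$.
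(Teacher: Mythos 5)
Your proposal is correct and follows essentially the same route as the paper: a Picard iteration/fixed-point argument for local existence on a small time interval (the paper sums the series $\sum M L^n t^n/n!$ \`a la Cauchy--Lipschitz rather than shrinking $\tau$ to force a contraction constant $\le 1/2$, but this is the same scheme), followed by Gronwall uniqueness and the standard continuation argument exploiting that the local existence time depends only on the size of the datum to obtain the blow-up alternative. The regularity bookkeeping you flag ($S(t)$ commuting with $A^s$ and preserving the graph norm) is exactly the point the paper also relies on, so there is nothing missing.
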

\begin{proof}
	We have to show that the successive approximation sequence
	\begin{align*}
	\psi^{(0)}(t) &= S(t)\psi_0,\\
	\psi^{(n)}(t) &= S(t)\psi_0 + \int_0^t F\big(t',\psi^{(n-1)}(t')\big)dt'
	\end{align*}
	converges in $C([0,T');D(A^s)$ for a sufficiently small $T'$. This is elementary, as we briefly sketch below. 
	
	Let $X$ be the complete metric space of the $\psi : [0,T'] \rightarrow D(A^s)$ whose range lie on the ``tube'' $K = \{ S(t)\psi_0 + y; 0\leq t\leq T \text{ and } \Vert y \Vert_{D(A^s)} \leq \delta  \}$, where $\delta > 0$ is arbitrary and $T'>0$ will be chosen later. If $M > 0$ is such that $\Vert F(t,\psi) \Vert_{D(A^s)} \leq M$ for $0 \leq t \leq T$ and $\psi \in K$, pick $T'$ being the least between $T$ and $\delta/M$, so that the sequence $\{ \psi^{(n)} \}$ given above is entirely in $X$ (here we used that $I + A^s$ commutes with $S(t)$).
	
	Now the rest of the argument reminisces the proof of the Cauchy--Lipschitz--Picard theorem. If
	\begin{align*}
	\delta^{(0)}(t) &= \Vert u^{(0)}(t) \Vert_{D(A^s)},\\
	\delta^{(n)}(t) &= \Vert u^{(n)}(t) - u^{(n-1)}(t) \Vert_{D(A^s)},
	\end{align*}
	we can show by induction that
	\begin{align*}
	\delta^{(0)}(t) &\leq Mt, \text { and }\\
	\delta^{(n)}(t) &\leq M \frac{L^n t^n}{n!},
	\end{align*}
	so that $\operatorname{Sup}_t \sum \delta^{n} (t) < \infty$. By the Weierstrass M-test, $\psi^{(n)}$ converges uniformly in $C([0,T'], D(A^s))$ to a $\psi$, which is \textit{a fortiori} solution of the integral equation. Observe that a similar reasoning provides the uniqueness of solutions. 
	
	Regarding the blow up criteria we can argue as follows: if, by absurd, $T^* < T$ and $\Vert \psi(t) \Vert_{D(A^s)}$ remained bounded, one could apply the same argument above at initial time $T^* - \varepsilon$, for a small $\varepsilon > 0$, to obtain a solution defined at $[0,T'+\varepsilon]$; this is due to the fact that the lifespan of the local solution above depends only how large is $F$ on tubes like $K$. This contradiction settles the proof.
\end{proof}

Observe that, if $H = L^2(\Omega_\y)$ and $A : D(A) \subset H \rightarrow H$ is given by $Au = - \Delta u$ with $D(A) = H^2(\Omega_\y)$, then $A$ is nonnegative and self-adjoint; and for any $s \geq 0$, $D(A^s) = H^{2s}(\Omega)$ (with equivalent norms). In particular, if $s = m/2$, $D(A^s) = H^m$.
	
	Since $m \geq 2$, $H^m$ is an algebra and then the Sobolev inequalities and the equivalence of the Sobolev norms in Eulerean and Lagrangian coordinates established in the last subsection imply that $F$  given by \eqref{defF} is in accordance with the hypotheses of lemma \ref{pseudosegal}. Thence, the preceding abstract proposition guarantees the existence and uniqueness of local solutions. To conclude the proof of lemma \ref{existschr}, we have to show that the a priori estimate \eqref{aprioripsi(u)} holds so that the blow up criterion of lemma \ref{pseudosegal} is met and the local solutions are actually defined in the whole $[0,T]$. 

To prove \eqref{aprioripsi(u)} we proceed by induction, wherein the case $m=2$, contained in the following lemma, is an adaptation of an estimate on the Nonlinear Schr\"{o}dinger equation due to Brezis and Gallouet \cite{BrGa}.

\begin{lemma}\label{H1-H2(u)}
Let $\psi$ be a solution of equation \eqref{schr}. Then

(i) \begin{align}
&\int_{\Omega_\y}\left( \frac{1}{2}|\nabla_\y\psi(t)|^2 + \frac{1}{4}|\psi(t)|^4 +\alpha g(v)h(|\psi|^2) \right)d\y\nonumber \\
 &\qquad\qquad\qquad\qquad\leq \phi(T,\|\psi_0\|_{H^1(\Omega_\y)},(\min_{\x\in\Omega}\rho_0)^{-1},\|\Div \uu\|_{L^2(0,T;L^2(\Omega))}).\label{H1psi(u)}
\end{align}

(ii)\begin{align}
&\int_{\Omega_\y} \big( |\psi_t|^2+|\Delta_\y \psi|^2 \big) d\y\nonumber\\
&\qquad\qquad\qquad\qquad\leq \phi(T,\|\psi_0\|_{H^2(\Omega_\y)},(\min_{\x\in\Omega}\rho_0)^{-1},\|\Div \uu\|_{L^2(0,T;L^2(\Omega))}).\label{H2psi(u)}
\end{align}
In particular, for a.e. $t\in [0,T]$
\begin{equation}
\max_{\y\in\Omega_\y}|\psi(t)|\leq \phi(T,\|\psi_0\|_{H^2(\Omega_\y)},(\min_{\x\in\Omega}\rho_0)^{-1},\|\Div \uu\|_{L^2(0,T;L^2(\Omega))}).\label{uniformpsi(u)}
\end{equation}
\end{lemma}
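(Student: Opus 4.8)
The plan is to prove the two energy-type estimates by multiplying the NLS equation \eqref{schr} by appropriate quantities and integrating over $\Omega_\y$, being careful that the coefficient $g(v)$ is time-dependent (since $v = 1/\rho$ evolves along particle paths). For part (i), I would multiply \eqref{schr} by $\overline{\psi}_t$ and take the real part. Since $\mathrm{Re}(i\psi_t\overline{\psi}_t) = 0$, the $\Delta_\y\psi$ term gives $-\frac12\frac{d}{dt}\int|\nabla_\y\psi|^2$, the $|\psi|^2\psi$ term gives $\frac14\frac{d}{dt}\int|\psi|^4$, and the coupling term $\alpha g(v)h'(|\psi|^2)\psi$ paired with $\overline{\psi}_t$ contributes $\alpha\frac{d}{dt}\int g(v)h(|\psi|^2) - \alpha\int g(v)_t h(|\psi|^2)$ after using $\frac{d}{dt}h(|\psi|^2) = h'(|\psi|^2)\,2\mathrm{Re}(\psi\overline{\psi}_t)$. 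This reproduces exactly the time-derivative structure on the right-hand side of the energy identity \eqref{difE}. The only non-sign-definite term is the ``error'' $-\alpha\int_{\Omega_\y} \partial_t(g(v))\, h(|\psi|^2)\,d\y$. To control it, I would note that along particle paths $\frac{d}{dt}v(\y(t,\x),t) = v\,\Div\uu$ (which follows from the continuity equation, just as \eqref{J/rho=const}), so $|\partial_t(g(v))| \le C\,|g'(v)|\,v\,|\Div\uu\circ\x(t,\cdot)|$; since $\mathrm{supp}\,g'$ is compact in $(0,\infty)$ and $h$ is bounded on the (uniformly bounded, by \eqref{H1psi(u)} bootstrapped or a priori) range of $|\psi|^2$, the integrand is bounded by $C\|\Div\uu(t)\circ\x(t,\cdot)\|_{L^1(\Omega_\y)}$, and then by Remark~\ref{Lp_yLp_x}(i) this is controlled by $\phi(\|\nabla\uu\|_{L^1(0,T;L^\infty)})\|\Div\uu(t)\|_{L^2(\Omega)}$. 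Integrating in time and using $\int_0^T\|\Div\uu\|_{L^2}^2\,dt' \le T\,\|\Div\uu\|_{L^2(0,T;L^2)}^2$ (or Cauchy--Schwarz to get an $L^1_t L^2_x$ bound) together with Gronwall closes \eqref{H1psi(u)}; conservation of the $L^2$ norm $\|\psi(t)\|_{L^2} = \|\psi_0\|_{L^2}$ (obtained by multiplying by $\overline{\psi}$ and taking imaginary parts) supplies the missing low-order piece.

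For part (ii), I would multiply \eqref{schr} by $\overline{\psi}_{tt}$ — or, more robustly, differentiate the equation in $t$ and pair with $\overline{\psi}_t$, which is cleaner given the limited regularity. Differentiating \eqref{schr} in time gives $i\psi_{tt} + \Delta_\y\psi_t = \partial_t(|\psi|^2\psi) + \alpha\,\partial_t\big(g(v)h'(|\psi|^2)\psi\big)$; pairing with $\overline{\psi}_t$ and taking real parts kills the $i\psi_{tt}$ term and produces $-\frac12\frac{d}{dt}\int|\nabla_\y\psi_t|^2$ from the Laplacian, plus nonlinear terms that are cubic/bounded-in-$|\psi|$ times $|\psi_t|^2$ and one term $\alpha\int \partial_t(g(v))h'(|\psi|^2)\psi\overline{\psi}_t$. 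Using the $L^\infty$ bound on $\psi$ from \eqref{H1psi(u)} (via Gagliardo--Nirenberg/Brezis--Gallouet in 2D, or the embedding once $H^1$ is bounded), all these are dominated by $C(1 + \|\Div\uu\|_{L^\infty})\int|\psi_t|^2 + (\text{lower order})$, and Gronwall gives a bound on $\int|\psi_t|^2$. Then the equation itself, $\Delta_\y\psi = i\psi_t - |\psi|^2\psi - \alpha g(v)h'(|\psi|^2)\psi$, upgrades this to a bound on $\|\Delta_\y\psi\|_{L^2}$, hence on $\|\psi\|_{H^2(\Omega_\y)}$ by elliptic regularity on the torus. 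Alternatively — and this is the route actually suggested by the reference to Brezis--Gallouet — one can work directly at the $H^2$ level: multiply the equation by $\Delta_\y^2\overline{\psi}$ (or $-\Delta_\y\overline{\psi}_t$), take real parts, and use the two-dimensional logarithmic Sobolev (Brezis--Gallouet) inequality $\|\psi\|_{L^\infty} \le C(1 + \|\psi\|_{H^1})\sqrt{\log(1 + \|\psi\|_{H^2})}$ to absorb the supercritical nonlinearity; this converts a potentially superlinear Gronwall into a linear one in $\log(1+\|\psi\|_{H^2}^2)$, yielding a (double-exponential in $T$) bound. The uniform bound \eqref{uniformpsi(u)} then follows immediately from \eqref{H2psi(u)} and the Sobolev embedding $H^2(\Omega_\y)\hookrightarrow C(\Omega_\y)$ in two dimensions.

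The main obstacle is the handling of the time derivative of the coupling coefficient $g(v)$ and the supercritical cubic nonlinearity simultaneously. For the $g(v)_t$ term one must not differentiate $v$ in space at all — only along the flow, where $\dot v = v\,\Div\uu$ — so that only $\|\Div\uu\|_{L^2_x}$ (not $\|\uu\|_{H^2}$) enters the $H^1$ and $H^2$ estimates for $\psi$, which is exactly what the statement of Lemma~\ref{H1-H2(u)} promises (the bounds depend only on $\|\Div\uu\|_{L^2(0,T;L^2)}$, not on higher norms of $\uu$). For the $|\psi|^2\psi$ term at the $H^2$ level, the Brezis--Gallouet trick is essential in 2D because the cubic nonlinearity is $H^1$-critical but $H^2$-supercritical in a scaling sense; without the logarithmic refinement one cannot close a linear differential inequality and the a priori bound would be unavailable, breaking the bootstrap to global (in $[0,T]$) existence. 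The compact-support hypotheses \eqref{assumptiongh} on $g'$ and $h'$ are what make the coupling terms harmless: $g'$ supported away from $v=0$ means $|g'(v)|v$ is bounded wherever it is nonzero, and $h,h'$ bounded means no growth from the $\psi$-dependence in the coupling. Everything else is a routine Gronwall argument once these two points are dispatched.
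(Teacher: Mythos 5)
Your overall strategy coincides with the paper's: multiply by $\overline{\psi}_t$ and take real parts for (i), differentiate in time and estimate $\|\psi_t\|_{L^2}$ with a Brezis--Gallouet logarithmic Gronwall for (ii), then recover $\|\Delta_\y\psi\|_{L^2}$ from the equation. However, there are two concrete problems. First, in (i) you discard the factor $v$ too early: after bounding $|g'(v)|v\le C$ you are left with $\int_{\Omega_\y}|\Div\uu\circ\x(t,\cdot)|\,d\y$ and you invoke Remark~\ref{Lp_yLp_x}(i), whose Jacobian bound costs a factor $\phi(\|\nabla\uu\|_{L^1(0,T;L^\infty(\Omega))})$. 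That dependence is explicitly excluded by the statement of the lemma, and it is not cosmetic: the lemma is reused verbatim as Lemma~\ref{aprioripsi} in the a priori estimates of Section~\ref{S4}, where only the energy bound $\|\Div\uu\|_{L^2(0,T;L^2)}$ is available and $\|\nabla\uu\|_{L^1(0,T;L^\infty)}$ is one of the very last quantities to be controlled (Lemma~\ref{lemmanablarho}); your version would make the bootstrap circular. The fix is the paper's: keep the $v$, write $\int_{\Omega_\y}|g'(v)|\,v\,|\Div\uu\circ\x|\,d\y=\int_{\Omega}|g'(1/\rho)|\,|\Div\uu|\,\tfrac{\J_\y}{\rho}\,d\x$ and use \eqref{J/rho}, i.e. $\J_\y/\rho\le(\min_{\x}\rho_0)^{-1}$ from \eqref{J/rho=const}, so that only $(\min\rho_0)^{-1}$ and $\|\Div\uu\|_{L^2}$ enter. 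The same remark applies to your $(1+\|\Div\uu\|_{L^\infty})$ factor in part (ii), which should be $\|\Div\uu\|_{L^2(\Omega)}^2$ via Young's inequality and the same Jacobian identity.

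Second, in (ii) your primary computation is incorrect as written: pairing the time-differentiated equation with $\overline{\psi}_t$ and taking \emph{real} parts does not kill $i\psi_{tt}\overline{\psi}_t$ (its real part is $-\operatorname{Im}(\psi_{tt}\overline{\psi}_t)$, which is not a nice derivative) and gives $-\int|\nabla_\y\psi_t|^2$, not $-\tfrac12\tfrac{d}{dt}\int|\nabla_\y\psi_t|^2$. One must take \emph{imaginary} parts, which yields $\tfrac12\tfrac{d}{dt}\int|\psi_t|^2$ and no gradient term -- consistent with the Gronwall on $\int|\psi_t|^2$ you then run. Also, the parenthetical ``or the embedding once $H^1$ is bounded'' is false in two dimensions: $H^1(\Omega_\y)\not\hookrightarrow L^\infty(\Omega_\y)$, and this failure is precisely why the Brezis--Gallouet inequality (together with the equivalence $\|\Delta_\y\psi\|_{L^2}^2=\|\psi_t\|_{L^2}^2+O(1)$ from the equation) is indispensable; your ``alternative'' route and your discussion of the main obstacle correctly identify this, and that is in fact the paper's argument, so you should promote it to the main line and delete the embedding claim.
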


\begin{proof}
Multiply \eqref{schr} by $\overline{\psi_t}$ (the complex conjugate of $\psi_t$), take real part and integrate integrate by parts to obtain
	\begin{align}
	\frac{d}{dt} &\int_{\Omega_\y} \Big\{ \frac{1}{2}|\nabla_\y \psi(\y,t)|^2 + \frac{1}{4} |\psi(\y,t) |^4 \Big\} d\y\nonumber \\
	&= - \int_{\Omega_\y} g(v(\y,t)) \frac{\partial}{\partial t} h (|\psi(\y,t)|^2) d\y\nonumber \\
	&= - \frac{d}{dt}\int_{\Omega_\y} g(v(\y,t)) h (|\psi(\y,t)|^2) d\y  + \int_{\Omega_\y}   h (|\psi(\y,t)|^2) g(v(\y,t))_t d\y.\label{H1psi(u)prelim}
	\end{align}
	
Note that from the definition of $\mathbf{Y}$ we have the conversion formula between Eulerian and Lagrangian coordinates:
\[
 \beta(t,\mathbf{y})_t=(\beta\circ\mathbf{Y}(t,\mathbf{x}))_t+\mathbf{u}\cdot\nabla_{\mathbf{x}}(\beta\circ\mathbf{Y}(t,\mathbf{x})),
\]
or synthetically,
\[
 \beta(t,\mathbf{y})_t=\beta_t(t,\mathbf{x})+\mathbf{u}\cdot\nabla_{\mathbf{x}}\beta(t,\mathbf{x}).
\]
Then, recalling \eqref{assumptiongh} and using the continuity equation \eqref{continuity} together with \eqref{J/rho} we have
\begin{align*}
\int_0^t \int_{\Omega_\y} |g'(v) v_t|^2 d\y ds &= \int_0^t \int_{\Omega} \big|g'(1/\rho) \big( (1/\rho)_t + \uu \cdot \nabla(1/\rho) \big)\big|^2 \J_\y d\x ds\\
   &= \int_0^t \int_{\Omega} \left|\frac{g'(1/\rho)}{\rho} \big( (\log \rho)_t + \uu \cdot \nabla(\log \rho)\big)\right|^2 \J_\y  d\x ds\\
   &= \int_0^t \int_{\Omega} \frac{g'(1/\rho)^2}{\rho} |\Div \uu |^2 \frac{\J_\y}{\rho} d\x ds\\
   &\leq C(\min_{\x\in\Omega}\rho_0)^{-1}\int_0^t\int_\Omega |\Div \uu |^2 d\x ds
\end{align*}

Putting this inequality together with \eqref{H1psi(u)prelim} we obtain \eqref{H1psi(u)} upon integrating in $t$.

Let us now prove \eqref{H2psi(u)}. First, in light of estimate \eqref{H1psi(u)} and using Sobolev's embedding for $H^1$ functions we see from equation \eqref{schr} that there is some $C>0$ that depending on $T$, $\|\psi_0\|_{H^1(\Omega_\y)}$, $(\min_{\x\in\Omega}\rho_0)^{-1}$ and $\|\Div \uu\|_{L^2(0,T;L^2(\Omega))}$ such that
\begin{equation}
\int_{\Omega_\y}|\psi_t|^2d\y - C\leq \int_{\Omega_\y}|\Delta\psi|^2d\y\leq \int_{\Omega_\y}|\psi_t|^2d\y +C.\label{psitDeltapsi(u)}
\end{equation}

Now, we differentiate equation \eqref{E2psi} with respect to $t$ to obtain
\begin{flalign*}
&i\psi_{tt}+\Delta_\y \psi_t = 2|\psi|^2 \psi_t+ \psi^2 \overline{\psi}_t  + \alpha \Big(g'(v)h'(|\psi|^2) v_t \\
&\hspace{40mm}+ g(v)h''(|\psi|^2)(|\psi|^2\psi_t + \psi^2\overline{\psi}_t)+ g(v)h'(|\psi|^2)\psi_t\Big),
\end{flalign*}
where the over line stands for complex conjugation. 

Multiplying this equation by $\overline{\psi}_t$, taking imaginary part, integrating and using Young's inequality we obtain
\begin{flalign}
&\int_{\Omega_\y}|\psi_t|^2d\y - \int_{\Omega_\y}|\psi_{0t}|^2d\y \nonumber\\
&\leq C\int_0^t \left(1+||\psi(t)||_{L^\infty(\Omega_\y)}^2\right) \int_{\Omega_\y}|\psi_t|^2d\y ds + C\int_0^t\int_{\Omega_\y}|g'(v)v_t|^2d\y ds.\label{estpsi(u)t1}&&
\end{flalign}

Let us recall Brezis and Gallouet's inequality (see \cite{BrGa}, {\em cf.} \cite{BrWa}) asserting that there is a constant $C>0$ depending only on $\Omega$ such that
\[
||\omega||_{L^\infty(\Omega)}\leq C\big(1+\sqrt{\log[1+||\omega||_{H^2(\Omega)}]}\big),
\]
for every $\omega\in H^2(\Omega)$ with $||\omega||_{H^1(\Omega)}\leq 1$.

Combining this result with \eqref{H1psi(u)} and \eqref{psitDeltapsi(u)} we see that
\begin{equation}
||\psi(t)||_{L^\infty(\Omega_\y)}\leq C\big(1+\sqrt{\log[1+||\psi_t(t)||_{L^2(\Omega_\y)}]}\big)
\end{equation}

With this, the first term on the right hand side of \eqref{estpsi(u)t1} is bounded by
\[
C\int_0^t \big(1+\log[1+||\psi_t(s)||_{L^2(\Omega_\y)}]\big)||\psi_t(s)||_{L^2(\Omega_\y)}^2ds.
\]

Coming back to \eqref{estpsi(u)t1} we have
\begin{equation}
||\psi_t(t)||_{L^2(\Omega_\y)}^2\leq C+C\int_0^t \big(1+\log[1+||\psi_t(s)||_{L^2(\Omega_\y)}^2]\big)||\psi_t(s)||_{L^2(\Omega_\y)}^2ds.
\end{equation}

Finally, defining
\[
G(t):=1+\int_0^t \big(1+\log[1+||\psi_t(s)||_{L^2(\Omega_\y)}^2]\big)||\psi_t(s)||_{L^2(\Omega_\y)}^2ds,
\]
we see that
\begin{align*}
\frac{d}{dt}G(t)&= \Big(1+\log[1+||\psi_t(t)||_{L^2(\Omega_\y)}^2]\big)||\psi_t(t)||_{L^2(\Omega_\y)}^2ds\\
&\leq CG(t)(1+\log[1+G(t)]).&&
\end{align*}

Thus,
\[
\frac{d}{dt}\log\Big[1+\log [1+G(t)]\Big]\leq C,
\]
which implies that
\begin{equation}
G(t)\leq \exp(\alpha \exp (\beta t)),
\end{equation}
for some $\alpha,\beta>0$ depending only on the initial data (note that they depend on the $H^2$ norm of $\psi_0$).

In order to conclude we point out that this estimate and \eqref{psitDeltapsi(u)} imply \eqref{H2psi(u)}, while \eqref{uniformpsi(u)} follows from the Sobolev embeddings.
\end{proof}

\begin{proof}[Proof of lemma \ref{existschr}]
Having proved the case $m=2$ we proceed by induction in $m\geq 3$. So, let us assume that the result holds for $m-1$, aiming to prove the case $m$. 

By \eqref{schr1} we have that
	\begin{align}
	\Vert \psi(t) \Vert_{H^m(\Omega_\y)} &\leq C  \Big(\Vert \psi_0 \Vert_{H^m(\Omega_\y)} + \int_0^t \big(\Vert \psi(t') \Vert_{L^\infty(\Omega_\y)}^2  \Vert \psi(t') \Vert_{H^m(\Omega_\y)} \nonumber \\ 
	&\qquad\qquad + \Vert g(v(t')) \Vert_{L^\infty(\Omega_\y)} \Vert h(|\psi(t')|^2)\psi(t')\Vert_{H^m(\Omega_\y)} \nonumber\\ 
	&\qquad\qquad + \Vert g(v(t')) \Vert_{H^m(\Omega_\y)} \Vert h'(|\psi(t')|^2)\psi(t')\Vert_{L^\infty(\Omega_\y)} \big)dt'\Big) \label{estschr}
	\end{align}
	
On the one hand, the $L^\infty$-norms appearing in this inequality are easily estimated using the hypotheses on the coupling functions as well as lemma \ref{H1-H2(u)}. On the other hand, by a similar reasoning to that of remark \ref{Lp_yLp_x} we have that
\begin{equation}
\Vert h(|\psi(t)|^2)\psi(t)\Vert_{H^m(\Omega_\y)}\leq \phi(T,h,\|\psi\|_{L^\infty((0,T);H^{m-1}\Omega_\y)})(1+\|\psi(t)\|_{H^m(\Omega_\y)}),\label{hpsiHm}
\end{equation}
and also,
\begin{align}
&\Vert g(v(t)) \Vert_{H^m(\Omega_\y)}\nonumber\\
&\qquad\leq \phi(T,g,\| \nabla\uu \|_{L^1(0,T;L^\infty(\Omega))},\|\rho\|_{L^\infty(0,T;H^{m-1}(\Omega))},\|\mathbf{B}\|_{L^\infty(0,T;H^{m-1})})\nonumber\\
&\qquad\qquad\qquad\qquad\qquad\qquad\qquad\cdot(1+\| \rho\|_{H^m(\Omega)}).\label{gvHm}
\end{align}

Indeed, it suffices to realize that there are universal polynomials $Q_m^k$ of degree $k$, $k=1,...,m$, such that for any two smooth enough functions $f$ and $\zeta$ we have
\[
|\nabla_\y f(\zeta(\y))|\leq \sum_{k=1}^m |f^{(k)}(\zeta)| Q_m^k(|\nabla_\y \zeta|,...,|\nabla_\y^{m-k+1}\zeta|),
\]
where $Q_m^1(z_1,...,z_m)=z_m$ and $Q_m^2$ is of the form $Q_m^2(z_1,...,z_{m-1})=C_m z_1 z_{m-1} + P_m^2(z_1,...,z_{m-2})$ for some constant $C_m$ and another universal polynomial $P_m^2$ of degree $2$. Then, if $m\geq 3$, so that $H^{m}\hookrightarrow W^{1,\infty}$, and all the derivatives of $f$ are bounded then, by the Gagliardo-Nirenberg inequality we have
\[
\| \nabla_\y^{m}f(\zeta)\|_{L^2(\Omega_\y)}\leq \phi(f,..., f^{(m)},\| \zeta\|_{H^{m-1}})(1+\|\zeta\|_{H^m(\Omega_\y)}).
\]

Putting \eqref{estschr}, \eqref{hpsiHm} and \eqref{gvHm} together we obtain the result using the induction hypothesis, lemma \ref{higheu} and Gronwall's inequality.
\end{proof}
	
\section{Local solutions}\label{S3}

Let us state the desired local existence result.
\begin{proposition} \label{localexist}
	Let $m \geq 3$ be an integer and assume, in addition to \eqref{assumptionvisc}, \eqref{assumptionp} and \eqref{assumptiongh}, that $\rho_0$, $\uu_0$, $\HH_0$ and $\psi_0$ satisfy
	\begin{equation}
	\begin{cases}
	\rho_0 &\in H^{m}(\Omega), \nonumber \\
	\uu_0 &\in H^m(\Omega), \\
	\HH_0 &\in H^m(\Omega),\\
	\psi_0 &\in H^m(\Omega_\y). \nonumber \nonumber
	\end{cases}
	\end{equation}
	Moreover, assume that there are constants $0< m_0 < M_0$ such that
	\begin{equation}
	m_0 \leq \rho_0(\x) \leq M_0
	\end{equation}
	for any $\x \in \Omega$.
	
	Then there exists $T^* = T^*(m_0,\Vert \rho_0 \Vert_{H^m}, \Vert \uu_0 \Vert_{H^m}, \|\HH_0\|_{H^m} \Vert \psi_0 \Vert_{H_\y^m})>0$ and a unique strong solution to the system \eqref{E2rho}--\eqref{E2psi} $(\rho, \uu, \psi)$ such that
	\[
	\begin{cases}
	\rho &\in C([0,T^*]; H^m(\Omega))\cap C^1([0,T^*];H^{m-1}(\Omega)), \\
	\uu  &\in L^2(0,T^*;H^{m+1}(\Omega))\cap H^1(0,T^*;H^{m-1}(\Omega)), \\
	\HH  &\in L^2(0,T^*;H^{m+1}(\Omega))\cap H^1(0,T^*;H^{m-1}(\Omega)), \\
	\psi &\in C([0,T^*];H^m(\Omega))\cap C^1( [0,T^*];H^{m-2}(\Omega)). 
	\end{cases}
	\]
\end{proposition}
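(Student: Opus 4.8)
The plan is to realize the local solution as a fixed point of the map $K$ described in the introduction, following the Solonnikov scheme but tracking the extra couplings. First I would fix $m\geq 3$ and a target class of velocity fields. Precisely, I would set up the complete metric space
\[
\mathcal{X}_{T,R} := \Big\{ \uu \in L^2(0,T;H^{m+1}(\Omega))\cap H^1(0,T;H^{m-1}(\Omega)) : \uu(0)=\uu_0,\ \|\uu\|_{\mathcal{X}_T}\le R,\ \nabla\uu\in L^1(0,T;L^\infty(\Omega)) \Big\},
\]
where $\|\cdot\|_{\mathcal{X}_T}$ denotes the natural norm $\big(\int_0^T \|\uu_t\|_{H^{m-1}}^2 + \|\uu\|_{H^{m+1}}^2\,dt\big)^{1/2}$ plus a fixed-time piece; $R$ will be chosen of the order of $\|\uu_0\|_{H^m}$ (plus the data of $\rho_0,\HH_0,\psi_0$) and $T^*$ will be taken small depending only on $m_0$ and the $H^m$-norms of the data. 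For $\uu\in\mathcal{X}_{T,R}$, one solves the continuity equation \eqref{continuity} explicitly by \eqref{rhoeuler}, obtaining $\rho$ with the bounds \eqref{uniformrho(u)} and the $H^m$-estimate \eqref{Hmrho}; one solves the magnetic field equation via Lemma \ref{lemmaH(u)}, obtaining $\HH$ with \eqref{estonH(u)}; one constructs the Lagrangian transformation $\Y$ with the regularity from Lemmas \ref{regE}, \ref{regB} and Corollary \ref{equivnorms}; one solves the NLS \eqref{schr} via Lemma \ref{existschr}, obtaining $\psi\in C([0,T];H^m(\Omega_\y))$; and finally one returns $K\uu=\vv$ as the unique solution of the linear parabolic problem \eqref{E2ulinear}, which is precisely \eqref{problemaparabolico} with $f = -\rho(\uu\cdot\nabla)\vv$'s... no: here $f$ collects $-\nabla P_0 + \alpha\nabla(g'(1/\rho)h(|\psi\circ\Y|^2)\J_\y/\rho)$ and the transport term is absorbed by a trivial modification of Lemma \ref{highparabol} (the estimate is unchanged since $\rho(\uu\cdot\nabla)\vv$ is handled like a lower-order term), so that Lemma \ref{highparabol} gives $\vv\in L^2(0,T;H^{m+1})\cap H^1(0,T;H^{m-1})$ together with \eqref{apriorimparabol}.

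The next step is the \emph{self-map} property: I would combine the above estimates to show that, after possibly shrinking $T$, $K$ maps $\mathcal{X}_{T,R}$ into itself. This is a bookkeeping exercise: the forcing $f$ in \eqref{E2ulinear} is controlled in $L^2(0,T;H^{m-1})$ by $\phi(T,m_0^{-1},\dots)$ times powers of $T$ (coming from the explicit $T$-dependence in Lemmas \ref{higheu}, \ref{lemmaH(u)}, \ref{existschr}), so choosing $R$ large enough relative to the data and then $T$ small enough forces $\|K\uu\|_{\mathcal{X}_T}\le R$; here one also checks $\nabla(K\uu)\in L^1(0,T;L^\infty)$ via the embedding $H^{m+1}\hookrightarrow W^{1,\infty}$ and Cauchy--Schwarz in $t$. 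The uniform lower bound on $\rho$ along the iteration is guaranteed by \eqref{uniformrho(u)} since $\int_0^T\|\Div\uu\|_{L^\infty}\,dt \le C T^{1/2} R$ stays small.

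Then comes the \emph{contraction} step, which I expect to be the main obstacle. Given $\uu^{(1)},\uu^{(2)}\in\mathcal{X}_{T,R}$ with corresponding $\rho^{(i)},\HH^{(i)},\Y^{(i)},\psi^{(i)},\vv^{(i)}$, one must estimate $\|\vv^{(1)}-\vv^{(2)}\|$ in a suitable \emph{lower}-order norm (typically the energy norm $L^\infty(0,T;L^2)\cap L^2(0,T;H^1)$, since the fixed-point contraction cannot close at top regularity) by a small multiple of $\|\uu^{(1)}-\uu^{(2)}\|$ in the same norm. The delicate point is the coupling term $\alpha\nabla(g'(1/\rho)h(|\psi\circ\Y|^2)\J_\y/\rho)$: one must bound $\|\psi^{(1)}\circ\Y^{(1)} - \psi^{(2)}\circ\Y^{(2)}\|$, which requires (a) Lipschitz dependence of the solution of the continuity equation on $\uu$, (b) Lipschitz dependence of the Lagrangian map $\Y$ (equivalently of $\Phi$, $\E$, $\B$) on $\uu$ — obtainable by subtracting the ODEs \eqref{flow(u)}, \eqref{defgrad}, \eqref{eqB(u)} and using Gronwall — and (c) Lipschitz dependence of $\psi$ on $(\rho,\uu)$, obtained by subtracting two copies of the Duhamel formula \eqref{schr1} and using that $g,h$ and their derivatives are smooth with the compact-support conditions \eqref{assumptiongh}, plus the uniform bounds \eqref{uniformpsi(u)}; this is exactly why $m\ge 3$ (hence $\psi$ and $v$ Lipschitz in space) is needed, so that composition with $\Y$ is controlled. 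A similar but easier difference estimate handles $\HH^{(1)}-\HH^{(2)}$ via Lemma \ref{lemmaH(u)}. Standard energy estimates on the difference of the two linear momentum equations \eqref{E2ulinear}, using the uniform density bounds and absorbing viscous terms, then yield $\|\vv^{(1)}-\vv^{(2)}\|_{L^2(0,T;H^1)}\le C(R,m_0,\dots)\,T^\theta\,\|\uu^{(1)}-\uu^{(2)}\|_{L^2(0,T;H^1)}$ for some $\theta>0$; shrinking $T^*$ once more makes $K$ a strict contraction on $\mathcal{X}_{T^*,R}$ equipped with this weaker metric (which is complete because $\mathcal{X}_{T,R}$ is closed and bounded in the stronger norm, hence the weak limit retains the bound $\le R$).

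Finally, the Banach fixed point theorem yields a unique $\uu\in\mathcal{X}_{T^*,R}$ with $K\uu=\uu$; unwinding the construction, the associated $(\rho,\uu,\HH,\psi)$ solves \eqref{E2rho}--\eqref{E2psi} and lies in the asserted regularity class — $\rho\in C([0,T^*];H^m)\cap C^1([0,T^*];H^{m-1})$ from \eqref{rhoeuler} and the continuity equation, $\uu,\HH\in L^2(0,T^*;H^{m+1})\cap H^1(0,T^*;H^{m-1})$ from Lemmas \ref{highparabol}, \ref{lemmaH(u)}, and $\psi\in C([0,T^*];H^m(\Omega_\y))\cap C^1([0,T^*];H^{m-2}(\Omega_\y))$ from Lemma \ref{existschr}. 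The time-continuity of $\rho$ and $\psi$ into the top space, rather than mere boundedness, follows from the explicit formula and Duhamel representation respectively, together with the standard weak-continuity-plus-norm-continuity argument; uniqueness in this class follows by running the contraction estimate directly on two putative solutions. The dependence $T^* = T^*(m_0,\|\rho_0\|_{H^m},\|\uu_0\|_{H^m},\|\HH_0\|_{H^m},\|\psi_0\|_{H^m_\y})$ is exactly what the choices of $R$ and $T$ above produce.
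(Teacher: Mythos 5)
Your proposal follows essentially the same route as the paper: the map $K$ is shown to preserve a ball of radius $r$ in $E_T=L^2(0,T;H^{m+1})\cap H^1(0,T;H^{m-1})$ for small $T$ (the paper's Lemma~\ref{boundness}), and to be a contraction in the weaker energy norm $F_T=C([0,T];L^2)\cap L^2(0,T;\dot H^1)$ via difference estimates for $\tilde\rho$, $\tilde\HH$, the forward and inverse Lagrangian maps, $\tilde\psi$ and $\J_{\y}/\rho$ (the paper's Lemma~\ref{contraction}, Steps 2--6), exactly the points you single out as the delicate ones. The only cosmetic difference is that the paper runs the successive approximations $\uu^{(n)}=K\uu^{(n-1)}$ explicitly and passes to the limit by weak compactness plus interpolation rather than invoking Banach's theorem on the ball endowed with the weak metric, which is an equivalent formulation.
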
 

We dedicate this section to the proof of this proposition. To this end, let us introduce the space
	\begin{align*}
	E_T &= L^2(0,T;H^{m+1}(\Omega;\mathbb{R}^2)) \cap H^1(0,T; H^{m-1}(\Omega;\mathbb{R}^2)), 
	\end{align*}
	and let us define the operator
	\[K : E_T \rightarrow E_T\]
where $\vv = K \uu$ is the solution of the following linear parabolic problem on the $2$-torus $\Omega$
\begin{equation}
	\begin{cases} \label{eqKu}
	\rho \vv_t + (L_\rho \vv) + \rho (\uu \cdot \nabla) \vv= \HH\cdot\nabla \HH - \tfrac{1}{2}\nabla|\HH|^2- \nabla p\\
	\qquad\qquad\qquad\qquad\qquad\qquad\qquad+  \alpha \nabla (g'(1/\rho)h(|\psi \circ \Y|^2) \J_\y/\rho),\\
	\vv(\x,0) = \uu_0(\x).
	\end{cases}
	\end{equation}
	with $\rho$, $\HH$, $\Y$ and $\psi$ being the corresponding density, magnetic field, Lagrangian transformation and wave function associated to $\uu$, according to the considerations from the previous section. Then we have the following.
	
	\begin{lemma} \label{boundness}
		There exists a constant $N$ depending on $m_0$, $\Vert \rho_0 \Vert_{H^m(\Omega)}$, $\|\uu_0\|_{H^m(\Omega)}$, $\|\HH_0\|_{H^m(\Omega)}$ and $\Vert \psi_0 \Vert_{H^m(\Omega_\y)}$ with the following property: given $r > N$, there is a positive $T^* = T^*(r, m_0, \Vert \rho_0 \Vert_{H^m}, \|\uu_0\|_{H^m}, \Vert \psi_0 \Vert_{H_\y^m})$ for which
		\[
		K : r B_{E_{T^*}} \rightarrow r B_{E_{T^*}},
		\]
		where $rB_{E_T}$ stands for the closed ball in $E_T$ with radius is $r$.
		
		In other words, $r B_{E_{T^*}}$ is invariant under $K$.
	\end{lemma}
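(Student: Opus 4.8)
The statement is the usual first half of a Solonnikov-type fixed point construction, and the strategy is dictated by one structural fact: among the many norms of the input $\uu$ that control the output $K\uu$, only the ``critical'' one $\|\uu\|_{L^2(0,T^*;H^{m+1})}$ fails to be small when $T^*$ is small, and it never multiplies a term that cannot be absorbed; consequently the one non-vanishing contribution to $\|K\uu\|_{E_{T^*}}$ as $T^*\to0$ is the one carried by the datum $\uu_0$, with an $r$-independent prefactor. The radius $N$ is then essentially a multiple of $\|\uu_0\|_{H^m}$. First I would fix $\uu$ in the closed ball $rB_{E_{T^*}}$; since the iterates $\uu,K\uu,K^2\uu,\dots$ all equal $\uu_0$ at $t=0$, I may and do assume $\uu|_{t=0}=\uu_0$. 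From $\frac{d}{dt}\|\uu(t)\|_{H^m}^2\le\|\uu_t(t)\|_{H^{m-1}}^2+\|\uu(t)\|_{H^{m+1}}^2$ and $\uu(0)=\uu_0$ one gets $\sup_{[0,T^*]}\|\uu(t)\|_{H^m}^2\le\|\uu_0\|_{H^m}^2+r^2$, and hence (via $H^{m+1}\hookrightarrow W^{1,\infty}$, valid since $m\ge3$) that $\|\uu\|_{L^1(0,T^*;H^{m+1})}\le\sqrt{T^*}\,r$, $\|\nabla\uu\|_{L^1(0,T^*;L^\infty)}\le C\sqrt{T^*}\,r$ and $\|\uu\|_{L^2(0,T^*;H^{m})}\le\sqrt{T^*}(\|\uu_0\|_{H^m}^2+r^2)^{1/2}$, i.e.\ everything except $\|\uu\|_{L^2(0,T^*;H^{m+1})}$ is $O(\sqrt{T^*})$ for $r$ fixed.

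Next I would feed $\uu$ into \eqref{continuity}, \eqref{H(u)}, \eqref{schr} and the Lagrangian transformation, and shrink $T^*$ so that $C\sqrt{T^*}r\le\log2$; then \eqref{uniformrho(u)} gives $m_0/2\le\rho\le2M_0$ on $[0,T^*]\times\Omega$, so $m(T^*)^{-1}\le2/m_0$ and $M(T^*)\le2M_0$. Reading Lemma~\ref{higheu} (equivalently, the representation \eqref{rhoeuler} together with Lemma~\ref{regE}, using $\J_\y=\det\E$) on the short interval, and noting that there the $\uu$-dependence enters only through $\|\uu\|_{L^1(0,T^*;H^{m+1})}$, $\|\nabla\uu\|_{L^1(0,T^*;L^\infty)}$ and $\|\E\|_{C([0,T^*];H^m)}$ --- all bounded by a universal constant for $T^*$ small --- one obtains $\|\rho\|_{C([0,T^*];H^m)}+\|\nabla\rho\|_{C([0,T^*];L^4)}\le C\|\rho_0\|_{H^m}$ with $C=C(m_0,M_0)$, hence \emph{independent of $r$}; whereas $\|\rho_t\|_{L^\infty(0,T^*;L^2)}\le C(\|\rho_0\|_{H^m}+M_0)\,r$ is merely finite. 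Then Lemmas~\ref{lemmaH(u)}, \ref{existschr}, \ref{regE} and Corollary~\ref{equivnorms} (with \eqref{assumptionp}, \eqref{assumptiongh} and \eqref{J/rho}) control $\HH$, $\psi$, $\E$ (hence $\J_\y$) and $\psi\circ\Y$ in $C([0,T^*];H^m)$; here I do \emph{not} mind that these bounds depend on $r$, so that the right-hand side $f$ of \eqref{eqKu} satisfies $\sup_{[0,T^*]}\|f(t)\|_{H^{m-1}}\le G(r)<\infty$ and therefore $\|f\|_{L^2(0,T^*;H^{m-1})}^2\le T^*G(r)^2\to0$.

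The problem \eqref{eqKu} has a unique solution $\vv=K\uu\in E_{T^*}$ with $\vv(0)=\uu_0$ by the standard theory of linear parabolic equations. I would then rewrite it as $\rho\vv_t-L_\rho\vv=f-\rho(\uu\cdot\nabla)\vv$ and apply Lemma~\ref{highparabol}, getting (up to a harmless factor)
\[
\|\vv\|_{E_{T^*}}^2\le\phi\bigl(T^*,2/m_0,\|\rho\|_{F_{r',q}\cap C([0,T^*];H^m)}\bigr)\bigl(\|f-\rho(\uu\cdot\nabla)\vv\|_{L^2(0,T^*;H^{m-1})}^2+\|\uu_0\|_{H^m}^2\bigr).
\]
The crucial point is that the prefactor can be bounded, for $T^*$ small \emph{depending on $r$}, by a constant $\Phi_0=\Phi_0(m_0,M_0,\|\rho_0\|_{H^m})$ that is \emph{independent of $r$}: on inspection of the proof of Lemma~\ref{highparabol}, the possibly large quantity $\|\rho_t\|_{L^\infty(0,T^*;L^2)}\sim r$ enters only through Gronwall exponentials of the form $e^{cT^*\,\mathrm{poly}(\|\rho_t\|_{L^\infty L^2})}$, and $T^*\,\mathrm{poly}(r)\to0$ as $T^*\to0$, while the rest of the dependence is on $m(T^*)^{-1}\le2/m_0$, $\|\rho\|_{C([0,T^*];H^m)}\le C\|\rho_0\|_{H^m}$ and $\|\nabla\rho\|_{C([0,T^*];L^q)}\le C\|\rho_0\|_{H^m}$. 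Since moreover $\|\rho(\uu\cdot\nabla)\vv\|_{H^{m-1}}\le C\|\rho_0\|_{H^m}\|\uu\|_{H^{m-1}}\|\vv\|_{H^m}$ ($H^{m-1}$ being an algebra for $m\ge3$) and $\int_0^{T^*}\|\vv\|_{H^m}^2\le T^*(\|\uu_0\|_{H^m}^2+\|\vv\|_{E_{T^*}}^2)$, one is led to
\[
\|\vv\|_{E_{T^*}}^2\le\Phi_0\Bigl(2T^*G(r)^2+C\|\rho_0\|_{H^m}^2\,r^2\,T^*\bigl(\|\uu_0\|_{H^m}^2+\|\vv\|_{E_{T^*}}^2\bigr)+\|\uu_0\|_{H^m}^2\Bigr).
\]
Shrinking $T^*$ so that $\Phi_0C\|\rho_0\|_{H^m}^2r^2T^*\le\tfrac14$ and absorbing $\tfrac12\|\vv\|_{E_{T^*}}^2$ on the left gives $\|\vv\|_{E_{T^*}}^2\le8\Phi_0T^*G(r)^2+(1+4\Phi_0)\|\uu_0\|_{H^m}^2$. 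I would then set $N:=\bigl(2(1+4\Phi_0)\bigr)^{1/2}\|\uu_0\|_{H^m}$ (which depends only on $m_0$, $\|\rho_0\|_{H^m}$, $\|\uu_0\|_{H^m}$, and, through $\Phi_0$ or $M_0\le C\|\rho_0\|_{H^m}$, harmlessly on the remaining data), and for $r>N$ shrink $T^*$ once more so that $8\Phi_0T^*G(r)^2\le r^2/2$; this yields $\|\vv\|_{E_{T^*}}^2\le r^2/2+N^2/2<r^2$, i.e.\ $K\uu\in rB_{E_{T^*}}$, with $T^*=T^*(r,m_0,\|\rho_0\|_{H^m},\|\uu_0\|_{H^m},\|\HH_0\|_{H^m},\|\psi_0\|_{H^m})$ as required.

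\textbf{The main difficulty} is exactly the $r$-uniformity of the prefactor $\Phi_0$ multiplying $\|\uu_0\|_{H^m}^2$: a naive use of Lemmas~\ref{highparabol} and \ref{higheu} leaves there a factor $1+\|\uu\|_{L^2(0,T^*;H^{m+1})}=1+r$, which would make it impossible to fix $N$ before $r$. Overcoming this requires (i) extracting the short-time behaviour of the density --- its $H^m$-norm, and the $L^q$-norm of its gradient, stay $O(\|\rho_0\|_{H^m})$, all of the $H^{m+1}(\uu)$-dependence being confined to the $L^1$-in-time norm $\|\uu\|_{L^1(0,T^*;H^{m+1})}=O(\sqrt{T^*}r)$ --- and (ii) observing that in Lemma~\ref{highparabol} the genuinely large term $\|\rho_t\|_{L^\infty(0,T^*;L^2)}$ only feeds exponential Gronwall factors that tend to $1$ on short time. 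By contrast, the transport term $\rho(\uu\cdot\nabla)\vv$ is a routine lower-order perturbation absorbed by virtue of the explicit $T^*$ it carries, and the whole magnetic/Schr\"odinger/Lagrangian apparatus may be treated with merely finite, possibly $r$-dependent, constants because the forcing $f$ enters the final estimate with a factor $\sqrt{T^*}$.
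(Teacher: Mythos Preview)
Your argument is correct but follows a genuinely different route from the paper's. The paper never tries to isolate an $r$-independent prefactor: it simply derives a single bound
\[
\Vert K\uu\Vert_{E_T}^2\le\phi\bigl(T,(m_0)^{-1},\Vert\rho_0\Vert_{H^m},\Vert\uu_0\Vert_{H^m},\Vert\HH_0\Vert_{H^m},\Vert\psi_0\Vert_{H^m_\y},\Vert\uu\Vert_{E_T}\bigr)
\]
and then observes that $\phi$, ``most naturally chosen,'' is independent of its last argument at $T=0$; continuity of $\phi$ in $T$ then furnishes $T^*$. To absorb the transport contribution $\rho(\uu\cdot\nabla)K\uu$, the paper does \emph{not} use a short-time factor as you do, but instead interpolates $\Vert K\uu\Vert_{L^2(0,T;H^m)}^2\le\varepsilon^2\Vert K\uu\Vert_{E_T}^2+C_\varepsilon\Vert K\uu\Vert_{L^2(0,T;L^2)}^2$ via Gagliardo--Nirenberg and closes with a separate $L^2$ energy estimate on $K\uu$. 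Your approach, by contrast, opens up the proofs of Lemmas~\ref{highparabol} and~\ref{higheu} (or rather replaces the latter by the $\rho=\rho_0(\y)\det\E$ representation and Lemma~\ref{regE}, which carries the crucial $L^1_t H^{m+1}_\x$ rather than $L^2_t H^{m+1}_\x$ dependence) to show explicitly that $\Vert\rho\Vert_{C([0,T^*];H^m)}$ and the parabolic prefactor are $r$-independent for small $T^*$, the only large quantity $\Vert\rho_t\Vert_{L^\infty(L^2)}\sim r$ entering through Gronwall factors $e^{cT^*\mathrm{poly}(r)}\to1$. What your route buys is a constructive, transparent justification of exactly the point the paper leaves as an assertion (that $\phi(0,\ldots,\eta)$ does not depend on $\eta$); what the paper's route buys is brevity and the ability to cite Lemmas~\ref{highparabol} and~\ref{higheu} as black boxes. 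One minor remark: your reduction to $\uu(0)=\uu_0$ is not part of the lemma as stated (which concerns the whole ball $rB_{E_{T^*}}$), but it is inessential---either invoke the embedding $E_T\hookrightarrow C([0,T];H^m)$ to get $\sup_t\Vert\uu(t)\Vert_{H^m}\le Cr$ for arbitrary $\uu$ in the ball, or note that the subsequent fixed-point argument only ever iterates within $\{\uu:\uu(0)=\uu_0\}$.
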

	\begin{proof}
	\textit{Step \#1.}
	
		Given $\uu \in E_T$, lemma \ref{highparabol} asserts that
		\begin{align}
		&\Vert K \uu \Vert_{E_T}^2 \leq \phi\Big(T, \Big(\operatornamewithlimits{Min}_{\x \in \Omega, 0\leq t \leq T} \rho(\x,t)\Big)^{-1} , \Vert \rho \Vert_{ W^{1,\infty}(0,T; L^2(\Omega))\cap C([0,T];H^m)}\Big) \nonumber\\ 
		&\cdot \big(\Vert \rho (\uu \cdot \nabla) K\uu  \Vert_{L^2(0,T;H^{m-1})}^2 +\|\nabla\HH\cdot\HH\|_{L^2(0,T;H^{m-1})} +\Vert \nabla p  \Vert_{L^2(0,T;H^{m-1})}^2   \nonumber\\
		&\qquad\qquad+ \big\Vert \alpha \nabla (g'(1/\rho)h(|\psi \circ \Y|^2) \J_\y/\rho) \big\Vert_{L^2(0,T;H^{m-1})}^2 + \Vert \uu_0 \Vert_{H^m}^2\big).\label{estK}
		\end{align}
	
Note that from \eqref{J/rho=const} we have that 
\begin{equation}
\frac{\J_\y}{\rho}(t,\x) = \rho_0(\y(t,\x)),\label{Jy/rho(x)}
\end{equation} 
Then, using the fact that $H^{m-1}(\Omega)$ is an algebra, as well as the embedding $E_T\hookrightarrow C(0,T;C^1(\Omega))$, together with lemmas \ref{higheu}, \ref{lemmaH(u)}, \ref{existschr} and Corollary~\ref{equivnorms} we have
\begin{align}
	&\Vert K \uu \Vert_{E_T}^2 \leq \phi\big(T, (m_0)^{-1}, \Vert \rho_0 \Vert_{H^m}, \Vert \uu_0 \Vert_{H^m},\|\HH_0\|_{H^m},\Vert \psi_0 \Vert_{H_\y^m},  \Vert \uu \Vert_{E_T}\big) \nonumber\\
	&\qquad\qquad\qquad\qquad\qquad\qquad\cdot \big(1 + \Vert K \uu \Vert_{L^2(0,T;H^m)}^2\big) . \label{estKf}
	\end{align}

\textit{	Step \#2.}

	Regarding the term $\Vert K \uu \Vert_{L^2(0,T;H^m)}$ appearing in the estimate above, we apply the Gagliardo--Nirenberg inequality to obtain for every $\varepsilon > 0$
	\begin{align}
	\Vert K \uu \Vert_{L^2(0,T;H^m)}^2 &\leq \varepsilon^2 	\Vert K \uu \Vert_{L^2(0,T;H^{m+1})}^2 + C_\varepsilon 	\Vert K \uu \Vert_{L^2(0,T;L^2)}^2 \nonumber\\
	&\leq \varepsilon^2 	\Vert K \uu \Vert_{E_T}^2 + C_\varepsilon 	\Vert K \uu \Vert_{L^2(0,T;L^2)}^2.
	\end{align}
	Choosing appropriately $\varepsilon = \varepsilon(\phi(T, \ldots))$, 
	\eqref{estKf} is somewhat simplified to
	\begin{align}
	 &\Vert K \uu \Vert_{E_T}^2 \leq \phi\big(T, (m_0)^{-1}, \Vert \rho_0 \Vert_{H^m}, \Vert \uu_0 \Vert_{H^m},\|\HH_0\|_{H^m},\Vert \psi_0 \Vert_{H_\y^m},  \Vert \uu \Vert_{E_T}\big) \nonumber\\
	 &\qquad\qquad\qquad\qquad\qquad\qquad\cdot \big(1 + \Vert K \uu \Vert_{L^2(0,T;L^2)}^2\big) . \label{estKf1}
	 \end{align}
	To get rid of this final term in the right-side, multiply \eqref{eqKu} by $K\uu$ to obtain
	\begin{align}
	&\frac{1}{2}\frac{d}{dt} \int_{\Omega} \rho(\x,t) \big\vert K\uu(\x,t) \big\vert^2 d\x + \mu \cdot \int_{\Omega}  \big|\nabla K \uu(\x,t) \big|^2 d\x \nonumber \\
	&\qquad + \int_{\Omega} \big( \mu + \lambda\big(\rho(\x,t)\big) \big(\Div K \uu(\x,t)\big)^2  d\x  \nonumber \\ 
	&= \int_\Omega (\HH\cdot\nabla \HH - \tfrac{1}{2}\nabla|\HH|^2)\cdot K\uu d\x + \int_{\Omega} p(\x,t) \cdot \Div K\uu(\x,t) d\x  \nonumber\\
	&\qquad- \alpha \int_\Omega g'(1/\rho) h(|\psi \circ \Y|^2) \J_\y/\rho \cdot \Div K\uu(\x,t) d\x;
	\end{align}
	where we have used the continuity equation \eqref{continuity} in order to write
	\[
	\frac{1}{2}\frac{d}{dt} (\rho |K\uu|^2) = \rho (K\uu)_t\cdot K\uu + \rho(\uu\cdot\nabla) K\uu \cdot K\uu - \frac{1}{2}\Div(\rho \uu |K\uu|^2).
	\]
	
	Thus by virtue of \eqref{J/rho} and \eqref{assumptiongh}, using Young's inequality we conclude that
	\begin{align}
	\frac{1}{2}\frac{d}{dt} &\int_{\Omega} \rho(\x,t) \big\vert K\uu(\x,t) \big\vert^2 d\x + \mu \int_{\Omega} \big|\nabla K \uu (\x,t)\big|d\x \nonumber\\
	&\leq C_\mu (1+\|K\uu(t)\|_{L^2(\Omega)}+\|p(\rho(t))\|_{L^2(\Omega)} + \|\HH\|_{L^4(\Omega)}^2\|\nabla\HH\|_{L^4(\Omega)}^2) \nonumber,
	\end{align}
	and again, using the estimates from the previous section, Gronwall's inequality yields
	\begin{align}
	\operatornamewithlimits{Max}_{0\leq t \leq T}&\int_\Omega \big|K\uu (\x,t)\big|^2 d\x + \int_{0}^{T}\big|\nabla \uu(\x,t)\big|^2 d\x dt \nonumber\\
	&\qquad\qquad\leq \phi\big(T, (m_0)^{-1}, \Vert \rho_0 \Vert_{H^m}, \Vert \uu_0 \Vert_{H^m},\|\HH_0\|_{H^m},\Vert \psi_0 \Vert_{H_\y^m},  \Vert \uu \Vert_{E_T}\big). \nonumber
	\end{align}
	
	Inserting this estimate in \eqref{estKf1} we obtain,
	\begin{align}
	\Vert K \uu \Vert_{E_T}^2 \leq \phi\big(T, (m_0)^{-1}, \Vert \rho_0 \Vert_{H^m}, \Vert \uu_0 \Vert_{H^m},\|\HH_0\|_{H^m},\Vert \psi_0 \Vert_{H_\y^m},  \Vert \uu \Vert_{E_T}\big).\label{estKf2}
	\end{align}

\textit{Step \#3.} (Conclusion)
	
	Let us recall that $\phi$ appearing on \eqref{estKf2} is a continuous and non-decreasing function of its arguments. Moreover, we note that it can be most naturally chosen as to satisfy
	\begin{align*}
	\phi(T=0, (m_0)^{-1}, \Vert \psi_0 \Vert_{H^m}, \Vert \rho_0 \Vert_{H^m}, \Vert \uu_0 \Vert_{H^m}, \eta) &\leq \text{constant independent of } \eta, \\
	&= N \\
	&= N(\Vert \rho_0 \Vert_{H^m}, m_0, \Vert \psi_0 \Vert_{H^m}).
	\end{align*}
	In that way, for $r > N$, by continuity there exists a $T^*>0$ such that 
	\[\phi(T^*, (m_0)^{-1}, \Vert \psi_0 \Vert_{H^m}, \Vert \rho_0 \Vert_{H^m}, \Vert \uu_0 \Vert_{H^m}, r) < r^2.
	\]
	As a consequence, \eqref{estKf2} says that $K : r B_{E_{T^*}} \rightarrow r B_{E_{T^*}}$.
	
	\end{proof}
	
Let us now introduce the space 
\[
F_T = C([0,T];L^2(\Omega;\mathbb{R}^2)) \cap L^2(0,T; \dot{H}^1(\Omega;\mathbb{R}^2)).
\]
Then, we can prove that the constant $T^*$ from lemma \ref{boundness} above can be chosen so that the application $K$, restricted to $rB_{E_{T^*}}$, is contractive in the norm of $F_T$.

\begin{lemma} \label{contraction}
	Given any $0 < \kappa < 1$, one can choose a $r, T > 0 $, depending only on $r, \Vert \rho_0 \Vert_{H^m}, m_0, \Vert \psi_0 \Vert_{H^m}$, for which, not only
	$$K : r B_{E_T} \rightarrow r B_{E_T},$$
	but also for every $\uu'$ and $\uu''$ in $r \cdot B_{E_T}$, one has that
	\begin{equation}
	\big\Vert K\uu' - K\uu'' \big\Vert_{F_T} \leq \kappa \cdot 	\big\Vert \uu' - \uu'' \big\Vert_{F_T}.
	\end{equation}
\end{lemma}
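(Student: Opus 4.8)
The plan is to first establish $L^2$-stability of all the auxiliary quantities built from $\uu$ as functions of $\uu'-\uu''$ measured in the $F_T$-norm, and then to close an $L^2$-energy estimate for $\vv:=K\uu'-K\uu''$. Write $\rho',\HH',\psi',\Y'=(t,\y'),\Phi'$ for the density, magnetic field, wave function, Lagrangian map and flow attached to $\uu'$, and similarly with double primes for $\uu''$. By \eqref{Jy/rho(x)} the coupling source in \eqref{eqKu} equals $\alpha\nabla G'$ with $G':=g'(1/\rho')\,h(|\psi'\circ\Y'|^2)\,\rho_0(\y')$, and subtracting the two copies of \eqref{eqKu} shows that $\vv$ solves, with $\vv(\cdot,0)=0$,
\begin{align}
\rho'\vv_t+L_{\rho'}\vv+\rho'(\uu'\!\cdot\!\nabla)\vv
&=\big(\HH'\!\cdot\!\nabla\HH'-\tfrac12\nabla|\HH'|^2\big)-\big(\HH''\!\cdot\!\nabla\HH''-\tfrac12\nabla|\HH''|^2\big)\nonumber\\
&\quad-\nabla\big(p(\rho')-p(\rho'')\big)+\alpha\nabla(G'-G'')-(\rho'-\rho'')\vv''_t-(L_{\rho'}-L_{\rho''})\vv''\nonumber\\
&\quad-\rho'\big((\uu'-\uu'')\!\cdot\!\nabla\big)\vv''-(\rho'-\rho'')(\uu''\!\cdot\!\nabla)\vv''.\label{eqdiffKu}
\end{align}
Since $\uu',\uu''$ range over $rB_{E_{T^*}}$ with $T\le T^*$ as in Lemma~\ref{boundness}, Lemmas~\ref{higheu}--\ref{existschr} and the embeddings $E_T\hookrightarrow C([0,T];C^1(\Omega))$ and $H^{m-1}\hookrightarrow L^\infty$ (here $m\ge3$) guarantee that all norms of $\rho',\rho'',\HH',\HH'',\psi',\psi'',\vv'',\vv''_t$ appearing below are bounded by some $\phi=\phi(T^*,r,m_0,\ldots)$.

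First I would prove that, for $0\le t\le T$,
\begin{align*}
\|(\rho'-\rho'')(t)\|_{L^2(\Omega)}^2+\|(\HH'-\HH'')(t)\|_{L^2(\Omega)}^2&\le\phi(r,\ldots)\,\|\uu'-\uu''\|_{F_T}^2,\\
\|(\Phi'-\Phi'')(t)\|_{L^2}^2+\|(\y'-\y'')(t)\|_{L^2(\Omega)}^2+\|(\psi'-\psi'')(t)\|_{L^2(\Omega_\y)}^2&\le\phi(r,\ldots)\,t\,\|\uu'-\uu''\|_{F_T}^2.
\end{align*}
Each follows from a basic energy estimate on the relevant difference equation, which is the original equation plus a linear source made of $\uu'-\uu''$ with bounded coefficients; the key point is that any \emph{derivative} of $\uu'-\uu''$ appears in these sources linearly (e.g.\ $\rho''\Div(\uu'-\uu'')$ in the continuity equation, $\HH''\Div(\uu'-\uu'')$ and $\HH''\!\cdot\!\nabla(\uu'-\uu'')$ in the magnetic equation), so only $\|\nabla(\uu'-\uu'')\|_{L^2(0,T;L^2)}\le\|\uu'-\uu''\|_{F_T}$ is needed. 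The Lagrangian map and the flow are handled through $\partial_t\y+(\uu\!\cdot\!\nabla_\x)\y=0$ and the flow ODE, testing the difference equations against $\y'-\y''$ and $\Phi'-\Phi''$ and using that $\nabla_\x\y''=\mathbf{E}''$ and $\nabla_\x\uu''$ are bounded in $L^\infty$ (by \eqref{LinftyE} and the bounds on $rB_{E_{T^*}}$), together with a change of variables in the term $(\uu'-\uu'')\circ\Phi'$, whose Jacobian $\J_{\y'}$ is bounded above and away from $0$ by \eqref{uniformrho(u)} and \eqref{J/rho}. For $\psi'-\psi''$, testing the difference of the two Schr\"odinger equations against the complex conjugate and taking imaginary parts eliminates the Laplacian; the cubic difference is controlled by the $L^\infty$ bound of Lemma~\ref{H1-H2(u)}, and the coupling difference by $\|v'-v''\|_{L^2(\Omega_\y)}$, which after telescoping and a change of variables is $\le\phi(r,\ldots)\big(\|\rho'-\rho''\|_{L^2(\Omega)}+\|\Phi'-\Phi''\|_{L^2}\big)$ — here $g$ is globally Lipschitz because $\text{supp}\,g'$ is compact and $1/\rho''$ is Lipschitz in space because $\rho''\in H^m\hookrightarrow C^1$. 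The $t$-factor on the second line reflects that those sources are pure, undifferentiated $\uu'-\uu''$; the first line carries no such factor, but when these bounds are substituted into the estimate for $\vv$ they will be integrated once more in time.

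Next I would test \eqref{eqdiffKu} against $\vv$, using the continuity equation for $\rho'$ to turn $\int\rho'\vv_t\!\cdot\!\vv+\int\rho'(\uu'\!\cdot\!\nabla)\vv\!\cdot\!\vv$ into $\tfrac12\frac{d}{dt}\int\rho'|\vv|^2$ and the coercivity $\int L_{\rho'}\vv\!\cdot\!\vv=\int\big(\mu|\nabla\vv|^2+(\mu+\lambda(\rho'))(\Div\vv)^2\big)\ge\mu\|\nabla\vv\|_{L^2}^2$ (valid since $\mu+\lambda(\rho')=\mu+b(\rho')^\beta>0$), obtaining $\tfrac12\frac{d}{dt}\int_\Omega\rho'|\vv|^2+\mu\|\nabla\vv\|_{L^2}^2\le\int_\Omega(\text{source})\!\cdot\!\vv$. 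Integrating by parts where convenient to move a derivative onto $\vv$, absorbing any $\varepsilon\|\nabla\vv\|_{L^2}^2$ into the left side, and using the uniform bounds, the pressure and magnetic sources are dominated by $\phi(r,\ldots)\big(\|\vv\|_{L^2}^2+\|\rho'-\rho''\|_{L^2}^2+\|\HH'-\HH''\|_{L^2}^2\big)$ — for the magnetic one, $\Div\HH''=0$ cancels the only term carrying a full derivative on $\HH'-\HH''$ — while the four remaining explicit sources are dominated by $g(t)\|\vv\|_{L^2}^2+\phi(r,\ldots)\big(\|\rho'-\rho''\|_{L^2}^2+\|\uu'-\uu''\|_{L^2}^2\big)$ with $g\in L^1(0,T)$ (from $\|\vv''_t\|_{L^\infty}\in L^2(0,T)$ and $\Div\vv''\in L^\infty$). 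The delicate step — the one I expect to be the main obstacle — is the coupling term $\alpha\int\nabla(G'-G'')\!\cdot\!\vv=-\alpha\int(G'-G'')\Div\vv$, which forces a bound on $\|G'-G''\|_{L^2(\Omega)}$. Telescoping in the three factors of $G$, the nontrivial pieces are $\|\psi'\circ\Y'-\psi''\circ\Y''\|_{L^2(\Omega)}$ and $\|\rho_0(\y')-\rho_0(\y'')\|_{L^2(\Omega)}$; for the former, $\psi'\circ\Y'-\psi''\circ\Y''=(\psi'-\psi'')\circ\Y'+(\psi''\circ\Y'-\psi''\circ\Y'')$, whose first summand is $\le C\|\psi'-\psi''\|_{L^2(\Omega_\y)}$ by a change of variables (Remark~\ref{Lp_yLp_x}) and whose second is $\le\|\nabla\psi''\|_{L^\infty(\Omega_\y)}\|\y'-\y''\|_{L^2(\Omega)}$ since $\psi''(t,\cdot)\in H^m\hookrightarrow C^1$; similarly $\|\rho_0(\y')-\rho_0(\y'')\|_{L^2}\le\|\nabla\rho_0\|_{L^\infty}\|\y'-\y''\|_{L^2}$ as $\rho_0\in H^m\hookrightarrow C^1$. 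Hence $\|G'-G''\|_{L^2(\Omega)}\le\phi(r,\ldots)\big(\|\rho'-\rho''\|_{L^2(\Omega)}+\|\psi'-\psi''\|_{L^2(\Omega_\y)}+\|\y'-\y''\|_{L^2(\Omega)}\big)$, so this term contributes $\varepsilon\|\Div\vv\|_{L^2}^2$ plus $\phi(r,\ldots)$ times the square of that combination.

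Finally, Gronwall's inequality (its exponential factor being $\le e^{\int_0^T g}$, bounded for $T\le T^*$) bounds $\max_{[0,t]}\|\vv\|_{L^2}^2+\int_0^t\|\nabla\vv\|_{L^2}^2$ by $\phi(r,\ldots)$ times the time integral over $[0,t]$ of $\|\rho'-\rho''\|_{L^2}^2+\|\HH'-\HH''\|_{L^2}^2+\|\uu'-\uu''\|_{L^2}^2+\|\psi'-\psi''\|_{L^2(\Omega_\y)}^2+\|\y'-\y''\|_{L^2}^2$; substituting the Step~1 bounds (and integrating once more in time) gives $\|K\uu'-K\uu''\|_{F_T}^2\le\phi(r,\ldots)\,T^{1/2}\,\|\uu'-\uu''\|_{F_T}^2$. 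Given $0<\kappa<1$, one then fixes $r>N$ (from Lemma~\ref{boundness}) and chooses $T\le T^*(r,\ldots)$ so small that $\phi(r,\ldots)\,T^{1/2}<\kappa^2$ — possible since $\phi$ is continuous and non-decreasing — which simultaneously preserves $K:rB_{E_T}\to rB_{E_T}$ and yields $\|K\uu'-K\uu''\|_{F_T}\le\kappa\|\uu'-\uu''\|_{F_T}$. The only genuinely new difficulty relative to the constant-viscosity case is the composition $\psi\circ\Y$ (and $\rho_0\circ\y=\J_\y/\rho$) in the coupling term of the previous paragraph; this is precisely where the Lipschitz-in-space regularity of $\psi''$ and $\rho_0$, hence the hypothesis $m\ge3$, and the $L^2$-stability of the Lagrangian map come into play.
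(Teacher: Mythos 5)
Your proposal is correct and follows essentially the same route as the paper: a weighted $L^2$-energy estimate for $\tilde{\vv}=K\uu'-K\uu''$, closed by separate $L^2$-stability estimates for the differences of the densities, magnetic fields, Lagrangian maps and wave functions, with the coupling term integrated by parts onto $\Div\tilde{\vv}$ and telescoped, its composition pieces controlled through the Lipschitz (in space) regularity of $\psi$ and $\rho_0$ afforded by $m\geq 3$, and a final smallness of $T$ giving the contraction. The only cosmetic differences are that you telescope $\psi\circ\Y$ in the opposite order and phrase the stability of the inverse Lagrangian map in terms of the flow $\Phi$, which is the same object the paper calls $\tilde{\x}$ since $\x(t,\y)=\Phi(t;\y)$.
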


\begin{proof}
\textit{	Step \#1.}

	Let $\vv' = K\uu'$ and let us denote by $\rho'$, $\HH'$, $\psi'$ and $\Y'=(t,\y'(t,\cdot))$ the density, magnetic field, wave function and Lagrangian transformation associated to $\uu'$. Correspondingly, we write $\vv''=K\uu''$ and denote by $\rho''$, $\HH''$, $\psi''$ and $\Y''=(t,\y''(t,\cdot))$ the density, magnetic field, wave function and Lagrangian transformation associated to $\uu''$. Let us further denote $\tilde{\vv} = \vv' - \vv''$, $\tilde{\rho}= \rho' - \rho''$, $\tilde{\HH} = \HH'-\HH''$, $\tilde{\psi} = \psi' - \psi''$, and $\tilde{\Y} = \Y'-\Y''$.

	Then, with some effort, we can show that $\tilde{\vv}$ satisfies the following equation
	\begin{equation}
	\begin{aligned}
	\rho' &\tilde{\vv}_t  + \rho'(\uu'\cdot \nabla) \tilde{\vv} - L_{\rho'} \tilde{\vv} = \\
	 &- \tilde{\rho} {\vv}_t''	- (L_{\rho'}-L_{\rho''})\vv''-\rho'(\tilde{\vv}\cdot\nabla)\vv'' - \tilde{\rho} (\uu' \cdot \nabla ) \vv''\\
      &+ \tilde{\HH}\cdot \nabla\HH' + \HH''\cdot\nabla \tilde{\HH} - \nabla\HH'\cdot\tilde{\HH}-\nabla\tilde{\HH}\cdot\HH''\\
      &- \nabla (p(\rho') - p(\rho'')) + \nabla \big(\big( g'(1/\rho') - g'(1/\rho'') \big) h(|\psi' \circ \Y'|^2) \J_{\y'}/\rho' \big)\\
      &+\nabla \big( g'(1/\rho'') \big(h(|\psi' \circ \Y'|^2) - h(|\psi' \circ \Y''|^2) \big) J_{\y'}/\rho'\big) \\
      &+\nabla \big( g'(1/\rho'') \big(h(|\psi' \circ \Y''|^2) - h(|\psi'' \circ \Y''|^2) \big) J_{\y'}/\rho'\big)\\
      &+\nabla \big( g'(1/\rho'')  h(|\psi'' \circ \Y''|^2) \big(J_{\y'}/\rho' - J_{\y''}/\rho'' \big).
	\end{aligned}
	\end{equation}
	
	Multiplying the equation above by $\tilde{\vv}$ and integrating by parts we arrive at
	\begin{equation}	
	\begin{aligned}
	 \frac{d}{dt} \Big\{& \frac{1}{2}\int_{\Omega} \rho' |\tilde{\vv}|^2 d\x \Big\}   + \int_\Omega (\mu |\nabla \tilde{\vv}|^2+(\lambda(\rho')+\mu)(\Div\tilde{\vv})^2) d\x   \\
	=&- \int_{\Omega} \tilde{\rho} {\vv}_t'' \cdot \tilde{\vv} d\x	- \int_{\Omega} (\lambda(\rho')-\lambda(\rho'')) \Div \vv'' \Div \tilde{\vv} d\x\\
	&-\int_{\Omega} \rho'[(\tilde{\vv}\cdot\nabla)\vv''] \cdot\tilde{\vv} d\x - \int_{\Omega} \tilde{\rho} [(\uu' \cdot \nabla ) \vv''] \cdot \tilde{\vv} d\x \\
	&+\int_\Omega (\tilde{\HH}\cdot \nabla\HH' + \HH''\cdot\nabla \tilde{\HH} - \nabla\HH'\cdot\tilde{\HH}-\nabla\tilde{\HH}\cdot\HH'')\cdot\tilde{\vv} d\x \\
	&+ \int_\Omega (p(\rho') - p(\rho'')) \Div \tilde{\vv} d\x  	\\
	&- \int_\Omega \big( g'(1/\rho') - g'(1/\rho'') \big) h(|\psi' \circ \Y'|^2) \frac{\J_{\y'}}{\rho'}\Div \tilde{\vv} d\x\\
	&- \int_{\Omega}  g'(1/\rho'') \big(h(|\psi' \circ \Y'|^2) - h(|\psi' \circ \Y''|^2) \big) \frac{J_{\y'}}{\rho'} \Div \tilde{\vv} d\x \\
	&- \int_\Omega g'(1/\rho'') \big(h(|\psi' \circ \Y''|^2) - h(|\psi'' \circ \Y''|^2) \big) \frac{J_{\y'}}{\rho'} \Div \tilde{\vv} d\x \\
	&- \int_{\Omega} g'(1/\rho'')  h(|\psi'' \circ \Y''|^2) \left(\frac{J_{\y'}}{\rho'} - \frac{J_{\y''}}{\rho''}\right) \Div \tilde{\vv} d\x.\label{coisagigantesca}
	\end{aligned}
	\end{equation}
	
	Note that, through some applications of Young's inequality with $\varepsilon$ and in light of lemma \ref{boundness}, we can arrive at the desired conclusion, provided that we bound appropriately $\tilde{\rho}$, $\tilde{H}$, $\tilde{\psi}$, $\tilde{Y}$ and $\frac{J_{\y'}}{\rho'} - \frac{J_{\y''}}{\rho''}$.
	
\textit{Step \#2.}\ (Analysis of $\tilde{\rho}$). 

A straightforward calculation shows that $\tilde{\rho}$ satisfies the following equation
	\begin{equation*}
		\begin{dcases*}
	\frac{\partial \tilde{\rho}}{\partial t} + \Div (\tilde{\rho} \uu' + \rho'' (\uu' - \uu'')) = 0\\
	\tilde{\rho}(0,\x) = 0.
	\end{dcases*}
	\end{equation*}
	So, multiplying the equation by $\tilde{\rho}$ yields
	\begin{align*}
	 &\frac{1}{2}\frac{d}{dt} \int_\Omega \tilde{\rho}^2 d\x\\
	 &\qquad= - \frac{1}{2} \int_\Omega (\Div \uu' ) \tilde{\rho}^2 d\x - \int_{\Omega} \tilde{\rho}\nabla \rho'' \cdot (\uu' - \uu'')  d\x - \int_{\Omega} \tilde{\rho}\rho'' \Div (\uu' - \uu'')  d\x \\
	 &\qquad\leq \frac{1}{2}  \Vert \Div \uu' (t) \Vert_\infty  \cdot \Vert \tilde{\rho} (t) \Vert_2^2 + \Vert \nabla \rho'' (t) \Vert_\infty \cdot \Vert \tilde{\rho}(t) \Vert_2 \cdot \Vert \uu'(t) - \uu'' (t) \Vert_2 \\
	 &\qquad\qquad + \Vert  \rho'' (t) \Vert_\infty \cdot \Vert \tilde{\rho}(t) \Vert_2 \cdot \Vert \Div \uu'(t) - \Div \uu'' (t) \Vert_2;
	\end{align*}
	hence, by Gronwall's inequality and the fact that $\uu \in C([0,T^*];H^3)$ and $\rho'' \in C([0,T^*];H^3)$,
	\begin{equation}
	\int_\Omega \tilde{\rho}^2 (\x,t) d\x \leq C  \Vert \uu' - \uu'' \Vert_{L^2(0,T^*;H^1(\Omega))}^2\label{tilderho}
	\end{equation} 
	for all $0\leq t \leq T^*$ and some constant $C$ depending on $T^*$ and $r$.
	
\textit{Step \#3.}\ (Analysis of $\tilde{\HH}$). 

Similarly, we see that $\tilde{\HH}$ is the solution of the following equation
\[
\begin{cases}
\tilde{\HH}_t - \nu\Delta \tilde{\HH} = -(\uu'-\uu'')\cdot\nabla\HH'' - \uu'\cdot\nabla\tilde{\HH} - \tilde{\HH}\cdot\nabla\uu'-\HH''\cdot\nabla(\uu'-\uu'') \\
\qquad\qquad\qquad\qquad+ \tilde{\HH}\Div\uu' + \HH''\Div(\uu'-\uu''),\\
\tilde{\HH}(0,\x)=0.
\end{cases}
\]
	
Multiplying by $\tilde{\HH}$ and integrating, upon integrating by parts and using Young's inequality with $\varepsilon$ we get
\begin{align*}
&\frac{1}{2}\frac{d}{dt}\int_\Omega |\tilde{\HH}|^2d\x + \nu\int_\Omega |\nabla\tilde{\HH}|^2d\x\\
& \leq \varepsilon \|\uu'\|_\infty^2 \int_\Omega |\nabla\tilde{\HH}|^2d\x + C_\varepsilon \Big(\|\nabla \HH'' \|_\infty^2\|\uu'-\uu''\|_2^2+ \|\HH''\|_\infty^2\|\nabla(\uu'-\uu'')\|_2^2 \\
&\qquad\qquad\qquad\qquad\qquad\qquad\qquad\qquad\qquad(1+\|\nabla\uu'\|_\infty)\|\tilde{\HH}\|_2^2\Big),
\end{align*}
and choosing $\varepsilon>0$ small enough, by Gronwall's inequality and the fact that $\uu \in C([0,T^*];H^3)$ and $\HH'' \in C([0,T^*];H^3)$,
\begin{equation}
\int_\Omega \tilde{\HH}^2 (\x,t) d\x+\int_0^t\int_\Omega|\nabla\tilde{\HH}|^2d\x  \leq C  \Vert \uu' - \uu'' \Vert_{L^2(0,T^*;H^1(\Omega))}^2
\end{equation} 
for all $0\leq t \leq T^*$ and some constant $C$ depending on $T^*$ and $r$.
	
\textit{Step \#4.}\ (Analysis of $\tilde{\y}$).

	From the definition of the Lagrangian coordinate we see that $\y'$ satisfies the equation
	\begin{equation*}
	\y'_t(t,\x) + \uu'(t,\x)\cdot \nabla_\x \y'(t,\x)=0,\qquad
	\y'(0,\x)=\x.
	\end{equation*}
    By symmetry, $\y''$	 satisfies an analogous equation with $\uu''$ in place of $\uu'$. Then, the difference $\tilde{\y}=\y'-\y''$ satisfies
    \begin{equation*}
    \begin{cases}
	\tilde{\y}_t =- (\uu'-\uu'')\cdot \nabla_\x \y' - \uu''\cdot\nabla\tilde{\y},\\
	\tilde{\y}(0,\x)=0.
	\end{cases}
	\end{equation*}
	
	Multiplying by $\tilde{\y}$ and integrating by parts we obtain
	\begin{equation*}
	\frac{d}{dt}\int_\Omega |\tilde{\y}|^2d\x\leq C(\|\nabla\y'\|_\infty^2\|\uu'-\uu''\|_2 + \|\Div\uu''\|_\infty^2\|\tilde{\y}\|_2^2),
    \end{equation*}		
and then, Gronwall's inequality implies
\begin{equation}
 \int_\Omega |\tilde{\y}|^2d\x \leq C\|\uu'-\uu''\|_{L^2(0,T^*;H^1(\Omega))}^2.
\end{equation}
	
In the interest of estimating the wave functions, we have to analyse also the inverse Lagrangian transforms $\x'(t,\y)$ and $\x''(t,\y)$ corresponding to the velocity fields $\uu'$ and $\uu''$, respectively. In that direction we have that the difference $\tilde{\x}:=\x'-\x''$ solves the following equation
\begin{equation*}
 \begin{cases}
  \tilde{\x}_t(t,\y) = \uu'(t,\x'(t,\y))-\uu''(t,\x''(t,\y)), \\
  \tilde{\x}(0,\y)=0.  
 \end{cases}
\end{equation*}

Writing
\begin{align*}
  \uu'(t,\x'(t,\y))-\uu''(t,\x''(t,\y)) = &\>\uu'(t,\x'(t,\y))-\uu''(t,\x'(t,\y)) \\  &+ \uu''(t,\x'(t,\y))-\uu''(t,\x''(t,\y)),
\end{align*} and multiplying the equation above by $\tilde{\x}$ we have
\begin{align*}
&\frac{d}{dt}\int_{\Omega_y} |\tilde{\x}|^2d\y \\
&\leq C\left(\int_{\Omega_y} |\uu'(t,\x'(t,\y))-\uu''(t,\x'(t,\y))|^2d\y + (1+\|\nabla_x\uu'\|_\infty) \int_{\Omega_\y}|\tilde{\x}|^2d\y\right)
\end{align*}

And, therefore, using Gronwall's inequality together with Corollary~\ref{equivnorms} we obtain
\begin{equation}
\int_{\Omega_\y}|\tilde{\x}|^2d\y\leq C\|\uu'-\uu''\|_{L^2(0,T^*;H^1(\Omega))}^2.\label{tildex}
\end{equation}

	\textit{Step \#5.} (Analysis of $\tilde{\psi}$).
	
	 Now, $\tilde{\psi}(t,\y) = \psi'(t,\y) - \psi''(t,\y)$ solves the Schrödinger equation
	\begin{equation*}
	\begin{dcases}
	i \tilde{\psi}_t + \Delta_\y \tilde{\psi} = |\psi'|^2\psi' - |\psi''|^2\psi'' + \big(g(v') - g(v'')\big) h'(|\psi'|^2)\psi' 
	\\ \qquad\qquad\qquad\qquad+ g(v'')\big(h'(|\psi'|^2) \psi' - h'(|\psi''|^2) \psi''\big), \\
	\tilde{\psi}(\y,0) = 0,
	\end{dcases}
	\end{equation*}
 where, $v'$ and $v''$ are given by
 \[
 v'(t,\y)=\frac{1}{\rho'(t,\x'(t,\y))},\qquad\qquad v''(t,\y)=\frac{1}{\rho''(t,\x''(t,\y))}.
 \]
	
	Thus, multiplying the equation by $\tilde{\psi}$ and taking imaginary parts, after some manipulation we obtain
	\begin{align}
	\frac{d}{dt} \int_{\Omega_\y} |\tilde{\psi}|^2 d\y &\leq C \cdot \Big( \int_{\Omega_\y} |\tilde{\psi}|^2 d\y + \int_{\Omega_\y} |v'(t,\y)-v''(t,\y)|^2d\y \Big),\label{tildepsi'}
	\end{align}
	
Regarding the last term on the right hand side, using the uniform bounds available on the densities, we have that
\begin{align*}
&\int_{\Omega_\y} |v'(t,\y)-v''(t,\y)|^2d\y \\
&\qquad\leq C \int_{\Omega_\y} |\rho'(t,\x'(t,\y))-\rho''(t,\x''(t,\y))|^2d\y\\
&\qquad\leq C\Bigg( \int_{\Omega_\y} |\rho'(t,\x'(t,\y))-\rho''(t,\x'(t,\y))|^2d\y \\
&\qquad\qquad\qquad\qquad\qquad\qquad+ \|\nabla\rho''\|_\infty\int_{\Omega_\y} |\x'(t,\y)-\x''(t,\y)|^2d\y\Bigg)\\
&\qquad\leq C\|\uu'-\uu''\|_{L^2(0,T^*;H^1(\Omega))},
\end{align*}
where we have used \eqref{tildex}, \eqref{tilderho} and Corollary~\ref{equivnorms}.

Coming back to \eqref{tildepsi'} and using Gronwall's inequality we conclude
\begin{equation}
\int_{\Omega_\y}|\tilde{\psi}|^2d\y \leq C\|\uu'-\uu''\|_{L^2(0,T^*;H^1(\Omega))}^2,
\end{equation}

\textit{Step \#6.} (Analysis of $\frac{J_{\y'}}{\rho'} - \frac{J_{\y''}}{\rho''}$ and conclusion)

Finally, recalling \eqref{Jy/rho(x)} we note that $\frac{J_{\y'}}{\rho'}(t,\y) - \frac{J_{\y''}}{\rho''}(t,\y) = \rho_0(\x'(t,\y))-\rho_0(\x''(t,\y))$; and reasoning similarly as before we have
\begin{equation}
\int_\Omega \left| \frac{J_{\y'}}{\rho'} - \frac{J_{\y''}}{\rho''} \right|^2(t,\x) d\x \leq \|\uu'-\uu''\|_{L^2(0,T^*;H^1(\Omega))}^2.
\end{equation}

Analysing each one of the terms in \eqref{coisagigantesca} at the light of the estimates above, we conclude that its right-hand side may be bounded by
	$$\varepsilon \cdot\int_{\Omega} \Big( |\tilde{\vv}|^2 + |\nabla \tilde{\vv}|^2 \Big) d\x  +  C_\varepsilon \cdot  \Vert \uu' - \uu'' \Vert_{L^2(0,T;H^1)}, $$
	where $C_\varepsilon = C(\varepsilon, r, T).$ Thus, integrating \eqref{coisagigantesca} from $t' = 0$ to $t' = t <T$  and taking a sufficiently small $\varepsilon = \varepsilon(r, T^*,\mu)>0$,
	\begin{align*}
	&\int_\Omega \rho'(\x,t) |\tilde{\vv}(\x,t)|^2 d\x + \int_0^t \int_{\Omega} |\nabla \tilde{\vv} (\x,t')|^2 d\x \leq C \cdot T \cdot \Vert \uu - \uu \Vert_{F_t}.
	\end{align*}
  At last, diminishing $T^* = T^*(r,\varepsilon)$ accordingly, the desired conclusion follows.
\end{proof}

\begin{proof} [Proof of proposition \ref{localexist}]
	For, say, $\kappa = 1/2$, let $r$ and $T^*$ be as in the previous lemma. Then, for $\uu^{(0)} \equiv 0$, define recursively $$\uu^{(n)} = K \uu^{(n-1)}.$$
	Since $K$ preserves the ball of radius $r$ in $E_T$, $\uu^{(n)}$ defines a bounded sequence in $E_T$. Moreover, from lemma \ref{contraction} we know that
	\[\uu^{(n)} \rightarrow \uu \text{ strongly in } F_T = C([0,T];L^2) \cap L^2(0,T;H^1),
	\]
where $\uu$ is a fixed point of $K$.
	
By an elementary weak convergence argument, 
\[\uu \in E_T = L^2(0,T;H^{m+1}) \cap H^1(0,T;H^{m-1}),
\]
and, in fact, 
\[\uu^{(n)} \rightharpoonup \uu \text{ weakly in } E_T.
\] 
In particular, by interpolation, $$\uu^{(n)} \rightarrow \uu \text{  strongly in } L^2(0,T;H^{m}).$$
	
	Because of this convergence, it is not hard to see that the associated densities $\rho^{(n)}$, magnetic fields $\HH^{(n)}$ and wave functions $\psi^{(n)}$ also converge 
	$$\begin{dcases}
\rho^{(n)} \rightarrow \rho \text{ strongly in } C([0,T];H^{m-1}(\Omega)), \\
\HH^{(n)} \rightarrow \HH \text{ strongly in } C([0,T];H^{m-1}(\Omega)),\\
\psi^{(n)} \rightarrow \psi \text{ strongly in } C([0,T];H^{m-1}(\Omega_\y))
	\end{dcases}$$
	for some $\rho \in C([0,T];H^{m}(\Omega))$ (which is positive), $\HH\in L^2(0,T;H^{m+1}(\Omega))\cap H^1(0,T;H^{m-1}(\Omega))$  and $\psi \in C([0,T];H^{m}(\Omega_\y))$; wherein the limit $[\rho, \uu, \HH, \psi]$ is a strong solution to the system \eqref{E2rho}--\eqref{E2psi} with the desired regularity.
	
	This proves the existence part of the proposition. The uniqueness assertion follows trivially from the lemma \ref{contraction}.
\end{proof}

\section{Low order a priori estimates}\label{S4}

In this section we deduce the first set of a priori estimates. These low order a priori estimates are the starting point for the proof of the existence of global smooth solutions. In particular, we prove that solutions are away from vacuum, so that the Lagrangian transformation makes sense.

We point out that the $H^m$ regularity, with $m\geq 3$, from Theorem~\ref{principalthm} was necessary for the fixed point argument in the proof of the existence of local solutions. Indeed, this is necessary in the calculations that ensure the application $K$ to be a contraction, where we  required the wave function and the specific volume to be Lipschitz continuous. This is due to the coupling terms, where we have a composition of the wave function with the Lagrangian transformation.

The a priori estimates from this section are based on the analogues by Yu Mei in \cite{Mei,Mei'} on the periodic solutions to the 2-dimensional MHD equations, which, in turn, are inspired by the work of Va\u \i gant and Kazhikhov \cite{VK}, by the work of Perepelitsa in \cite{P} and by the work of Jiu, Wang and Xin \cite{JWX} on the 2-dimensional periodic Navier-Stokes equations.

The crucial a priori estimates are the uniform bounds from above and away from vacuum for the density. As delineated in the introduction, these estimates are achieved by analyzing a transport equation involving a function of the density, namely, equation \eqref{transportrho} below. The desired bounds will follow from the key estimates on the commutators $[u_i,R_i R_j](\rho u_j)$ and $[H_i,R_i R_j](H_j)$ of Riesz transforms and the operators of multiplication by $u_i$ and $H_i$. Such estimates ultimately depend on a careful analysis of the effective viscous flux $\F$ (given by \eqref{EVF}), which, in turn, relies on some $L^p$ bounds on the density.

Note that the coupling term from the momentum equation \eqref{E2u} behaves as a pressure and, accordingly, we incorporate it in the definition of the effective viscous flux. This is very advantageous for us as the elliptic estimates from lemmas \ref{elliptic1} and \ref{elliptic2} turn out to be identical to the analogues in \cite{Mei}. However, this additional term has to be accounted for in the subsequent estimates which poses a delicate interplay between the estimates from the MHD equations and from the NLS equation. At this point, the regularity of the Lagrangian transformation comes into play and is crucial in order to close the estimates. This is most evident in the proof of Lemma \ref{lemmanablarho} where we see explicitly how the regularity of the density depends on the integrability of the deformation gradient.

In this section, we follow \cite{Mei,Mei'}, closely making emphasis on the modifications that have to be made in order to include the short wave-long wave interactions.

\subsection{Energy estimates and uniform bounds for the density}

Let us begin with the basic energy estimate, which follows directly from the energy identity \eqref{difE}.
\begin{lemma}\label{apriorienergy}
Let
\begin{align*}
&E(t):=\int_\Omega \left( \rho\left(\frac{1}{2}|\uu|^2 + e \right) + \frac{1}{2}|\HH|^2 \right)d\x \\
&\qquad\qquad+ \int_{\Omega_\y}\left( \frac{1}{2}|\nabla_\y \psi|^2 + \frac{1}{4}|\psi|^4 + \alpha g(v)h(|\psi|^2) \right) d\y.
\end{align*}

Then,
\begin{equation}
E(t) + \int_0^t \int_\Omega\left( \mu |\nabla \uu|^2 + (\lambda(\rho)+\mu)(\Div\uu)^2 + \nu|\nabla\HH|^2 \right)d\x ds = E(0).\label{energy}
\end{equation}
\end{lemma}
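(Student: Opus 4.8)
The lemma is the space- and time-integrated form of the local energy balance \eqref{difE}, so the plan is simply to integrate \eqref{difE}, first in space and then in time. Since $\Omega$ and $\Omega_\y$ are both the flat torus and the strong solution is periodic with ample regularity ($\rho\in C^1([0,T];H^{m-1})$, $\uu,\HH\in H^1([0,T];H^{m-1})$, $\psi\in C^1([0,T];H^{m-2})$ with $m\ge 3$), integrating the $\x$-part of \eqref{difE} over $\Omega$ and the $\y$-part over $\Omega_\y$ annihilates the two spatial divergences, namely $\int_\Omega\Div(\cdots)\,d\x=0$ and $\int_{\Omega_\y}\Div\big(\overline{\psi}_t\nabla_\y\psi+\psi_t\nabla_\y\overline{\psi}\big)\,d\y=0$. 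What remains is the identity
\[
\frac{d}{dt}E(t)+\int_\Omega\big(\mu|\nabla\uu|^2+(\lambda(\rho)+\mu)(\Div\uu)^2+\nu|\nabla\HH|^2\big)\,d\x=0,
\]
and since $t\mapsto E(t)$ is absolutely continuous (again by the regularity of the solution), integrating from $0$ to $t$ yields \eqref{energy}.

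For completeness — the Introduction quotes \eqref{difE} without proof — one recovers \eqref{difE} in the usual way: test \eqref{E2u} with $\uu$, \eqref{E2H} with $\HH$, \eqref{E2psi} with $\overline{\psi}_t$, take real parts and add. Using \eqref{E2rho}, the convective terms produce $\tfrac{d}{dt}\int_\Omega\tfrac12\rho|\uu|^2$, the pressure term produces $\tfrac{d}{dt}\int_\Omega\rho e(\rho)$ (via $\nabla\big(e(\rho)+p(\rho)/\rho\big)=\rho^{-1}\nabla p(\rho)$), and testing $\Div\mathbb{S}$ with $\uu$ produces the dissipation $\int_\Omega\big(\mu|\nabla\uu|^2+(\lambda(\rho)+\mu)(\Div\uu)^2\big)$; using $\Div\HH=0$, the Lorentz terms $\HH\cdot\nabla\HH-\tfrac12\nabla|\HH|^2$ cancel exactly against the stretching and transport terms generated by \eqref{E2H}, leaving $\tfrac{d}{dt}\int_\Omega\tfrac12|\HH|^2+\nu\int_\Omega|\nabla\HH|^2$; and testing \eqref{E2psi} with $\overline{\psi}_t$ and taking the real part produces $\tfrac{d}{dt}\int_{\Omega_\y}\big(\tfrac12|\nabla_\y\psi|^2+\tfrac14|\psi|^4\big)$, a divergence in $\y$ that will integrate away, and a coupling term proportional to $\int_{\Omega_\y}g(v)h'(|\psi|^2)\,\mathrm{Re}(\psi\overline{\psi}_t)\,d\y$.

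The only step that is not mere bookkeeping — hence the one to watch — is the reconciliation of the two coupling contributions, which is where the Lagrangian machinery of Section~\ref{linearproblems} is used. Testing the momentum coupling $\alpha\nabla\big(g'(1/\rho)h(|\psi\circ\Y|^2)\J_\y/\rho\big)$ with $\uu$ and integrating by parts gives $-\alpha\int_\Omega g'(1/\rho)h(|\psi\circ\Y|^2)(\J_\y/\rho)\,\Div\uu\,d\x$; passing to the Lagrangian coordinate $\y$ (whose Jacobian is $\J_\y$), writing $v=1/\rho$, and using that $\J_\y/\rho$ is transported along particle paths — i.e.\ \eqref{J/rho=const}, equivalently $\partial_t v(\y,t)=v(\y,t)\,(\Div\uu)(\x(\y,t),t)$, which follows from \eqref{liouvilleJ_y}--\eqref{J/rho=const} — turns this term into $-\alpha\int_{\Omega_\y}h(|\psi|^2)\,\partial_t\big(g(v)\big)\,d\y$; this is precisely the computation already carried out in the proof of Lemma~\ref{H1-H2(u)}(i). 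Together with the NLS coupling term above (rewritten using $h'(|\psi|^2)\,\mathrm{Re}(\psi\overline{\psi}_t)=\tfrac12\partial_t h(|\psi|^2)$ and collecting constants), the two combine into $-\tfrac{d}{dt}\int_{\Omega_\y}\alpha g(v)h(|\psi|^2)\,d\y$, which is exactly the last term in $E(t)$, so that \eqref{difE} follows. Every manipulation here is licit because the strong solution keeps $\rho$ bounded above and away from $0$ on $[0,T]$, so by \eqref{J/rho} the map $\x\mapsto\y(\x,t)$ is a genuine diffeomorphism of the torus for each $t$, and the Sobolev regularity of $(\rho,\uu,\HH,\psi)$ justifies each integration by parts and the use of the fundamental theorem of calculus in $t$; with the coupling identity in hand, Lemma~\ref{apriorienergy} follows at once from the integration described in the first paragraph.
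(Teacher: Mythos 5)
Your proof is correct and follows exactly the route the paper intends: the lemma is obtained by integrating the differential energy identity \eqref{difE} over the two tori (killing the divergence terms) and then in time, with \eqref{difE} itself derived by the standard multipliers $\uu$, $\HH$, $\overline{\psi}_t$ and the reconciliation of the two coupling terms via the Lagrangian change of variables and \eqref{J/rho=const} — precisely the computation already carried out in the proof of Lemma~\ref{H1-H2(u)}(i). The paper omits these details, so your write-up simply supplies what the paper leaves implicit.
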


In light of estimate \eqref{energy}, part (ii) of lemma \ref{H1-H2(u)} yields the following higher order estimate on the wave function $\psi$.

\begin{lemma}\label{aprioripsi}
\begin{equation}
\int_{\Omega_\y} \big( |\psi_t|^2+|\Delta_\y \psi|^2 \big) d\y\leq C.\label{estpsit}
\end{equation}
In particular, for a.e. $t\in [0,T]$
\begin{equation}
\max_{\y\in\Omega_\y}|\psi(t)|\leq C.\label{uniformpsi}
\end{equation}
\end{lemma}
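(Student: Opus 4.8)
The plan is to invoke part (ii) of Lemma \ref{H1-H2(u)}, whose conclusion \eqref{H2psi(u)} is almost exactly the desired bound \eqref{estpsit}; the only thing to check is that the quantity on the right-hand side of \eqref{H2psi(u)}, namely $\phi(T,\|\psi_0\|_{H^2(\Omega_\y)},(\min_{\x\in\Omega}\rho_0)^{-1},\|\Div\uu\|_{L^2(0,T;L^2(\Omega))})$, is actually finite for the genuine solution $(\rho,\uu,\HH,\psi)$ of the full system \eqref{E2rho}--\eqref{E2psi}. The inputs $\|\psi_0\|_{H^2(\Omega_\y)}$ and $(\min_{\x\in\Omega}\rho_0)^{-1}$ are finite by the hypotheses on the data (the density is bounded below at $t=0$), so the entire issue reduces to bounding $\|\Div\uu\|_{L^2(0,T;L^2(\Omega))}$. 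But this is immediate from the basic energy identity \eqref{energy} of Lemma \ref{apriorienergy}: the dissipation term contains $\int_0^t\int_\Omega(\lambda(\rho)+\mu)(\Div\uu)^2\,d\x\,ds \geq \mu\int_0^t\int_\Omega(\Div\uu)^2\,d\x\,ds$ since $\lambda(\rho)=b\rho^\beta\geq 0$ by \eqref{assumptionvisc}, and the left side of \eqref{energy} is controlled by $E(0)$, which is finite (it involves only the $H^1$ norm of $\uu_0$ and $\HH_0$, the $H^1$ norm of $\psi_0$, and the pressure/internal-energy and coupling contributions, all controlled by the data thanks to \eqref{assumptionp}, \eqref{assumptiongh} and the bounds $m_0\le\rho_0\le M_0$). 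Since $\mu>0$, this gives $\|\Div\uu\|_{L^2(0,T;L^2(\Omega))}^2 \leq \mu^{-1}E(0) \leq C$, with $C$ depending only on $T$ and the data.

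Substituting this bound into \eqref{H2psi(u)} and using monotonicity of $\phi$ in its arguments yields \eqref{estpsit} with a constant $C$ depending only on the initial data (through $\|\psi_0\|_{H^2}$, $(\min\rho_0)^{-1}$, $m_0$, $M_0$) and on $T$. The inequality \eqref{uniformpsi} then follows at once: from \eqref{estpsit} we have $\psi(t)\in H^2(\Omega_\y)$ with $\|\Delta_\y\psi(t)\|_{L^2(\Omega_\y)}\le C$, and combined with the $H^1$ bound \eqref{H1psi(u)} (whose right-hand side is also finite by the same energy estimate) we get a uniform bound on $\|\psi(t)\|_{H^2(\Omega_\y)}$; since $\Omega_\y$ is two-dimensional, the Sobolev embedding $H^2(\Omega_\y)\hookrightarrow C(\overline{\Omega_\y})$ gives $\max_{\y\in\Omega_\y}|\psi(t)|\le C$ for a.e. $t\in[0,T]$.

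There is essentially no obstacle here: the only subtlety worth spelling out is the verification that $E(0)<\infty$, i.e.\ that the coupling energy $\int_{\Omega_\y}\alpha g(v(0))h(|\psi_0|^2)\,d\y$ and the internal energy $\int_\Omega\rho_0 e(\rho_0)\,d\x$ are finite --- both are, since $g,h$ are smooth and bounded on the range determined by $m_0\le\rho_0\le M_0$ and by $\psi_0\in H^2(\Omega_\y)\hookrightarrow L^\infty$, and $e(\rho)=\frac{a}{\gamma-1}\rho^{\gamma-1}$ is continuous. One should also note that although Lemma \ref{H1-H2(u)} was proved for solutions of the decoupled problem \eqref{schr} with $\uu$ a given field, it applies verbatim to the $\psi$-component of a solution of the full system, since that component solves exactly \eqref{schr} with $\uu$ taken to be the solution's own velocity field and $v$ its own specific volume.
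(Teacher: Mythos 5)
Your proposal is correct and follows exactly the paper's route: the paper derives Lemma \ref{aprioripsi} by combining the energy identity \eqref{energy} (which bounds $\|\Div\uu\|_{L^2(0,T;L^2(\Omega))}$ since $\lambda(\rho)+\mu\geq\mu>0$) with part (ii) of Lemma \ref{H1-H2(u)}, whose conclusion already contains both \eqref{estpsit} and the $L^\infty$ bound via the two-dimensional Sobolev embedding. Your spelled-out verification that $E(0)<\infty$ and that the lemma applies to the solution's own velocity field is a faithful elaboration of the same one-line argument.
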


Moving on, through some standard manipulation, multiplying equation \eqref{E2H} by $p|\HH|^{p-2}\HH$ and integrating we deduce the following $L^p$ estimate on the magnetic field. We omit the details.
\begin{lemma}\label{lemmaHLp}
For any $p\geq 2$, there exists a positive constant $C$ such that
\begin{equation}\label{HLp}
\sup_{0\leq t\leq T} ||\HH(t)||_{L^p(\Omega)}\leq C.
\end{equation}
\end{lemma}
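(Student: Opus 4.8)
The plan is to run the standard $L^p$ energy estimate on the magnetic field equation \eqref{E2H}, taking care that the resulting Gronwall constant depends only on quantities already under control at this stage --- the energy $E(0)$ (hence, via Lemma~\ref{apriorienergy}, on $\|\nabla\uu\|_{L^2(0,T;L^2)}$), the time $T$, the exponent $p$, and $\|\HH_0\|_{L^p}$ --- and in particular \emph{not} on $\|\nabla\uu\|_{L^\infty}$, which is not yet available. Since $m\ge3$ gives $\HH\in C([0,T];H^m)\hookrightarrow C([0,T];C^1(\Omega))$, the multiplier $|\HH|^{p-2}\HH$ is admissible; for non-even $p$ one makes the computation rigorous by first using $(|\HH|^2+\delta)^{(p-2)/2}\HH$ and letting $\delta\to0$, a point I will only note in passing.

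First I would multiply \eqref{E2H} by $p|\HH|^{p-2}\HH$ and integrate over $\Omega$. The time derivative gives $\frac{d}{dt}\int_\Omega|\HH|^p\,d\x$. Integration by parts turns the convection term $\uu\cdot\nabla\HH$ into $-\int_\Omega(\Div\uu)|\HH|^p\,d\x$, which combines with the contribution of $\HH\Div\uu$ into $(p-1)\int_\Omega(\Div\uu)|\HH|^p\,d\x$; together with the stretching term coming from $\HH\cdot\nabla\uu$, all three are bounded in absolute value by $C_p\int_\Omega|\nabla\uu|\,|\HH|^p\,d\x$. The diffusion term, after one integration by parts, yields a nonnegative dissipation bounded below by $c_{p,\nu}\int_\Omega|\nabla(|\HH|^{p/2})|^2\,d\x$ for a constant $c_{p,\nu}>0$, which I would keep on the left. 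This produces
\begin{equation*}
\frac{d}{dt}\int_\Omega|\HH|^p\,d\x + c_{p,\nu}\int_\Omega\bigl|\nabla(|\HH|^{p/2})\bigr|^2\,d\x \le C_p\int_\Omega|\nabla\uu|\,|\HH|^p\,d\x .
\end{equation*}

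The key step is to estimate the right-hand side so that only the $L^2$-in-space norm of $\nabla\uu$ enters. I would bound $\int_\Omega|\nabla\uu|\,|\HH|^p\,d\x\le\|\nabla\uu\|_{L^2}\,\bigl\||\HH|^{p/2}\bigr\|_{L^4}^2$ and invoke the two-dimensional Gagliardo--Nirenberg inequality $\|f\|_{L^4}^2\le C\|f\|_{L^2}\bigl(\|f\|_{L^2}+\|\nabla f\|_{L^2}\bigr)$ with $f=|\HH|^{p/2}$, so that, using $\|f\|_{L^2}^2=\|\HH\|_{L^p}^p$, the right-hand side is at most $C\|\nabla\uu\|_{L^2}\|\HH\|_{L^p}^{p/2}\bigl(\|\HH\|_{L^p}^{p/2}+\|\nabla(|\HH|^{p/2})\|_{L^2}\bigr)$. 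Young's inequality then lets me absorb the term involving $\|\nabla(|\HH|^{p/2})\|_{L^2}$ into the dissipation, yielding
\begin{equation*}
\frac{d}{dt}\|\HH(t)\|_{L^p}^p \le C\bigl(\|\nabla\uu(t)\|_{L^2}^2+\|\nabla\uu(t)\|_{L^2}\bigr)\|\HH(t)\|_{L^p}^p .
\end{equation*}
Since $\int_0^T\bigl(\|\nabla\uu\|_{L^2}^2+\|\nabla\uu\|_{L^2}\bigr)dt \le \mu^{-1}E(0)+\sqrt{\mu^{-1}T\,E(0)}$ by Lemma~\ref{apriorienergy}, Gronwall's inequality gives $\|\HH(t)\|_{L^p}^p\le\|\HH_0\|_{L^p}^p\,\exp\!\big(C(E(0),T)\big)$; and since $\HH_0\in H^1(\Omega)\hookrightarrow L^p(\Omega)$ in dimension two, the bound \eqref{HLp} follows. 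The only mild obstacle here is the realization that one must exploit the parabolic dissipation of \eqref{E2H}, rather than a crude $L^\infty$ bound on $\nabla\uu$, in order to keep the constant inside the hierarchy of low-order a priori estimates; the remaining computations are entirely routine.
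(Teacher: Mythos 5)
Your proof is correct and follows exactly the route the paper indicates (multiplying \eqref{E2H} by $p|\HH|^{p-2}\HH$ and integrating), whose details the paper omits as standard. In particular, your use of the two-dimensional Ladyzhenskaya inequality to absorb the stretching term into the parabolic dissipation is precisely the step that keeps the Gronwall constant depending only on the energy bound of Lemma~\ref{apriorienergy}, as required at this stage of the low-order estimates.
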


Note that the momentum equation \eqref{E2u} can be written as
\begin{equation}
(\rho \uu)_t + \Div(\rho\uu \otimes\uu - \HH\otimes\HH) = \mu\nabla^\perp \omega + \nabla \F, \label{E2u'}
\end{equation}
where, $\nabla^\perp$ is the operator $\nabla^\perp:=(\partial/\partial x_2,-\partial/\partial x_1)$, $\omega:=\nabla^\perp\cdot \uu$ is the vorticity and $\F$ is the effective viscous flux given by
\begin{equation}
\F:= (2\mu+\lambda(\rho))\Div \uu - P(\rho) - \frac{1}{2}|\HH|^2+\alpha g'(1/\rho)h(|\psi\circ \Y|^2)\frac{\J_\y}{\rho}. \label{EVF}
\end{equation}

Applying the divergence operator to equation \eqref{E2u'} we see that
\begin{equation}
\Delta\left(\xi_t + \eta + \F -\int_\Omega\F(\tilde{\x},t)d\tilde{\x}\right)=0,\label{E2u''}
\end{equation}
with
\[
\int_\Omega \left( \xi_t + \eta + \F -\int_\Omega\F(\tilde{\x},t)d\tilde{\x} \right) d\x = 0,
\]
where, $\xi$ and $\eta$ are the solutions to the following elliptic problems
\begin{equation}
-\Delta \xi=\Div(\rho \uu),\qquad\int_\Omega \xi(\x,t)d\x = 0,\label{defxi}
\end{equation}
\begin{equation}
-\Delta\eta = \Div \big(\Div(\rho\uu\otimes\uu -\HH\otimes\HH) \big), \qquad\int_\Omega \eta(\x,t)d\x=0.\label{defeta}
\end{equation}

From \eqref{E2u''} we infer that
\[
\xi_t + \eta + \F -\int_\Omega\F(\tilde{\x},t)d\tilde{\x} =0.
\]
Thus, defining
\begin{equation}
\Lambda(\rho):=\int_1^\rho \frac{2\mu+\lambda(s)}{s}ds = 2\mu\log\rho + \frac{1}{\beta}(\rho^\beta -1),
\end{equation}
and using the continuity equation along with the definition of $\F$ we arrive at the following transport equation
\begin{flalign}
&(\Lambda(\rho)-\xi)_t + \uu\cdot\nabla(\Lambda(\rho)-\xi) + P + \frac{|\HH|^2}{2} \label{transportrho}&\\
&\qquad\qquad\qquad-\alpha g'(1/\rho)h(|\psi\circ\Y|^2)\frac{\J_\y}{\rho} - \eta+ u\cdot\nabla\xi +\int_\Omega\F(\tilde{\x},t)d\tilde{\x} = 0.&\nonumber
\end{flalign}

Regarding the functions $\xi$ and $\eta$, just as in \cite{Mei,VK,JWX}, we have the following elliptic estimates.
\begin{lemma}\label{elliptic1}\hfill
\begin{enumerate}
\item[(1)]$\Vert \nabla\xi\Vert_{2m}\leq Cm\Vert \rho\Vert_{\frac{2mk}{k-1}}\Vert\uu\Vert_{2mk},$ for any $k>1$ and $m\geq 1$;
\item[(2)] $\Vert \nabla\xi\Vert_{2-r}\leq C\Vert \rho\Vert_{\frac{2-r}{r}}^{1/2},$ for any $0<r<1$;
\item[(3)] $\Vert \eta\Vert_{2m}\leq Cm(\Vert \rho \Vert_{\frac{2mk}{k-1}}\Vert \uu\Vert_{4mk}^2 + \Vert \HH\Vert_{4m}^2),$ for any $k>1$ and $m\geq 1$,
\end{enumerate}
\vskip 5pt
where $C$ is a positive constant independent of $m$, $k$ and $r$.
\end{lemma}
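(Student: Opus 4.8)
The plan is to reduce all three bounds to the $L^p$-mapping properties of second-order Riesz-type operators on $\mathbb T^2$, exactly as in \cite{VK,JWX,Mei}. First I would note that the right-hand sides of \eqref{defxi} and \eqref{defeta} are, respectively, a divergence and a double divergence on the torus, hence have zero mean; so the two Poisson problems are uniquely solvable among mean-zero functions and one may write
\begin{equation*}
\partial_i\xi=\partial_i(-\Delta)^{-1}\partial_j(\rho u_j),\qquad \eta=(-\Delta)^{-1}\partial_i\partial_j\bigl(\rho u_iu_j-H_iH_j\bigr),
\end{equation*}
with summation over repeated indices understood. Each operator $\partial_i(-\Delta)^{-1}\partial_j$ is a bounded Fourier multiplier (symbol $-k_ik_j/|k|^2$ for $k\neq0$), hence a Calder\'on--Zygmund operator on $\mathbb T^2$; the only quantitative input needed is the classical fact that its $L^p\to L^p$ operator norm grows at most linearly in $p$, i.e.
\begin{equation*}
\bigl\|\partial_i(-\Delta)^{-1}\partial_j g\bigr\|_{p}\le C\,p\,\|g\|_{p},\qquad 2\le p<\infty,
\end{equation*}
with $C$ an absolute constant. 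This already yields $\|\nabla\xi\|_{2m}\le Cm\,\|\rho\uu\|_{2m}$ and $\|\eta\|_{2m}\le Cm\bigl(\|\rho|\uu|^2\|_{2m}+\||\HH|^2\|_{2m}\bigr)$ for every integer $m\ge1$.

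Then (1) and (3) follow from a single H\"older splitting. For (1), since $\tfrac1{2m}=\tfrac{k-1}{2mk}+\tfrac1{2mk}$, one has $\|\rho\uu\|_{2m}\le\|\rho\|_{\frac{2mk}{k-1}}\|\uu\|_{2mk}$. For (3), the same split gives $\|\rho|\uu|^2\|_{2m}\le\|\rho\|_{\frac{2mk}{k-1}}\||\uu|^2\|_{2mk}=\|\rho\|_{\frac{2mk}{k-1}}\|\uu\|_{4mk}^2$, and trivially $\||\HH|^2\|_{2m}=\|\HH\|_{4m}^2$. For (2), observe $2-r\in(1,2)$, so the same Calder\'on--Zygmund operators are bounded on $L^{2-r}(\Omega)$ and $\|\nabla\xi\|_{2-r}\le C\|\rho\uu\|_{2-r}$; writing $\rho\uu=\sqrt\rho\cdot(\sqrt\rho\,\uu)$ and applying H\"older with the conjugate pair $\bigl(\tfrac{2(2-r)}{r},2\bigr)$ gives $\|\rho\uu\|_{2-r}\le\|\rho\|_{\frac{2-r}{r}}^{1/2}\,\|\sqrt\rho\,\uu\|_{2}$, and the last factor is bounded by the basic energy estimate of Lemma~\ref{apriorienergy}.

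The one point that requires care — and the reason the lemma is stated the way it is — is the dependence of the constants on the exponent: the factor $Cm$ in (1) and (3), rather than something growing faster like $Cm^2$, is exactly the linear-in-$p$ gain of the Calder\'on--Zygmund norm, and it is this gain that lets the commutator and density estimates of the following subsections close. In (2) one should likewise keep the Calder\'on--Zygmund constant on $L^{2-r}$ uniform over the range of $r$ in use. Beyond tracking these constants, the argument is routine exponent bookkeeping, identical to the MHD computation in \cite{Mei}, so I would present it compactly and refer there for the remaining details.
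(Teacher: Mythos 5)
Your argument is correct and is exactly the standard one behind this lemma; the paper itself gives no proof, simply deferring to \cite{Mei,VK,JWX}, and those references argue precisely as you do (write $\nabla\xi$ and $\eta$ as double Riesz transforms of $\rho\uu$ and of $\rho\uu\otimes\uu-\HH\otimes\HH$, use the $O(p)$ growth of the Calder\'on--Zygmund norm on $L^p$ for $p\geq 2$, and split by H\"older; your exponent bookkeeping in (1) and (3) and the $\sqrt\rho\cdot\sqrt\rho\,\uu$ splitting in (2) combined with the energy bound $\|\sqrt\rho\,\uu\|_2\leq C$ are all right). The one point you flag but do not resolve is real: on $L^{2-r}$ with $2-r\in(1,2)$ the Calder\'on--Zygmund constant behaves like $(1-r)^{-1}$ as $r\to 1^-$, so this route does not literally give a constant independent of $r$ over all of $(0,1)$ as the statement claims. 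This is harmless for the paper, since part (2) is only ever invoked (in the proof of Lemma~\ref{elliptic2}(1)) with $r=2/(m+1)\leq 2/3$, a range on which the constant is uniform; but if you want your write-up to match the stated lemma verbatim you should either restrict the range of $r$ or note that the constant is uniform only for $r$ bounded away from $1$.
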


\begin{lemma}\label{elliptic2}\hfill
\begin{enumerate}
\item[(1)]$\Vert \xi\Vert_{2m}\leq Cm^{1/2}\Vert\nabla\xi \Vert_{\frac{2m}{m+1}}\leq Cm^{1/2}   \Vert \rho\Vert_{m}^{1/2},$ for any $m\geq 2$;
\item[(2)] $\Vert \uu \Vert_{2m}\leq C(m^{1/2}\Vert \nabla\uu\Vert_{2}+1),$ for any $m\geq 1$;
\item[(3)] $\Vert \nabla\xi\Vert_{2m}\leq C(m^{3/2}k^{1/2}\Vert \rho\Vert_{\frac{2mk}{k-1}}\phi(t)^{1/2}+m\Vert \rho\Vert_{\frac{2mk}{k-1}})$ for any $m\geq 1$ and $k>1$;
\item[(4)] $\Vert \eta\Vert_{2m}\leq C(m^2k\Vert \rho\Vert_{\frac{2mk}{k-1}}+m\Vert\rho\Vert_{\frac{2mk}{k-1}}+m^2\phi(t)+m),$ for any $m\geq 1$ and $k>1$,
\end{enumerate}
\vskip 5pt
where $C$ is a positive constant independent of $m$ and $k$, and 
\[
\phi(t):=\int_\Omega (\mu\omega^2 + (2\mu+\lambda(\rho))(\Div\uu)^2 + \nu|\nabla\HH|^2)d\x,\qquad t\in[0,T].
\]
\end{lemma}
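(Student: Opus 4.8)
The plan is to read the four estimates as quantitative sharpenings of Lemma~\ref{elliptic1}: besides that lemma the only extra ingredients I need are (a) the two-dimensional Sobolev inequality $\|f\|_{L^{2m}(\Omega)}\le Cm^{1/2}\|\nabla f\|_{L^{2m/(m+1)}(\Omega)}$ for zero-mean $f$, and the Gagliardo--Nirenberg inequality $\|f\|_{L^{2m}(\Omega)}\le Cm^{1/2}\|\nabla f\|_{L^2(\Omega)}+\|f\|_{L^2(\Omega)}$, both with constants growing only like $m^{1/2}$ as $m\to\infty$; (b) the torus identity $\|\nabla\uu\|_{L^2(\Omega)}^2=\|\omega\|_{L^2(\Omega)}^2+\|\Div\uu\|_{L^2(\Omega)}^2$, which with $\lambda(\rho)\ge 0$ gives $\|\nabla\uu\|_{L^2(\Omega)}^2\le\mu^{-1}\phi(t)$; and (c) the energy bound of Lemma~\ref{apriorienergy} (in particular $\int_\Omega\rho|\uu|^2\,d\x\le C$, $\int_\Omega\rho^\gamma\,d\x\le C$ and the conserved mass $\int_\Omega\rho\,d\x=\int_\Omega\rho_0\,d\x>0$) together with the magnetic bound of Lemma~\ref{lemmaHLp}.

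For part~(1) the first inequality is ingredient~(a) applied to $\xi$ (which has zero mean), and the second inequality is precisely Lemma~\ref{elliptic1}(2) with the choice $r=\tfrac{2}{m+1}\in(0,1)$, for which $2-r=\tfrac{2m}{m+1}$ and $\tfrac{2-r}{r}=m$. (One can instead re-derive the second inequality directly by writing $\rho\uu=\rho^{1/2}\cdot(\rho^{1/2}\uu)$, so that H\"older with exponents $2m$ and $2$ gives $\|\rho\uu\|_{L^{2m/(m+1)}}\le\|\rho\|_{m}^{1/2}\|\rho^{1/2}\uu\|_{L^2}\le C\|\rho\|_{m}^{1/2}$ by Lemma~\ref{apriorienergy}, while the Calder\'on--Zygmund bound for $\nabla(-\Delta)^{-1}\Div$ holds with a constant uniform in $m\ge 2$ because the exponent $\tfrac{2m}{m+1}$ stays in the compact set $[\tfrac{4}{3},2)\subset(1,\infty)$.) For part~(2) I would write $\uu=\bar\uu+(\uu-\bar\uu)$ with $\bar\uu=\int_\Omega\uu\,d\x$; ingredient~(a) gives $\|\uu-\bar\uu\|_{L^{2m}}\le Cm^{1/2}\|\nabla\uu\|_{L^2}$, while the mean is controlled via $|\bar\uu|^2\!\int_\Omega\rho\,d\x=\int_\Omega\rho|\bar\uu|^2\,d\x\le 2\!\int_\Omega\rho|\uu|^2\,d\x+2\|\rho\|_{\gamma}\|\uu-\bar\uu\|_{2\gamma/(\gamma-1)}^2\le C(1+\|\nabla\uu\|_{L^2}^2)$ by H\"older, Poincar\'e and Lemma~\ref{apriorienergy}; since $\int_\Omega\rho\,d\x>0$ this gives $|\bar\uu|\le C(1+\|\nabla\uu\|_{L^2})$, which is where the ``$+1$'' comes from, and combining the two bounds yields the claim.

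Parts~(3) and~(4) then follow by feeding part~(2) into Lemma~\ref{elliptic1}. For~(3), Lemma~\ref{elliptic1}(1) gives $\|\nabla\xi\|_{2m}\le Cm\|\rho\|_{\frac{2mk}{k-1}}\|\uu\|_{2mk}$; applying part~(2) at exponent $2mk$ and ingredient~(b) gives $\|\uu\|_{2mk}\le C((mk)^{1/2}\phi(t)^{1/2}+1)$, and multiplying out produces exactly the two claimed terms $m^{3/2}k^{1/2}\|\rho\|_{\frac{2mk}{k-1}}\phi(t)^{1/2}$ and $m\|\rho\|_{\frac{2mk}{k-1}}$. For~(4), Lemma~\ref{elliptic1}(3) gives $\|\eta\|_{2m}\le Cm(\|\rho\|_{\frac{2mk}{k-1}}\|\uu\|_{4mk}^2+\|\HH\|_{4m}^2)$; part~(2) at exponent $4mk$ and ingredient~(b) give $\|\uu\|_{4mk}^2\le C(mk\,\phi(t)+1)$, Lemma~\ref{lemmaHLp} bounds $\|\HH\|_{4m}^2$, and a final application of Young's inequality then puts the result in the stated form.

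The only genuinely delicate point, and the main obstacle, is the faithful tracking of the exponent dependence of the constants: one must use that the two-dimensional Sobolev constant grows like $m^{1/2}$ and no faster, that (inside Lemma~\ref{elliptic1}) the $L^p$ norms of the relevant Riesz transforms grow like $p\sim m$, and one must choose the H\"older exponents so that the density norm lands exactly on $\|\rho\|_{\frac{2mk}{k-1}}$. Everything else is routine bookkeeping; these choices are the same as in \cite{Mei,Mei'} and ultimately go back to \cite{VK}.
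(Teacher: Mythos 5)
The paper gives no proof of this lemma (it simply points to \cite{Mei,VK,JWX}), and your reconstruction is the standard argument from those references: the $m^{1/2}$ growth of the 2D Sobolev constant, the choice $r=2/(m+1)$ in Lemma~\ref{elliptic1}(2), control of the mean $\bar\uu$ via conserved mass and the energy bound, the periodic identity $\|\nabla\uu\|_2^2=\|\omega\|_2^2+\|\Div\uu\|_2^2$ giving $\|\nabla\uu\|_2^2\le\mu^{-1}\phi(t)$, and substitution of part (2) into Lemma~\ref{elliptic1}(1),(3). Parts (1)--(3) are complete and correct as you present them.

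Two caveats on part (4). First, invoking Lemma~\ref{lemmaHLp} for $\|\HH\|_{4m}^2$ is not quite enough: as stated, its constant may depend on $p=4m$, and in any case a $p$-independent bound would not account for the term $m^2\phi(t)$ in the conclusion. What is actually needed is the $\HH$-analogue of your part (2), namely $\|\HH\|_{4m}^2\le C(m\|\nabla\HH\|_2^2+1)\le C(m\nu^{-1}\phi(t)+1)$, which uses the zero-mean Sobolev inequality together with the fact that $\int_\Omega\HH\,d\x$ is conserved by \eqref{E2H}; multiplying by the prefactor $Cm$ from Lemma~\ref{elliptic1}(3) then produces exactly $m^2\phi(t)+m$. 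Second, your computation for the $\uu$-part yields $Cm^2k\Vert\rho\Vert_{\frac{2mk}{k-1}}\phi(t)+Cm\Vert\rho\Vert_{\frac{2mk}{k-1}}$, and no application of Young's inequality converts $m^2k\Vert\rho\Vert\phi$ into $m^2k\Vert\rho\Vert$; the first term of item (4) as printed appears to have dropped a factor $\phi(t)$ relative to the source \cite{Mei} (and relative to how the bound is used in the proof of Lemma~\ref{rhop}), so your derivation in fact recovers the correct form rather than the literal one.
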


Using the transport equation \eqref{transportrho} and the elliptic estimates above, as well as Lemma~\ref{lemmaHLp}, we can derive the following $L^p$ estimate for the density, which corresponds to the analogue estimate in \cite[lemma 3.5]{Mei}.
\begin{lemma}\label{rhop}
Assume $\beta>1$. Then, for any $p\geq 1$,
\begin{equation}
\sup_{0\leq t \leq T} \Vert \rho(t) \Vert_{L^p(\Omega)}\leq Cp^{\frac{2}{\beta-1}},
\end{equation}
where $C$ is a positive constant independent of $p$.
\end{lemma}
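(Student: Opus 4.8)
The plan is to follow closely the computation of \cite[Lemma~3.5]{Mei}, the only genuinely new feature being the coupling term in the transport equation \eqref{transportrho}, which turns out to be harmless for this estimate. We work with a smooth solution available on $[0,T]$ (from Proposition~\ref{localexist}, suitably continued), so that all the manipulations below are legitimate; the point is that the bound obtained is independent of that solution. Write $b:=\Lambda(\rho)-\xi$, so that \eqref{transportrho} reads
\[
b_t+\uu\cdot\nabla b = -P(\rho)-\tfrac12|\HH|^2+\alpha g'(1/\rho)h(|\psi\circ\Y|^2)\tfrac{\J_\y}{\rho}+\eta-\uu\cdot\nabla\xi-\int_\Omega\F(\tilde\x,t)\,d\tilde\x .
\]
The first step is to record the \emph{favourable} terms on the right. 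One has $-P(\rho)-\tfrac12|\HH|^2\le0$, so these will be discarded by a sign argument. The coupling term is \emph{uniformly bounded}: by \eqref{Jy/rho(x)} we have $\J_\y/\rho=\rho_0\circ\y\le M_0$, by Lemma~\ref{aprioripsi} $\|\psi(t)\|_{L^\infty}\le C$ (hence $h(|\psi\circ\Y|^2)$ is bounded by a constant depending only on $h$ and the data), and $g'$ is bounded with compact support by \eqref{assumptiongh}. Finally, the spatial average $\overline\F(t):=\int_\Omega\F(\tilde\x,t)\,d\tilde\x$ is controlled in $L^1(0,T)$ by energy quantities only, using $\|\rho(t)\|_{L^\gamma}\le C$ (from the internal energy in \eqref{energy}), $\|\HH(t)\|_{L^2}\le C$, and $\int_0^T\phi(t)\,dt\le C$, both coming from \eqref{energy}. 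Thus the only substantive right-hand terms are $\eta$ and $\uu\cdot\nabla\xi$.

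Next I would run an $L^q$-type estimate on $b$, where eventually $q\sim p/\beta$ (because $\Lambda(\rho)\sim\rho^\beta/\beta$ for $\rho$ large). Testing \eqref{transportrho} against a sign-definite power of $b$, weighted by $\rho$ so that the transport term is absorbed exactly via the continuity equation \eqref{E2rho} (no boundary terms on $\Omega=\mathbb T^2$), the non-positive contribution of $-P-\tfrac12|\HH|^2$ is dropped and, after Hölder's inequality, one is left with a differential inequality of the schematic form
\[
\frac{d}{dt}\big\|(\Lambda(\rho)-\xi)(t)\big\|_{L^q}\;\lesssim\;1+|\overline\F(t)|+\|\eta(t)\|_{L^q}+\|(\uu\cdot\nabla\xi)(t)\|_{L^q}.
\]
Estimating the last two terms via the elliptic bounds of Lemmas~\ref{elliptic1} and~\ref{elliptic2} (using Lemma~\ref{lemmaHLp} for the magnetic contribution to $\eta$), they are controlled, up to powers of $q$ and factors of $\phi(t)^{1/2}\in L^2(0,T)$, by $\|\rho\|_{L^{p'}}$ with $p'\le\frac{k}{k-1}q$ for an arbitrary $k>1$. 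Choosing $k$ with $\frac{k}{k-1}<\beta$ — possible \emph{precisely because} $\beta>1$ — makes $p'$ strictly below $\beta q\sim p$. Integrating in time, then returning to the density through the elementary inequality $\rho^\beta\le C\big(|\Lambda(\rho)-\xi|+|\xi|+1\big)$ (valid since $\log\rho$ is dominated by $\rho^\beta$) together with Lemma~\ref{elliptic2}(1) to bound $\|\xi\|_{L^q}\lesssim q^{1/2}\|\rho\|_{L^{q/2}}^{1/2}$, one arrives at a recursion of the form
\[
A_p\;\le\;C\,p^{2/\beta}A_{\theta p}^{1/\beta}+C,\qquad A_p:=1+\sup_{[0,T]}\|\rho(t)\|_{L^p},
\]
for some fixed $\theta\in(0,1)$.

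The final step is to iterate this recursion down to an exponent comparable to $\gamma$, where $A_{\,\cdot\,}\le C$ by the energy estimate \eqref{energy}; this supplies the base case. Each step of the iteration contributes a constant $C^{1/\beta^{j}}$ and a power $p^{2/\beta^{j+1}}$; since $\beta>1$, the product $\prod_jC^{1/\beta^{j}}$ converges and $\sum_{j\ge0}\frac{2}{\beta^{j+1}}=\frac{2}{\beta-1}$, which is exactly the exponent in the statement. The main obstacle — and the only place real care is needed — is this bookkeeping of the dependence on $p$: one must verify that the exponents produced by Lemma~\ref{elliptic2} stay strictly below $p$ (the role of $\beta>1$, via the choice of $k$) and that the accumulated constants and powers of $p$ telescope to the claimed $Cp^{2/(\beta-1)}$. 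By contrast, the new coupling term requires nothing beyond the uniform bound noted in the first step, so the argument is, modulo that observation, identical to the one in \cite{Mei,Mei'}.
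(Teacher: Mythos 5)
Your proposal follows essentially the same route as the paper: multiply the transport equation \eqref{transportrho} by $\rho[(\Lambda(\rho)-\xi)_+]^{2m-1}$, discard the sign-favourable pressure and magnetic terms, absorb the new coupling term via the uniform bounds \eqref{J/rho}, \eqref{assumptiongh} and Lemma~\ref{aprioripsi} (this is exactly the paper's $K_0$), and control $\eta$ and $\uu\cdot\nabla\xi$ with Lemmas~\ref{elliptic1}--\ref{elliptic2}. The only organizational difference is the closing step: the paper (following Mei line by line) keeps everything at the single exponent $2m\beta+1$ and closes the differential inequality for $f(t)$ by Young plus Gronwall, using that the power $(1+\tfrac{1}{2m})/\beta<1$, whereas you iterate a recursion $A_p\le Cp^{2/\beta}A_{\theta p}^{1/\beta}+C$ down to the energy exponent; both extract the same $p^{2/(\beta-1)}$, and the role of $\beta>1$ is the same in either version. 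One small inaccuracy: the average $\int_\Omega\F\,d\tilde\x$ is \emph{not} controlled in $L^1(0,T)$ by energy quantities alone, since $\int(2\mu+\lambda(\rho))(\Div\uu)\,d\x$ requires $\|\rho\|_{L^\beta}^{\beta/2}$, which exceeds the energy bound when $\beta>\gamma$; the paper keeps this as $\phi^{1/2}\|\rho\|_{2m\beta+1}^{\beta/2}$ in $K_3$, which is harmless because $\|\rho\|_{L^\beta}^\beta\lesssim 1+f$ enters only sublinearly — so your argument survives, but that term cannot simply be declared bounded at the outset.
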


\begin{proof}
Fix some $m\in\mathbb{N}$ and multiply equation \eqref{transportrho} by $\rho[(\Lambda(\rho)-\xi)_+]^{2m-1}$, where $(\cdot)_+$ denotes the positive part. Then, integrating the resulting equation over $\Omega$ we have
\begin{flalign}
&\frac{1}{2m}\frac{d}{dt}\int \rho[(\Lambda(\rho)-\xi)_+]^{2m}d\x\label{ineqrhop}&\\
&\qquad+\int \rho P [(\Lambda(\rho)-\xi)_+]^{2m-1}d\x + \frac{1}{2}\int \rho|\HH|^2[(\Lambda(\rho)-\xi)_+]^{2m-1}d\x&\nonumber\\
&=\alpha\int \rho g'(1/ \rho)h(|\psi\circ \Y|^2)\frac{\J_\y}{\rho}[(\Lambda(\rho)-\xi)_+]^{2m-1}d\x &\nonumber\\
&\qquad + \int \rho\eta[(\Lambda(\rho)-\xi)_+]^{2m-1}d\x-\int \rho\uu\cdot\nabla\xi[(\Lambda(\rho)-\xi)_+]^{2m-1}d\x& \nonumber\\
&\qquad\qquad + \left(\int \F(\x,t)d\x\right)\left(\int \rho[(\Lambda(\rho)-\xi)_+]^{2m-1}d\x \right):=\sum_{i=0}^3 K_i.& \nonumber
\end{flalign}

Let us denote 
\[
f(t):=\left( \int \rho[(\Lambda(\rho)-\xi)_+]^{2m}d\x\right)^{1/2m},
\]
so that
\begin{equation}
\Vert \rho(t) \Vert_{2m\beta + 1}^\beta\leq C\left[1+f(t)+\left(\int \rho|\xi|^{2m}d\x \right)^{\frac{\beta}{2m\beta+1}}  \right].\label{rholeqf}
\end{equation}

Note that, since $g'$ and $h$ are bounded functions, using \eqref{J/rho} the term $K_0$ can be easily bounded as
\begin{align}
|K_0| &\leq C\int \rho^{1/2m}[\rho(\lambda(\rho)-\xi)_+^{2m}]^{\frac{2m-1}{2m}}d\x\label{K_0}\\
& \leq C\Vert \rho \Vert_{1}^{1/2m}\Vert \rho(\lambda(\rho)-\xi)_+^{2m}\Vert_{1}^{\frac{2m-1}{2m}}\nonumber\\
& \leq C f^{2m-1}(t).\nonumber
\end{align}

Next, just as in the proof of \cite[lemma 3.5]{Mei}, using lemma \ref{elliptic2}, we have that
\begin{equation}
|K_1|\leq C\Vert \rho \Vert_{2m\beta+1}^{1+\frac{1}{2m}}f^{2m-1}(t)[m^2\phi(t)+m],\label{K_1}
\end{equation}
and
\begin{equation}
|K_2|\leq C\Vert \rho \Vert_{2m\beta+1}^{\frac{1}{2m}}f^{2m-1}(t)[m^2\phi(t) + m].\label{K_2}
\end{equation}

Regarding $K_3$ we note that, similarly to \eqref{K_0} we have that
\[
\int \rho[(\Lambda(\rho)-\xi)_+]^{2m-1}d\x \leq C f^{2m-1}(t),
\]
while 
\begin{align*}
\int |\F |d\x &\leq \left(\int(2\mu + \lambda(\rho))(\Div\uu)^2d\x\right)^{1/2}\left(\int(2\mu+\lambda(\rho))d\x\right)^{1/2} \\
&\qquad+ \int Pd\x + \frac{1}{2}\int|\HH|^2d\x + \alpha\int\left|g'(1/\rho)h(|\psi\circ\Y|^2)\frac{\J_\y}{\rho}\right|d\x\\
 &\leq C\left[\phi^{1/2}(t) + \phi^{1/2}(t)\left(\int \rho^\beta d\x\right)^{1/2} +1 \right]\\
 &\leq C [\phi^{1/2}(t) + \phi^{1/2}(t)\Vert \rho \Vert_{2m\beta+1}^{\frac{\beta}{2}}+1].
\end{align*}
Therefore,
\begin{equation}
|K_3|\leq Cf^{2m-1}(t)[\phi^{1/2}(t) + \phi^{1/2}(t)\Vert \rho \Vert_{2m\beta+1}^{\frac{\beta}{2}}+1].\label{K_3}
\end{equation}

Substituting  \eqref{K_0}-\eqref{K_3} into \eqref{ineqrhop} we have
\begin{equation}
\frac{d}{dt}f(t)\leq C[1+\phi^{1/2}(t) + \phi^{1/2}(t)\Vert \rho \Vert_{2m\beta+1}^{\frac{\beta}{2}}+(m^2\phi(t) + m)\Vert \rho \Vert_{2m\beta+1}^{1+\frac{1}{2m}}].\label{ddtf}
\end{equation} 

Once we have inequalities \eqref{rholeqf} and \eqref{ddtf} the remaining of the proof follows from lemma \ref{elliptic2} and Gronwall's inequality, wherein the details follow line by line the proof of \cite[lemma 3.5]{Mei}.
\end{proof}

\begin{lemma}\label{Zvarphi}
For any $\varepsilon>0$ there exists a positive constant $C(\varepsilon)$ such that
\begin{equation}
\sup_{0\leq t\leq T} \log (e+Z^2(t))+\int_0^T\frac{\varphi^2(t)}{e+Z^2(t)}dt\leq C\Phi_T^{1+\beta\varepsilon},
\end{equation}
where $Z^2(t):=\int (\mu\omega^2+\frac{\F^2}{2\mu+\lambda(\rho)}+|\nabla \HH|^2)d\x,$ $\varphi^2(t):=\int(\rho|\dot{\uu}|^2+|\nabla^2 \HH|^2)d\x$ and $\Phi_T:=||\rho||_\infty+1$.

In particular,
\begin{equation}
\sup_{0\leq t\leq T}\log(e+\phi(t))\leq C\Phi_T^{1+\beta\varepsilon},\label{suplogphi}
\end{equation}
where, $\phi$ is as defined in Lemma~\ref{elliptic2}.
\end{lemma}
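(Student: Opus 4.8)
The plan is to follow, essentially step by step, the argument of Yu Mei \cite{Mei,Mei'} for the $2$-dimensional periodic MHD system, the only genuinely new point being the treatment of the coupling summand inside the effective viscous flux $\F$ of \eqref{EVF}. The starting point is a higher–order energy identity for $Z^2$: test the momentum equation, in the form \eqref{E2u'}, against the material derivative $\dot\uu:=\uu_t+\uu\cdot\nabla\uu$, and the induction equation \eqref{E2H} against $-\Delta\HH$; then add the two and integrate by parts, using \eqref{continuity} to write $\int\rho\dot\uu\cdot\dot\uu=\int\rho|\dot\uu|^2$, using the identity $\Div\uu=(2\mu+\lambda(\rho))^{-1}\bigl(\F+P+\tfrac12|\HH|^2-\alpha g'(1/\rho)h(|\psi\circ\Y|^2)\J_\y/\rho\bigr)$ to recast the flux terms as $\tfrac{d}{dt}\int\F^2/(2(2\mu+\lambda(\rho)))$ modulo lower order and the vorticity terms as $\tfrac{d}{dt}\int\tfrac{\mu}{2}\omega^2$ modulo commutators, and noting that on the torus $\int|\Delta\HH|^2=\int|\nabla^2\HH|^2$. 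This yields
$$\tfrac12\tfrac{d}{dt}Z^2(t)+c\,\varphi^2(t)\le \mathcal R(t),$$
with $\mathcal R$ a sum of commutator and lower–order terms.

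The bulk of the work is the bound $\mathcal R(t)\le \tfrac{c}{2}\varphi^2(t)+\Phi_T^{1+\beta\varepsilon}\,h(t)\,\bigl(e+Z^2(t)\bigr)$ with $h\in L^1(0,T)$ and $\|h\|_{L^1(0,T)}$ bounded independently of $\Phi_T$. For the hydrodynamic and magnetic terms this is verbatim Mei's computation: one uses Gagliardo--Nirenberg on $\mathbb T^2$, the elliptic bounds of Lemmas \ref{elliptic1} and \ref{elliptic2} for $\xi$ and $\eta$, the energy identity \eqref{energy}, the uniform magnetic bounds of Lemma \ref{lemmaHLp}, and — decisively — the $L^p$ density bound $\|\rho\|_{L^p}\le Cp^{2/(\beta-1)}$ of Lemma \ref{rhop}, whose exponent is balanced against $\|\rho\|_\infty$ to generate precisely the $\Phi_T^{\beta\varepsilon}$ loss (this is where $\varepsilon$ and the dependence $C=C(\varepsilon)$ enter). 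Throughout, $Z^2$ and $\phi$ may be interchanged at the cost of factors polynomial in $\Phi_T$, since by \eqref{EVF} one has $(2\mu+\lambda(\rho))(\Div\uu)^2\le C\F^2/(2\mu+\lambda(\rho))+C(P^2+|\HH|^4+1)/(2\mu+\lambda(\rho))$ and conversely.

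The new contributions are those stemming from the summand $\alpha g'(1/\rho)h(|\psi\circ\Y|^2)\J_\y/\rho$ of $\F$. Since $g'$ and $h$ are bounded and $\J_\y/\rho=\rho_0\circ\y$ is bounded by \eqref{J/rho}, this summand is uniformly bounded, so it affects the lower–order estimates only through harmless constants; it is differentiated only in the step producing $\tfrac{d}{dt}\int\F^2/(2(2\mu+\lambda(\rho)))$, where its material derivative $\dot{}=\partial_t+\uu\cdot\nabla_\x$ is needed. Here I would use that $\J_\y/\rho$ is transported along particle paths, \eqref{J/rho=const}, so $\dot{(\J_\y/\rho)}=0$; that $\dot{(g'(1/\rho))}=g''(1/\rho)\rho^{-1}\Div\uu$ by \eqref{continuity} (and $g''(1/\rho)\rho^{-1}$ is bounded, since by \eqref{assumptiongh} $\mathrm{supp}\,g'$ is compact in $(0,\infty)$); and, by the Eulerian--Lagrangian conversion formula, that $\dot{(\psi\circ\Y)}=(\psi_t)\circ\Y$, hence $\dot{(|\psi\circ\Y|^2)}=2\,\mathrm{Re}(\overline{\psi}\psi_t)\circ\Y$. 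The resulting integrals are of the form $\int (2\mu+\lambda(\rho))^{-1}\F\cdot\bigl(O(|\Div\uu|)+O(|(\overline{\psi}\psi_t)\circ\Y|)\bigr)d\x$; estimating $\|\F/\sqrt{2\mu+\lambda(\rho)}\|_{L^2}\le Z$ and, after changing variables to $\Omega_\y$ (whose Jacobian $\rho(t,\x(t,\y))/\rho_0(\y)$ is $\le C\Phi_T$ by \eqref{J/rho}), invoking the uniform bounds $\|\psi_t\|_{L^2(\Omega_\y)}+\|\psi\|_{L^\infty(\Omega_\y)}\le C$ of Lemma \ref{aprioripsi} and \eqref{energy} for $\Div\uu$, each such term is $\le\varepsilon Z^2+C\Phi_T$. (Should a spatial derivative of the coupling term occur in some commutator, Remark \ref{Lp_yLp_x} and Corollary \ref{equivnorms} transfer the norm between coordinates.)

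Having obtained $\tfrac12\tfrac{d}{dt}Z^2+\tfrac{c}{2}\varphi^2\le \Phi_T^{1+\beta\varepsilon}h(t)\,(e+Z^2)$, divide by $e+Z^2$ — the left side dominates $\tfrac12\tfrac{d}{dt}\log(e+Z^2)+\tfrac{c}{2}\varphi^2/(e+Z^2)$ — and integrate in $t$ to get the first assertion. For the ``in particular'' part, the comparison above gives $\phi(t)\le C\,Z^2(t)+C\Phi_T^{2\gamma}+C$ (using $P=a\rho^\gamma$, the boundedness of the coupling term, and Lemma \ref{lemmaHLp} for the $|\HH|^4$ piece), so $\log(e+\phi(t))\le C\Phi_T^{1+\beta\varepsilon}$ because $\log$ of a power of $\Phi_T$ is itself $\le C(\varepsilon)\Phi_T^{1+\beta\varepsilon}$. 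The main obstacle is keeping the estimate of $\mathcal R$ genuinely \emph{logarithm-free} — every error term must carry a factor $(e+Z^2)$ and not $(e+Z^2)\log(e+Z^2)$ — since only then does a single application of Gronwall yield the stated single-logarithm bound rather than the weaker double-logarithm one; this is exactly the role of the sharp power-of-$p$ density estimate of Lemma \ref{rhop} in place of a crude $L^\infty$ bound, and it is also the point at which the interplay with the NLS must be kept sufficiently tight.
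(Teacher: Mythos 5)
Your proposal follows essentially the same route as the paper: the identity for $\tfrac{d}{dt}Z^2+\varphi^2$ obtained by testing \eqref{E2dotu} against $\dot\uu$ and \eqref{E2H} against $\Delta\HH$, the recasting of $\Div\uu$ via \eqref{uitoF}, the deferral to Mei's estimates (powered by Lemma~\ref{rhop}) for the purely MHD terms, and the treatment of the new coupling terms exactly as in the paper's $J_{10}$--$J_{12}$, using $D_t(\J_\y/\rho)=0$, the compact support of $g'$, $D_t(\psi\circ\Y)=\psi_t\circ\Y$ and $\|\psi_t\|_{L^2(\Omega_\y)}\le C$ from Lemma~\ref{aprioripsi}. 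The only cosmetic difference is that you divide by $e+Z^2$ before integrating while the paper absorbs the total-derivative term $J_0=2\int\HH\otimes\HH:\nabla\uu\,d\x$ into the Gronwall quantity via $|J_0|\le C_0+\tfrac12 Z^2$; both lead to the same single-logarithm bound.
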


\begin{proof}
The proof of this lemma consists of bounding appropriately all the terms in the right hand side of the following identity
\begin{align}
&\frac{d}{dt}Z^2 + 2\varphi^2 = -2\frac{d}{dt}\int_\Omega \HH\otimes\HH :\nabla\uu d\x - \mu\int_\Omega \omega^2\Div \uu d\x \label{idZ}\\
&\quad + 4\int_\Omega \F \nabla u_1\cdot\nabla^\perp u_2 d\x + \int_\Omega \F^2\Div \uu \left[ \rho\left( \frac{1}{2\mu+\lambda(\rho)}\right)' - \frac{1}{2\mu+\lambda(\rho)} \right] d\x \nonumber\\
&\quad +2\int_\Omega \F\Div \uu \left[ \rho\left( \frac{P}{2\mu+\lambda(\rho)} \right)' - \frac{P}{2\mu+\lambda(\rho)} \right] d\x \nonumber\\
&\quad +\int_\Omega \F |\HH|^2\Div\uu \left[ \rho\left( \frac{1}{2\mu+\lambda(\rho)}\right)' - \frac{1}{2\mu+\lambda(\rho)} \right] d\x \nonumber\\
&\quad -2\int_\Omega\frac{\F}{2\mu+\lambda(\rho)}(\HH\cdot\nabla\uu\cdot\HH + \nu\HH\cdot\Delta\HH)d\x &\nonumber\\
&\quad+2\int_\Omega(\nu\Delta\HH - \HH\Div\uu)\cdot\nabla\uu\cdot\HH d\x + 2\int_\Omega\HH\cdot\nabla\uu(\nu\Delta\HH+\HH\cdot\nabla\uu) \nonumber\\
&\quad +2\int_\Omega (\uu\cdot\nabla\HH - \HH\cdot\nabla\uu + \HH\Div\uu)\cdot\Delta\HH d\x \nonumber\\
&\quad-2\alpha\int_\Omega \F g'(1/\rho)h(|\psi\circ\Y|^2)\frac{\J_\y}{\rho}\Div\uu \left[ \rho\left( \frac{1}{2\mu+\lambda(\rho)}\right)' - \frac{1}{2\mu+\lambda(\rho)} \right]d\x \nonumber\\
&\quad +2\alpha \int_\Omega \frac{\F}{2\mu+\lambda(\rho)}\Div\uu\frac{1}{\rho}g''(1\rho)h(|\psi\circ\Y|^2)\frac{\J_\y}{\rho}d\x \nonumber\\
&\quad +2\alpha\int_\Omega \frac{\F}{2\mu+\lambda(\rho)}D_t(|\psi\circ\Y|^2)g'(1/\rho)h'(|\psi\circ\Y|^2)\frac{\J_\y}{\rho}  \nonumber\\
&\quad:=-\frac{d}{dt}J_0 + \sum_{i=1}^{12}J_i.\nonumber
\end{align}
Here $D_t=\frac{d}{dt}+\uu\cdot\nabla$ stands for the material derivative and $(\cdot)'$ stands for differentiation with respect to $\rho$.

This identity follows after some lengthy and tedious manipulation by rewriting the momentum equation as
\begin{equation}
\rho\dot{\uu} = \nabla\F +\mu\nabla^\perp\omega + \HH\cdot\nabla\HH,\label{E2dotu}
\end{equation}
where $\dot{\uu}=D_t\uu$, multiplying by $\dot{\uu}$, integrating by parts and using the continuity equation \eqref{E2rho} in order to deal with the material derivative of some of the terms. We write $\Div\uu$ as 
\begin{equation}
\Div\uu=\frac{\F + P(\rho)+\tfrac{1}{2}|\HH|^2-\alpha g'(1/\rho)h(|\psi\circ\Y|^2)\tfrac{\J_\y}{\rho}}{2\mu+\lambda(\rho)},\label{uitoF}
\end{equation}
in order to complete the term $\frac{\F^2}{2\mu+\lambda(\rho)}$ on the left hand side and then we use the magnetic field equation \eqref{E2H} to transform some troublesome terms into ones that are easier to handle. In particular, we use the following key observation
\begin{flalign*}
&\int_\Omega \HH\cdot\nabla\HH\cdot\dot{\uu}d\x=-\frac{d}{dt}\int_\Omega \HH\otimes\HH:\nabla\uu d\x + \int_\Omega(\nu\Delta\HH - \HH\Div\uu)\cdot\nabla\uu\cdot\HH d\x&\\
&\qquad\qquad\qquad\qquad\qquad\qquad+\int_\Omega\HH\cdot\nabla\uu\cdot(\nu\nabla\HH + \HH\cdot\nabla\uu)d\x.
\end{flalign*}
Another important observation in the deduction of identity \eqref{idZ} is that by virtue of \eqref{J/rho=const} we have that $D_t\left( \frac{\J_\y}{\rho}\right)=0$.

This identity has an analogue in \cite{Mei} where the terms $J_0,...,J_9$ are identical and, once we have lemma \ref{rhop}, they may be estimated as
\[
\sum_{i=1}^9|J_i|\leq C\Phi_T^{1+\beta\varepsilon}(1+\|\nabla\uu\|_2^2+\|\nabla\HH \|_2^2)(1+Z^2(t)),
\]
for any $\varepsilon>0$, where $C=C(\varepsilon)$ is a positive constant.

Moreover, noting that $\|\nabla \uu(t)\|_2^2\leq C(1+Z^2(t))$ (see, for instance, \cite[lemma 1]{P}) and using Lemma \ref{lemmaHLp} along with Young's inequality with $\varepsilon$, we have
\begin{equation}
|J_0|\leq 2\|\HH \|_4^2\| \nabla\uu \|_2\leq C_0 + \frac{1}{2}Z^2\label{J_0}
\end{equation}

In our case, the terms $J_{10}$, $J_{11}$ and $J_{12}$ appear due to the short wave-long wave interactions. From the fact that $g'$ has compact support in $(0,\infty)$ and using \eqref{J/rho} as well as Lemma~\ref{aprioripsi} we have
\begin{align*}
\sum_{i=10}^{12}|J_i|&\leq C\left\| \frac{\F}{(2\mu+\lambda(\rho))^{1/2}}\right\|_2 \left[ \|\nabla\uu\|_2+\int_\Omega |D_t(\psi\circ\Y)|^2 \J_\y d\x \right]\\
  &= C\left\| \frac{\F}{(2\mu+\lambda(\rho))^{1/2}}\right\|_2 \left[ \|\nabla\uu\|_2+\int_{\Omega_\y} |\psi_t|^2 d\y \right]\\
  &\leq C Z(t)[1+\|\nabla\uu\|_2]\\
  &\leq C(1+Z^2(t))(1+\|\nabla\uu\|_2^2).
\end{align*}

Thus, we conclude that 
\begin{equation}
\frac{d}{dt}(1+Z^2(t)-J_0)+\varphi^2(t)\leq C\Phi_T^{1+\beta\varepsilon}(1+\|\nabla\uu\|_2^2 + \|\nabla\HH\|_2^2)(1+Z^2(t))\label{Zvarphidif}
\end{equation}
and, by virtue of \eqref{J_0}, Lemma~\ref{apriorienergy} and Gronwall's inequality yield the result.
\end{proof}

Before we prove the key uniform estimates for the density we need the following lemma whose proof is identical to \cite[lemma 0.1]{Mei'}.

\begin{lemma}\label{rhoup}
For any $p>4$, there exists a positive constant $C(p)$ such that
\[
\|\rho\uu\|_p\leq C(p)\Phi_T^{1+\frac{\beta}{4}}(\|\nabla\uu\|_2+1)^{1-\frac{2}{p}}.
\]
\end{lemma}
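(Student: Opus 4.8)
The plan is to follow verbatim the argument of \cite[Lemma~0.1]{Mei'}, so that the task reduces to checking that the only structural novelty of \eqref{E2rho}--\eqref{E2psi} relative to the MHD system treated there — the extra summand $\alpha g'(1/\rho)h(|\psi\circ\Y|^2)\J_\y/\rho$ in the effective viscous flux \eqref{EVF} — does not disturb any of the estimates. This is immediate: by \eqref{J/rho} one has $0\le \J_\y/\rho\le(\min_\Omega\rho_0)^{-1}$, while $g'$ is bounded with compact support in $(0,\infty)$ and $h$ is bounded by \eqref{assumptiongh}, and $\|\psi\|_{L^\infty}\le C$ by Lemma~\ref{aprioripsi}; hence this term is uniformly bounded and contributes only a constant (independent of $\Phi_T$) wherever it appears, so every step of the MHD proof carries over unchanged.

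Concretely, I would first record the Hodge decomposition on the torus
\[
\rho\uu \;=\; \overline{\rho\uu}\;-\;\nabla\xi\;+\;\nabla^\perp\Delta^{-1}\bigl(\operatorname{curl}(\rho\uu)\bigr),
\]
where $\xi$ is as in \eqref{defxi}, $\overline{\rho\uu}=|\Omega|^{-1}\int_\Omega\rho\uu$ is the total momentum (conserved by \eqref{E2rho}--\eqref{E2psi}, hence bounded by the initial data together with Lemma~\ref{apriorienergy}), and $\operatorname{curl}(\rho\uu)=\Div(\rho\uu^\perp)$ with $\uu^\perp=(u_2,-u_1)$ — so the rotational part has exactly the structure of $\nabla\xi$ with $\uu$ replaced by $\uu^\perp$. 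Applying the Calderón--Zygmund/elliptic bounds of Lemmas~\ref{elliptic1} and \ref{elliptic2} to both gradient pieces yields, for every $q$ and every $k>1$,
\[
\|\rho\uu\|_{L^q}\;\le\; Cq\,\|\rho\|_{L^{qk/(k-1)}}\,\|\uu\|_{L^{qk}}\;+\;C,
\]
after which Lemma~\ref{rhop} turns $\|\rho\|_{L^{qk/(k-1)}}$ into a constant depending only on $q,k,\beta$, and Lemma~\ref{elliptic2}(2) replaces $\|\uu\|_{L^{qk}}$ by $C(qk)^{1/2}(\|\nabla\uu\|_{L^2}+1)$.

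Next I would combine this high-integrability information with the two-dimensional Gagliardo--Nirenberg interpolation and the energy bound of Lemma~\ref{apriorienergy}, which gives $\|\rho\uu\|_{L^2}\le\|\rho\|_{L^\infty}^{1/2}\|\rho^{1/2}\uu\|_{L^2}\le C\,\Phi_T^{1/2}$; the auxiliary exponents $q=q(p)$ and $k=k(p,\beta)$ are then chosen so that the factor $\Phi_T^{1/2}$ is raised to a power no larger than $1+\beta/4$ while the factor $\|\nabla\uu\|_{L^2}+1$ comes out with the exponent $1-2/p$. The power $\beta/4$ originates precisely where one estimates $(2\mu+\lambda(\rho))(\Div\uu)^2\sim\rho^{\beta}(\Div\uu)^2$ against the dissipation, i.e.\ it is dictated by the prescription $\lambda(\rho)=b\rho^{\beta}$; the remaining manipulations are those of \cite{Mei'}.

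The main obstacle is exactly this bookkeeping — tracking the two auxiliary exponents through the whole chain of Hölder, elliptic and interpolation estimates so as to land on the sharp powers $1+\beta/4$ of $\Phi_T$ and $1-2/p$ of $\|\nabla\uu\|_{L^2}+1$, with a constant $C(p)$ that stays finite (and in fact depends only polynomially on $p$). By contrast, the only genuinely new verification for the present system — that the NLS coupling term never spoils a cancellation — is routine, reducing as noted to \eqref{J/rho}, $\|\psi\|_{L^\infty}\le C$, and the hypotheses \eqref{assumptiongh} on $g$ and $h$.
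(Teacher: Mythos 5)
The paper offers no argument for this lemma at all: it simply declares the proof ``identical to [Mei', Lemma 0.1]''. At the level of strategy your proposal therefore matches the paper exactly, and your one substantive check --- that the new summand $\alpha g'(1/\rho)h(|\psi\circ\Y|^2)\J_\y/\rho$ is uniformly bounded thanks to \eqref{J/rho}, \eqref{assumptiongh} and Lemma~\ref{aprioripsi}, so that it enters every estimate only as a harmless constant --- is correct and is precisely the verification the paper implicitly relies on. (In fact the statement of the lemma involves only $\rho$, $\uu$ and previously established bounds, namely the energy identity, Lemma~\ref{rhop} and Lemma~\ref{elliptic2}(2), so the coupling term barely surfaces here.)

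However, the internal reconstruction you sketch would not close as written, and the gap is concrete. Your chain is: Hodge decomposition, then $\|\rho\uu\|_{L^q}\le Cq\|\rho\|_{L^{qk/(k-1)}}\|\uu\|_{L^{qk}}+C\le C(q,k)(\|\nabla\uu\|_2+1)$, then interpolation against $\|\rho\uu\|_{L^2}\le C\Phi_T^{1/2}$. But Lebesgue interpolation between $L^2$ and $L^q$ with $q<\infty$ gives $\|\rho\uu\|_p\le\|\rho\uu\|_2^{1-\sigma}\|\rho\uu\|_q^{\sigma}$ with $\sigma=\frac{1/2-1/p}{1/2-1/q}$, which is \emph{strictly greater} than $1-2/p$ for every finite $q$; the exponent $1-2/p$ is the endpoint $q=\infty$, which your ingredients do not reach. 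Since at this stage $\|\nabla\uu\|_2$ is not controlled by any power of $\Phi_T$ (Lemma~\ref{Zvarphi} gives only $\|\nabla\uu\|_2^2\le C\exp(C\Phi_T^{1+\beta\varepsilon})$), the excess power $\sigma-(1-2/p)>0$ cannot be absorbed into $\Phi_T^{1+\beta/4}$, so no choice of the ``auxiliary exponents $q(p)$, $k(p,\beta)$'' lands on the stated inequality. Relatedly, nothing in your chain produces the factor $\Phi_T^{\beta/4}$: the viscosity $\lambda(\rho)=b\rho^{\beta}$ never appears in the quantities you invoke, whereas in Mei's argument this factor is generated by weighing $\rho\uu$ against the dissipation $\int(2\mu+\lambda(\rho))(\Div\uu)^2\,d\x$, an ingredient absent from your outline. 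Finally, the Hodge decomposition as you use it is inert --- bounding each gradient piece by Calder\'on--Zygmund just returns H\"older's inequality $\|\rho\uu\|_q\le\|\rho\|_{qk/(k-1)}\|\uu\|_{qk}$ with a worse constant --- so it does not supply the missing mechanism. To actually prove the lemma one must reproduce the specific interpolation of [Mei'] that trades the energy bound $\|\sqrt{\rho}\uu\|_2\le C$ against the $\rho^{\beta}$-weighted dissipation; deferring that step to ``bookkeeping'' leaves the essential point unproved.
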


We can now prove that neither vacuum, nor concentration develop in finite time, which guarantees that the Lagrangian transformation is well defined at all times.
\begin{lemma}\label{aprioriunifrho}
Let $\beta> \frac{4}{3}$. Then, there exists a constant $C>0$ such that 
\begin{equation}
C^{-1}\leq\rho(x,t)\leq C,\qquad\text{for all }(x,t)\in\Omega\times[0,T].
\end{equation}

In particular,
\begin{equation}
\sup_{0\leq t\leq T}\int_\Omega (|\nabla \uu|^2 + |\nabla\HH|^2)d\x +\int_0^T \varphi^2(t)dt \leq C.
\end{equation}
\end{lemma}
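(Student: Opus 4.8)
The argument follows the scheme of Va\u\i gant--Kazhikhov \cite{VK} in the form adapted to the MHD equations by Mei \cite{Mei,Mei'}, so I will only indicate where the short wave--long wave coupling intervenes. The key observation is that the only new term entering the transport equation \eqref{transportrho} and the effective viscous flux \eqref{EVF}, namely $-\alpha g'(1/\rho)h(|\psi\circ\Y|^2)\J_\y/\rho$, is in fact \emph{uniformly bounded}: by \eqref{assumptiongh} the functions $g'$ and $h$ are bounded, by Lemma~\ref{aprioripsi} $|\psi|$ is uniformly bounded, and by \eqref{J/rho} one has $0\le \J_\y/\rho=\rho_0\circ\y\le M_0$. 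Hence it acts merely as a harmless bounded forcing and does not affect any of the $L^p$ or elliptic estimates of Lemmas~\ref{elliptic1}, \ref{elliptic2}, \ref{rhop}, \ref{rhoup}. Put $\Phi_T:=\|\rho\|_{L^\infty(\Omega\times[0,T])}+1$; the crux is to bound $\Phi_T$ by a constant depending only on $T$ and the data.

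\textbf{Upper bound.} Writing $\partial_i\xi$ and $\eta$ in terms of Riesz transforms via \eqref{defxi}--\eqref{defeta}, and using $\Div\HH=0$ (so that $R_jH_j\equiv 0$ modulo the mean), one obtains the commutator identity
\[
\eta-\uu\cdot\nabla\xi=[R_iR_j,u_i](\rho u_j)-[R_iR_j,H_i](H_j).
\]
Integrating \eqref{transportrho} along the particle paths $X(t)=\Phi(t;\x)$ gives, for each $\x$,
\[
\Lambda(\rho)(X(t),t)=\Lambda(\rho_0)(\x)+\xi(X(t),t)-\xi(\x,0)-\int_0^t\mathcal{R}(s)\,ds,
\]
where $\mathcal{R}=P+\tfrac12|\HH|^2-\alpha g'(1/\rho)h(|\psi\circ\Y|^2)\tfrac{\J_\y}{\rho}+[R_iR_j,u_i](\rho u_j)-[R_iR_j,H_i](H_j)+\int_\Omega\F$. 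Since $P\ge0$ and $\tfrac12|\HH|^2\ge0$ these two terms only help; the coupling term is bounded as noted above; $\|\xi\|_{L^\infty}$ and $\int_\Omega|\F|$ are handled through Lemmas~\ref{elliptic2} and \ref{rhop}; and the commutators are estimated by the Coifman--Rochberg--Weiss/Va\u\i gant--Kazhikhov bounds combined with a Brezis--Gallouet type logarithmic interpolation, in terms of $\|\nabla\uu\|_2$, $\|\nabla\HH\|_2$, $\|\rho\uu\|_p$ (Lemma~\ref{rhoup}), $\|\HH\|_p$ (Lemma~\ref{lemmaHLp}), $\|\rho\|_p$ (Lemma~\ref{rhop}) and $Z(t),\varphi(t)$ (Lemma~\ref{Zvarphi}). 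Collecting these bounds produces a Gronwall inequality for $\sup_{\x}(\Lambda(\rho)-\xi)(\cdot,t)$, equivalently for $\log(e+\Phi_t)$, whose right-hand side is integrable in $t$ by the energy estimate \eqref{energy}. The assumption $\beta>4/3$ is precisely what makes the powers of $\Phi_T$ generated by the elliptic and interpolation estimates subcritical, so that the inequality closes and yields $\Phi_T\le C$, i.e. $\rho\le C$.

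\textbf{Lower bound and consequences.} With $\rho\le C$ in hand, Lemma~\ref{Zvarphi} (e.g. with $\varepsilon=1$) and \eqref{suplogphi} give $\sup_{0\le t\le T}\log(e+\phi(t))\le C\Phi_T^{1+\beta}\le C$, hence $\phi(t)\le C$ uniformly, while integrating \eqref{Zvarphidif} over $[0,T]$ gives $\int_0^T\varphi^2(t)\,dt\le C$. Since $\rho$ is bounded, $\lambda(\rho)$ is bounded, so on the torus $\|\nabla\uu\|_2^2\le C(1+\|\omega\|_2^2+\|\Div\uu\|_2^2)\le C(1+\phi(t))\le C$ and likewise $\|\nabla\HH\|_2\le C$, which is the stated ``in particular''. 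For the lower bound one returns to the identity for $\Lambda(\rho)(X(t),t)$ above: now that $\rho\le C$, every term of $\mathcal R$ is controlled in $L^1_tL^\infty_\x$ by the improved bounds just obtained together with the elliptic estimates with $\rho$ bounded, so $\Lambda(\rho)(X(t),t)\ge -C$ for all $t\le T$; since $\Lambda(\rho)=2\mu\log\rho+\tfrac1\beta(\rho^\beta-1)\to-\infty$ as $\rho\to0^+$ and $\rho$ is bounded above, this forces $\rho\ge C^{-1}$. This last step is carried out exactly as in Mei \cite{Mei'}.

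\textbf{Main obstacle.} The decisive and most technical point is the estimation of the commutators $[R_iR_j,u_i](\rho u_j)$ and $[R_iR_j,H_i](H_j)$ and the logarithmic interpolation that converts the $L^p$ (rather than $L^\infty$) bounds into a workable Gronwall inequality — this is the step for which the density-dependent viscosity $\lambda(\rho)=b\rho^\beta$ in \eqref{assumptionvisc} was introduced — together with the careful bookkeeping of the exponents of $\Phi_T$ showing that $\beta>4/3$ suffices. By contrast the coupling with the Schr\"odinger equation enters here only through the bounded term analysed above and through the identity $D_t(\J_\y/\rho)=0$ coming from \eqref{J/rho=const} (already exploited in Lemma~\ref{Zvarphi}); the genuinely delicate interplay between the MHD dynamics and the NLS, via the regularity of the Lagrangian transformation, will reappear only at the level of the higher-order a priori estimates of the next section.
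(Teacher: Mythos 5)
Your proposal is correct and follows essentially the same route as the paper: the Va\u\i gant--Kazhikhov/Mei scheme with the commutator identity $\uu\cdot\nabla\xi-\eta=[u_i,R_iR_j](\rho u_j)-[H_i,R_iR_j](H_j)$, the Coifman et al.\ estimates combined with logarithmic interpolation, the observation that the coupling term $\alpha g'(1/\rho)h(|\psi\circ\Y|^2)\J_\y/\rho$ is uniformly bounded via \eqref{assumptiongh}, \eqref{J/rho} and Lemma~\ref{aprioripsi}, and the closing of the upper bound by absorbing $\Phi_T^{1+\beta/4+\beta\varepsilon}$ into $\Phi_T^{\beta}$ precisely because $\beta>4/3$. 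The only cosmetic remark is that the paper closes the estimate by this algebraic absorption rather than by a Gronwall inequality for $\log(e+\Phi_t)$, but you state the absorption mechanism correctly as well, so the substance matches.
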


\begin{proof}
As outlined in the beginning of this section, the proof consists in analysing equation \eqref{transportrho}. First, similarly as in \cite{P,Mei} (cf. \cite{H,Li}), from \eqref{defxi} and \eqref{defeta} we note that we can write
\[
u\cdot \nabla\xi-\eta = [u_i,R_iR_j](\rho u_j)-[H_i,R_iR_j](H_j),
\]
where $R_i$ are the Riesz transform and $[b,R_i R_j](f)=bR_iR_j(f) - R_iR_j(bf)$ is the Lie bracket.

Then, we can rewrite equation \eqref{transportrho} in the Lagrangian coordinates as
\begin{equation}
\begin{aligned}
&\Big(\Lambda(\rho(\y,\tau))-\xi(\y,\tau)\Big)_t + P(\rho(\y,\tau))+\frac{|\HH|^2}{2}(\y,\tau) - \alpha g'(v)h(|\psi|^2)v\J_y(\y,\tau) \\
&\qquad +[u_i,R_iR_j](\rho u_j(\y,\tau))-[H_i,R_iR_j](H_j)(\y,\tau) + \int_\Omega \F(\tilde{\x},\tau)d\tilde{\x} = 0.
\end{aligned}\label{transporrhodif}
\end{equation}
Thus, upon integrating over $[0,t]$, we conclude that, for any $(\x,t)\in\Omega\times [0,T]$ we have
\begin{equation}
\begin{aligned}
&2\mu \log\frac{\rho(\x,t)}{\rho_0(\x_0)}+\frac{1}{\beta}(\rho^\beta(\x,t)-\rho_0^\beta(\x_0)) - \xi(\x,t)+\xi_0(\x_0)\\
& \leq C+ \int_0^t [u_i,R_iR_j](\rho u_j)(\tau) d\tau + \int_0^t [H_i,R_iR_j](H_j)(\tau)d\tau -\int_0^t\int_\Omega \F(\tilde{\x},\tau)d\tilde{\x} d\tau,\\
\end{aligned}\label{uppertransportrho}
\end{equation}
where $\x_0 = \y(\x,t)$. Note that we used \eqref{J/rho} as well as the uniform bounds on $g'$ and $h$.

Let us recall the result by Coiffman et al. \cite{CRW,CM,CLiM} stating that, for any $b,f\in C^\infty(\Omega)$, then for $p\in(1,\infty)$ there exists a constant $C(p)>0$ such that
\begin{equation}\label{coifetal1}
\| [b,R_i R_j](f)\|_p\leq C \| b\|_{BMO}\|f\|_p.
\end{equation}
Moreover, if $q_k\in (1,\infty)$, $k=1,2,3$ satisfy $\frac{1}{q_1}=\frac{1}{q_2}+\frac{1}{q_3}$, then there is a constant $C$ depending on $q_k$ such that
\begin{equation}\label{coifetal2}
\| \nabla [b,R_i R_j](f)\|_{q_1}\leq \|\nabla b\|_{q_2}\|f\|_{q_2}.
\end{equation}

With this result at hand, we can bound the first two integrals on the right hand side of \eqref{uppertransportrho}. Indeed, for $p>4$,  the following Gagliardo-Nirenberg type inequality follows from a well known inequality (see, e.g., \cite{A} theorem~5.9, p.140) 
\begin{align}
\| [u_i,R_iR_j](\rho u_j) \|_\infty &\le C\| [u_i,R_iR_j](\rho u_j) \|_p^{1-\frac{4}{p}}\|\nabla [u_i,R_iR_j](\rho u_j)\|_{\frac{4p}{p+4}}^{\frac{4}{p}} \label{adams}\\
\intertext{and, using \eqref{coifetal1}, \eqref{coifetal2}, we have that the right hand-side can be estimated by}
  &\leq C\left(\|\uu\|_{BMO}\|\rho\uu\|_p \right)^{1-\frac{4}{p}}\left( \|\nabla\uu\|_4 \|\rho\uu\|_p \right)^{\frac{4}{p}}\nonumber\\
  &\leq C \Phi_T^{1+\frac{\beta}{4}}\left( \|\nabla\uu\|_2 + 1 \right)^{2-\frac{6}{p}}\|\nabla\uu\|_4^{\frac{4}{p}},\label{commut1}
\end{align}
where the last inequality follows from lemma \ref{rhoup}.

On the other hand, similarly as in \cite{Mei}, using the following elliptic estimate
\begin{equation}
\|\nabla\uu\|_4\leq C(\|\Div\uu\|_4+\|\omega\|_4),\label{nablau<=divucurlu}
\end{equation}
and writing $\Div\uu$ as in \eqref{uitoF}, after some manipulation, for any $\varepsilon>0$ we have that
\begin{equation}
\begin{aligned}
\|\nabla \uu\|_4&\leq C\Phi_T^{\frac{1+\beta\varepsilon+2\beta}{4}}(e+\|\nabla\uu\|_2+\|\nabla\HH\|_2)\left( \frac{\varphi^2}{e+Z^2} \right)^{\frac{1}{4}}\\
   &\qquad +C\Phi_T^{\frac{2\beta\varepsilon+5\beta}{8}}(e+\|\nabla\uu\|_2+\|\nabla\HH\|_2)^{\frac{5}{4}}\left( \frac{\varphi^2}{e+Z^2} \right)^{\frac{1}{8}} +C,
\end{aligned}\label{commut2}
\end{equation}
where $Z$ and $\varphi$ are those defined in lemma \ref{Zvarphi}. Here we have used the fact that
\[
Z^2\leq C\Phi_T^\beta \left( \|\nabla\uu\|_2^2 + \|\nabla\HH\|_2^2 \right)
\] 
and that for any $\varepsilon>0$
\begin{align*}
\left\| \frac{\F}{2\mu+\lambda(\rho)} \right\|_4^2&\leq C\left\| \frac{\F}{\sqrt{2\mu+\lambda(\rho)}} \right\|_2^{1-\varepsilon}\|\F \|_{\frac{2(1+\varepsilon)}{\varepsilon}}^{1+\varepsilon}\leq CZ^{1-\varepsilon}\|\F \|_2^\varepsilon \|\nabla F\|_2\\
  &\leq CZ \Phi_T^{\frac{\beta\varepsilon}{2}}\left( \Phi_T^{\frac{1}{2}}\varphi + \|\nabla\HH \|_2\varphi^{\frac{1}{2}} \right),
\end{align*}
where, the last inequality follows by a standard elliptic estimate on the identity $\Delta F=\Div(\rho\dot{\uu}-\HH\cdot\nabla\HH)$, which holds by virtue of \eqref{E2dotu}.

Thus, putting \eqref{commut1}  and \eqref{commut2} together and choosing $p>4$ large enough we have that
\begin{equation}
\int_0^T \| [u_i,R_iR_j](\rho u_j) \|_\infty dt\leq C\Phi^{1+\frac{\beta}{4}+\beta\varepsilon}.
\end{equation}

Similarly, using Lemma~\ref{lemmaHLp}, for $p>4$, again using the analogue of \eqref{adams} and \eqref{coifetal1}, \eqref{coifetal2},   we have that
\begin{align*}
\| [H_i,R_iR_j](H_j) \|_\infty & \leq C \left( \| \HH\|_{BMO}\| \HH\|_p \right)^{1-\frac{4}{p}}\left( \|\nabla\HH\|_4\|\HH\|_p \right)^{\frac{4}{p}}\\
   &\leq C\|\nabla\HH\|_2^{1-\frac{4}{p}}\| \nabla\HH\|_4^{\frac{4}{p}}\leq C\|\nabla\HH\|_2^{1-\frac{2}{p}}\| \nabla^2\HH\|_2^{\frac{2}{p}}\\
   &\leq C\Phi_T^{\frac{\beta}{2}}(\| \nabla\uu \|_2+\|\nabla\HH\|_2)\left( \frac{\varphi^2(t)}{e+Z^2(t)} \right)^{\frac{1}{p}}.
\end{align*}

Thus, by virtue of Lemma~\ref{Zvarphi} we obtain that
\begin{equation}
\int_0^T\| [H_i,R_iR_j](H_j) \|_\infty dt\leq C\Phi_T^{1+\beta\varepsilon}.
\end{equation}

Now, we note that from Lemmas~\ref{apriorienergy} and \ref{rhop} we have that
\[
\int_0^T \F(\tilde{\x},t)dx dt \leq C.
\]
Moreover, for a suitably large but fixed $m>1$, Lemma~\ref{elliptic1} implies that
\[
\| \xi \|_{2m} \leq Cm^{1/2}\|\rho \|_m^{1/2}\leq C,\qquad \|\nabla\xi\|_{2}\leq \|\rho\uu \|_2 \leq C\Phi_T^{\frac{1}{2}}.
\]
Also, Lemma~\ref{elliptic2} implies that
\[
\| \nabla\xi\|_{2m}\leq C(1+\phi(t)^{\frac{1}{2}}).
\]

Now, we recall the inequality by Br\'{e}zis and Wainger (see \cite{BrWa,En}) stating that for the $2$-torus $\Omega$ (and also for $\mathbb{R}^2$),  for any $q>2$, there is a positive constant $C$ such that
\[
\|w\|_{L^\infty(\Omega)}\leq C\| \nabla w\|_{L^2(\Omega)}\sqrt{\log(1+\|\nabla w \|_{L^q(\Omega)}}+C\| w\|_{L^2(\Omega)} + C,
\]
for any $w\in W^{1,q}(\Omega)$.

Hence, in light of Lemma~\ref{Zvarphi}, we have
\begin{align*}
\| \xi\|_\infty&\leq C(\|\xi\|_{2m}+\|\nabla\xi \|_2)\log^{\frac{1}{2}}(e+\| \xi\|_{W^{1,2m}})\leq \Phi_T^{\frac{1}{2}}\log^{\frac{1}{2}}(e+\| \xi\|_{W^{1,2m}})\\
  &\leq C\Phi_T^{\frac{1}{2}} \log^{1/2}(e+\phi(t))\leq C\Phi_T^{1+\frac{\beta\varepsilon}{2}}.
\end{align*}

Gathering all this information in \eqref{uppertransportrho} we obtain that,
\[
\Phi_T^\beta\leq C\Phi_T^{1+\frac{\beta}{4}+\beta\varepsilon},
\]
so that, if $\beta>\frac{4}{3}$, we can choose $\varepsilon>0$ small enough and conclude that
\[
\sup_{(\x,t)\in\Omega\times[0,T]}\rho(\x,t) \leq C.
\]
In particular, we can apply Gronwall's inequality in \eqref{Zvarphidif} to conclude that
\[
\int_0^T \varphi^2(t) dt\leq C,
\] 
which, implies that
\[
\int_0^T \| \HH \|_\infty^2 dt\leq C.
\]
Taking this information back to \eqref{transporrhodif} we can also derive a positive lower bound for the density by a similar reasoning. That is,
\begin{equation}
\inf_{(\x,t)\in\Omega\times[0,T]}\rho(\x,t) \geq C^{-1}.
\end{equation}
\end{proof}

We can summarize the a priori estimates obtained so far in the following lemma.
\begin{lemma}\label{aprioriloworder}
There is a constant $C>0$ depending only on $T$, $\| \uu_0\|_{H^1(\Omega)}$, $\| \HH_0\|_{H^1(\Omega)}$, $\|\psi_0\|_{H^2(\Omega_\y)}$ and the uniform bounds from above and below on $\rho_0$ such that
\begin{equation}
C^{-1}\leq \rho (\x,t)\leq C,\qquad (\x,t)\in \Omega\times [0,T],
\end{equation}
\begin{equation}
\sup_{t\in[0,T]}(\| \uu(t)\|_{H^1(\Omega)} + \| \HH(t)\|_{H^1(\Omega)}) + \int_0^T (\|\sqrt{\rho}\dot{\uu}\|_{L^2(\Omega)}^2 + \|\nabla^2\HH\|_{L^2(\Omega)}^2)ds\leq C,
\end{equation}
\begin{equation}
\sup_{t\in[0,T]}(\| \psi_t(t) \|_{L^2(\Omega_\y)}+\|\psi(t)\|_{H^2(\Omega_\y)})\leq C.
\end{equation}
\end{lemma}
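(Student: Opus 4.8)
The plan is to assemble the statement from the estimates already established in this section; the only genuinely new content is the bookkeeping of which data the final constant depends on. Since $\beta>4/3$ by \eqref{assumptionvisc}, Lemma~\ref{aprioriunifrho} applies and yields at once the two-sided bound $C^{-1}\le\rho(\x,t)\le C$ on $\Omega\times[0,T]$, together with
\[
\sup_{0\le t\le T}\int_\Omega\big(|\nabla\uu|^2+|\nabla\HH|^2\big)\,d\x+\int_0^T\varphi^2(t)\,dt\le C .
\]
Recalling that $\varphi^2(t)=\int_\Omega(\rho|\dot{\uu}|^2+|\nabla^2\HH|^2)\,d\x$, the last inequality is exactly the asserted control of $\int_0^T(\|\sqrt{\rho}\,\dot{\uu}\|_{L^2(\Omega)}^2+\|\nabla^2\HH\|_{L^2(\Omega)}^2)\,ds$.

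Next I would upgrade the gradient bounds to full $H^1$ bounds by invoking the energy identity. By Lemma~\ref{apriorienergy} the total energy $E(t)$ is non-increasing, hence $\int_\Omega\rho|\uu|^2\,d\x\le 2E(0)$ and $\int_\Omega|\HH|^2\,d\x\le 2E(0)$; combined with the lower bound $\rho\ge C^{-1}$ this gives $\|\uu(t)\|_{L^2(\Omega)}+\|\HH(t)\|_{L^2(\Omega)}\le C$, and together with the gradient bounds above, $\sup_{t}\big(\|\uu(t)\|_{H^1(\Omega)}+\|\HH(t)\|_{H^1(\Omega)}\big)\le C$. For the wave function, the energy identity also furnishes a uniform bound on $\|\Div\uu\|_{L^2(0,T;L^2(\Omega))}$, so Lemma~\ref{aprioripsi} (i.e.\ part (ii) of Lemma~\ref{H1-H2(u)}) gives $\int_{\Omega_\y}(|\psi_t|^2+|\Delta_\y\psi|^2)\,d\y\le C$; the bound on $E(t)$ simultaneously controls $\|\nabla_\y\psi(t)\|_{L^2(\Omega_\y)}$ and $\|\psi(t)\|_{L^4(\Omega_\y)}$, hence $\|\psi(t)\|_{L^2(\Omega_\y)}$, and elliptic regularity for $-\Delta_\y$ on the torus upgrades $\|\Delta_\y\psi\|_{L^2}+\|\psi\|_{L^2}$ to $\|\psi(t)\|_{H^2(\Omega_\y)}\le C$, which is the last line of the lemma.

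There is no serious obstacle here, the statement being a summary of Lemmas~\ref{apriorienergy}, \ref{aprioripsi}, \ref{Zvarphi} and \ref{aprioriunifrho}, all already proved. The only point requiring a modicum of care is tracing the dependence of the constants: the initial data enter through $E(0)$ (controlled by the uniform bounds on $\rho_0$ via $e(\rho_0)$, by $\|\uu_0\|_{L^2}$, $\|\HH_0\|_{L^2}$ and $\|\psi_0\|_{H^1(\Omega_\y)}$), through $Z^2(0)$ in Lemma~\ref{Zvarphi} (which involves $\|\nabla\uu_0\|_{L^2}$, $\|\nabla\HH_0\|_{L^2}$ and $\F(0)$, the latter bounded using $m_0$, $M_0$ and $\|\Div\uu_0\|_{L^2}$), and through the $L^p$ bounds of Lemma~\ref{rhop}; verifying that all of this collapses to a dependence on $T$, $\|\uu_0\|_{H^1(\Omega)}$, $\|\HH_0\|_{H^1(\Omega)}$, $\|\psi_0\|_{H^2(\Omega_\y)}$ and the two-sided bounds $m_0\le\rho_0\le M_0$ completes the proof.
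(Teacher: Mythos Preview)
Your proposal is correct and matches the paper's treatment: the paper states this lemma explicitly as a summary of the preceding estimates (Lemmas~\ref{apriorienergy}, \ref{aprioripsi}, \ref{Zvarphi}, \ref{aprioriunifrho}) without giving a separate proof, and your assembly of the pieces---density bounds from Lemma~\ref{aprioriunifrho}, $H^1$ bounds for $\uu$, $\HH$ by combining the energy identity with the gradient control from that same lemma, and the $\psi$ bounds from Lemma~\ref{aprioripsi}---is exactly the intended reading. Your tracing of the constant dependencies is an appropriate (and slightly more detailed) elaboration of what the paper leaves implicit.
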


\subsection{$H^2$ estimates}

Now, we turn our attention to the a priori estimates in $H^2$ for the density, velocity and magnetic field.
\begin{lemma}\label{nablaudotut}
There is a constant $C>0$, depending on $\| \uu_0\|_{H^2(\Omega)}$, $\| \HH_0\|_{H^2(\Omega)}$, $\|\psi_0\|_{H^2(\Omega_\y)}$ and the uniform bounds from above and below on $\rho_0$, such that
\begin{equation}
\sup_{0\leq t\leq T} (\|\sqrt{\rho}\dot{\uu}\|_{L^2(\Omega)}^2+\|\HH_t\|_{L^2(\Omega)}^2)+\int_0^T(\|\nabla\dot{\uu}\|_{L^2(\Omega)}^2+\|\nabla\HH_t\|_{L^2(\Omega)^2}^2dt)\leq C. \label{rhodotuHt}
\end{equation}
\end{lemma}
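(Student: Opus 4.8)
The plan is to run the Hoff‑type estimate: test the momentum equation, differentiated along particle paths, against the material derivative $\dot\uu=\uu_t+\uu\cdot\nabla\uu$, pair it with the estimate obtained by testing the time‑differentiated magnetic field equation against $\HH_t$, and close the resulting coupled differential inequality by Gronwall, using the low‑order bounds of Lemma~\ref{aprioriloworder} and an auxiliary elliptic estimate for $\uu$. The only genuinely new feature compared with \cite{Mei,Mei'} is the short‑wave coupling term, which is controlled via the Lagrangian structure.

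\emph{Step 1 (the $\dot\uu$ identity).} I would start from the momentum equation in the form \eqref{E2dotu}, $\rho\dot\uu=\nabla\F+\mu\nabla^\perp\omega+\HH\cdot\nabla\HH$, apply the operator $\partial_t+\Div(\cdot\,\uu)$ componentwise and use the continuity equation \eqref{E2rho} to get $\rho\ddot\uu=\dot{(\nabla\F)}+\mu\,\dot{(\nabla^\perp\omega)}+\dot{(\HH\cdot\nabla\HH)}$ with $\ddot\uu=\partial_t\dot\uu+\uu\cdot\nabla\dot\uu$; then multiply by $\dot\uu$ and integrate by parts, using $\tfrac12\tfrac{d}{dt}\int\rho|\dot\uu|^2=\int\rho\ddot\uu\cdot\dot\uu$, to obtain
\begin{align*}
\tfrac12\tfrac{d}{dt}\int_\Omega\rho|\dot\uu|^2\,d\x+\int_\Omega\big(\mu|\nabla\dot\uu|^2+(\lambda(\rho)+\mu)(\Div\dot\uu)^2\big)\,d\x=\sum_i\mathcal R_i .
\end{align*}
After commuting $D_t$ past $\nabla$ and $\Div$, the $\mathcal R_i$ are: (a) terms quadratic in $\nabla\uu$ paired with $\nabla\dot\uu$; (b) terms produced by $\dot P=-\rho P'(\rho)\Div\uu$ and, because the viscosity is density dependent, by $\dot{(\lambda(\rho))}=-\rho\lambda'(\rho)\Div\uu$, paired with $\Div\dot\uu$; (c) magnetic terms from $\dot{(\HH\cdot\nabla\HH)}$ and $\dot{(|\HH|^2)}$, which by \eqref{E2H} and $\dot\HH=\HH\cdot\nabla\uu-\HH\Div\uu+\nu\Delta\HH$ bring in $\Delta\HH$, $\HH_t$ and $\nabla\HH_t$; (d) the short‑wave coupling term $\dot\Pi$, $\Pi:=g'(1/\rho)h(|\psi\circ\Y|^2)\J_\y/\rho$. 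For (d) the Lagrangian formulation is decisive: since $D_t(\J_\y/\rho)=0$ by \eqref{J/rho=const} and $D_t(\psi\circ\Y)=\psi_t\circ\Y$ (because $\y_t+\uu\cdot\nabla_\x\y=0$), one gets $\dot\Pi=\tfrac1\rho g''(1/\rho)(\Div\uu)h(|\psi\circ\Y|^2)\tfrac{\J_\y}{\rho}+g'(1/\rho)h'(|\psi\circ\Y|^2)\tfrac{\J_\y}{\rho}\,D_t(|\psi\circ\Y|^2)$, and by \eqref{J/rho} together with the change of variables of Remark~\ref{Lp_yLp_x}(i),
\begin{align*}
\|\dot\Pi\|_{L^2(\Omega)}^2\le C\Big(\|\nabla\uu\|_{L^2(\Omega)}^2+\int_\Omega|D_t(\psi\circ\Y)|^2\J_\y\,d\x\Big)=C\big(\|\nabla\uu\|_{L^2(\Omega)}^2+\|\psi_t\|_{L^2(\Omega_\y)}^2\big),
\end{align*}
which is uniformly bounded by Lemmas~\ref{aprioriloworder} and \ref{aprioripsi}; so $\int_\Omega\dot\Pi\,\Div\dot\uu$ is absorbed into $\tfrac\mu2\|\nabla\dot\uu\|_{L^2}^2$ plus a bounded term. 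The remaining $\mathcal R_i$ are handled by Young's inequality and Gagliardo--Nirenberg (e.g.\ $\|\dot\uu\|_{L^4}\le C\|\dot\uu\|_{L^2}^{1/2}\|\nabla\dot\uu\|_{L^2}^{1/2}$ in $2$D).

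\emph{Steps 2--3 (the $\HH_t$ identity and closing).} Differentiating \eqref{E2H} in $t$, testing against $\HH_t$ and integrating by parts gives
\begin{align*}
\tfrac12\tfrac{d}{dt}\int_\Omega|\HH_t|^2\,d\x+\nu\int_\Omega|\nabla\HH_t|^2\,d\x=\sum_j\mathcal S_j ,
\end{align*}
where the $\mathcal S_j$ involve $\uu_t$, $\nabla\uu_t$, $\HH$, $\nabla\HH$, $\HH_t$; writing $\uu_t=\dot\uu-\uu\cdot\nabla\uu$ one has $\|\uu_t\|_{L^2}\le C(\|\sqrt\rho\dot\uu\|_{L^2}+\|\uu\|_{L^4}\|\nabla\uu\|_{L^4})$ and $\|\nabla\uu_t\|_{L^2}\le\|\nabla\dot\uu\|_{L^2}+C(\|\nabla\uu\|_{L^4}^2+\|\uu\|_{L^\infty}\|\nabla^2\uu\|_{L^2})$, so the dangerous terms are absorbed into $\tfrac\nu2\|\nabla\HH_t\|_{L^2}^2$ and $\tfrac\mu2\|\nabla\dot\uu\|_{L^2}^2$. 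To make these bounds effective I need the auxiliary elliptic estimate obtained by applying $\Div$ and $\nabla^\perp\cdot$ to \eqref{E2dotu} (periodic div--curl decomposition) together with $\Delta\F=\Div(\rho\dot\uu-\HH\cdot\nabla\HH)$ and the representation \eqref{uitoF}, of the form $\|\nabla^2\uu\|_{L^2}+\|\nabla\uu\|_{L^4}\le C(1+\|\sqrt\rho\dot\uu\|_{L^2}+\|\nabla\HH\|_{L^4}^2+\|\psi_t\|_{L^2(\Omega_\y)})+C\|\nabla\rho\|_{L^4}(1+\|\nabla\uu\|_{L^4})$; the last term is the price of the $\rho$‑dependent viscosity and is handled by running simultaneously the $L^4$‑in‑space transport bound for $\nabla\rho$ (the analogue of \eqref{rhoW14}), controlled in turn by $\|\nabla^2\uu\|_{L^4}$ via Lemma~\ref{aprioriunifrho} and Sobolev interpolation, so the two contributions close. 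Adding the two identities, absorbing the gradient norms of $\dot\uu$ and $\HH_t$, and invoking Lemma~\ref{aprioriloworder} (which gives $\sup_t(\|\nabla\uu\|_{L^2}+\|\nabla\HH\|_{L^2})\le C$, $\int_0^T(\|\sqrt\rho\dot\uu\|_{L^2}^2+\|\nabla^2\HH\|_{L^2}^2)\,dt\le C$, hence $\|\nabla\HH\|_{L^4}\in L^2(0,T)$) together with $\sup_t(\|\psi_t\|_{L^2(\Omega_\y)}+\|\psi\|_{L^\infty})\le C$, I would reach
\begin{align*}
\tfrac{d}{dt}\big(\|\sqrt\rho\dot\uu\|_{L^2}^2+\|\HH_t\|_{L^2}^2\big)+c\big(\|\nabla\dot\uu\|_{L^2}^2+\|\nabla\HH_t\|_{L^2}^2\big)\le A(t)\big(\|\sqrt\rho\dot\uu\|_{L^2}^2+\|\HH_t\|_{L^2}^2\big)+B(t)
\end{align*}
with $\int_0^T(A+B)\,dt\le C$. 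Since the initial values $\|\sqrt\rho\dot\uu\|_{L^2}(0)$ and $\|\HH_t\|_{L^2}(0)$ are finite by the $H^2$ hypotheses on $(\rho_0,\uu_0,\HH_0)$ (read off from the momentum and the magnetic equations at $t=0$, using $\rho_0\in H^2\hookrightarrow W^{1,\infty}$ in $2$D and $\psi_0\in H^2$), Gronwall's inequality yields \eqref{rhodotuHt}.

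I expect the main obstacle to be twofold. First, controlling the new coupling term $\dot\Pi$: this is exactly where stating the NLS in Lagrangian coordinates pays off, because $D_t$ annihilates $\J_\y/\rho$ and converts $\psi\circ\Y$ into $\psi_t\circ\Y$, reducing the term to the already available $L^2$ bound on $\psi_t$ via the area‑change formula $\int_\Omega|D_t(\psi\circ\Y)|^2\J_\y\,d\x=\int_{\Omega_\y}|\psi_t|^2\,d\y$; without this structure the term would not be controllable at this level of regularity, which is precisely why the interaction was set up in Lagrangian variables. Second, the density‑dependent viscosity $\lambda(\rho)=b\rho^\beta$ forces $\nabla\rho$ into the elliptic estimate for $\uu$, so the $\|\sqrt\rho\dot\uu\|_{L^2}$ estimate must be run in tandem with an $L^4$ transport estimate for $\nabla\rho$; keeping track of the Gagliardo--Nirenberg exponents so that this coupled Gronwall argument closes (and, in parallel, so that the $\dot\uu$ and $\HH_t$ cross‑terms are absorbable) is the delicate bookkeeping of the proof.
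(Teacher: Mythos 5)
Your overall strategy is the paper's: test the material-derivative form of the momentum equation against $\dot\uu$, test the time-differentiated induction equation against $\HH_t$, handle the coupling term exactly as you do (via $D_t(\J_\y/\rho)=0$, $D_t(\psi\circ\Y)=\psi_t\circ\Y$, and the area-change identity $\int_\Omega|D_t(\psi\circ\Y)|^2\J_\y\,d\x=\|\psi_t\|_{L^2(\Omega_\y)}^2$, with $\|\psi\|_{L^\infty}$ and $\J_\y/\rho$ bounded), control $\|\nabla\uu\|_{L^4}^4$ through the effective viscous flux and vorticity, and close by Gronwall using the $L^1_t$ integrability of $\|\sqrt\rho\dot\uu\|_{L^2}^2+\|\nabla^2\HH\|_{L^2}^2$ from Lemma~\ref{aprioriloworder}. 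That part is sound and matches the paper's treatment of the terms $N_0,\dots,N_5$.

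The one place you diverge is in Step~2, where you route the $\uu_t$-terms of the $\HH_t$ identity through $\|\nabla\uu_t\|_{L^2}\lesssim\|\nabla\dot\uu\|_{L^2}+\|\uu\|_{L^\infty}\|\nabla^2\uu\|_{L^2}+\dots$, which forces a full $H^2$ elliptic estimate for the Lam\'e system with variable $\lambda(\rho)$ and hence drags $\|\nabla\rho\|_{L^4}$ into the inequality. At this stage $\|\nabla\rho\|_{L^4}$ is \emph{not} yet available --- it is established only in Lemma~\ref{lemmanablarho}, which uses the conclusion of the present lemma, and the tandem transport estimate you sketch requires $\|\nabla\uu\|_{L^\infty}\in L^1_t$ (itself a Beale--Kato--Majda/log-Gronwall argument, also deferred to Lemma~\ref{lemmanablarho}). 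So your ``run the two estimates simultaneously'' step is the genuinely delicate part and is asserted rather than verified; it can be made to close, but only with the double-logarithmic Gronwall bookkeeping of the next lemma. The paper avoids this entirely: in the $\HH_t$ identity it integrates by parts so that the derivative lands on $\HH\otimes\HH_t$ rather than on $\uu_t$ (producing terms like $\int(H^i\partial_iH_t^j-H^i\partial_jH_t^i)(\uu\cdot\nabla u^j)$, absorbed by $\tfrac{\nu}{2}\|\nabla\HH_t\|_{L^2}^2$), so that only $\|\nabla\uu\|_{L^4}$ is ever needed --- and that quantity is reached through the Poisson equations $\Delta\F=\Div(\rho\dot\uu-\HH\cdot\nabla\HH)$ and $\mu\Delta\omega=\nabla^\perp\cdot(\rho\dot\uu-\HH\cdot\nabla\HH)$, whose right-hand sides contain no $\nabla\rho$. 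You should either adopt that integration by parts or carry out the coupled Gronwall in full; as written, the closure of your version is the missing step.
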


\begin{proof}
Let us denote $f=\alpha g'(1/\rho)h(|\psi\circ\Y|^2)\frac{J_\y}{\rho}$. Applying the operator $\dot{\uu}^j[\partial_t + \Div(\uu\cdot)]$ to the $j^{th}$ component of the momentum equation equation $\eqref{E2u}$, summing with respect to $j$ and integrating we obtain
\begin{align}
\frac{1}{2}\frac{d}{dt}\int_\Omega \rho |\dot{\uu}|^2d\x &= \int_\Omega \dot{\uu}^j[\partial_{x_j}f_t + \Div(\uu \partial_{x_j}f)]d\x - \int_\Omega \dot{\uu}^j[\partial_{x_j}p_t + \Div(\uu \partial_{x_j}p)]d\x\nonumber\\
  &+\mu \int_\Omega \dot{\uu}^j[\partial_t \Delta\uu^j + \Div(\uu \Delta\uu^j)]d\x \nonumber \\
  &+ \int_\Omega \dot{\uu}^j[\partial_{x_jt}((\mu+\lambda)\Div\uu) + \Div(\uu \partial_{x_j}((\mu+\lambda)\Div\uu))]d\x\nonumber\\
  &+\int_\Omega \dot{\uu}^j[\partial_t ((\HH\cdot\nabla)\HH^j) + \Div(\uu (\HH\cdot\nabla)\HH^j)]d\x\nonumber\\
  &+\frac{1}{2}\int_\Omega \dot{\uu}^j[\partial_{x_jt}(|\HH|^2) + \Div(\uu \partial_{x_j}(|\HH|^2))]d\x:=\sum_{i=0}^5 N_i.
\end{align}

Again, there is analogue identity in \cite{Mei}, wherein the terms $N_1,\cdots,N_5$ are identical and, through integration by parts and some manipulation using the continuity equation, are estimated as
\begin{align*}
&\sum_{i=1}^5N_i \leq -\frac{\mu}{2}( \|\nabla \dot{\uu}\|_{L^2(\Omega)}^2 + \| D_t\Div\uu\|_{L^2(\Omega)}^2) + \delta \|\nabla\HH_t\|_{L^2(\Omega)}^2\\
&\qquad\qquad\qquad + C_\delta (1+\|\HH_t\|_{L^2(\Omega)}^2+  \|\sqrt{\rho}\dot{\uu}\|_{L^2(\Omega)}^2 +\|\nabla\uu\|_{L^4(\Omega)}^4),
\end{align*}
for arbitrary $\delta>0$ (to be chosen) and some suitably large $C_\delta$.

Regarding the term $N_0$, due to the short wave-long wave interactions, integrating by parts, we see that
\begin{align*}
N_0 = \int_\Omega\Div\dot{\uu} (f_t+\Div(f\uu))-2\int_\Omega f\partial_{x_j}\uu^k \partial_{x_k}\dot{\uu}^j.
\end{align*}
Moreover, using the continuity equation, we have that
\begin{align*}
f_t+\Div(f\uu)&=D_t f + f\Div\uu\\
  &=-\alpha \frac{g''(1/\rho)}{\rho}h(|\psi\circ\Y|^2)\frac{J_\y}{\rho} + \alpha g'(1/\rho)D_t[h(|\psi\circ\Y|^2)] \frac{J_y}{\rho}\\
  &\qquad\qquad + f\Div\uu,
\end{align*}
and, noting that $\int_\Omega |D_t(\psi\circ\Y)|^2J_\y d\x = \int_{\Omega_\y}|\psi_t|^2 d\y\leq C$, using Young's inequality, we obtain that
\[
N_0\leq \frac{\mu}{8}\|\nabla\dot{\uu}\|_{L^2(\Omega)}^2 + C.
\]

On the other hand, applying the operator $\partial_t$ to the magnetic field equation \eqref{E2H}, multiplying the resulting equation by $\HH_t$ and integrating we obtain
\begin{align*}
&\frac{1}{2}\frac{d}{dt}\int_\Omega |\HH_t|^2d\x + \nu\int_\Omega |\nabla\HH_t|^2d\x\\
&=-\int_\Omega \uu_t\cdot\nabla\HH\cdot\HH_td\x + \int_\Omega \HH_t\cdot\nabla\uu\cdot\HH_td\x + \int_\Omega \HH\cdot\nabla\uu_t\cdot\HH_t d\x\\
&\qquad -\int_\Omega |\HH_t|^2\Div\uu d\x - \int_\Omega\HH\cdot\Div\uu_t\HH_t d\x\\
&= \int_\Omega (\HH\cdot\nabla\dot{\uu}-\dot{\uu}\nabla\HH-\HH\Div\dot{\uu})\cdot\HH_td\x + \int_\Omega (H^i\partial_iH_t^j-H^i\partial_jH_t^i)(\uu\cdot\nabla u^j)d\x\\
&\qquad +\int_\Omega \left(\HH_t\cdot\nabla\uu - \frac{1}{2}\HH_t\Div\uu-\uu\cdot\nabla\HH_t \right)\HH_td\x\\
&\leq \delta \|\nabla\dot{\uu}\|_{L^2(\Omega)}^2 + \frac{\nu}{2}\|\nabla\HH_t\|_{L^2(\Omega)}^2 \\
&\qquad+C_\delta(1+\|\HH_t\|_{L^2(\Omega)}^2+\|\sqrt{\rho}\dot{\uu}\|_{L^2(\Omega)}^2+\|\nabla\uu\|_{L^4(\Omega)}^4).
\end{align*}

Gathering all this information and choosing $\delta>0$ small enough we arrive at
\begin{align*}
&\frac{1}{2}\frac{d}{dt}(\|\sqrt\rho \dot{\uu}\|_{L^2(\Omega)}^2+\|\HH_t\|_{L^2(\Omega)}^2)  + \|\nabla\dot{\uu}\|_{L^2(\Omega)}^2+\|\nabla\HH_t\|_{L^2(\Omega)}^2\\
&\qquad\qquad\qquad\qquad\qquad\leq C(1+\|\sqrt\rho \dot{\uu}\|_{L^2(\Omega)}^2+\|\HH_t\|_{L^2(\Omega)}^2+\|\nabla\uu\|_{L^4(\Omega)}^4).
\end{align*}
Finally, using \eqref{nablau<=divucurlu} and standard elliptic estimates on the identities 
\begin{equation}
\Delta F=\Div(\rho\dot{\uu}-\HH\cdot\nabla\HH),\hspace{5mm} \text{ and }\hspace{5mm}\mu\Delta\omega=\nabla^\perp\cdot(\rho\dot{\uu}-\HH\cdot\nabla\HH),\label{Fomegaelliptic}
\end{equation} 
which hold by virtue of \eqref{E2dotu} we see that
\begin{align*}
\|\nabla\uu\|_{L^4(\Omega)}^4&\leq C(1+\|\F\|_{L^4(\Omega)}^4+\|\omega\|_{L^4(\Omega)}^4)\\
  &\leq C(1+\|\F\|_{L^2(\Omega)}^2\|\nabla \F\|_{L^2(\Omega)}^2+\|\omega\|_{L^2(\Omega)}^2\|\nabla\omega\|_{L^2(\Omega)}^2)\\
  &\leq C(1+\|\sqrt\rho \dot{\uu}\|_{L^2(\Omega)}^2+\|\nabla^2\HH\|_{L^2(\Omega)}^2).
\end{align*}

Thus, using Gronwall's inequality we obtain \eqref{rhodotuHt}.
\end{proof}

\begin{lemma}\label{lemmanablarho}
Let $2\leq p < \infty$ and let $\E(t,\x)=\nabla_\x \y(t,\x)$ be the deformation gradient. Then,
\begin{equation}
\sup_{0\leq t\leq T}(\|\nabla\rho\|_{L^p(\Omega)} + \|\E\|_{L^{2p}(\Omega)})+\int_0^T \|\nabla\uu\|_{L^\infty}^2dt \leq C.
\end{equation}
\end{lemma}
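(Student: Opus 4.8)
The plan is to estimate $\|\nabla\rho\|_{L^p}$ and the $L^{2p}$-norm of the deformation gradient $\E$ \emph{simultaneously}, and to obtain the bound on $\int_0^T\|\nabla\uu\|_{L^\infty}^2\,dt$ as a by-product. These two quantities must be kept together because of the coupling term: writing $f:=\alpha g'(1/\rho)h(|\psi\circ\Y|^2)\J_\y/\rho$, identity \eqref{Jy/rho(x)} gives $\J_\y/\rho=\rho_0\circ\y$, so $\nabla f$ involves both $\nabla\rho$ (through $g'(1/\rho)$) and $\E$ (through $\nabla(\psi\circ\Y)=(\nabla_\y\psi)\circ\Y\cdot\E$ and $\nabla(\rho_0\circ\y)=(\nabla\rho_0)\circ\y\cdot\E$). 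Using Lemma~\ref{aprioripsi}, which gives $\psi\in L^\infty(0,T;H^2(\Omega_\y))$ and hence $\nabla_\y\psi\in L^\infty(0,T;L^{2p})$, together with the two-sided bounds on $\rho$ of Lemma~\ref{aprioriunifrho}, which make $\J_\y$ bounded above and below so that the $L^q$ change of variables between Eulerian and Lagrangian frames is harmless, one gets $\|\nabla f(t)\|_{L^p}\le C\bigl(\|\nabla\rho(t)\|_{L^p}+\|\E(t)\|_{L^{2p}}\bigr)$.

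First I would record two transport inequalities. Differentiating the continuity equation \eqref{E2rho} and testing against $|\nabla\rho|^{p-2}\nabla\rho$ yields
\[
\frac{d}{dt}\|\nabla\rho\|_{L^p}\le C\bigl(1+\|\nabla\uu\|_{L^\infty}\bigr)\|\nabla\rho\|_{L^p}+C\|\rho\,\nabla\Div\uu\|_{L^p},
\]
while \eqref{defgrad} (equivalently \eqref{ELpeq}) gives $\tfrac{d}{dt}\|\E\|_{L^{2p}}\le C\|\nabla\uu\|_{L^\infty}\|\E\|_{L^{2p}}$. The term $\nabla\Div\uu$ is controlled through the effective viscous flux: from \eqref{uitoF}, $\nabla\Div\uu$ is a combination of $\nabla\F$, of $\nabla\rho$ multiplied by the bounded quantities $\F$, $P(\rho)$, $|\HH|^2$, $f$, of $\nabla(|\HH|^2)$, and of $\nabla f$, so that
\[
\|\nabla\Div\uu\|_{L^p}\le C\Bigl(\|\nabla\F\|_{L^p}+\bigl(1+\|\F\|_{L^\infty}+\|\HH\|_{L^\infty}^2\bigr)\|\nabla\rho\|_{L^p}+\|\HH\|_{L^\infty}\|\nabla\HH\|_{L^p}+\|\nabla f\|_{L^p}\Bigr).
\]
For $\nabla\F$ I would use the identity $\Delta\F=\Div(\rho\dot\uu-\HH\cdot\nabla\HH)$ from \eqref{Fomegaelliptic} and a Calder\'on--Zygmund estimate, obtaining $\|\nabla\F\|_{L^p}\le C\|\rho\dot\uu-\HH\cdot\nabla\HH\|_{L^p}\le C\bigl(\|\dot\uu\|_{L^p}+\|\HH\|_{L^\infty}\|\nabla\HH\|_{L^p}\bigr)$, and then $\|\dot\uu\|_{L^p}\le C\bigl(\|\sqrt\rho\,\dot\uu\|_{L^2}+\|\nabla\dot\uu\|_{L^2}\bigr)\le C\bigl(1+\|\nabla\dot\uu\|_{L^2}\bigr)$ by a two-dimensional Gagliardo--Nirenberg inequality and Lemma~\ref{nablaudotut}. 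Along the way I would upgrade the magnetic field to $\sup_{[0,T]}\|\HH\|_{H^2}\le C$ (hence $\|\HH\|_{L^\infty}$ and $\|\nabla\HH\|_{L^p}$ are bounded): the equation \eqref{E2H} gives $\|\nabla^2\HH\|_{L^2}\le C\bigl(\|\HH_t\|_{L^2}+\|\uu\|_{L^4}\|\nabla\HH\|_{L^4}+\|\HH\|_{L^\infty}\|\nabla\uu\|_{L^2}\bigr)$, and since $\sup_{[0,T]}\|\HH_t\|_{L^2}\le C$ by Lemma~\ref{nablaudotut} while $\|\uu\|_{H^1}$ and $\|\HH\|_{H^1}$ are already bounded by Lemma~\ref{aprioriloworder}, interpolating $\|\nabla\HH\|_{L^4}$ and $\|\HH\|_{L^\infty}$ against powers of $\|\nabla^2\HH\|_{L^2}$ strictly less than one turns this into a self-improving inequality.

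Next I would invoke the two-dimensional Beale--Kato--Majda / Br\'ezis--Wainger logarithmic inequality: for $q>2$,
\[
\|\nabla\uu\|_{L^\infty}\le C\Bigl(1+\bigl(\|\Div\uu\|_{L^\infty}+\|\omega\|_{L^\infty}\bigr)\log\bigl(e+\|\nabla\rho\|_{L^p}+\|\E\|_{L^{2p}}\bigr)+\|\nabla\uu\|_{L^2}\Bigr),
\]
where, using \eqref{uitoF} and the boundedness of $f$ and $\rho$, one has $\|\Div\uu\|_{L^\infty}\le C\bigl(1+\|\F\|_{L^\infty}+\|\HH\|_{L^\infty}^2\bigr)$, while $\|\F\|_{L^\infty}+\|\omega\|_{L^\infty}\le C\bigl(1+\|\nabla\F\|_{L^q}+\|\nabla\omega\|_{L^q}+\|\F\|_{L^2}+\|\omega\|_{L^2}\bigr)$ by $W^{1,q}\hookrightarrow L^\infty$ and the identities \eqref{Fomegaelliptic}; thus $A(t):=\|\Div\uu\|_{L^\infty}+\|\omega\|_{L^\infty}\le C\bigl(1+\|\dot\uu\|_{L^q}\bigr)$, which lies in $L^2(0,T)$ by Lemma~\ref{nablaudotut}. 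Setting $\Xi(t):=e+\|\nabla\rho(t)\|_{L^p}+\|\E(t)\|_{L^{2p}}$, the preceding estimates combine into $\tfrac{d}{dt}\Xi\le C\bigl(1+A(t)\log\Xi\bigr)\Xi+h(t)$ with $h\in L^1(0,T)$, whence $\tfrac{d}{dt}\log\Xi\le C\bigl(1+A(t)\log\Xi+h(t)\bigr)$, and Gronwall's inequality (with $\int_0^T(1+A+h)\,dt\le C$) yields $\sup_{[0,T]}\Xi\le C$, i.e. the claimed bounds on $\|\nabla\rho\|_{L^p}$ and $\|\E\|_{L^{2p}}$. Feeding $\Xi\le C$ back into the logarithmic inequality gives $\|\nabla\uu\|_{L^\infty}\le C\bigl(1+A(t)\bigr)$, so $\int_0^T\|\nabla\uu\|_{L^\infty}^2\,dt\le C\bigl(1+\int_0^T A(t)^2\,dt\bigr)\le C$.

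The main obstacle is exactly this circular coupling: $\|\nabla\rho\|_{L^p}$ cannot be closed without $\|\E\|_{L^{2p}}$ (because $\nabla f$ contains $\E$), $\|\E\|_{L^{2p}}$ cannot be closed without control of $\int\|\nabla\uu\|_{L^\infty}$, and $\|\nabla\uu\|_{L^\infty}$ feeds back on $\nabla\rho$ through the effective viscous flux — so the argument only closes by treating $\|\nabla\rho\|_{L^p}$ and $\|\E\|_{L^{2p}}$ as a single unknown and relying on the logarithmic inequality to keep the Gronwall argument from blowing up in finite time. A secondary point, essentially already taken care of by Lemma~\ref{aprioriunifrho}, is that every passage between Eulerian and Lagrangian $L^q$-norms rests on $\J_\y$ being uniformly bounded above and away from zero, which is where the non-degeneracy of the Lagrangian change of variables enters.
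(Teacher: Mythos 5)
Your proposal is correct and follows essentially the same route as the paper: couple $\|\nabla\rho\|_{L^p}$ and $\|\E\|_{L^{2p}}$ into a single unknown, control $\nabla^2\uu$ in $L^p$ through the effective viscous flux (with $\nabla f$ contributing both $\nabla\rho$ and $\E$ via the chain rule for $\psi\circ\Y$ and $\rho_0\circ\y$), apply the Beale--Kato--Majda logarithmic inequality, and close with a logarithmic Gronwall argument using that $\|\nabla\dot\uu\|_{L^2}+\|\nabla^2\HH\|_{L^2}\in L^2(0,T)$. The only differences are organizational: you upgrade $\HH$ to $L^\infty(0,T;H^2)$ in-line (the paper carries $\|\nabla^2\HH\|_{L^2}$ as an $L^2(0,T)$ coefficient and proves the $H^2$ bound only in the following lemma), and you run a single-log Gronwall where the paper substitutes $G=\log(1+\log(1+Z))$ --- both choices are equivalent.
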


\begin{proof}
First, we note that from the estimates from Lemmas~\ref{aprioriloworder} and \ref{nablaudotut} we have
\[
\|\rho\dot{\uu}\|_{L^p(\Omega)}+\||\HH||\nabla\HH|\|_{L^p(\Omega)}\leq C(1+\|\nabla\dot{\uu}\|_{L^2(\Omega)}+\|\nabla^2\HH\|_{L^2(\Omega)}).
\] 

As a consequence, from the standard elliptic estimates on identities \eqref{Fomegaelliptic}, we have
\begin{align*}
\|\Div\uu\|_{L^\infty(\Omega)}+\|\omega\|_{L^\infty(\Omega)}&\leq C(\|\F\|_{L^\infty(\Omega)}+\|\omega\|_{L^\infty(\Omega)}+\|\HH\|_{L^\infty(\Omega)}^2+1)\\
 &\leq C(\|\nabla\F\|_{L^4(\Omega)}^{\frac{3}{2}}+\|\nabla\omega\|_{L^4(\Omega)}^{\frac{3}{2}}+\|\nabla\HH\|_{L^4(\Omega)}^{\frac{3}{2}}+1)\\
 &\leq C(\|\nabla\dot{\uu}\|_{L^2(\Omega)}^{\frac{3}{2}}+\|\nabla^2\HH\|_{L^2(\Omega)}^{\frac{3}{2}}+1)
\end{align*}

Next, applying the operator $\nabla$ to the continuity equation \eqref{E2rho}, multiplying the resulting equation by $p|\nabla\rho|^{p-2}\nabla\rho$ and integrating we have
\begin{align*}
\frac{d}{dt}\int_\Omega|\nabla\rho|^pd\x&=-(p-1)\int_\Omega |\nabla\rho|^p\Div\uu d\x - p\int_\Omega |\nabla\rho|^{p-2}\nabla\rho\cdot\nabla\uu\cdot\nabla\rho d\x\\
&\qquad -p\int_\Omega \rho|\nabla\rho|^{p-2}\nabla\rho\cdot\nabla\Div\uu d\x,
\end{align*}
so that
\begin{equation}
\frac{d}{dt}\|\nabla\rho\|_{L^p(\Omega)} \leq C(\|\nabla\uu\|_{L^\infty(\Omega)}\|\nabla\rho\|_{L^p(\Omega)}+\|\nabla^2\uu\|_{L^p(\Omega)}).\label{nablarhopprelim}
\end{equation}

Denoting $f=\alpha g'(1/\rho)h(|\psi\circ\Y|^2)\frac{J_\y}{\rho}$ as in the proof of Lemma~\ref{nablaudotut} we have
\begin{align}
\|\nabla^2\uu\|_{L^p(\Omega)}&\leq C(\|\Div\uu\|_{L^p(\Omega)}+\|\nabla\omega\|_{L^p(\Omega)})\nonumber\\
 &\leq C(\|\nabla\F\|_{L^p(\Omega)}+\|\nabla\omega\|_{L^p(\Omega)}+\|\nabla p\|_{L^p(\Omega)}+\| |\HH||\nabla\HH|\|_{L^p(\Omega)} \nonumber\\
 &\qquad+ \|\nabla f\|_{L^p(\Omega)}+\|\Div\uu\|_{L^\infty(\Omega)}\|\nabla\rho\|_{L^p(\Omega)}). \nonumber
\end{align}

Now, on the one hand, we have that
\begin{align*}
&\|\nabla\F\|_{L^p(\Omega)}+\|\nabla\omega\|_{L^p(\Omega)}+\|\nabla p\|_{L^p(\Omega)}+\| |\HH||\nabla\HH|\|_{L^p(\Omega)} \\
&\qquad\qquad+ \|\Div\uu\|_{L^\infty(\Omega)}\|\nabla\rho\|_{L^p(\Omega)}\\
&\qquad\leq C(\|\nabla\dot{\uu}\|_{L^2(\Omega)}+\|\nabla^2\HH\|_{L^2(\Omega)}+1)(1+\|\nabla\rho\|_{L^p(\Omega)}).
\end{align*}

On the other hand, recalling identity \eqref{Jy/rho(x)}, we have
\begin{align}
\|\nabla f\|_{L^p(\Omega)}&\leq C(\|\nabla\rho\|_{L^p(\Omega)}+\|\E\cdot \nabla_\y\psi (t,\y(t,\x))\J_\y^{\frac{1}{p}}\|_{L^p(\Omega)}\nonumber\\
&\qquad\qquad\qquad+\|\E\cdot\nabla_\y\rho_0(\y(t,\x)))\J_\y^{\frac{1}{p}}\|_{L^p(\Omega)})\nonumber\\
&\leq C(\|\nabla\rho\|_{	L^p(\Omega)}+\|\E\|_{L^{2p}(\Omega)}(\|\nabla_\y\psi\|_{L^{2p}(\Omega_\y)}+\|\nabla_\y\rho_0\|_{L^{2p}(\Omega_\y)}))\nonumber\\
&\leq C(\|\nabla\rho\|_{L^p(\Omega)}+\|\E\|_{L^{2p}(\Omega)}(\|\psi\|_{H^{2}(\Omega_\y)}+\|\nabla\rho_0\|_{L^{2p}(\Omega)}))\nonumber\\
&\leq C(\|\nabla\rho\|_{L^p(\Omega)}+\|\E\|_{L^{2p}(\Omega)}).\label{nablafLp}
\end{align}

Therefore,
\begin{equation}
\|\nabla^2\uu\|_{L^p(\Omega)}\leq C(\|\nabla\dot{\uu}\|_{L^2(\Omega)}+\|\nabla^2\HH\|_{L^2(\Omega)}+1)(1+\|\nabla\rho\|_{L^p(\Omega)}+\|\E\|_{L^{2p}(\Omega)}).\label{nabla2uLp}
\end{equation}

In order to estimate the term $\|\nabla\uu\|_{L^\infty(\Omega)}$, we recall the following inequality by Beale, Kato and Majda (see \cite{BKM,HLX,Mei}, {\em cf.} \cite{KatoPonce}), asserting, in particular, for the  $2$-torus $\Omega$,  if $w\in L^2(\Omega)$ with 
$\nabla w\in W^{1,q}(\Omega)$ for some $q>2$, then
\[
\|\nabla w\|_{L^\infty(\Omega)}\leq C(\|\Div w\|_{L^\infty(\Omega)}+\|w\|_{L^\infty(\Omega)})\log (1+\|\nabla^2 w\|_{L^q(\Omega)}) + C\|\nabla w\|_{L^2(\Omega)} + C.
\]

Using the previous estimates we then obtain
\begin{align}
&\|\nabla\uu\|_{L^\infty(\Omega)}\nonumber\\
&\leq C(\|\Div\uu\|_{L^\infty(\Omega)}+\|\omega\|_{L^\infty(\Omega)})\log(e+\|\nabla^2\uu\|_{L^p(\Omega)})+C\|\nabla\uu\|_{L^2(\Omega)}+C\nonumber\\
&\leq C(\|\nabla\dot{\uu}\|_{L^2(\Omega)}+\|\nabla^2\HH\|_{L^2(\Omega)}+1)\log (e+\|\nabla\rho\|_{L^p(\Omega)}+\|\E\|_{L^{2p}(\Omega)})\nonumber\\
&\qquad\qquad+ C(\|\nabla\dot{\uu}\|_{L^2(\Omega)}+\|\nabla^2\HH\|_{L^2(\Omega)}+1).\label{nablauinfty}
\end{align}

Coming back to \eqref{nablarhopprelim} we get
\begin{align*}
&\frac{d}{dt}\|\nabla\rho\|_{L^p(\Omega)} \\
&\leq C\Big( (\|\nabla\dot{\uu}\|_{L^2(\Omega)}+\|\nabla^2\HH\|_{L^2(\Omega)}+1)\log (e+\|\nabla\rho\|_{L^p(\Omega)}+\|\E\|_{L^{2p}(\Omega)})\\
&\qquad\qquad+ C(\|\nabla\dot{\uu}\|_{L^2(\Omega)}+\|\nabla^2\HH\|_{L^2(\Omega)}+1)\Big)(1+\|\nabla\rho\|_{L^p(\Omega)}+\|\E\|_{L^{2p}(\Omega)}).
\end{align*}

At this point we recall inequality \eqref{ELpeq} satisfied by the deformation gradient, so that defining
\[
Z(t):=(1+\|\nabla\rho\|_{L^p(\Omega)}+\|\E\|_{L^{2p}(\Omega)}),
\]
we arrive at the inequality
\begin{equation}
\frac{d}{dt}Z(t)\leq C(\|\nabla\dot{\uu}\|_{L^2(\Omega)}+\|\nabla^2\HH\|_{L^2(\Omega)}+1)(1+Z(t))\log (1+Z(t)),
\end{equation}
which can be rewritten in terms of the function $G=\log(1+\log(1+Z(t)))$ as
\begin{equation}
\frac{d}{dt}G(t)\leq C(\|\nabla\dot{\uu}\|_{L^2(\Omega)}+\|\nabla^2\HH\|_{L^2(\Omega)}+1).
\end{equation}

Therefore, upon integrating in $t$ we arrive at
\begin{equation}
\sup_{0\leq t\leq T}(\|\nabla\rho\|_{L^p(\Omega)} + \|\E\|_{L^{2p}(\Omega)})\leq C,
\end{equation}
and from \eqref{nablauinfty} we obtain
\[
\int_0^T\|\nabla\uu(t)\|_{L^\infty(\Omega)}^2dt\leq C.
\]
\end{proof}

\begin{lemma}
There is a constant $C>0$ depending on $\|\rho_0\|_{H^2(\Omega)}$, $\|\uu_0\|_{H^2(\Omega)}$, $\|\HH_0\|_{H^2(\Omega)}$, $\|\psi_0\|_{H^2(\Omega_y)}$ and the uniform bounds form above and from below for the initial density, such that
\begin{equation}
\sup_{0\leq t\leq T}[\|(\rho(t),\uu(t),\HH(t))\|_{H^2(\Omega)}+\|\E(t)\|_{L^\infty(\Omega)}]\leq C.
\end{equation}
\end{lemma}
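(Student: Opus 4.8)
The strategy is to bootstrap from the estimates already obtained in Lemmas~\ref{aprioriloworder}, \ref{nablaudotut} and \ref{lemmanablarho} to second order. We have at our disposal uniform control of $\|\nabla\rho\|_{L^p}$ and $\|\E\|_{L^{2p}}$ for all finite $p$, of $\int_0^T\|\nabla\uu\|_{L^\infty}^2\,dt$, of $\sup_t(\|\sqrt{\rho}\dot{\uu}\|_{L^2}^2+\|\HH_t\|_{L^2}^2)$ and of $\int_0^T(\|\nabla\dot{\uu}\|_{L^2}^2+\|\nabla\HH_t\|_{L^2}^2)\,dt$. First I would obtain $\sup_t\|\uu(t)\|_{H^2}$ and $\sup_t\|\HH(t)\|_{H^2}$: this follows from the elliptic estimates \eqref{Fomegaelliptic} for $\F$ and $\omega$ in $L^2$, which together with \eqref{nablau<=divucurlu} give $\|\nabla^2\uu\|_{L^2}\le C(\|\nabla\F\|_{L^2}+\|\nabla\omega\|_{L^2}+\cdots)\le C(\|\nabla\dot{\uu}\|_{L^2}+\|\nabla^2\HH\|_{L^2}+1)(1+\|\nabla\rho\|_{L^4})$, the right side being in $L^2(0,T)$ but not yet bounded pointwise in $t$. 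To upgrade to an $L^\infty_t$ bound on $\|\uu\|_{H^2}$ one uses that $\rho\dot{\uu}=\rho\uu_t+\rho\uu\cdot\nabla\uu$, so $\|\sqrt\rho\dot{\uu}\|_{L^2}^2\le C$ and $\|\nabla\uu\|_{L^4}\le C$ (from $H^2$ via Lemma~\ref{nablaudotut} controlling $\|\nabla\uu\|_{L^4}^4$ through $\|\sqrt\rho\dot{\uu}\|_{L^2}^2+\|\nabla^2\HH\|_{L^2}^2$) already give $\|\uu_t\|_{L^2}\le C$, hence $\|\nabla\F\|_{L^2},\|\nabla\omega\|_{L^2}\le C(\|\sqrt\rho\dot{\uu}\|_{L^2}+\|\nabla^2\HH\|_{L^2}+1)$; combined with Lemma~\ref{lemmanablarho} this yields $\sup_t\|\nabla^2\uu\|_{L^2}\le C$. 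The magnetic field estimate $\sup_t\|\HH\|_{H^2}\le C$ follows by applying standard parabolic regularity to \eqref{E2H} rewritten as $\nu\Delta\HH=\HH_t+\uu\cdot\nabla\HH-\HH\cdot\nabla\uu+\HH\Div\uu$, whose right side is bounded in $L^2$ uniformly in $t$ using $\|\HH_t\|_{L^2}\le C$, $\|\nabla\uu\|_{L^4}\le C$, $\|\HH\|_{L^\infty}\le C\|\HH\|_{H^2}^{1/2}\|\HH\|_{L^2}^{1/2}$ absorbed by interpolation.

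Next I would close the density estimate $\sup_t\|\rho\|_{H^2}\le C$. Applying $\nabla^2$ to the continuity equation \eqref{E2rho}, multiplying by $\nabla^2\rho$ and integrating by parts, one obtains $\frac{d}{dt}\|\nabla^2\rho\|_{L^2}^2\le C(1+\|\nabla\uu\|_{L^\infty})\|\nabla^2\rho\|_{L^2}^2+C\|\nabla^2\rho\|_{L^2}(\|\nabla\rho\|_{L^4}\|\nabla^2\uu\|_{L^4}+\|\rho\|_{L^\infty}\|\nabla^3\uu\|_{L^2})$, exactly as in Lemma~\ref{higheu} with $m=2$. The term $\|\nabla^3\uu\|_{L^2}$ is not controlled, so instead one uses the $\F$-based splitting: $\|\nabla^2\uu\|_{L^4}\le C(1+\|\nabla\dot{\uu}\|_{L^2}+\|\nabla^2\HH\|_{L^2})(1+\|\nabla\rho\|_{L^p}+\|\E\|_{L^{2p}})$ for any $p<\infty$ by \eqref{nabla2uLp}, which is in $L^2(0,T)$; and $\|\nabla\uu\|_{L^\infty}\in L^2(0,T)$ by Lemma~\ref{lemmanablarho}. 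Then Gronwall's inequality (with the $L^2_t$ data $\|\nabla\uu\|_{L^\infty}$ and $\|\nabla^2\uu\|_{L^4}$) gives $\sup_t\|\nabla^2\rho\|_{L^2}\le C$. One must be careful that the forcing term $\|\nabla f\|_{L^p}$ in \eqref{nabla2uLp}, where $f=\alpha g'(1/\rho)h(|\psi\circ\Y|^2)\J_\y/\rho$, is controlled: by \eqref{nablafLp} this reduces to control of $\|\E\|_{L^{2p}}$ (done in Lemma~\ref{lemmanablarho}) and of $\|\psi\|_{H^2(\Omega_\y)}$ (done in Lemma~\ref{aprioriloworder}); one also needs $\|\nabla^2 f\|_{L^2}$ for the $H^2$ estimate on $\uu$, which brings in $\|\nabla^2\psi\|_{L^2(\Omega_\y)}$ and $\|\nabla^2\E\|$ — the latter I would bound via Lemma~\ref{regE} with $m=2$ once $\|\uu\|_{L^2(0,T;H^3)}$ is available, or more cheaply by differentiating \eqref{defgrad} directly and using the $L^\infty_t$ $H^2$-bound on $\uu$ just established.

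Finally, the bound $\sup_t\|\E(t)\|_{L^\infty}\le C$ follows from the explicit estimate \eqref{LinftyE}: $\|\E(t)\|_{L^\infty}\le\exp\{\int_0^t\|\nabla\uu(s)\|_{L^\infty}\,ds\}$, and $\int_0^T\|\nabla\uu\|_{L^\infty}\,ds\le T^{1/2}(\int_0^T\|\nabla\uu\|_{L^\infty}^2\,ds)^{1/2}\le C$ by Lemma~\ref{lemmanablarho}. Collecting the four bounds yields the claim. The main obstacle is the circular dependence between the $H^2$-estimate for $\rho$ and the $H^2$- (or $W^{2,4}$-) estimate for $\uu$: the elliptic bound on $\nabla^2\uu$ costs one derivative of $\rho$, while the transport estimate on $\nabla^2\rho$ costs $\nabla^2\uu$. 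This is resolved precisely as in Lemma~\ref{lemmanablarho} by measuring $\nabla\rho$ in $L^p$ rather than $L^2$, using that the logarithmic Gronwall argument there already decoupled the loop at first order and that $\|\nabla^2\uu\|_{L^4}$ and $\|\nabla\uu\|_{L^\infty}$ are in $L^2_t$ (not $L^1_t$, so a plain Gronwall suffices and no second logarithm is needed); the coupling terms from the NLS enter only through quantities — $\|\psi\|_{H^2(\Omega_\y)}$, $\|\E\|_{L^{2p}}$, $\|\psi_t\|_{L^2(\Omega_\y)}$ — that have already been bounded, so they contribute only lower-order forcing.
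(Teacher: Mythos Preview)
Your overall strategy matches the paper's, but there is a genuine gap in the $\nabla^2\rho$ estimate. You write the differential inequality with the term $\|\rho\|_{L^\infty}\|\nabla^3\uu\|_{L^2}$ and then claim that ``instead'' you can close using only $\|\nabla^2\uu\|_{L^4}\in L^2_t$. This does not work: after applying $\nabla^2$ to the continuity equation, the term $\rho\,\nabla^2\Div\uu$ is unavoidable and cannot be reduced to second derivatives of $\uu$ by integration by parts (doing so throws a third derivative onto $\rho$). The paper handles this by estimating $\|\nabla^3\uu\|_{L^2}$ itself via the second-order analogue of the $\F$--$\omega$ decomposition: writing $\|\nabla^3\uu\|_{L^2}\le C(\|\nabla^2\Div\uu\|_{L^2}+\|\nabla^2\omega\|_{L^2})$ and then expanding $\nabla^2\Div\uu$ through \eqref{uitoF} and $\nabla^2\omega$ through \eqref{Fomegaelliptic}. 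This reintroduces $\|\nabla^2\rho\|_{L^2}$ on the right-hand side (through $\nabla^2 p$, $\nabla^2\lambda(\rho)$ and $\nabla^2 f$) with coefficients in $L^2_t$, together with an $\varepsilon\|\nabla^3\uu\|_{L^2}$ that is absorbed; Gronwall then closes. So the ``$\F$-based splitting'' is the right idea, but it must be carried out one order higher than you indicate.

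A second, smaller point: your plan to control $\|\nabla^2 f\|_{L^2}$ via $\|\nabla^2\E\|$ and Lemma~\ref{regE} with $m=2$ introduces a circularity, since that lemma requires $\uu\in L^2(0,T;H^3)$, which is part of what you are trying to establish. The paper sidesteps this by invoking Corollary~\ref{equivnorms} at level $m=2$, whose hypothesis is only $\nabla^2\uu\in L^2(0,T;L^4)$ --- already supplied by \eqref{nabla2uLp} and Lemma~\ref{lemmanablarho} --- to convert $\|\psi\circ\Y\|_{H^2(\Omega)}$ and $\|\rho_0\circ\y\|_{H^2(\Omega)}$ into the Lagrangian quantities $\|\psi\|_{H^2(\Omega_\y)}$ and $\|\rho_0\|_{H^2}$, both already bounded. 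This gives $\|\nabla^2 f\|_{L^2}\le C(1+\|\nabla^2\rho\|_{L^2})$ directly, feeding cleanly into the Gronwall loop above.
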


\begin{proof}
At this point, we may apply estimate \eqref{LinftyE} in order to conclude that
\begin{equation}
\|\E(t)\|_{L^\infty(\Omega)}\leq C.
\end{equation}

Next, by standard $L^2$ estimates on the elliptic equation \eqref{E2u}, and using \eqref{nablafLp}, we have
\begin{align}
\|\uu\|_{H^2(\Omega)}&\leq C(\|\uu\|_{L^2(\Omega)}+\|\rho\dot{\uu}\|_{L^2(\Omega)}+\|\nabla p\|_{L^2(\Omega)}\nonumber\\
&\qquad\qquad\qquad\qquad+\| |\HH| |\nabla\HH|\|_{L^2(\Omega)}+\|\nabla f\|_{L^2(\Omega)})\nonumber\\
&\leq  C(1+\|\nabla\HH\|_{L^4(\Omega)}) \leq \frac{1}{4}\|\HH\|_{H^2(\Omega)}+C.\label{uH2}
\end{align}

Similarly, considering the magnetic field equation \eqref{E2H} as an elliptic equation we have
\begin{align}
\|\HH\|_{H^2(\Omega)}&\leq C(\|\HH\|_{L^2(\Omega)}+\|\HH_t\|_{L^2(\Omega)}+\| |\HH| |\nabla\uu|\|_{L^2(\Omega)}+\| |\uu| |\nabla\HH|\|_{L^2(\Omega)})\nonumber\\
  &\leq  \frac{1}{2}\|\uu\|_{H^2(\Omega)}+\frac{1}{4}\|\HH\|_{H^2(\Omega)} +C.\label{HH2}
\end{align}

Combining \eqref{uH2} and \eqref{HH2} we get
\begin{equation}
\|\uu\|_{H^2(\Omega)}+\|\HH\|_{H^2(\Omega)}\leq C.
\end{equation}

Next, similar to the proof of the case $m=2$ of Lemma~\ref{higheu}, applying the operator $\nabla^2$ to the continuity equation \eqref{E2rho}, multiplying the resulting equation by $\nabla^2\rho$ and integrating we have
\begin{align}
\frac{d}{dt}\|\nabla^2\rho\|_{L^2(\Omega)}^2\leq C[(\|\nabla\uu\|_{L^\infty(\Omega)}+1)\|\nabla^2\rho\|_{L^2(\Omega)}^2+\|\nabla^3\uu\|_{L^2(\Omega)}^2+1].\label{nabla2rhoprelim}
\end{align}

Now we see that
\begin{align*}
\|\nabla^3\uu\|_{L^2(\Omega)}&\leq \|\nabla^2\Div\uu\|_{L^2(\Omega)}+\|\nabla^2\omega\|_{L^2(\Omega)})\\
 &\leq C(\|\nabla^2\F\|_{L^2(\Omega)}+\|\nabla^2(|\HH|^2)\|_{L^2(\Omega)}+\|\nabla^2 p\|_{L^2(\Omega)}+\|\nabla^2 f\|_{L^2(\Omega)}\\
 &\qquad+\|\nabla\rho\cdot\nabla\Div\uu\|_{L^2(\Omega)}+\|\Div\uu\nabla^2\rho\|_{L^2(\Omega)}  +\|\nabla^2\omega\|_{L^2(\Omega)}).
\end{align*}

Here we note that
\begin{align*}
 &\|\nabla^2\F\|_{L^2(\Omega)}+\|\nabla^2(|\HH|^2)\|_{L^2(\Omega)}+\|\nabla^2 p\|_{L^2(\Omega)}\\
 &\qquad+\|\nabla\rho\cdot\nabla\Div\uu\|_{L^2(\Omega)}+\|\Div\uu\nabla^2\rho\|_{L^2(\Omega)}  +\|\nabla^2\omega\|_{L^2(\Omega)}\\
 &\leq C( \|\nabla(\rho\dot{\uu})\|_{L^2(\Omega)} + \|\nabla\rho\|_{L^4(\Omega)}\|\nabla^2\uu\|_{L^2(\Omega)}^{\frac{1}{2}}\|\nabla^3\uu\|_{L^2(\Omega)}^{\frac{1}{2}}\\
 &\qquad+(\|\Div\uu\|_{L^\infty(\Omega)}+1)\|\nabla^2\rho\|_{L^2(\Omega)}+1 )\\
 &\leq \varepsilon\|\nabla^3\uu\|_{L^2(\Omega)} + C_\varepsilon(\|\nabla\uu\|_{L^\infty(\Omega)}+1)\|\nabla^2\rho\|_{L^2(\Omega)} + \|\nabla\dot{\uu}\|_{L^2(\Omega)}+1),
 \end{align*}
for arbitrary $\varepsilon>0$.

Also, since \eqref{nabla2uLp} implies that $\nabla^2 u\in L^2(0,t;L^4(\Omega))$, we may apply Corollary~\ref{equivnorms} with $m=2$, in order to obtain
\begin{align*}
\|\nabla^2 f\|_{L^2(\Omega)}&\leq C\Big(\|\rho\|_{H^2(\Omega)}\|\psi\circ\Y\|_{H^2(\Omega)}\|\rho_0(\y(t,\cdot)\|_{H^2(\Omega)})\\
&\leq C\Big(\|\nabla^2\rho\|_{L^2(\Omega)}\|\psi\|_{H^2(\Omega_\y)}\|\rho_0\|_{H^2(\Omega)}\Big)\\
&\leq C(\|\nabla^2\rho\|_{L^2(\Omega)}+1)
\end{align*}

Putting this information into \eqref{nabla2rhoprelim} we obtain
\begin{align}
\frac{d}{dt}\|\nabla^2\rho\|_{L^2(\Omega)}^2\leq C\Big( (\|\nabla\uu\|_{L^\infty(\Omega)}+1)(\|\nabla^2\rho\|_{L^2(\Omega)}+1) + \|\nabla\dot{\uu}\|_{L^2(\Omega)}+1)\Big),
\end{align}
and Gronwall's inequality yields
\begin{equation}
\|\nabla^2\rho\|_{L^2(\Omega)}^2\leq C.
\end{equation}
\end{proof}
\section{High order estimates, and conclusion}\label{S5}

Moving on to the higher order estimates, we have the following.
\begin{lemma} 
	For $m\geq 3$, there exists a positive constant $C$ depending on $T$, $m_0$, $\Vert \rho_0 \Vert_{H^m}$, $\Vert \uu_0 \Vert_{H^m}$, $\|\HH_0\|_{H^m}$ and $\Vert \psi_0 \Vert_{H_\y^m})$, such that
	\begin{align*}
	\Vert \rho \Vert_{L^\infty(0,T;H^m)}, \Vert (\uu,\HH) \Vert_{L^2(0,T;H^{m+1})\cap H^1(0,T;H^{m-1})}, \Vert \psi \Vert_{L^\infty(0,T;H^m)} \leq C.
	\end{align*}
\end{lemma}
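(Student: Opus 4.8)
The plan is to argue by induction on $m\ge 3$, the induction hypothesis being the assertion of the lemma with $m$ replaced by $m-1$. For $m=3$ this hypothesis is essentially the content of Section~\ref{S4}: Lemmas~\ref{aprioriloworder}--\ref{lemmanablarho} give $\rho,\uu,\HH$ bounded in $L^\infty(0,T;H^2)$ and $\psi$ in $L^\infty(0,T;H^2)$, together with $\int_0^T\|\nabla\uu\|_{L^\infty}^2\,dt\le C$, $\int_0^T(\|\nabla\dot\uu\|_{L^2}^2+\|\nabla^2\HH\|_{L^2}^2)\,dt\le C$, $\nabla\rho\in L^\infty(0,T;L^p)$ and $\E\in L^\infty(0,T;L^\infty)$; and from the elliptic identities \eqref{Fomegaelliptic} (rewriting $\Div\uu$ and $\omega$ through $\F$ and $\omega$, and $\Delta\HH$ through the magnetic equation) one reads off that $\uu,\HH\in L^2(0,T;H^3)$. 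So at stage $m$ we may assume $\rho,\psi\in L^\infty(0,T;H^{m-1})$, $\uu,\HH\in L^2(0,T;H^m)\cap H^1(0,T;H^{m-2})$, and $\int_0^T\|\nabla\uu\|_{L^\infty}^2\,dt\le C$. The first move is to control the geometry: since $\uu\in L^2(0,T;H^m)$ with $\nabla\uu\in L^1(0,T;L^\infty)$, Lemmas~\ref{regE} and \ref{regB} give $\E,\mathbf B\in L^\infty(0,T;H^{m-1})$, hence Corollary~\ref{equivnorms} yields the equivalence of $H^{m-1}$ norms in the two coordinate systems and the boundedness of $\Psi\mapsto\Psi\circ\Y$ from $H^{m-1}(\Omega)$ to $H^{m-1}(\Omega_\y)$. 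Since $\J_\y/\rho=\rho_0\circ\y$ by \eqref{Jy/rho(x)} with $\rho_0\in H^m$, the functions $\psi\circ\Y$, $g'(1/\rho)$ and $h(|\psi\circ\Y|^2)$ lie in $L^\infty(0,T;H^{m-1})$; as $m-1\ge2$ and $H^{m-1}$ is an algebra, the coupling term $f:=\alpha g'(1/\rho)h(|\psi\circ\Y|^2)\J_\y/\rho$ is bounded in $L^\infty(0,T;H^{m-1})$ by the induction hypothesis alone, and $\nabla^m f$ is estimated, using the chain-rule bounds of Remark~\ref{Lp_yLp_x}, by $C\bigl(1+\|\nabla^m\rho\|_{L^2(\Omega)}+\|\nabla^m_\y\psi\|_{L^2(\Omega_\y)}\bigr)$ with $C$ depending only on induction-hypothesis quantities.

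Next I run $L^2$ energy estimates on the $m$-th derivatives of the four equations, simultaneously, closing on the single quantity $\Gamma(t):=\|\rho(t)\|_{H^m(\Omega)}^2+\|\uu(t)\|_{H^m(\Omega)}^2+\|\HH(t)\|_{H^m(\Omega)}^2+\|\psi(t)\|_{H^m(\Omega_\y)}^2$. Applying $\partial^\alpha$ ($|\alpha|=m$) to \eqref{E2u}, testing against $\partial^\alpha\uu$ and using \eqref{E2rho} produces $\tfrac12\tfrac{d}{dt}\!\int_\Omega\rho|\nabla^m\uu|^2+\mu\!\int_\Omega|\nabla^{m+1}\uu|^2$ plus the coercive $\lambda(\rho)$-term on the left; the commutators coming from $[\partial^\alpha,\rho\partial_t]$, $[\partial^\alpha,L_\rho]$ and $[\partial^\alpha,\rho\uu\cdot\nabla]$, and the forcing $\partial^\alpha(\HH\cdot\nabla\HH-\tfrac12\nabla|\HH|^2-\nabla p+\alpha\nabla f)$, are all dominated, by $H^{m-1}$-algebra estimates and two-dimensional Gagliardo--Nirenberg interpolation, by $\varepsilon(\|\nabla^{m+1}\uu\|_{L^2}^2+\|\nabla^{m+1}\HH\|_{L^2}^2)+a(t)\Gamma(t)+C$, where $a\in L^1(0,T)$ is built from $\|\nabla\uu\|_{L^\infty}^2$, $\|\Div\uu\|_{L^\infty}^2$, $\|\nabla\HH\|_{L^\infty}^2$ and the $L^2(0,T)$ quantities of Lemmas~\ref{nablaudotut}--\ref{lemmanablarho}; the point is that the genuinely top-order density--velocity interaction enters only through $\int_\Omega\nabla^m\lambda(\rho)\,\Div\uu\,\Div\nabla^m\uu$, which Young's inequality splits into an absorbable piece $\varepsilon\|\nabla^{m+1}\uu\|_{L^2}^2$ and $C_\varepsilon\|\Div\uu\|_{L^\infty}^2\|\nabla^m\rho\|_{L^2}^2$, i.e. an $L^1(0,T)$-weighted multiple of $\Gamma$. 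Likewise, testing $\partial^\alpha\eqref{E2H}$ against $\partial^\alpha\HH$ gives $\tfrac12\tfrac{d}{dt}\|\nabla^m\HH\|_{L^2}^2+\nu\|\nabla^{m+1}\HH\|_{L^2}^2\le\varepsilon(\|\nabla^{m+1}\uu\|_{L^2}^2+\|\nabla^{m+1}\HH\|_{L^2}^2)+a(t)\Gamma(t)+C$; testing $\partial^\alpha\eqref{E2rho}$ against $\partial^\alpha\rho$ (exactly as in the proof of Lemma~\ref{higheu}) gives $\tfrac12\tfrac{d}{dt}\|\nabla^m\rho\|_{L^2}^2\le\varepsilon\|\nabla^{m+1}\uu\|_{L^2}^2+a(t)\Gamma(t)+C$; and testing $\partial^\alpha\eqref{E2psi}$ against $\partial^\alpha\overline\psi$ and taking imaginary parts (so the cubic top term drops out), using $\|\psi\|_{L^\infty}\le C$ from Lemma~\ref{aprioripsi} and $\|g(v)\|_{H^m(\Omega_\y)}\le C(1+\|\rho\|_{H^m(\Omega)})$ from Corollary~\ref{equivnorms}, gives $\tfrac{d}{dt}\|\nabla^m_\y\psi\|_{L^2(\Omega_\y)}^2\le C(1+\|\nabla^m_\y\psi\|_{L^2(\Omega_\y)}^2+\|\nabla^m\rho\|_{L^2(\Omega)}^2)$.

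Adding the four inequalities and choosing $\varepsilon$ small relative to $\mu$ and $\nu$ absorbs every $\|\nabla^{m+1}\uu\|_{L^2}^2$ and $\|\nabla^{m+1}\HH\|_{L^2}^2$ on the right into the dissipation on the left, leaving $\tfrac{d}{dt}\mathcal E_m(t)+\mu\|\nabla^{m+1}\uu\|_{L^2}^2+\nu\|\nabla^{m+1}\HH\|_{L^2}^2\le c(t)\Gamma(t)+C$, where $c\in L^1(0,T)$ and $\mathcal E_m:=\tfrac12\int_\Omega\rho|\nabla^m\uu|^2+\tfrac12\|\nabla^m\HH\|_{L^2}^2+\tfrac12\|\nabla^m\rho\|_{L^2}^2+\tfrac12\|\nabla^m_\y\psi\|_{L^2(\Omega_\y)}^2$ is comparable to $\Gamma$ up to additive constants fixed by the induction hypothesis and the uniform bounds on $\rho$. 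Gronwall's inequality then gives $\sup_{[0,T]}\Gamma\le C$ and $\int_0^T(\|\nabla^{m+1}\uu\|_{L^2}^2+\|\nabla^{m+1}\HH\|_{L^2}^2)\,dt\le C$, i.e. $\uu,\HH\in L^2(0,T;H^{m+1})$ and $\rho,\psi\in L^\infty(0,T;H^m)$. Finally, reading $\uu_t=\rho^{-1}(L_\rho\uu+\HH\cdot\nabla\HH-\tfrac12\nabla|\HH|^2-\nabla p+\alpha\nabla f)$ and the analogous expression for $\HH_t$ off the equations, and using that all factors are now bounded in the appropriate spaces and that $H^{m-1}$ is an algebra, yields $\uu_t,\HH_t\in L^2(0,T;H^{m-1})$; the transport structure of \eqref{E2rho} and \eqref{E2psi} upgrades the weak-in-time continuity of $\rho$ and $\psi$ to $C([0,T];H^m)$. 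This closes the induction.

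The step I expect to be the main obstacle is the bookkeeping of the coupling term $f$: closing the scheme requires $\nabla^m f$ to be controlled by $\|\nabla^m\rho\|_{L^2}$, $\|\nabla^m_\y\psi\|_{L^2}$ and induction-hypothesis quantities \emph{without any loss of derivative}, which is precisely why the regularity of the Lagrangian transformation — $\E,\mathbf B\in L^\infty(0,T;H^{m-1})$, and the consequent norm equivalence of Corollary~\ref{equivnorms} and boundedness of $\rho_0\circ\y$ and $\psi\circ\Y$ in $H^{m-1}$ — must be secured (as in the first paragraph) before the energy estimates are carried out. A secondary delicacy is the one already singled out above: retaining the genuinely top-order density--velocity coupling $\int_\Omega\nabla^m\lambda(\rho)\,\Div\uu\,\Div\nabla^m\uu$ in the benign form $\|\Div\uu\|_{L^\infty}\|\nabla^m\rho\|_{L^2}\|\nabla^{m+1}\uu\|_{L^2}$, rather than in a form that would let $\|\rho\|_{H^m}$ multiply $\|\nabla^{m+1}\uu\|_{L^2}^2$ and so obstruct the Gronwall closure — which is why one should estimate the momentum equation directly rather than invoking Lemma~\ref{highparabol} as a black box.
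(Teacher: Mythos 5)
Your argument is correct in substance, but it follows a genuinely different route from the paper. The paper does exactly what you warn against: it feeds the momentum equation into the linear parabolic Lemma~\ref{highparabol} as a black box, combines this with the auxiliary bounds of Section~\ref{linearproblems} (Lemma~\ref{higheu} for $\rho$, Lemma~\ref{lemmaH(u)} for $\HH$, Lemma~\ref{existschr} for $\psi$, each of the form $\|\cdot\|_{H^m}\leq \phi(\ldots)\,(1+\|\uu\|_{L^2(0,T;H^{m+1})})$, together with Corollary~\ref{equivnorms}), and closes a Gronwall inequality on the single scalar $Z(t)=\int_0^t\|\uu\|_{H^{m+1}}^2+\|\uu_t\|_{H^{m-1}}^2\,ds$; the only terms it examines by hand are $\|p\|_{L^2(0,t;H^m)}$ and, for $m=3$, $\|\nabla\rho\otimes\uu_t\|_{L^2(0,t;H^1)}$, the latter via \eqref{rhoW14} and Gagliardo--Nirenberg. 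You instead run a simultaneous $m$-th order energy estimate on all four equations and close on the combined quantity $\Gamma(t)$. Both routes rest on the same Section~\ref{S4} inputs (in particular $\|\nabla\uu\|_{L^\infty}^2\in L^1(0,T)$ from Lemma~\ref{lemmanablarho}) and on the Lagrangian regularity $\E,\mathbf{B}\in L^\infty(0,T;H^{m-1})$ needed to control $\nabla^m f$. What your version buys is an explicit absorption of the top-order $\rho$--$\uu$ coupling $\int\nabla^m\lambda(\rho)\,\Div\uu\,\Div\nabla^m\uu$ into the dissipation, so no multiplicative constant depending on $\|\rho\|_{C([0,T];H^m)}$ ever appears; the price is a heavier commutator bookkeeping and a direct $H^m$ energy estimate for the Schr\"odinger equation (where the paper reuses the Duhamel argument of Lemma~\ref{existschr}). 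What the paper's version buys is modularity, at the cost of having to disentangle the dependence of the constant in Lemma~\ref{highparabol} on $\|\rho\|_{C([0,T];H^m)}$ through the bound $\|\rho\|_{H^m}\lesssim 1+Z(t)^{1/2}$. One small caveat on your side: in the chain-rule estimate for $\nabla^m(\psi\circ\Y)$ the cross terms produce factors such as $\|\nabla_\y\psi\|_{L^\infty}\|\nabla^{m-1}\E\|_{L^2}$, which in two dimensions are controlled by $\|\psi\|_{H^m}$ rather than by induction-hypothesis quantities alone; this is harmless since $\|\psi\|_{H^m}^2$ is part of $\Gamma$, but the constant in your bound for $\nabla^m f$ is not purely an induction-hypothesis constant as stated.
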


\begin{proof}
	Let us argue by induction, observing that the case with $m=2$ was already proven in above. First, using the continuity equation we rewrite the momentum equation \eqref{E2u} as
	\begin{equation*}
	\rho \uu_t + L_\rho \uu = \HH\cdot\nabla\HH-\frac{1}{2}\nabla|\HH|^2-\rho(\uu \cdot \nabla)\uu - \nabla p - \nabla (g'(1/\rho)h(|\psi\circ\Y|^2)).
	\end{equation*}
	Applying the tensorial operator $\nabla^{m-1}$ on the equation above, one arrives at
	\begin{flalign*}
	&\rho \nabla^{m-1} \uu_t + L_\rho \nabla^{m-1} \uu = \nabla^{m-1}\left(\HH\cdot\nabla\HH-\frac{1}{2}\nabla|\HH|^2\right)- \nabla^{m-2}(\nabla \rho \otimes \uu_t)&\\
	&\qquad \qquad\qquad\qquad + \nabla^{m-1} \big(\rho (\uu\cdot \nabla) \uu\big) - \nabla^m p + \alpha\nabla^m (g'(1/\rho)h(|\psi\circ\Y|^2)).
	\end{flalign*}
	Thus, defining
 \[
   Z(t):=\int_0^t \|\uu(s)\|_{H^{m+1}(\Omega)}^2 +\|\uu_t(s)\|_{H^{m-1}(\Omega)}^2ds,
 \]	
by Lemma~\ref{highparabol} and the low order estimates proven up to this moment, for $0 < t < T'$,
	\begin{align}
	&Z(t) \leq C \big(T, m_0, \Vert \rho_0 \Vert_{H^2}, \Vert \psi_0 \Vert_{H^2}, \Vert \uu_0 \Vert_{H^2} \big) \nonumber \\
	&\qquad \Big( 1 + \Vert \uu_0 \Vert_{H^m}^2 +\| |\HH|^2 \|_{L^2(0,t;H^m)}+ \Vert \nabla \rho \otimes \uu_t \Vert_{L^2(0,t;H^{m-2})}^2   \nonumber \\
	&\qquad\qquad\qquad\qquad + \Vert \rho (\uu \cdot \nabla) \uu \Vert_{L^2(0,t;H^{m-1})}^2 +   \Vert p \Vert_{L^2(0,t;H^m)}^2 \nonumber \\
	&\qquad\qquad\qquad\qquad\qquad\qquad+ \Vert \alpha g'(1/\rho)h(|\psi\circ\Y|^2) \Vert_{L^2(0,t;H^m)}^2 \Big). \label{final}
	\end{align}
	
At this point, in light of Lemmas~\ref{higheu}, \ref{lemmaH(u)}, \ref{existschr}, Corollary~\ref{equivnorms}, and the classic Leibniz rule for Sobolev functions, it is not difficult to  estimate each term in the right-hand side above. Let us just point out how to handle the most interesting -- and illustrating -- ones.

First, let us consider the pressure term $\Vert p \Vert_{L^2(0,t;H^m)}$. Once $p(\rho) = a \rho^\gamma$, a chain rule in the same spirit of \eqref{chainrule} gives
$$\Vert p(t) \Vert_{H^m} \leq C_m \big(\Vert \rho(t) \Vert_{H^{m-1} \cap L^\infty}, \operatorname{Min}_\x \rho(\x, t) \big)  \big( 1 + \Vert \rho (t) \Vert_{H^m}).$$
Thus, by \eqref{Hmrho} of Lemma \ref{higheu},
\begin{align}
 &\Vert p(t) \Vert_{H^m(\Omega)} \leq C(T,\|\uu\|_{L^2(0,T;H^{m}(\Omega))},\|\rho_0\|_{H^m(\Omega)}, M_0)\nonumber\\
	&\qquad\qquad\qquad\qquad\qquad\qquad\cdot(1+\|\uu\|_{L^2(0,t;H^{m+1})}) \nonumber \\
	&\qquad\qquad\>\>\>\>\>\>\leq C(T,\|\uu\|_{L^2(0,T;H^{m}(\Omega))},\|\rho_0\|_{H^m(\Omega)}, M_0, m_0) \cdot (1+Z(t)^{1/2}). \nonumber
\end{align}
Consequently,
$$\Vert p(t) \Vert_{L^2(0,t;H^m)}^2 \leq C  \Big( 1 + \int_0^t Z(s) ds \Big),$$
for some $C = C(T,\|\uu\|_{L^2(0,T;H^{m}(\Omega))},\|\rho_0\|_{H^m(\Omega)}, M_0)$.

The other quantity that poses a curious difficulty is $\Vert \nabla \rho \otimes \uu_t \Vert_{L^2(0,t;H^{m-2})}^2$ when $m=3$. Observe that, by Gagliardo--Nirenberg inequality, and Lemma \ref{higheu} -- especially \eqref{rhoW14} --,
\begin{align*}
  \Vert \nabla \rho(t) \otimes \uu_t(t) \Vert_{H^1} &\leq c \Vert \nabla \rho(t) \Vert_{L^4} \Vert \uu_t(t) \Vert_{W^{1,4}} + c\Vert \nabla^2 \rho(t) \Vert_{L^2} \Vert \uu_t(t) \Vert_{L^\infty} \\
                                                    &\leq \varepsilon \Vert \uu_t(t) \Vert_{H^2} + C_\varepsilon  (1 + \Vert \uu_t(t) \Vert_{L^2}),
\end{align*}
with $C_\varepsilon = C(\varepsilon, T, m_0, \Vert \uu_0 \Vert_{H^2}, \Vert \rho_0 \Vert_{H^2}, \Vert \HH_0 \Vert_{H^2}, \Vert \psi \Vert_{H^2})$. Thus, taking squares and integrating yield
$$\Vert \nabla \rho \otimes \uu_t \Vert_{L^2(0,t;H^{m-2})}^2 \leq C_\varepsilon + \varepsilon Z(t),$$
with the constant $C_\varepsilon$ depending on the very same parameters as the previous one.\footnote{All this inconvenience extraordinarily only happens with $m=3$. Indeed, when $m > 3$, one can make a much easier estimate $\Vert \nabla \rho(t) \otimes \uu_t(t) \Vert_{H^{m-2}} \leq c \Vert \nabla \rho(t) \Vert_{L^\infty} \Vert \uu_t(t) \Vert_{H^{m-2}} + c \Vert \nabla \rho (t) \Vert_{H^{m-2}} \Vert \uu_t \Vert_{L^\infty} \leq c \Vert \rho(t) \Vert_{H^3} \Vert \uu_t(t) \Vert_{H^{m-2}} + c\Vert \rho(t)\Vert_{H^{m-1}} \Vert \uu_t (t) \Vert_{H^2} $. In virtue of the induction hypothesis, these terms bring no problem.   }

In any case, the sum of all terms arising in \eqref{final} may be majorized in a similar fashion as
$$\frac{1}{2}Z(t) + C + C\int_0^t Z(s) ds,$$
for some constant $C$ depending only on $T$, $m_0$, $\Vert \rho_0 \Vert_{H^m}$, $\Vert \uu_0 \Vert_{H^m}$, $\|\HH_0\|_{H^m}$ and $\Vert \psi_0 \Vert_{H_\y^m})$. Thus, the conclusion follows from Gronwall's inequality.
\end{proof}

With this lemma we have the necessary a priori estimates, which combined with the local result from Proposition~\ref{localexist}, complete the proof of Theorem~\ref{principalthm}. Note that the local result holds with $m\geq 3$ and the case $m=2$ from Theorem~\ref{principalthm} follows from the a priori estimates with $m=2$ by a standard compactness argument.

\section{A continuous dependence inequality}\label{S6}

	As a corollary of Theorem~\ref{principalthm}, coming back to the proof of Lemma~\ref{contraction} yields the following relative energy estimate.

\begin{theorem} \label{dependcont}
	Let $m\geq 3$ and let $[\rho^{(j)}, \uu^{(j)},\HH^{(j)}, \psi^{(j)}]$ be solutions of \eqref{E2rho}--\eqref{E2psi} with initial data $[\rho_0^{(j)}, \uu_0^{(j)},\HH_0^{(j)}, \psi_0^{(j)}]$, $j=1,2$, lying in the appropriate $H^m-$space and with the initial densities being positive. Then there exists a constant $C$ such that for any $0 \leq t \leq T$
	\begin{align*}
	&\int_\Omega |\rho^{(1)} - \rho^{(2)}|^2 d\x+\int_\Omega \rho^{(1)} | \uu^{(1)} - \uu^{(2)} |^2 d\x + \int_0^t\int_\Omega| \nabla (\uu^{(1)} -  \uu^{(2)}) |^2 d\x ds \\
	&+ \int_\Omega  | \HH^{(1)} - \HH^{(2)} |^2 d\x + \int_0^t\int_\Omega| \nabla (\HH^{(1)} -  \HH^{(2)}) |^2 d\x dt' + \int_{\Omega_\y} |\psi^{(1)} - \psi^{(2)}|^2 d\y \\ 
	&\leq C \cdot \Bigg( \int_\Omega |\rho_0^{(1)} - \rho_0^{(2)}|^2 d\x+ \int_\Omega \rho_0^{(1)} | \uu_0^{(1)} - \uu_0^{(2)}|^2 d\x  + \int_\Omega |\HH_0^{(1)} - \HH_0^{(2)}|^2d\x\\
	&\qquad\qquad\qquad\qquad + \int_{\Omega_\y} |\psi_0^{(1)}- \psi_0^{(2)}|^2 d\y \Bigg)	
\end{align*}
	where $C$ depends only on $T$, the $H^m$--norm of initial data, $\min \rho^{(1)}_0(\x)$ and $\min \rho^{(2)}_0(\x)$.
\end{theorem}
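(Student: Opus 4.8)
The plan is to re-run the energy estimates from the proof of Lemma~\ref{contraction}, now for two genuine solutions of \eqref{E2rho}--\eqref{E2psi} rather than iterates of the map $K$, and retaining the (now nonzero) initial data of the differences. Write $\tilde\rho=\rho^{(1)}-\rho^{(2)}$, $\tilde\uu=\uu^{(1)}-\uu^{(2)}$, $\tilde\HH=\HH^{(1)}-\HH^{(2)}$, $\tilde\psi=\psi^{(1)}-\psi^{(2)}$, and let $\y^{(j)}$, $\x^{(j)}$ be the Lagrangian coordinate and its inverse attached to $\uu^{(j)}$, with $\tilde\y=\y^{(1)}-\y^{(2)}$ and $\tilde\x=\x^{(1)}-\x^{(2)}$. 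The starting point is that, by Theorem~\ref{principalthm} together with Lemmas~\ref{higheu}, \ref{lemmaH(u)}, \ref{existschr}, \ref{regE}, \ref{regB} and Corollary~\ref{equivnorms}, each of $\rho^{(j)}$ (bounded from above and away from zero), $\uu^{(j)},\HH^{(j)}$ in $C([0,T];H^m)\cap H^1(0,T;H^{m-1})$, $\psi^{(j)}$ in $C([0,T];H^m)$, the (inverse) deformation gradients $\E^{(j)},\mathbf B^{(j)}$ and the maps $\y^{(j)},\x^{(j)}$ is controlled on $[0,T]$ by a constant depending only on $T$, the $H^m$--norms of the data and $\min\rho_0^{(1)},\min\rho_0^{(2)}$; in particular $\nabla\rho^{(j)}$, $\nabla\rho_0^{(j)}$ and $\nabla_\y\psi^{(j)}$ are bounded in $L^\infty$, which is where $m\ge 3$ is used.

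I would then introduce the functional
\[
\mathcal E(t):=\|\tilde\rho(t)\|_{L^2(\Omega)}^2+\int_\Omega\rho^{(1)}|\tilde\uu|^2\,d\x+\|\tilde\HH(t)\|_{L^2(\Omega)}^2+\|\tilde\psi(t)\|_{L^2(\Omega_\y)}^2+\|\tilde\y(t)\|_{L^2(\Omega)}^2+\|\tilde\x(t)\|_{L^2(\Omega_\y)}^2
\]
and the companion quantity $\mathcal E_0$ built from the initial data (omitting the $\tilde\y,\tilde\x$ terms, which vanish at $t=0$, so that $\mathcal E(0)=\mathcal E_0$). The target is the differential inequality
\[
\frac{d}{dt}\mathcal E(t)+c\big(\|\nabla\tilde\uu(t)\|_{L^2(\Omega)}^2+\|\nabla\tilde\HH(t)\|_{L^2(\Omega)}^2\big)\le a(t)\big(\mathcal E(t)+\mathcal E_0\big),\qquad a\in L^1(0,T),
\]
from which Gronwall's inequality yields $\mathcal E(t)+c\int_0^t(\|\nabla\tilde\uu\|_2^2+\|\nabla\tilde\HH\|_2^2)\,ds\le C\,\mathcal E_0$, that is, the assertion of the theorem.

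To reach this inequality I would write down the equations for the differences, exactly as in the proof of Lemma~\ref{contraction} with $\vv^{(j)}$ replaced by $\uu^{(j)}$: namely $\tilde\rho_t+\Div(\tilde\rho\,\uu^{(1)}+\rho^{(2)}\tilde\uu)=0$; the linear parabolic equation for $\tilde\HH$ with right-hand side built from $\tilde\uu,\uu^{(1)},\HH^{(2)}$; the transport equations for $\tilde\y$ and $\tilde\x$; the Schr\"{o}dinger equation for $\tilde\psi$ whose source is controlled by $\tilde\psi$ and by $v^{(1)}-v^{(2)}$; and the analogue of \eqref{coisagigantesca} for $\tilde\uu$, with the extra lower-order term $-\tilde\rho\,\uu^{(2)}_t\cdot\tilde\uu$. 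Testing each equation against the corresponding difference and integrating by parts --- using the continuity equation for $\rho^{(1)}$ in the velocity identity, and the constraint $\Div\HH^{(2)}=0$ once more in the $\tilde\HH$ identity, which cancels all $\nabla\tilde\uu$ contributions there --- one bounds every term on the right, via Young's inequality with a small parameter $\delta$, by $\delta(\|\nabla\tilde\uu\|_2^2+\|\nabla\tilde\HH\|_2^2)$ plus $C_\delta\,b(t)$ times one of the $L^2$ quantities entering $\mathcal E$ or $\mathcal E_0$, where $b(t)\le C(1+\|\uu^{(2)}_t(t)\|_{L^\infty(\Omega)})$ and $\|\uu^{(2)}_t(t)\|_{L^\infty}\le C\|\uu^{(2)}_t(t)\|_{H^{m-1}}\in L^2(0,T)$, so that $a:=C_\delta b\in L^2(0,T)\subset L^1(0,T)$. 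The composition-with-$\Y$ terms are treated verbatim as in Lemma~\ref{contraction}: $v^{(1)}-v^{(2)}$ splits into $\big(\tfrac1{\rho^{(1)}}-\tfrac1{\rho^{(2)}}\big)(t,\x^{(1)})$, whose $L^2(\Omega_\y)$--norm is $\le C\|\tilde\rho\|_{L^2(\Omega)}$ by Remark~\ref{Lp_yLp_x}(i), plus $\big(\tfrac1{\rho^{(2)}}(t,\x^{(1)})-\tfrac1{\rho^{(2)}}(t,\x^{(2)})\big)$, which is $\le C\|\nabla\rho^{(2)}\|_\infty\|\tilde\x\|_{L^2(\Omega_\y)}$; by \eqref{Jy/rho(x)}, $\J_{\y^{(1)}}/\rho^{(1)}-\J_{\y^{(2)}}/\rho^{(2)}=\rho_0^{(1)}(\y^{(1)}(t,\cdot))-\rho_0^{(2)}(\y^{(2)}(t,\cdot))$ has $L^2(\Omega)$--norm $\le C(\|\rho_0^{(1)}-\rho_0^{(2)}\|_{L^2}+\|\nabla\rho_0^{(2)}\|_\infty\|\tilde\y\|_{L^2})$ --- the only term feeding $\mathcal E_0$ directly into the right-hand side --- and $\psi^{(1)}\circ\Y^{(1)}-\psi^{(2)}\circ\Y^{(2)}$ splits into a piece $\le C\|\nabla_\y\psi^{(1)}\|_\infty\|\tilde\y\|_{L^2}$ and a piece $\le C\|\tilde\psi\|_{L^2(\Omega_\y)}$. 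Choosing $\delta<\min(\mu,\nu)$ and summing the six inequalities then gives the differential inequality above.

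The main difficulty is not any individual estimate --- each of them already occurs in Lemma~\ref{contraction} --- but the coordinated bookkeeping: making sure that every gradient of a difference other than $\nabla\tilde\uu$ and $\nabla\tilde\HH$ gets absorbed, that these two coercive terms survive with a positive coefficient, and that the residual Gronwall weight $a(t)$ is integrable in time (it is, because the only time-singular quantity involved, $\uu^{(2)}_t$, lies in $L^2(0,T;H^{m-1})$). Closing the composition terms is where the hypothesis $m\ge 3$ genuinely enters, via the $L^\infty$ bounds on $\nabla\rho^{(j)}$, $\nabla\rho_0^{(j)}$ and $\nabla_\y\psi^{(j)}$; with those and the uniform bounds from Theorem~\ref{principalthm} in hand, all the constants above depend only on $T$, the $H^m$--norms of the initial data, and $\min\rho_0^{(1)},\min\rho_0^{(2)}$, as claimed.
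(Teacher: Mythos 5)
Your proposal is correct and follows essentially the same route as the paper, which proves this theorem simply by revisiting the energy identities of Lemma~\ref{contraction} with the two iterates replaced by two genuine solutions and the (now nonzero) initial differences retained. Your additional observations — that the terms involving $\nabla\tilde\uu$ and $\nabla\tilde\HH$ on the right must now be absorbed into the coercive terms via Young's inequality (since input and output velocities coincide here), and that the Gronwall weight is integrable because $\uu^{(2)}_t\in L^2(0,T;H^{m-1})\hookrightarrow L^2(0,T;L^\infty)$ — are exactly the bookkeeping the paper leaves implicit.
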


\end{document}